\documentclass[12pt,reqno]{amsart}
\pdfoutput=1
\usepackage[table]{xcolor}
\usepackage{colortbl}
\usepackage{booktabs}
\usepackage{multirow}
\usepackage{subfig}
\usepackage[headings]{fullpage}
\usepackage{amssymb,bbm}
\usepackage{epstopdf}
\usepackage{graphicx}
\usepackage{texdraw}
\usepackage{url}
\usepackage[all]{xy}
\usepackage[T1]{fontenc}
\usepackage{mathtools}
\usepackage{float}   
\usepackage[bookmarks=true,%
    colorlinks=true,%
    linkcolor=blue,%
    citecolor=blue,%
    filecolor=blue,%
    menucolor=blue,%
    urlcolor=blue,%
    breaklinks=true]{hyperref}
\usepackage{stmaryrd}

\newtheorem{theorem}{Theorem}

\theoremstyle{definition}

\newtheorem{remark}[theorem]{Remark}
\newtheorem{corollary}[theorem]{Corollary}
\newtheorem{conjecture}[theorem]{Conjecture}




\def\BN{\mathbbm N}
\def\BZ{\mathbbm Z}
\def\BQ{\mathbbm Q}
\def\BR{\mathbbm R}
\def\BC{\mathbbm C}

\def\BP{\mathbbm P}

\def\IZ{\mathbb{Z}}
\def\calA{\mathcal A}
\def\calB{\mathcal B}
\def\calC{\mathcal C}
\def\calD{\mathcal D}

\def\calJ{\mathcal J}

\def\calL{\mathcal L}

\def\s{\sigma}

\def\SL{\mathrm{SL}}

\def\tq{\tilde{q}}
\def\tx{\tilde{x}}

\def\={\;=\;}
\def\+{\,+\,}
\def\-{\,-\,}

\def\be{\begin{equation}}
\def\ee{\end{equation}}

\def\Li{\mathrm{Li}}
\def\e{\bold e}

\def\Z{\Bbb Z}

\def\th{\theta}

\def\sma#1#2#3#4{\bigl(\smallmatrix#1&#2\\#3&#4\endsmallmatrix\bigr)}

\def\ve{\varepsilon}
\def\bb{\mathsf{b}}
\def\ri{\mathrm{i}}
\def\rd{\mathrm{d}}

\def\tx{\tilde{x}}

\def\re{{\rm e}}
\def\ri{{\rm i}}
\def\rd{{\rm d}}
\newcommand{\figref}[1]{Fig.~\protect\ref{#1}}
\def\SU{\mathrm{SU}}

\def\Im{\mathrm{Im}}

\def\diag{\mathrm{diag}}
\def\ub{u_{\mathsf{b}}}
\def\bJ{\mathbf J}
\def\DS{Q} 
\def\RED{\mathrm{red}}
\def\sma#1#2#3#4{\bigl(\smallmatrix#1&#2\\#3&#4\endsmallmatrix\bigr)}  

\def\DJ{\mathrm{DJ}}
\def\e{\bold e}


\def\ts{\tilde{s}}
\def\mc{\mathcal}
\def\md{\mathbf}
\def\mf{\mathfrak}

\def\ms{\mathsf}

\def\cO{O}
\def\IC{\mathbb{C}}

\def\IQ{\mathbb{Q}}
\def\IR{\mathbb{R}}
\def\IZ{\mathbb{Z}}

\newcommand{\knot}[1]{\md{#1}}
\newcommand{\fad}{\operatorname{\Phi}_{\mathsf{b}}}
\newcommand{\mb}{{\mathsf{b}}}

\DeclareMathOperator{\real}{Re}
\DeclareMathOperator{\imag}{Im}

\newcommand{\nn}{\nonumber \\}

\graphicspath{{figs/}}


\begin{document}
\title[Resurgence of Chern--Simons theory at the trivial flat
connection]{Resurgence of Chern--Simons theory at the trivial flat
  connection}%
\author{Stavros Garoufalidis}%
\address{
  International Center for Mathematics, Department of Mathematics \\
  Southern University of Science and Technology \\
  Shenzhen, China \newline
  {\tt \url{http://people.mpim-bonn.mpg.de/stavros}}}
\email{stavros@mpim-bonn.mpg.de}
\author{Jie Gu}
\address{D\'epartement de Physique Th\'eorique, Universit\'e de Gen\`eve \\
  Universit\'e de Gen\`eve, 1211 Gen\`eve 4, Switzerland}
\address{Shing-Tung Yau Center and School of Physics\\
Southeast University, Nanjing 210096, China}
\email{jie.gu@unige.ch}
\author{Marcos Mari\~no}
\address{Section de Math\'ematiques et D\'epartement de Physique Th\'eorique\\
Universit\'e de Gen\`eve, 1211 Gen\`eve 4, Switzerland  \newline
  {\tt \url{http://www.marcosmarino.net}}}
\email{marcos.marino@unige.ch}
\author{Campbell Wheeler}
\address{Max Planck Institute for Mathematics \\
         Vivatsgasse 7, 53111 Bonn, Germany \newline
  {\tt \url{https://guests.mpim-bonn.mpg.de/cjwh/}}}
\email{cjwh@mpim-bonn.mpg.de}
       

\thanks{
{\em Key words and phrases:}
Knots, Jones polynomial, colored Jones polynomial, Chern-Simons theory,
holomorphic blocks, state-integrals, Kashaev invariant, 
resurgence, perturbative series, Borel resummation, Stokes automorphisms,
Stokes constants, $q$-holonomic modules, $q$-difference equations, DT-invariants,
BPS states, peacocks, meromorphic quantum Jacobi forms, $q$-Stokes phenomenon.
}

\date{5 November 2021}

\begin{abstract}
  Some years ago, it was conjectured by the first author that the
  Chern--Simons perturbation theory of a 3-manifold at the trivial
  flat connection is a resurgent power series. We describe completely
  the resurgent structure of the above series (including the location
  of the singularities and their Stokes constants) in the case of a
  hyperbolic knot complement in terms of an extended square matrix of
  $(x,q)$-series whose rows are indexed by the boundary parabolic
  $\SL_2(\BC)$-flat connections, including the trivial one. We use our
  extended matrix to describe the Stokes constants of the above
  series, to define explicitly their Borel transform and to identify
  it with state--integrals.  Along the way, we use our matrix to give
  an analytic extension of the Kashaev invariant and of the colored
  Jones polynomial and to complete the matrix valued holomorphic
  quantum modular forms as well as to give an exact version of the
  refined quantum modularity conjecture of Zagier and the first
  author.  Finally, our matrix provides an extension of the 3D-index
  in a sector of the trivial flat connection. We illustrate our
  definitions, theorems, numerical calculations and conjectures with
  the two simplest hyperbolic knots.
\end{abstract}

\maketitle

{\footnotesize
\tableofcontents
}


\section{Introduction}
\label{sec.intro}

\subsection{Resurgence of Chern--Simons perturbation theory}
\label{sub.intro}

Quantum Topology originated by Jones's discovery of the famous
polynomial invariant of a knot~\cite{Jones}, followed by Witten's
3-dimensional interpretation of the Jones polynomial by means of a
gauge theory with a topological (i.e., metric independent)
Chern--Simons action~\cite{witten-jones}.  The connection between this
topological quantum field theory and the Jones polynomial appears both
on the level of the exact partition function and its perturbative
expansion which both determine, and are determined by, the (colored)
Jones polynomial. Indeed, the exact partition function on the
complement of a knot colored by the defining representation of the
gauge group $\SU(2)$ at level $k$ coincides with the value of the
Jones polynomial at the complex root of unity $e^{2\pi\ri/(k+2)}$. On
the other hand, the perturbative expansion along the trivial flat
connection $\s_0$ is a formal power series
$\Phi^{(\s_0)}(h) \in \BQ[[h]]$ whose coefficients are Vassiliev knot
invariants which are determined by the colored Jones polynomial of a
knot expanded as a power series in $h$ where
$q=e^h$~\cite{Bar-Natan}. More generally, the loop expansion of the
colored Jones polynomial is a formal power series
$\Phi^{(\s_0)}(x,h) \in \BQ(x)[[h]]$ introduced by
Rozansky~\cite{Rozansky:bureau} and further studied by
Kricker~\cite{Kricker, GK:noncommutative}, where $x=q^n$ plays the role of the
monodromy of the meridian. An important feature of the power series
$\Phi^{(\s_0)}(x,h)$ is that it is determined by (but also uniquely
determines) the colored Jones polynomial.  Likewise, the power series
$\Phi^{(\s_0)}(h)$ is determined by (and determines) the Kashaev
invariant of a knot~\cite{K95}, interpreted as an element of the
Habiro ring~\cite{Habiro:WRT}.

In~\cite{Ga:resurgence} the first author conjectured that the factorially divergent
formal power series $\Phi^{(\s_0)}(h)$ is resurgent, whose Borel transform has
singularities arranged in a peacock pattern, and can be re-expanded
in terms of the perturbative series $\Phi^{(\s)}(h)$ corresponding
to the remaining non-trivial flat connections of the Chern-Simons action. Although this
is a well-defined statement, resurgence was a bit of the surprise and a mystery.
We should point out that the above series are well-defined (for $\s\neq \s_0$ via
formal Gaussian integration using as input an ideal triangulation of a
3-manifold~\cite{DG}, and for $\s=\s_0$ using the Kashaev invariant itself) and their
coefficients are (up to multiplication by a power of $2\pi\ri$) algebraic numbers.
However a numerical computation of their coefficients is difficult (about 280
coefficients can be obtained for the simplest hyperbolic knot), hence it is difficult
to numerically study them beyond the nearest to the origin singularity of their Borel
transform.

The resurgence question has attracted a lot of attention in mathematics and
mathematical physics and some aspects of it were discussed by Jones~\cite{Jones:origin},
Witten~\cite{Witten:2010cx}, Gukov, Putrov and the third author~\cite{gmp},
Costin and the first author~\cite{CG} and Sauzin~\cite{Sauzin:resurgent}. Further
aspects of resurgence in Chern--Simons
theory were studied in~\cite{mmreview,gmp,gh-res,GZ:qseries,GZ:kashaev}.

When $s\neq \s_0$, the resurgence structure of the series
$\Phi^{(\s)}(h)$ was given explicitly in~\cite{GGM:resurgent}, where it was
found that the location of the singularities was arranged in a peacock pattern, and
the Stokes constants were integers. The latter were fully described by an $r \times r$
matrix $\bJ^\RED(q)$.
The passage from a vector $(\Phi^{(\s)}(h))_\s$ of power series to a matrix $\bJ^\RED(q)$
is inevitable, and points out to the possibility that the non-perturbative partition
function of a theory yet-to-be defined and its corresponding perturbative expansion is
matrix-valued and \emph{not} vector-valued, as was discussed in detail
in~\cite{GZ:kashaev} and~\cite{GZ:qseries}.
Let us summarise some key properties of the matrix $\bJ^\RED(q)$. 

\noindent
{\bf Linear $q$-difference equation.} 
The entries of $\bJ^\RED(q)$ are $q$-series with integer coefficients defined for
$|q| \neq 1$. The matrix $\bJ^\RED(q)$ is a fundamental solution of a linear
$q$-difference equation of order $r$, and its rows are labeled by the set of
nontrivial $\s$.

\noindent
{\bf Asymptotics in sectors: $q$-Stokes phenomenon.} 
The function $\bJ^\RED(e^{2\pi\ri\tau})$ as $\tau$ approaches zero in a fixed cone,
has a full asymptotic expansion as a sum of power series in $\tau$, times power
series in $e^{-2\pi\ri/\tau}$. However, passing from one cone to an adjacent one changes
the $e^{-2\pi\ri/\tau}$-series. The dependence of the asymptotics on the cone is the
$q$-Stokes phenomenon, analogous to the well-studied Stokes phenomenon in the theory
of linear differential equations with polynomial coefficients 
(see, e.g.,~\cite{Sibuya}). In our case, the $q$-Stokes phenomenon is a consequence 
of the fact that $\bJ^\RED(q)$ is a fundamental matrix solution to a linear 
$q$-difference equation. 

\noindent
{\bf Analyticity.} 
The product
$W(\tau)$ of $\bJ^\RED(\tq)$ with a diagonal automorphy factor and with $\bJ^\RED(q)$,
when $q=e^{2\pi\ri \tau}$ and $\tq=e^{-2\pi\ri/\tau}$, although defined for
$\tau \in \BC\setminus\BR$, equals to a matrix of state-integrals and hence it
analytically extends to $\tau$ in the cut plane $\BC'=\BC\setminus (-\infty, 0]$.
A distinguished $(\s_1,\s_1)$
entry of $W(\tau)$, where $\s_1$ is the geometric representation of a
hyperbolic 3-manifold, is the Andersen--Kashaev state-integral~\cite{AK}. The
latter is often identified with the unknown partition function of complex Chern--Simons
theory. Thus, analyticity of $W$ is interpreted as a factorisation property of
state-integrals, or as a matrix-valued holomorphic quantum modular
form~\cite{GZ:kashaev,Z:HQMF}.
 
\noindent
{\bf Borel resummation.} 
The matrix $W(\tau)$ coincides (in a suitable ray) to the Borel resummation of the
matrix of perturbative series. In particular, the Borel resummation of the perturbative
series is \emph{not} a $q$-series 
as has been claimed repeatedly in some physics literature, but rather
a bilinear combination of $q$-series and $\tq$-series.\footnote{A similar phenomenon
  was observed by Hatsuda--Okuyama~\cite{Hatsuda-Okuyama}.}

\noindent
{\bf Relation with the 3D-index.}
The 3D-index of Dimofte--Gaiotto--Gukov can be expressed bilinearly in terms of
$\bJ^\RED(q)$ and $\bJ^\RED(q^{-1})$. A detailed conjecture is given in
see~\cite[Conj.4]{GGM:peacock}.

\noindent
{\bf $x$-extension.} 
There is an extension of the above invariants by a nonzero complex number $x$,
which measures the monodromy of the meridian in the case of a knot complement, and
extends the $q$-series to functions of $(x,q)$, where $x$ behaves like a Jacobi
variable. This results in a matrix $\bJ^\RED(x,q)$ whose properties extend those of
the matrix $\bJ^\RED(q)$ and were studied in detail in~\cite{GGM:peacock}.

\subsection{A summary of our results}
\label{sub.summary}

Our goal is to describe the Stokes constants and the resurgent structure of the missing
asymptotic series $\Phi^{(\s_0)}(h)$ in terms of completing the
matrix $\bJ^\RED(x,q)$ to a square matrix with one extra row (namely
$(1,0,\dots,0)^T$) and column, whose distinguished $(\s_0,\s_1)$ entry
is conjecturally the Gukov--Manolescu series~\cite{Gukov-Manolescu}
(evaluated at $x=1$), and the remaining series in the top row are the
descendants of the Gukov-Manolescu series.

Along the way of solving the resurgence problem for the $\Phi^{(\s_0)}(h)$ series,
we solve several related problems, which we now discuss.

\noindent $\bullet$ {\bf A $q$-series that sees $\Phi^{(\s_0)}(h)$.}
This is a problem raised by Gukov and his collaborators
(see e.g.~\cite{Gukov:BPS,Gukov-Manolescu}). More precisely, our Resurgence
Conjecture~\ref{conj-exact-ra} implies that the asymptotics as $q=e^{2\pi\ri\tau}$
and $\tau \to 0$ in a sector of each of the $q$-series of the top row of the
matrix $\bJ(q)$ is a linear combination of the $\Phi^{(\s)}(h)$ series which includes
the $\Phi^{(\s_0)}(h)$ series. 

\noindent $\bullet$ {\bf A matrix-valued holomorphic quantum modular
  form.}  In~\cite{GZ:qseries} the first author and Don Zagier studied
a matrix $\bJ^\RED(q)$ of $q$-series with rows indexed by nontrivial
flat connections, and conjectured that the corresponding value of the
cocycle $\bJ(\tq)^{-1} \Delta(\tau) \bJ(q)$\footnote{For a suitable
  diagonal matrix $\Delta(\tau)$ of weights.} at
$S=\sma 0{-1}10 \in \SL_2(\BZ)$, which a priori is an analytic
function on $\BC\setminus\BR$, actually extends to the cut plane
$\BC'$. A problem posed was to find an extension of the matrix
$\bJ^\RED(q)$ which includes the trivial flat connection. We do so in
Sections~\ref{sub.41q3}, \ref{sub.41qx} and \ref{sub.52q3} for the
$\knot{4}_1$ and $\knot{5}_2$ knots.

\noindent $\bullet$ {\bf An exact form of the Refined Quantum Modularity Conjecture.}
In~\cite{GZ:kashaev} a Refined Quantum Modularity Conjecture was formulated. The
conjecture was numerically motivated by a smoothed optimal summation of the
divergent series $\Phi^{\s)}(\tau)$, and the final result was a matrix-valued 
periodic function defined at the rational numbers. We conjecture that if we replace
the smoothed optimal truncation by the median Borel resummation, all asymptotic
statements in~\cite{GZ:kashaev} become exact equalities, valid for finite (and not
necessarily large) range of the parameters. 
  
\noindent $\bullet$ {\bf An analytic extension of the Kashaev invariant and of
the colored Jones polynomial.} 
A consequence of the above conjecture is an exact formula for the Kashaev
  invariant at rational points as a linear combinations of three
  smooth functions, multiplied by the top row of $\bJ$.

\begin{conjecture}
  \label{conj.ekashaev}
  For every knot $K$ and every natural number $N$ we have:
  \be
  \label{ekashaev}
  \langle K \rangle_N = \sum_{\s} c^K_\s N^{\delta_\s} 
  s_{\rm med}( \Phi^{(K,\s)})(\tfrac{1}{N})  
  \ee
  where $\delta_\s=3/2$ for $\s\neq \s_0$ and $\delta_{\s_0}=1$ and
  $(c^K_\s)$ is a vector of elements of the Habiro ring
  (tensor $\BQ$) evaluated at $q=1$, with $c^K_{\s_1}=c^K_{\s_0}=1$. 
\end{conjecture}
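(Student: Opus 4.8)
The plan is to derive Conjecture~\ref{conj.ekashaev} from an exact form of the Refined Quantum Modularity Conjecture, namely from the analyticity of the completed cocycle attached to the extended matrix $\bJ(q)$ together with the identification of its boundary values with median Borel resummations, specialised to the modular variable $\tau=1/N$. Concretely, one works with the matrix $W(\tau)=\bJ(\tq)^{-1}\Delta(\tau)\bJ(q)$, where $q=e^{2\pi\ri\tau}$, $\tq=e^{-2\pi\ri/\tau}$ and $\Delta(\tau)$ is the diagonal automorphy factor, whose existence and properties for the \emph{extended} matrix (the square matrix with the new row and column for $\s_0$, specialised at $x=1$) are the exact analogues of the statements recalled in \S\ref{sub.intro} for $\bJ^\RED$. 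The first step is to record the key input from \cite{GZ:kashaev} and Resurgence Conjecture~\ref{conj-exact-ra}: the Kashaev invariant is the boundary value, at the rational point $\tau=1/N\in\BC\setminus(-\infty,0]$, of the distinguished entry $W_{\s_0,\s_1}$ of $W$, i.e.\ $\langle K\rangle_N=W_{\s_0,\s_1}(1/N)$.

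The second step is to read off the two factors in $W(1/N)=\bJ(\tq)^{-1}\Delta(\tau)\bJ(q)$ as $\tau\to 1/N$ inside $\BC\setminus\BR$. On the $\tq$-side one has $\tq=e^{-2\pi\ri N}\to 1$, so $\bJ(\tq)^{-1}$ degenerates to the value at $q=1$ of the inverse of the extended matrix; since the entries of $\bJ$ lie in the Habiro ring tensored with $\BQ$ they are regular at $q=1$, and the $\s_0$-th row of the limit is by definition the vector of constants $(c^K_\s)$, with $c^K_{\s_0}=c^K_{\s_1}=1$ coming from the fact that the $(\s_0,\s_0)$ and $(\s_1,\s_1)$ entries of the extended matrix specialise to $1$ at $q=1$. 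On the $q$-side one has $q=e^{2\pi\ri/N}$, a root of unity approached radially; the diagonal factor $\Delta(\tau)$ supplies the powers $N^{\delta_\s}$ (with $\delta_\s=3/2$ for $\s\neq\s_0$, from the one-loop/Hessian normalisation, and $\delta_{\s_0}=1$, from the $\tau^{-1}$ behaviour of the Gukov--Manolescu-type series) together with the relevant exponential prefactors, while the radial boundary behaviour of $\bJ(q)$ at the root of unity is, by the $q$-Stokes structure and the Borel-resummation property, governed by the median Borel sum $s_{\med}(\Phi^{(K,\s)})(\tfrac{1}{N})$. Multiplying out and reading off the $(\s_0,\s_1)$ entry then yields \eqref{ekashaev}.

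The hard part is that none of the structural inputs above is currently a theorem in the generality required, and proving them for the \emph{extended} matrix is the real content: (i) that the completed matrix $\bJ(q)$, with its new row $(1,0,\dots,0)^{T}$ and column, still satisfies a linear $q$-difference equation and that the associated $W(\tau)$ equals a matrix of convergent state-integrals, including those attached to $\s_0$ — the analyticity, equivalently the matrix-valued holomorphic quantum modular form, extended across the trivial connection; (ii) the identification, along the appropriate ray, of that state-integral matrix with the median Borel resummation of the perturbative matrix $(\Phi^{(K,\s)})$, which is the exact Refined Quantum Modularity Conjecture combined with Resurgence Conjecture~\ref{conj-exact-ra}; and (iii) a careful analysis of the limit $\tau\to 1/N$, showing that the Stokes jumps of the lateral Borel sums across the ray through $1/N$ cancel in the median combination, so that $s_{\med}(\Phi^{(K,\s)})(\tfrac{1}{N})$ is unambiguous and the boundary value of $W$ at $1/N$ is finite; here the integrality of the Stokes constants encoded in $\bJ^\RED(q)$ should do the work. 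Granting (i)--(iii), the remaining steps — bookkeeping of $\Delta$, extraction of $N^{\delta_\s}$, and the two normalisations $c^K_{\s_0}=c^K_{\s_1}=1$ — are routine, and the identity can in any case be tested numerically for $\knot{4}_1$ and $\knot{5}_2$.
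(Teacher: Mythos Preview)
The statement is a \emph{conjecture}; the paper does not prove it. What the paper does is (a) declare it to be a consequence of the ``exact Refined Quantum Modularity Conjecture'' (itself conjectural), (b) verify it numerically for $\knot{4}_1$ and $\knot{5}_2$, and (c) for those two knots, obtain the $x$-deformed version~\eqref{eq:Jn-smed},~\eqref{52ejones} experimentally and then specialise $u\to 0$. So your overall plan---reduce to the exact RQMC plus resurgence input---matches the paper's viewpoint. The problem is in the mechanism you propose.

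Your Step~1, namely $\langle K\rangle_N = W_{\s_0,\s_1}(1/N)$, is not established anywhere and appears to be a misidentification. In the paper the $(\s_1,\s_1)$ entry of $W$ is the Andersen--Kashaev state-integral, and the new $\s_0$-row entries are the \emph{new} state-integrals $\mc{Z}_{\lambda,\mu}(\tau)$ of~\eqref{inst-def}; neither is the Kashaev invariant. Indeed~\eqref{car} and~\eqref{itau-def} share an integrand but use \emph{different} contours $\mc{A}_N$ and $\mc{C}$, and the paper stresses that the integrand has nontrivial contributions at infinity so the two integrals are genuinely different. What equals the Kashaev invariant is the contour-$\mc{A}_N$ integral, which is not an entry of $W$. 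The constants $(c^K_\s)$ do not arise as ``$\bJ(\tq)^{-1}$ at $\tq=1$'' either: the entries of $\bJ$ are $q$-series on $|q|\neq 1$, not elements of the Habiro ring, and the $(c^K_\s)$ the paper refers to come from the matrix of periodic functions at rationals in~\cite{GZ:kashaev} (e.g.\ the top row of their equations~(92),~(104)), which is a different object from the cocycle $W(\tau)$. Finally, your extraction of the powers $N^{\delta_\s}$ from $\Delta(\tau)$ does not match: in the paper $\Delta(\tau)=\diag(\tau^{3/2},1,1)$ (see Conjecture~\ref{conj-exact-ra}), and the $N^{3/2}$ for $\s\neq\s_0$ actually comes from the limit $\lim_{u\to 0}\tau^{-1/2}\,\tfrac{\tx^{1/2}-\tx^{-1/2}}{x^{1/2}-x^{-1/2}}=N^{3/2}$ in Conjecture~\ref{conj.ejones}, not from $\Delta$ alone.

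In short: your conditional-on-conjectures strategy is the same spirit as the paper's, but the specific route through $W(\tau)$ mislocates the Kashaev invariant and the Habiro constants. The paper's actual route for the two examples is the experimental identity $J^K_N(q)=\text{(combination of lateral Borel sums)}$ in cones $I$ and $IV$, combined via the explicit Stokes discontinuity across the positive real axis into the median formula, followed by $u\to 0$.
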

The vector $(c_\s)$ for the $\knot{4}_1$ knot appears in Sec.4.2 of~\cite{GZ:kashaev}
and also as the top row of the matrix of Eqn.(92), and for the $\knot{5}_2$ knot it
appears in Section 4.3 as well as the top row of the matrix of Eqn.(104) of ibid.

For the $\knot{4}_1$ and the $\knot{5}_2$ knots,
we find numerically that $c^{\knot{4}_1}_{\s_2}=0$, $c^{\knot{5}_2}_{\s_2}=0$ and
$c^{\knot{5}_2}_{\s_3}=-2$ in complete agreement with the results of~\cite{GZ:kashaev}.
A corollary of~\eqref{ekashaev} is the Volume Conjecture
$\langle K \rangle_N \sim N^{3/2} \Phi^{(K,\s)}(\tfrac{1}{N})$ to all
orders in $1/N$ as $N \to \infty$.

\begin{conjecture}
  \label{conj.ejones}
  For every knot $K$, there is a neighborhood $U^K$ of $0$ in the complex plane, such
  that for every natural number $N$ and for $u \in U^K$, we have 
  \begin{equation}
    \label{ejones}
  J^K_N(e^{\frac{2\pi\ri}{N}+\frac{u}{N}})=  \sum_{\s} \delta_\s(u,N) c^K_\s(\tx)
  s_{\rm med}( \Phi^{(K,\s)})(e^u;\tau)  
\end{equation}
  where $\delta_{\s}(u,N)=\tau^{-1/2} \frac{\tx^{1/2}-\tx^{-1/2}}{x^{1/2}-x^{-1/2}}$
  for $\s \neq \s_0$ and $\delta_{\s_0}(x,\tau)=1$, where $x=e^u$, $\tx=e^{u/x}$,
  $\tau=\tfrac{u}{2\pi\ri N}+\tfrac{1}{N}$, and $c^K_\s(\tx) \in \BQ[\tx^{\pm 1}]$
  with $c^K_{\s_1}(\tx)=c^K_{\s_0}(\tx)=1$. 
\end{conjecture}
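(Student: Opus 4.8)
The statement is the Jacobi-variable deformation of Conjecture~\ref{conj.ekashaev}: at $u=0$ one has $x=\tx=1$ and $\tau=1/N$, the ratio $(\tx^{1/2}-\tx^{-1/2})/(x^{1/2}-x^{-1/2})$ tends to $1/x=1$, and, after accounting for the normalisation of the colored Jones polynomial, \eqref{ejones} degenerates to \eqref{ekashaev}; so the plan is to prove the $u$-deformed identity and recover the $u=0$ case by continuity. The two structural inputs are the completed square matrix $\bJ(x,q)$ carrying the extra $\s_0$-row of Gukov--Manolescu-type series (constructed explicitly for $\knot{4}_1$ and $\knot{5}_2$ in Sections~\ref{sub.41qx} and~\ref{sub.52q3}), and the associated $x$-extended analytic matrix $W(x,\tau)=\bJ(\tx,\tq)^{-1}\Delta(u,\tau)\bJ(x,q)$, which by the Analyticity property recalled in Section~\ref{sub.intro} continues holomorphically in $\tau$ from the upper and lower half-planes to the cut plane $\BC\setminus(-\infty,0]$, and which by the Borel-resummation property --- equivalently, by the Resurgence Conjecture~\ref{conj-exact-ra} --- equals entrywise, along the appropriate ray, a fixed $\BQ[\tx^{\pm1}]$-linear combination of the median Borel sums $s_{\rm med}(\Phi^{(K,\s)})(e^u;\tau)$ over all boundary-parabolic flat connections $\s$, including $\s_0$.

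Granting these, the target~\eqref{ejones} reduces to showing that the distinguished $(\s_0,\s_1)$ entry of $W(x,\tau)$, evaluated at $\tau=\tfrac1N+\tfrac{u}{2\pi\ri N}$, equals $J^K_N(e^{2\pi\ri/N+u/N})$ up to an elementary prefactor in $x$ and $\tau$; combining this with the Borel-resummation expansion of that entry then reproduces the right-hand side of~\eqref{ejones}, the per-connection weights $\delta_\s(u,N)$ coming from the diagonal automorphy factor $\Delta$ and the coefficients $c^K_\s(\tx)$ from the top, $\s_0$-indexed row of the cocycle. To prove the required identity at $\tau=\tfrac1N+\tfrac{u}{2\pi\ri N}$ I would start from the $x$-extended state-integral model of $J^K_N$ (a single Faddeev quantum-dilogarithm integral for the simplest knots, a multidimensional one built from an ideal triangulation in general), factorise it into holomorphic blocks via the product formula for $\fad$, and observe that at this $\tau$ one has $|q|=e^{u/N}\neq1$ for real $u\neq0$, so $\bJ(x,q)$ is evaluated at an honest point of its domain while $\tq=e^{-2\pi\ri/\tau}$ sits near $1$; deforming the integration contour and summing the residues it crosses then reconstructs the finite sum defining $J^K_N$, as in the evaluation of state-integrals at rational points, now carrying the Jacobi variable $x=e^u$ and its $\SL_2(\BZ)$-partner $\tx=e^{u/x}$ as dictated by the transformation of the Neumann--Zagier data of the triangulation. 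That $c^K_{\s_1}(\tx)=c^K_{\s_0}(\tx)=1$ is then forced by the normalisations of $\bJ$ and of the perturbative series, and $c^K_\s(\tx)\in\BQ[\tx^{\pm1}]$ follows from the Laurent-polynomiality and integrality of the Stokes data of $\bJ(x,q)$, proved for $\s\neq\s_0$ in~\cite{GGM:peacock} and part of the conjectural package for the new $\s_0$-row.

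The main obstacle --- and the reason this is stated as a conjecture rather than a theorem --- is that everything above has to be controlled across the circle $|q|=1$: the completed matrix $\bJ(x,q)$ is only known to be holomorphic for $|q|\neq1$, and the decisive input, namely that $W(x,\tau)$ continues analytically to the cut plane and that its top $\s_0$-row behaves like the Gukov--Manolescu series and its descendants, is open in general and verified only for $\knot{4}_1$ and $\knot{5}_2$. A second, more analytic, difficulty is the neighbourhood $U^K$: one needs a version of the resurgence estimates uniform in $u$, ensuring that as $u$ varies the Borel singularities of the $\Phi^{(K,\s)}(e^u;\tau)$ never cross the median-resummation ray and that $x=e^u$ stays away from the poles of the Jacobi-type $(x,q)$-series occurring in the $\s_0$-row, so that $s_{\rm med}(\Phi^{(K,\s)})(e^u;\tau)$ is holomorphic on $U^K$; non-emptiness of $U^K$ should then be automatic but depends on this uniformity. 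For $\knot{4}_1$ and $\knot{5}_2$ I would complete the argument by testing~\eqref{ejones} numerically to high precision against the explicit completed matrices of Sections~\ref{sub.41qx} and~\ref{sub.52q3}, which simultaneously pins down $(c^K_\s(\tx))$ and yields a concrete lower bound on the radius of $U^K$.
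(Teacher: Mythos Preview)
The paper does not prove this statement: it is a conjecture, and the only support offered (Section~\ref{sub.CJ41analytic} for $\knot{4}_1$, and the analogous passage leading to~\eqref{52ejones} for $\knot{5}_2$) is numerical. The authors write ``Experimentally, we found that in cones $I$ and $IV$ respectively, we have'' and record~\eqref{eq:Jn-s1}--\eqref{eq:Jn-s2}; averaging via the (also conjectural) Stokes discontinuity~\eqref{eq:dsPhi01x} yields the median form~\eqref{eq:Jn-smed}. No state-integral identity for $J^K_N$ and no derivation through $W(x,\tau)$ appears; the coefficients $c^K_\s(\tx)$ are simply read off from the numerics.

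Your sketch is a genuinely different, structural route, and you correctly flag its conjectural inputs. But its central reduction contains a concrete error. You claim that~\eqref{ejones} reduces to showing that the $(\s_0,\s_1)$ entry of $W(x,\tau)$ at $\tau=\tfrac1N+\tfrac{u}{2\pi\ri N}$ equals $J^K_N$ up to a prefactor, with the $c^K_\s(\tx)$ then emerging from the $\s_0$-row of the cocycle. The paper's own $\knot{4}_1$ computations contradict this: by~\eqref{nsi-median} and~\eqref{eq:Isphi-3} the new state-integral $\mc{Z}(u,\tau)$ --- which \emph{is} the top-row $W$-entry up to prefactors --- equals $s_{\rm med}(\Phi^{(\s_0)})(x;\tau)$ alone for real $\tau>0$, whereas by~\eqref{eq:Jn-smed} $J^{\knot{4}_1}_N$ is a combination of all three $s_{\rm med}(\Phi^{(\s_j)})$ with nontrivial $\tx$-weights. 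The contour move you invoke to pass between them is precisely what Section~\ref{sub.AK41new} rules out: the integrand has nonvanishing contributions at infinity in the upper half-plane, so the closed contour $\mc{A}_N$ whose residues give $J^K_N$ cannot be deformed to the horizontal contour $\mc{C}$ giving the factorisable state-integral (see the paragraph after~\eqref{itau-def}, and again after~\eqref{aint-habiro} in the $x$-extended setting). Thus $J^K_N$ is not a single $W$-entry; determining \emph{which} linear combination of the median resummations it is --- i.e., the $c^K_\s(\tx)$ --- is the entire content of the conjecture, and your outline produces these coefficients only through the final numerical check, which is exactly what the paper already does.
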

For the $\knot{4}_1$ and the $\knot{5}_2$ knots, we find numerically that
$c^{\knot{4}_1}_{\s_2}(\tx)=-\frac{\tx-\tx^{-1}}{2}$,
$c^{\knot{5}_2}_{\s_2}(\tx)=-\frac{\tx-\tx^{-1}}{2}$ and
$c^{\knot{5}_2}_{\s_3}(\tx)=-1-\tx$.

Since $\lim_{u\to 0} \delta_\s(N,u)=N^{\delta_s}$, the above conjecture specialises 
to Conjecture~\ref{conj.ekashaev} when $u \to 0$. Note also that the above conjecture 
implies the  Generalised Volume Conjecture when $u \not\in \pi \ri \BQ$ is fixed and 
$N \to \infty$. Indeed, $\delta(N,u)$ is nonzero and $J^{K}_N(e^{(2\pi\ri +u)/N}) 
\sim \delta(N,u)\Phi^{(\s_1)}(e^u;\tau)$. Note finally that the above conjecture 
explains the failure of exponential growth when $u$ is a rational multiple of $2\pi\ri$, 
known for all knots from theorems 1.10 and 1.11 of~\cite{GL:asy},
and theorem 5.3 of~\cite{Murakami2011:ivc} valid for the $\knot{4}_1$ knot.
Indeed, when $u=2\pi\ri r/s$ for integers $r$ and $s$ with $r/s$ near zero,
then $J^{K}_N(e^{(2\pi\ri +u)/N})$ is a periodic function of $N$ (see~\cite{Ha}), 
and so is $\delta(N,u)$ since $e^{u/\tau}=e^{2\pi\ri N r/(r+s)}$.
Moreover, $\delta(N,u)=0$ when $N$ is a multiple of $r+s$ which explains why in that
case the colored Jones polynomial does not grow exponentially.

\noindent $\bullet$ {\bf An extension of the 3D-index.}
Our completed matrix proposes a computable extension of the
  3D-index in the sector of the trivial connection $\s_0$, whose
  mathematical or physical definition is yet-to-be given.

\subsection{Challenges}
\label{sub.challenge}

Our solution to the above problems brings a new challenge: namely, the new square
matrix is actually a submatrix of a larger matrix $\bJ(x,q)$, one with block
triangular form which is a fundamental solution to the linear $q$-difference
equation satisfied by the descendants of the colored Jones
polynomials~\cite{GK:desc}. Already for the case of the $\knot{5}_2$ knot,
one obtains a $6 \times 6$ matrix instead of the original $3 \times 3$
matrix $\bJ^\RED(x,q)$, or of the completed $4 \times 4$ matrix.

A second challenge is to interpret the integers appearing in the new Stokes constants
associated to the trivial flat connection as BPS indices in the dual 3d super
conformal field theory. Incorporating the trivial connection in the
3d/3d correspondence of~\cite{DGG1} is subtle, but we expect our explicit results
to give hints on this problem.   

We should point out that although a proof of resurgence of the
asymptotic series $\Phi^{(\s)}(h)$ is \emph{still} missing, the
current paper (as well as the prior ones~\cite{GGM:resurgent,GGM:peacock}) provide
a complete description of
their resurgent structure (namely the location of the singularities
and a calculation of the Stokes constants) with precise statements,
complemented by extensive numerical computations (including a
numerical computation of the Stokes constants).  In addition, we
provide proofs of the algebraic properties of the matrices of
$q$-series and $(x,q)$-series.

\subsection{Illustration with the two simplest hyperbolic knots}
\label{sub.4152}

We will illustrate our ideas by giving a detailed description of these matrices
and of their algebraic, analytic and asymptotic properties for the case of the two
simplest hyperbolic knots, the $\knot{4}_1$ and the $\knot{5}_2$ knots. Let us summarise our findings
for the $\knot{4}_1$ knot.

\begin{itemize}
\item We complete the $2 \times 2$ matrix $\bJ^\RED(x,q)$ of
  $(x,q)$-series to the $3 \times 3$ matrix $\bJ(x,q)$ by adding the
  trivial flat connection. Our completed matrix is a fundamental
  solution of a third order linear $q$-difference equation.
\item A distinguished entry of $\bJ(x,q)$ is the Gukov--Manolescu
  series.
\item The matrix $\bJ(x,q)$ determines explicitly (but conjecturally)
  the Stokes constants and hence the resurgence structure of the three
  perturbative formal power series.
\item The matrix $\bJ(x,q)$ conjecturally computes an extension of the
  3D-index in a sector of the trivial flat connection.
\item We complete the $2 \times 2$ matrix of descendant
  Andersen--Kashaev state-integrals to a $3 \times 3$ matrix by adding
  new state-integrals which are implicit in work of Kashaev and show
  their bilinear factorisation property.
\end{itemize}

As a second example, we present our results for the $\knot{5}_2$ knot. In this case,
we complete the $3 \times 3$ matrix $\bJ^\RED(q)$ to a $4 \times 4$ one, and use it
to describe explicitly the Stokes constants of the $4$ asymptotic series in
half of the complex plane, thus completing the resurgence question of those
asymptotic series. However, the $\knot{5}_2$ knot reveals a new puzzle: the $4 \times 4$
matrix is a block of a $6 \times 6$ matrix whose rows are a fundamental solution
to a sixth order linear $q$-difference equation, namely the one satisfied by the
descendant colored Jones polynomial of the $\knot{5}_2$ knot~\cite[Eqn.14]{GK:desc}.
Although the homogeneous linear $q$-difference equation for the colored Jones polynomial
is fourth order, the one for the descendant colored Jones polynomial is sixth order,
and both equations are knot invariants. In the case of the $\knot{5}_2$ knot, the
extra $2 \times 2$ block is a matrix of modular functions, in fact of the famous
Rogers--Ramanujan modular $q$-hypergeometric series. We do not understand the
labeling of the two excess rows and columns (e.g., in terms of $\SL_2(\BC)$-flat
connections). Since the formulas for the $6 \times 6$ matrix appear rather complicated,
we will not give the $x$-deformation here, and postpone to a future publication
a systematic definition of the matrix of $(x,q)$-series for all knots.

We should point out that the definition of the top row of the $3 \times 3$ matrices
for the $\knot{4}_1$ knot, and the $6 \times 6$ matrix for the $\knot{5}_2$ knot, as well as an
extension of the above results to the case of closed hyperbolic 3-manifolds have
been taken from the forthcoming thesis of the last author.

\subsection*{Acknowledgements} 
The authors would like to thank Jorgen Andersen, Sergei Gukov, Rinat Kashaev, 
Maxim Kontsevich, Pavel Putrov and Matthias Storzer for enlightening conversations.
S.G. wishes to thank the University of Geneva for their hospitality during his visit
in the summer of 2021. The work of J.G. has been supported in part by the
NCCR 51NF40-182902 ``The Mathematics of Physics'' (SwissMAP). The work of M.M. has
been supported in part by the ERC-SyG project ``Recursive and Exact New
Quantum Theory" (ReNewQuantum), which received funding from the
European Research Council (ERC) under the European Union's Horizon
2020 research and innovation program, grant agreement No. 810573. The work of C.W. has been supported by the Max-Planck-Gesellschaft.


\section{The $\knot{4}_1$ knot}
\label{sec.41}

\subsection{A $2 \times 2$ matrix of $q$-series}
\label{sub.41q2}

In this section we recall in detail what is known about the resurgence
of the two asymptotic series of the $\knot{4}_1$ knot, labeled by the
geometric and the complex-conjugate flat connections. As explained in
the introduction, the answer is determined by a $2 \times 2$ matrix of
$q$-series which was discovered in a long story and in several stages
in a series of papers~\cite{GZ:qseries,
  GK:qseries,GGM:resurgent,GGM:peacock}. A detailed description of the
numerical discoveries and coincidences is given in~\cite{GZ:qseries}
and will not be repeated here. In that paper, the following pair of
$q$-series $G^{(j)}(q)$ for $j=0,1$ was introduced and studied by the
first author and Zagier~\cite{GZ:qseries}
\begin{equation}
\label{G01}
\begin{aligned}
G^{(0)}(q) &=
\sum_{n\geq0}(-1)^{n}\frac{q^{n(n+1)/2}}{(q;q)_{n}^{2}} \\
G^{(1)}(q) &=
\sum_{n\geq0}\left(n+\frac{1}{2}-2E_{1}^{(n)}(q)\right)(-1)^{n}
\frac{q^{n(n+1)/2}}{(q;q)_{n}^{2}} 
\end{aligned}
\end{equation}
where
\be
\label{Ek}
E_{k}^{(n)}(q) = \sum_{s=1}^{\infty}s^{k-1}\frac{q^{s(n+1)}}{1-q^{s}} \,.
\ee
These series were found to be connected to the $\knot{4}_1$ knot in at least two ways,
discussed in detail in~\cite{GZ:qseries}. On the one hand, they express
bilinearly the Andersen-Kashaev state-integral~\cite{GK:qseries} and the total
3D-index of Dimofte-Gaiotto-Gukov \cite{DGG2}. On the other hand, their radial
asymptotics as $q=e^{2\pi \ri \tau} \to 1$ (where $\tau$ is in a ray in the upper
half-plane)
is a linear combination of the two asymptotic series $\Phi^{(\s_1)}(\tau)$ and
$\Phi^{(\s_2)}(\tau)$ of the Kashaev invariant, where $\s_1$ is the geometric
representation of the fundamental group of the knot complement and $\s_2$ is
the complex conjugate. The resurgence of the factorially divergent
asymptotic series $\Phi^{(\s_1)}(\tau)$ and $\Phi^{(\s_2)}(\tau)$, including
a complete description of the Stokes automorphism and the Borel
resummation was given by the first three authors in~\cite{GGM:resurgent}. Surprisingly,
the Stokes matrices were expressed bilinearly in terms of a $2 \times 2$ matrix
of explicit descendant $q$-series whose definition we now give.
Consider the linear $q$-difference equation
\be
\label{41rec}
f_m(q) + (q^{m+1}-2) f_{m+1}(q) + f_{m+2}(q) = 0 \qquad (m \in \BZ) \,.
\ee
In~\cite{GGM:resurgent} it was shown that it has a
basis of solutions $G^{(j)}_m(q)$ for $j=1,2$ given by 
\footnote{$G_m^{(1)}(q)$ defined here is one half of $G^1_m(q)$ in
  \cite{GGM:resurgent}.}
\be
\label{Gm01}
\begin{aligned}
G_{m}^{(0)}(q) &=
\sum_{n\geq0}(-1)^{n}\frac{q^{n(n+1)/2}}{(q;q)_{n}^{2}}q^{mn}\\
G_{m}^{(1)}(q) &=
\sum_{n\geq0}\left(n+m+\frac{1}{2}-2E_{1}^{(n)}(q)\right)(-1)^{n}
\frac{q^{n(n+1)/2}}{(q;q)_{n}^{2}}q^{mn}
\end{aligned}
\ee
where $E_{k}^{(n)}(q)$ are as in Equation~\eqref{Ek}. Note that
$G_0^{(j)}(q)=G^{(j)}(q)$, and that $G^{(j)}_m(q) \in \BZ((q))$
are Laurent series in $q$ (with finitely many negative powers of $q$), meromorphic on
$|q|<1$ with only possible pole at $q=0$. We will extend them to analytic functions on
$|q| \neq 1$ by
\begin{equation}
  G_m^{(j)}(q^{-1}) = (-1)^{i}G^{(j)}_{-m}(q),\qquad j=0,1.
  \label{eq:Gmj-cn}
\end{equation}
The $2 \times 2$ matrix is given by 
$\bJ^\RED(q) = \bJ^\RED_{-1}(q) \sma 0{-2}1{-1}$ where
\begin{equation}
\label{Jqred}
\bJ^\RED_{m}(q) =
\begin{pmatrix}
G_{m}^{(1)}(q) & G_{m+1}^{(1)}(q)\\
G_{m}^{(0)}(q) & G_{m+1}^{(0)}(q)\\
\end{pmatrix} 
\end{equation}
coincides with the transpose of the matrix
$W_m(q)$ of~\cite[Eqn.(48)]{GGM:peacock} after interchanging of the
two rows. A complete description of the resurgent structure of the
series $\Phi^{(\s_j)}(\tau)$ for $j=0,1$, of their Borel resummation
and their expression in terms of a $2 \times 2$ matrix of
state-integrals (with one distinguished entry being the
Andersen--Kashaev state-integral~\cite{AK}) was given
in~\cite{GGM:resurgent,GGM:peacock}.

\subsection{A $3 \times 3$ matrix of $q$-series}
\label{sub.41q3}

In this section we define the promised $3 \times 3$ matrix of $q$-series
$\bJ^\RED_{m}(q)$ and give several algebraic properties thereof. 
In his forthcoming thesis, the fourth author introduced the series $G^{(2)}(q)$
\be
\label{G2}
G^{(2)}(q) =
\sum_{n\geq0}\left(\frac{1}{2}\left(n+\frac{1}{2}-2E_{1}^{(n)}(q)\right)^2
  -E_{2}^{(n)}(q)
  -\frac{1}{24}E_{2}(q)\right)(-1)^{n}\frac{q^{n(n+1)/2}}{(q;q)_{n}^{2}}
\ee
which is the coefficient of $\ve^2$ in the $\ve$-deformed $q$-series
\be
\label{G41epsilon}
G(q,\ve)= e^{-\ve^{2}\frac{E_{2}(q)}{24}}
\sum_{n=0}^{\infty}(-1)^{n}\frac{q^{n(n+1)/2}e^{(n+1/2)\ve}}{(qe^{\ve};q)_{n}^{2}}
	= 
	\sum_{k=0}^{\infty}G^{(k)}(q)\ve^{k}
\ee
which appears in~\cite{GZ:qseries}. Here, $E_2(q)=1-24E_2^{(0)}(q)$.
Adding the descendant variable $m \in \BZ$, leads to the $q$-series
\be
\label{Gm2}
G_{m}^{(2)}(q) =
\sum_{n\geq0}\left(\frac{1}{2}\left(n+m+\frac{1}{2}-2E_{1}^{(n)}(q)\right)^2
  -E_{2}^{(n)}(q)
  -\frac{1}{24}E_{2}(q)\right)(-1)^{n}\frac{q^{n(n+1)/2}}{(q;q)_{n}^{2}}q^{mn}
\ee
As in the case of $G_{m}^{(j)}(q)$ for $j=0,1$, it is a meromorphic function on
$|q|<1$ with only possible pole at $q=0$, and extends to an analytic function on
$|q|>1$ satisfying~\eqref{eq:Gmj-cn} with $j=2$.

The sequence $G_{m}^{(2)}(q)$ is a solution of the inhomogenous
equation obtained by replacing the right hand side of~\eqref{41rec} by
$1$. This follows easily by using creative telescoping of the theory
of $q$-holonomic functions implemented by
Koutschan~\cite{Koutschan:holofunctions}.

We can assemble the three sequences of $q$-series into a matrix
\be
\label{Jq}
\bJ_{m}(q) =
\begin{pmatrix}
1 & G_{m}^{(2)}(q) & G_{m+1}^{(2)}(q)\\
0 & G_{m}^{(1)}(q) & G_{m+1}^{(1)}(q)\\
0 & G_{m}^{(0)}(q) & G_{m+1}^{(0)}(q)\\
\end{pmatrix} 
\ee
whose bottom-right $2 \times 2$ matrix is $\bJ^\RED_{m}(q)$. The next theorem summarises
the properties of $\bJ_{m}(q)$.

\begin{theorem}
\label{thm.41bJ1}
  The matrix $\bJ_{m}(q)$ is a fundamental solution to the linear $q$-difference
  equation
\be
\label{41Jqrec}
\bJ_{m+1}(q) = \bJ_{m}(q) A(q^m,q), \qquad A(q^m,q)=
\begin{pmatrix}
1 & 0 & 1\\
0 & 0 & -1\\
0 & 1 & 2-q^{m+1}\\
\end{pmatrix} \,,
\ee
has $\det(\bJ_m(q))=-1$ and satisfies the analytic extension
\begin{equation}
\label{Jmqq}  
  \bJ_m(q^{-1}) =
  \begin{pmatrix}
    1&0&0\\
    0&-1&0\\
    0&0&1
  \end{pmatrix} \bJ_{-m-1}(q)
  \begin{pmatrix}
    1&0&0\\
    0&0&1\\
    0&1&0
  \end{pmatrix} \,.
\end{equation}
\end{theorem}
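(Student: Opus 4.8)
The plan is to verify the three assertions of Theorem~\ref{thm.41bJ1} separately, in increasing order of difficulty. The first claim --- that $\bJ_m(q)$ solves the $q$-difference equation $\bJ_{m+1}(q) = \bJ_m(q) A(q^m,q)$ --- amounts to checking nine scalar identities. Reading off the columns of the matrix product, the nontrivial ones are: $G^{(j)}_m(q) + (q^{m+1}-2)G^{(j)}_{m+1}(q) + G^{(j)}_{m+2}(q) = 0$ for $j=0,1$, which is precisely Equation~\eqref{41rec} established in~\cite{GGM:resurgent}; the analogous inhomogeneous recursion for $G^{(2)}_m(q)$ with right-hand side $1$, which is stated in the excerpt to follow from Koutschan's creative-telescoping implementation~\cite{Koutschan:holofunctions}; and the trivial identities coming from the first row and first column. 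First I would simply assemble these: the $(1,1)$ entry of $\bJ_{m+1}$ is $1 = 1\cdot 1$; the $(1,2)$ and $(1,3)$ entries give $G^{(2)}_{m+1} = 1\cdot 1 + G^{(2)}_{m+1}\cdot(\text{entry})$ --- more carefully, writing $A$ out, the second column of $\bJ_m A$ is $(G^{(2)}_{m+1}, G^{(1)}_{m+1}, G^{(0)}_{m+1})^T$ directly, and the third column is $(1 - G^{(2)}_m + (2-q^{m+1})G^{(2)}_{m+1},\ -G^{(1)}_m + (2-q^{m+1})G^{(1)}_{m+1},\ -G^{(0)}_m + (2-q^{m+1})G^{(0)}_{m+1})^T$, which equals $(G^{(2)}_{m+2}, G^{(1)}_{m+2}, G^{(0)}_{m+2})^T$ by the (in)homogeneous recursions. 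So this step is essentially bookkeeping once the scalar recursions are in hand.

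The determinant claim I would handle by the standard Wronskian argument for $q$-difference equations: from $\bJ_{m+1}(q) = \bJ_m(q) A(q^m,q)$ we get $\det \bJ_{m+1}(q) = \det \bJ_m(q)\cdot \det A(q^m,q)$, and an immediate expansion gives $\det A(q^m,q) = 1$ (expand along the first column: $1 \cdot \det\left(\begin{smallmatrix} 0 & -1 \\ 1 & 2-q^{m+1}\end{smallmatrix}\right) = 1$). Hence $\det \bJ_m(q)$ is independent of $m$, so it suffices to evaluate it at one convenient value of $m$, or better, to use the structure of $\bJ_m$: since the first column is $(1,0,0)^T$, $\det \bJ_m(q) = \det \bJ^\RED_m(q)$, and the normalization $\det \bJ^\RED_m(q) = -1$ is already recorded for the $2\times 2$ matrix in~\cite{GGM:resurgent,GGM:peacock} (it follows from the $m$-independence applied to $\bJ^\RED$ together with an explicit low-order computation using the $q$-expansions in~\eqref{Gm01}). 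So the determinant claim reduces to a citation plus a one-line cofactor expansion.

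The analytic-extension identity~\eqref{Jmqq} is where the real work lies, and it is the step I expect to be the main obstacle. The right-hand side conjugates $\bJ_{-m-1}(q)$ by permutation/sign matrices, and reading off entries, the identity is equivalent to the scalar relations $G^{(j)}_m(q^{-1}) = (-1)^j G^{(j)}_{-m}(q)$ for $j=0,1,2$ --- but this is exactly the \emph{definition}~\eqref{eq:Gmj-cn} of the extension of $G^{(j)}_m$ to $|q|>1$, together with the column-shuffling: note that the permutation $\left(\begin{smallmatrix}1&0&0\\0&0&1\\0&1&0\end{smallmatrix}\right)$ on the right swaps the ``$m$'' and ``$m+1$'' columns, so it converts the index $-m-1$ into the pair $(-m-1, -m)$, matching $(-(m+1), -m) = (-m-1,-m)$ against the needed $(-m, -(m+1))$ after the swap; one must chase the indices carefully to confirm the $-m-1$ shift is the correct one making both columns land in the right place. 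Concretely, I would first verify that with $G^{(j)}_m(q^{-1})$ \emph{defined} by~\eqref{eq:Gmj-cn} the matrix identity~\eqref{Jmqq} is a tautology of index-bookkeeping, and then --- this is the substantive point --- show that this \emph{definition} is \emph{consistent} with the $q$-difference equation, i.e. that the function $m \mapsto \bJ_m(q^{-1})$ so defined also satisfies~\eqref{41Jqrec} with $q$ replaced by $q^{-1}$. That consistency is what makes~\eqref{eq:Gmj-cn} the ``right'' analytic continuation across $|q|=1$: one checks that the recursion~\eqref{41rec} in the variable $q^{-1}$, after substituting $G^{(j)}_m(q^{-1}) = (-1)^j G^{(j)}_{-m}(q)$, becomes (a reindexed, sign-adjusted form of) the recursion~\eqref{41rec} in $q$ --- for $j=0,1$ the right-hand side is $0$ so signs cancel cleanly, while for $j=2$ the inhomogeneous term $1$ must be tracked and one verifies it is reproduced with the correct sign $(-1)^2=1$. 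The potential subtlety, and the part I would be most careful about, is precisely this inhomogeneous $j=2$ case and the interplay between the shift $m \mapsto -m-1$ (rather than $m \mapsto -m$) forced by the column structure of $\bJ_m$; getting the off-by-one in the descendant index right is the crux.
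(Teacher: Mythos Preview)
Your proposal is correct and follows essentially the same approach as the paper: verify the recursion via the (in)homogeneous difference equations for $G^{(j)}_m$, reduce the determinant to the $2\times 2$ block and cite~\cite{GGM:resurgent}, and read off~\eqref{Jmqq} as bookkeeping from the defining relation~\eqref{eq:Gmj-cn}. The ``consistency'' check you flag as the main obstacle is not actually needed for the theorem as stated --- since~\eqref{eq:Gmj-cn} is taken as the \emph{definition} of $G^{(j)}_m$ for $|q|>1$ (for all $j=0,1,2$), the identity~\eqref{Jmqq} is a pure tautology of index-chasing, and the paper neither performs nor requires the additional compatibility verification you describe.
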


\begin{proof}
Equation~\eqref{41Jqrec} follows from the fact the last two rows of $\bJ_m(q)$ are   
solutions of the $q$-difference equation~\eqref{41rec} and the first is a solution
of the corresponding inhomogenous equation. Moreover, the block form of $\bJ_m(q)$
implies that $\det(\bJ_m(q))=\det(\bJ^\RED_m(q))=-1$ where the last equality follows
from~\cite[eq.~(14)]{GGM:resurgent}. Equation~\eqref{Jmqq} follows from the
fact that all three sequences of $q$-series satisfy~\eqref{eq:Gmj-cn}.
\end{proof}

We now give the inverse matrix of $\bJ_m(q)$ in terms of Appell-Lerch like sums.
The latter appear curiously in the mock modular forms and the meromorphic Jacobi
forms of Zwegers~\cite{Zwegers:thesis}, and in~\cite{DMZ:blackholes}.

\begin{theorem}
\label{thm.41bJ2}  
We have
\be
\label{Jqinv}
\bJ_{m}(q)^{-1} =
\begin{pmatrix}
1 & L_{m}^{(0)}(q) & -L_{m}^{(1)}(q)     \\
0 & -G_{m+1}^{(0)}(q) & G_{m+1}^{(1)}(q) \\
0 & G_{m}^{(0)}(q) & -G_{m}^{(1)}(q)
\end{pmatrix} 
\ee
for the $q$-series $L_m^{(j)}(q)$ ($j=0,1$) defined by
\be
\label{L12}
\begin{aligned}
L^{(0)}_{m}(q) &=
G_{m+1}^{(0)}(q)G_{m}^{(2)}(q)-G_{m}^{(0)}(q)G_{m+1}^{(2)}(q)
\\
L_{m}^{(1)}(q) &=
G_{m+1}^{(1)}(q)G_{m}^{(2)}(q)-G_{m}^{(1)}(q)G^{(2)}_{m+1}(q) \,.
\end{aligned}
\ee
The $q$-series $L_m^{(j)}(q)$ are expressed in terms of Appell-Lerch type sums:
  \be
  \label{Lm}
  \begin{aligned}
L^{(0)}_{m}(q) &=
2E_{1}^{(0)}(q)-1-m+
\sum_{n=1}^{\infty}(-1)^{n}\frac{q^{n(n+1)/2}}{(q;q)_{n}^{2}}\frac{q^{mn+n}}{1-q^{n}}
\\
L_{m}^{(1)}(q) &=
-\frac{3}{8}-2E_{1}^{(0)}(q)^{2}+2E_{1}^{(0)}(q)-E_{2}^{(0)}(q)
-\frac{1}{24}E_{2}(q)+2mE_{1}^{(0)}(q)-m-\frac{m^{2}}{2}
\\
&+\sum_{n=1}^{\infty}(-1)^{n}\frac{q^{n(n+1)/2}}{(q)_{n}^{2}}
\frac{q^{mn+n}}{1-q^{n}}\left(n+m+\frac{1}{2}-2E^{(n)}_{1}(q)+\frac{1}{1-q^{n}}\right)
\,.
\end{aligned}
\ee
\end{theorem}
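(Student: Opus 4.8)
The plan is to read off the first identity from the block structure of the matrix~\eqref{Jq}, and to establish the Appell--Lerch formulas~\eqref{Lm} from a first-order recursion in $m$ satisfied by both sides.

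For~\eqref{Jqinv}: the matrix $\bJ_m(q)$ is block upper-triangular, with a $1$ in the $(1,1)$ entry, top row $(1,\,G_m^{(2)}(q),\,G_{m+1}^{(2)}(q))$, and lower-right $2\times2$ block $\bJ^\RED_m(q)$. By Theorem~\ref{thm.41bJ1} we have $\det\bJ^\RED_m(q)=-1$, so $\bJ^\RED_m(q)^{-1}=\sma{-G^{(0)}_{m+1}(q)}{G^{(1)}_{m+1}(q)}{G^{(0)}_{m}(q)}{-G^{(1)}_{m}(q)}$. The usual formula for the inverse of a block-triangular matrix then says that $\bJ_m(q)^{-1}$ has the same $1$ in the $(1,1)$ entry, has $\bJ^\RED_m(q)^{-1}$ as its lower-right $2\times2$ block, has zeros below the $1$, and has $-(G^{(2)}_m(q),G^{(2)}_{m+1}(q))\,\bJ^\RED_m(q)^{-1}$ as the remaining two entries of its top row. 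Multiplying this row out, those two entries become $G^{(0)}_{m+1}(q)G^{(2)}_m(q)-G^{(0)}_m(q)G^{(2)}_{m+1}(q)=L^{(0)}_m(q)$ and $-\bigl(G^{(1)}_{m+1}(q)G^{(2)}_m(q)-G^{(1)}_m(q)G^{(2)}_{m+1}(q)\bigr)=-L^{(1)}_m(q)$ by~\eqref{L12}, which is exactly~\eqref{Jqinv}. This step is routine.

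The substance is~\eqref{Lm}, which I would prove in three steps. \emph{Step 1.} If $f_m$ solves the homogeneous recursion~\eqref{41rec} and $g_m$ solves~\eqref{41rec} with right-hand side replaced by $1$, then eliminating $f_{m+2}$ and $g_{m+2}$ from $f_{m+2}g_{m+1}-f_{m+1}g_{m+2}$ using both relations gives the Casoratian-type identity
\[
\bigl(f_{m+2}g_{m+1}-f_{m+1}g_{m+2}\bigr)-\bigl(f_{m+1}g_{m}-f_{m}g_{m+1}\bigr)=-f_{m+1}.
\]
Applying this with $(f,g)=(G^{(0)},G^{(2)})$ and with $(f,g)=(G^{(1)},G^{(2)})$ — legitimate since $G^{(0)}_m,G^{(1)}_m$ solve~\eqref{41rec} and $G^{(2)}_m$ solves its inhomogeneous version with right-hand side $1$, as noted before Theorem~\ref{thm.41bJ1} — yields $L^{(j)}_{m+1}(q)-L^{(j)}_m(q)=-G^{(j)}_{m+1}(q)$ for $j=0,1$. \emph{Step 2.} One checks that the right-hand sides of~\eqref{Lm} satisfy the same recursion: the polynomial-in-$m$ parts differ telescopically, while in the Lerch sums one uses $\bigl(q^{(m+1)n+n}-q^{mn+n}\bigr)/(1-q^{n})=-q^{(m+1)n}$ and, for $j=1$, also $-\bigl(n+m+\tfrac12-2E^{(n)}_1(q)+\tfrac{1}{1-q^n}\bigr)+\tfrac{q^n}{1-q^n}=-\bigl(n+m+\tfrac32-2E^{(n)}_1(q)\bigr)$; separating the $n=0$ term recovers $-G^{(j)}_{m+1}(q)$. \emph{Step 3.} By Steps 1 and 2, the difference between $L^{(j)}_m(q)$ and the proposed formula is independent of $m$; to see it vanishes, compare the coefficient of a fixed power $q^N$ for $m>N$. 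Then every sum over $n\ge1$ appearing in $G^{(0)}_m$, $G^{(2)}_m$ and in the two proposed formulas is $\equiv0\pmod{q^{N+1}}$, so $G^{(0)}_m(q)\equiv1$ and $G^{(2)}_m(q)\equiv\tfrac12\bigl(m+\tfrac12-2E^{(0)}_1(q)\bigr)^2-E^{(0)}_2(q)-\tfrac1{24}E_2(q)$, and substituting shows that both $L^{(0)}_m(q)$ and its proposed formula reduce to $2E^{(0)}_1(q)-1-m$ modulo $q^{N+1}$, while both $L^{(1)}_m(q)$ and its proposed formula reduce to $-\tfrac{m^2}{2}-m+2mE^{(0)}_1(q)+2E^{(0)}_1(q)-2E^{(0)}_1(q)^2-E^{(0)}_2(q)-\tfrac1{24}E_2(q)-\tfrac38$ modulo $q^{N+1}$. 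As $N$ is arbitrary, both differences are zero, proving~\eqref{Lm}.

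The main obstacle is the bookkeeping in Step 3: one must make the truncation thresholds in $m$ (relative to $N$) explicit and handle the $n=0$ contribution to $G^{(2)}_m(q)$ — a quadratic polynomial in $m$ whose coefficients are themselves $q$-series in $E^{(0)}_1, E^{(0)}_2, E_2$ — correctly, after which the matching reduces to an elementary polynomial identity. Steps 1 and 2 are short and purely formal over $\BQ((q))$, with the $j=1$ case of Step 2 the messiest but still straightforward, and the only external inputs are Theorem~\ref{thm.41bJ1} and the inhomogeneous equation for $G^{(2)}_m(q)$. An alternative to Step 3 that avoids any limiting argument is to evaluate both sides at $m=0$ directly from the $q$-expansions of $G^{(0)}_0, G^{(0)}_1, G^{(2)}_0, G^{(2)}_1$, but this does not appear to be shorter.
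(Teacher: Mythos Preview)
Your proof is correct and follows essentially the same strategy as the paper: derive the first-order recursion $L^{(j)}_{m-1}-L^{(j)}_m=G^{(j)}_m$, verify that the Appell--Lerch expressions satisfy the same recursion, and pin down the $m$-independent constant by large-$m$ behaviour. The only cosmetic differences are that the paper obtains the recursion by reading off the top row of $\bJ_{m+1}(q)^{-1}=A(q^m,q)^{-1}\bJ_m(q)^{-1}$ rather than via your Casoratian computation, and fixes the constant via $\lim_{m\to\infty}$ rather than your coefficient-wise argument modulo $q^{N+1}$; both pairs of arguments are equivalent.
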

\begin{proof}
Since $\bJ^\RED_{m}(q)$ is a $2 \times 2$ matrix with determinant $-1$, it follows
that the inverse matrix $\bJ_{m}(q)^{-1}$ is given by~\eqref{Jqinv} for the 
$q$-series $L_m^{(j)}(q)$ ($j=0,1$) given by~\eqref{L12}.

Observe that $A(q^m,q)$ has first column $(1,0,0)^t$, first row $(1,0,1)$, and
the remaining part is a companion matrix. It follows that its inverse matrix
has first column $(1,0,0)^t$ and first row $(1,1,0)$. This, together
with~\eqref{41Jqrec} implies that

\begin{equation}
\bJ_{m+1}(q)^{-1} = A(q^m,q)^{-1} \bJ_{m}(q)^{-1} =
\begin{pmatrix}
1 & 1 & 0\\
0 & 2-q^{m+1} & 1\\
0 & -1 & 0\\
\end{pmatrix}
\bJ_{m}(q)^{-1} \,.
\end{equation}
It follows that $L_m^{(j)}(q)$ satisfy the first order inhomogeneous linear
$q$-difference equation

\begin{equation}
L^{(j)}_{m-1}(q)-L^{(j)}_{m}(q) =
G_{m}^{(j)}(q)  \qquad (j=0,1) \,.
\end{equation}
Let $\calL^{(0)}_m(q)$ denote the right hand side of the top Equation~\eqref{Lm}.  
Then we have
\[
\calL^{(0)}_{m-1}(q)-\calL^{(0)}_{m}(q)
=
1+\sum_{n=1}^{\infty}(-1)^{n}\frac{q^{n(n+1)/2}}{(q)_{n}^{2}}\frac{q^{mn}-q^{mn+n}}{1-q^{n}}
=
G_{m}^{(0)}(q).
\]
Therefore $\calL^{(0)}_{m}(q)-L_{m}^{(0)}(q)$ is independent of $m$. Moreover,
$\lim_{m \to \infty} \calL^{(0)}_{m}(q)-L_{m}^{(0)}(q)=0$. The top part of
Equation~\eqref{Lm} follows.

Likewise, let $\calL^{(1)}_m(q)$ denote the right hand side of the bottom part of
Equation~\eqref{Lm}. Then we have

\begin{align*}
\calL^{(1)}_{m-1}(q)-\calL^{(1)}_{m}(q)
=&
\sum_{n=1}^{\infty}(-1)^{n}\frac{q^{n(n+1)/2}}{(q)_{n}^{2}}
\frac{q^{mn}-q^{mn+n}}{1-q^{n}}\left(n+m+\frac{1}{2}
  -2E^{(n)}_{1}(q)+\frac{1}{1-q^{n}}\right)\\
&-\sum_{n=1}^{\infty}(-1)^{n}\frac{q^{n(n+1)/2}}{(q)_{n}^{2}}
\frac{q^{mn}}{1-q^{n}}+m+\frac{1}{2}-2E^{(0)}_{1}(q)\\
=&\:
G^{(1)}_{m}(q) \,.
\end{align*}
Therefore $\calL^{(1)}_{m}(q)-L^{(1)}_{m}(q)$ is independent of $m$. Moreover,
$\lim_{m \to \infty} \calL^{(1)}_{m}(q)-L_{m}^{(1)}(q)=0$. Equation~\eqref{Lm} follows.
\end{proof}

\subsection{The $\Phi^{(\s_0)}(\tau)$ asymptotic series}
\label{sub.413Phi}

The $\knot{4}_1$ knot has three asymptotic series $\Phi^{(\s_j)}(\tau)$ for $j=0,1,2$
corresponding to the trivial flat connection $\s_0$, the geometric flat connection
$\s_1$ and its complex conjugate $\s_2$. The asymptotic series $\Phi^{(\s_j)}(\tau)$
for $j=1,2$ are defined in terms of perturbation theory of a state-integral~\cite{DG}
and can be computed via formal Gaussian integration in a way that was explained in
detail in~\cite{GGM:resurgent} and in~\cite{GZ:kashaev} and will not be repeated here.
They have the form
\begin{equation}
  \Phi^{(\s_j)}(\tau) = e^{\tfrac{V(\s_j)}{2\pi\ri\tau}}
  \varphi^{(\s_j)}(\tau)
  ,\quad j=1,2,
\end{equation}
where
\begin{equation}
  V(\s_1) = -V(\s_2) = \ri \text{Vol}(\knot{4}_1)
  = \ri 2\text{Im} \Li_2(\re^{\ri \pi/3}) = \ri 2.029883\ldots,
\end{equation}
with $\text{Vol}(\knot{4}_1)$ being the hyperbolic volume of
$S^3\backslash \knot{4}_1$, and $\varphi^{(\s_1)}(\tau/(2\pi\ri))$ is
a power series with algebraic coefficients with first few terms
\begin{equation}
  \varphi^{(\s_1)}(\tfrac{\tau}{2\pi\ri})
  = 3^{-1/4}\left(
    1+\frac{11\tau}{72\sqrt{-3}}
    +\frac{697\tau^2}{2(72\sqrt{-3})^2}+\ldots\right)
\end{equation}
(a total of 280 terms have been computed), while
$\varphi^{(\s_2)}(\tau) = \ri\varphi^{(\s_1)}(-\tau)$.

We now discuss the new series $\varphi^{(\s_0)}(\tau) \in \BQ[[\tau]]$ corresponding
to the zero volume $V(\s_0)=0$ trivial flat connection. This series can be defined
and computed (for any knot) using either the colored Jones polynomial or the Kashaev
invariant. Let us recall how this works.

Let $J_n(q) \in \BZ[q^{\pm 1}]$ denotes the Jones polynomial colored by the
$n$-dimensional irreducible representation of $\mathfrak{sl}_2$, and normalised to $1$
at the unknot. Setting $q=e^h$, one obtains a power series in $h$, whose coefficient
of $h^k$ is a polynomial in $n$ of degree at most $k$. In other words, we have
\be
\label{Jnq}
J_n(e^h) = \sum_{i=0}^\infty \sum_{j=0}^i a_{i,j} n^j h^i \in \BQ[[n,h]]
\ee
where $a_{i,j}$ depends on the knot and, as the knot varies, defines a Vassiliev
invariant of type (i.e., degree) $i$~\cite{Bar-Natan}. Then, the perturbative series
$\varphi^{(\s_0)}(\tau)$ is given by 
\be
\label{phi0h}
\varphi^{(\s_0)}(\tau) = \sum_{i=0}^\infty a_{i,0} \tau^i \,.
\ee
With this definition, to compute the coefficient of $\tau^k$ in $\varphi^{(\s_0)}(\tau)$,
one needs to compute the first $k$ colored Jones polynomials $J_n(e^h)$ for
$k =1, \dots, n$ up to $O(h^{k+1})$, polynomially interpolate and extract the
coefficient $a_{k,0}$. An efficient computation of the colored Jones polynomial
is possible if one knows a recursion relation with respect to $n$ (such a relation
always exists~\cite{GL}) together with some initial conditions. This gives a polynomial
time algorithm to compute $J_n(e^h)+O(h^{k+1})$.

An alternative method is the so-called
loop expansion of the colored Jones polynomial
\be
\label{Jnqloop}
J_n(e^h) = \sum_{\ell=0}^\infty \frac{P_\ell(x)}{\Delta(x)^{2\ell+1}} h^\ell \in
\BZ[x^{\pm 1}, \Delta(x)^{-1}][[h]]
\ee
where $x=q^n=e^{n h}$ and $\Delta(x) \in \BZ[x^{\pm 1}]$ is the Alexander polynomial
of the knot. This expansion was introduced by Rozansky~\cite{Rozansky:bureau} (see also
Kricker~\cite{Kricker} and~\cite{GK:noncommutative}) and it is related to the
Vassiliev power series expansion~\eqref{Jnq} by 
\be
\label{Jnqresum}
\sum_{k=0}^\infty a_{\ell+k,k} h^k= \frac{P_\ell(e^h)}{\Delta(e^h)^{2\ell+1}} \,.
\ee
Then the perturbative series $\varphi^{(\s_0)}(\tau)$ is given in terms of the
loop expansion by
\be
\label{phi0hloop}
\varphi^{(\s_0)}(\tau) = \sum_{\ell=0}^\infty P_\ell(1) h^\ell 
\ee
as follows from the above equations together with the fact that $\Delta(1)=1$.

A third method uses a theorem of Habiro~\cite{Habiro:sl2, Habiro:WRT} which lifts
the Kashaev invariant of a knot to an element of the Habiro ring
$\widehat{\BZ[q]} = \varprojlim \;\Z[q]/((q;q)_n)$. There is a canonical
ring homomorphism $\widehat{\BZ[q]} \to \BZ[[h]]$ defined by $q \mapsto e^h$,
which sends $(q;q)_n$ to $(-1)^n h^n + O(h^{n+1})$ and the image of the lifted
element of the Habiro ring under this homomorphism equals to the series
$\varphi^{(\s_0)}(h)$. For the case of the $\knot{4}_1$ knot, the corresponding
element of the Habiro ring is given by
\be
\label{hab41}
\sum_{n=0}^\infty (q;q)_n (q^{-1};q^{-1})_n 
\ee
and its expansion when $q=e^h$ gives the power series with first few terms 
\begin{equation}
  \varphi^{(\s_0)}
  (\tfrac{\tau}{2\pi\ri})=
  1-\tau^2+\frac{47}{12}\tau^4+\ldots.
\end{equation}

We end this section with a comment. 
Going back to the case of a general knot, it was shown in~\cite{GK:desc} that
the colored Jones polynomial is equivalent (in the sense of knot invariants) to
a descendant sequence of colored Jones polynomials and of Kashaev invariants
(indexed by the integers) which is $q$-holonomic. These descendant Kashaev invariants
play a key role in extending matrices of periodic functions whose rows and columns are
indexed by nonrtivial flat connections to a matrix that includes the trivial flat
connection. This is explained in detail in~\cite{GZ:kashaev}.
  
\subsection{Borel resummation and Stokes constants}
\label{sub.41borel}

In this section we discuss the asymptotic expansion as
$q = \re^{2\pi\ri\tau}\rightarrow 1$ of the vector $G(q)$ of $q$-series
and relate it to the vector $\Phi(\tau)$ of the asymptotic series,
where 
\begin{equation}
  G(q) =
  \begin{pmatrix}
    G^{(2)}(q) \\
    G^{(1)}(q) \\
    G^{(0)}(q)
  \end{pmatrix}, \qquad
  \Phi(\tau) =
  \begin{pmatrix}
    \Phi^{(\s_0)}(\tau)\\
    \Phi^{(\s_1)}(\tau)\\
    \Phi^{(\s_2)}(\tau)
  \end{pmatrix}
  \label{eq:Gq}
\end{equation}
with $G^{(0)}(q), G^{(1)}(q)$ given in~\eqref{G01}, and the
additional series $G^{(2)}(q)$ given in~\eqref{G2}.

The three power series $\Phi^{(\s_j)}(\tau)$, $j=0,1,2$ can be
resummed by Borel resummation.  On the other hand, according to the
resurgence theory, the value of the Borel resummation of an asymptotic
power series depends crucially on the argument of the expansion
variable.  If the Borel transform of the power series has a singular
point located at $\iota$, the values of the Borel resummation of the
power series whose expansion variable has an argument slightly greater
and less than the angle $\theta = \arg \iota$ differ by an
exponentially small quantity, called the Stokes discontinuity. Usually
the difference is identical with the Borel resummation of another
power series in the theory, a phenomenon called the Stokes
automorphism.

\begin{figure}
  \centering
  \includegraphics[height=6cm]{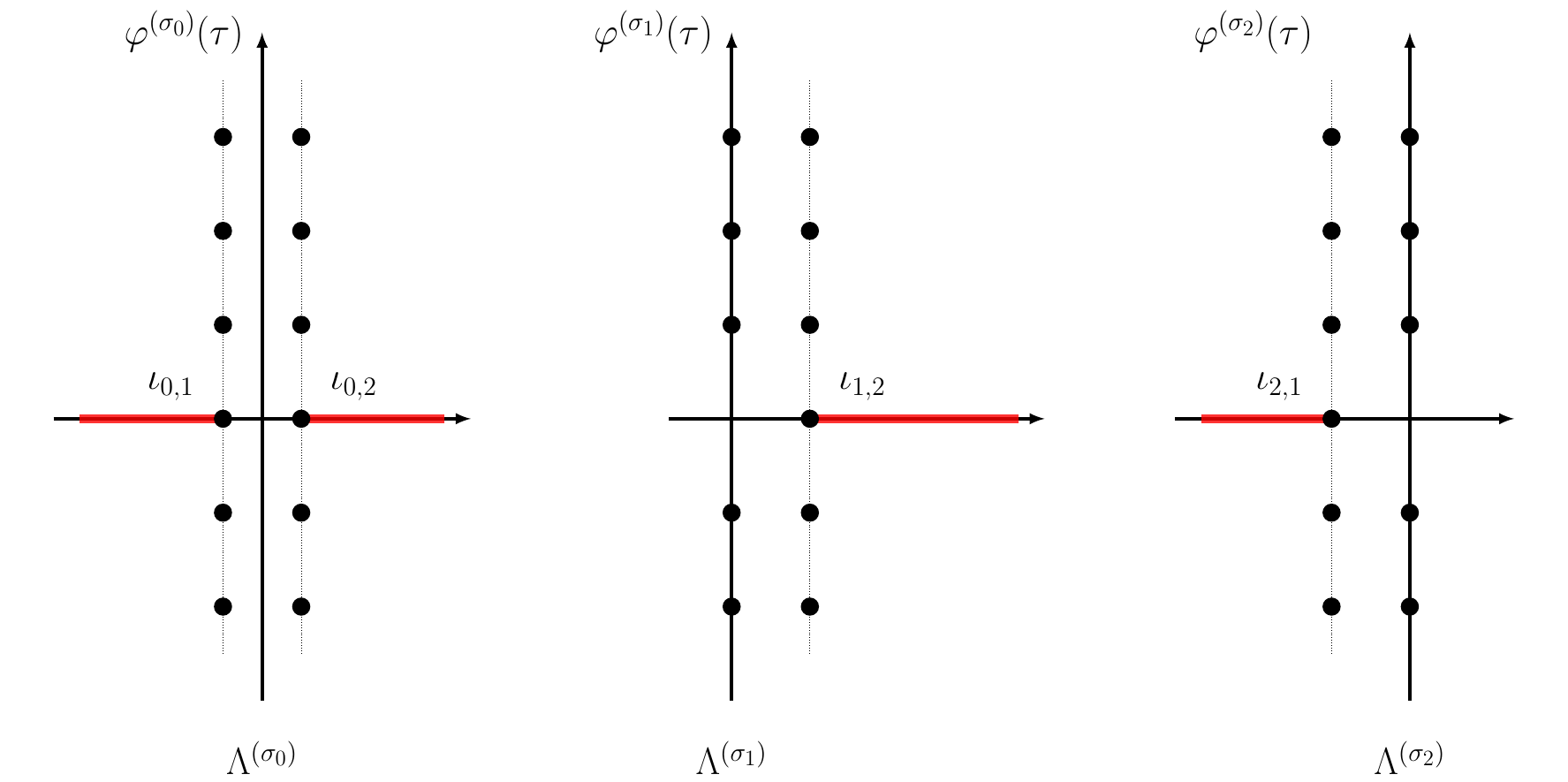}
  \caption{Singularities of the Borel transforms of
    $\varphi^{(\s_j)}(\tau)$ for $j=0,1,2$ of the knot $\knot{4}_1$.}
  \label{fg:41_brplane}
\end{figure}

In the case of the power series $\Phi^{(\s_j)}(\tau)$, $j=0,1,2$, the
singularities of the Borel transforms of $\Phi^{(\s_j)}(\tau)$,
$j=1,2$ were already studied in~\cite{GGM:resurgent,GGM:peacock}, and
they are located at
\begin{equation}
  \Lambda^{(\s_j)} = \{ \iota_{j,i} +2\pi\ri k\;|\; i=1,2,i\neq j,\,
  k\in\IZ\}\cup
  \{2\pi\ri k\;|\; k\in\IZ_{\neq 0}\},\quad j=1,2,
  \label{eq:Lmb12}
\end{equation}
as shown in the middle and the right panels of
Fig.~\ref{fg:41_brplane}, while the singularities of the Borel
transform of $\Phi^{(\s_0)}(\tau)$ are located at 
(see also \cite[Conj.~4]{Ga:resurgence})
\begin{equation}
  \Lambda^{(\s_0)} = \{\iota_{0,i} +2\pi\ri k\;|\; i=1,2,\,k\in\IZ\},
  \label{eq:Lmb0}
\end{equation}
as shown in the left panel of Fig.~\ref{fg:41_brplane}, where
\begin{equation}
  \iota_{j,i} = \frac{V(\s_j) - V(\s_i)}{2\pi\ri},\quad
  i,j=0,1,2.
\end{equation}
All the rays $\rho_\theta$ (Stokes rays) passing through the
singularities in the union
\begin{equation}
  \Lambda = \cup_{j=0,1,2} \Lambda^{(\s_j)},
  \label{eq:Lmb}
\end{equation}
form a peacock pattern, cf.~Fig.~\ref{fg:41_rays}, and they divide the
complex plane of Borel transform into infinitely many cones. The Borel
resummation of the vector $\Phi(\tau)$ is only well-defined within one
of these cones.  

Recall that the Borel transform $\widehat{\varphi}(\zeta)$ of a 
Gevrey-1 power series $\varphi(\tau)$ 
\begin{equation}
  \varphi(\tau) = \sum_{n=0}^\infty a_n\tau^n,\quad a_n = O(C^n n!),
\end{equation}
is defined by
\begin{equation}
  \widehat{\varphi}(\zeta) =\sum_{n=0}^\infty \frac{a_n}{n!}\zeta^n.
\end{equation}
If it analytically continues to an $L^1$-analytic function along the
ray $\rho_\theta:=\re^{\ri\theta}\IR_+$ where $\theta = \arg\tau$, we
define the Borel resummation by the Laplace integral
\begin{equation}
  s_\theta(\varphi)(\tau) =
  \int_0^\infty \widehat{\varphi}(\tau\zeta)
  \re^{-\zeta}\rd \zeta =
  \frac{1}{\tau}\int_{\rho_\theta}
  \widehat{\varphi}(\zeta)\re^{-\zeta/\tau}\rd
  \zeta.
  \label{eq:svarphi}
\end{equation}
The Borel resummation of the trans-series $\Phi(\tau) =
\re^{\frac{V}{2\pi\ri\tau}}\varphi(\tau)$ is defined to be
\begin{equation}
  s_\theta(\Phi)(\tau) =
  \re^{\frac{V}{2\pi\ri\tau}}s_\theta(\varphi)(\tau).
  \label{eq:sPhi}
\end{equation}
In the following we will also use the notation $s_R(\Phi)(\tau)$ when
the argument of $\tau$ is in the cone $R$ and it is a continuous
function of $\tau$.



Coming back to the vector of $q$-series $G(q)$, we find that the
asymptotic expansion of $G(q)$ when $q=e^{2\pi\ri\tau}$ and
$\tau \to 0$ in a cone $R$ can be expressed in terms of
$\Phi(\tau)$. Moreover, this asymptotic expansion can be lifted to an
exact identity between $q$-series $G^{(j)}(q)$ and linear combinations
of Borel resummation of $\Phi^{(\s_j)}(\tau)$ multiplied by power
series in $\tq = \re^{-2\pi\ri\tau^{-1}}$ (thought of as exponentially
small corrections) with integer coefficients. This is the content of
the following conjecture.

\begin{conjecture}
  \label{conj-exact-ra}
  For every cone $R \subset \BC\setminus \Lambda$ and every
  $\tau \in R$, we have
  \begin{equation}
    \Delta'(\tau) G(q) = M_R(\tq) \Delta(\tau)
    s_R(\Phi)(\tau),
    \label{eq:GMsPhi}
  \end{equation}
  where
  \begin{equation}
    \Delta'(\tau) = \diag(\tau^{3/2},\tau^{1/2},\tau^{-1/2}),\quad
    \Delta(\tau) = \diag(\tau^{3/2},1,1),
  \end{equation}
  and $M_R(\tq)$ is a $3\times 3$ matrix of $\tq$ (resp., $\tq^{-1}$)-series 
  if $\Im\tau>0$ (resp., $\Im\tau<0$) with integer coefficients that depend on
  $R$.
\end{conjecture}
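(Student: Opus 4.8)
\emph{Strategy of proof.}
The plan is to follow and extend the strategy of \cite{GGM:resurgent,GGM:peacock}, which already establishes the restriction of \eqref{eq:GMsPhi} to the lower-right $2\times 2$ block (the $\s_1,\s_2$ sector). The new content concerns the top row, i.e.\ the series $G^{(2)}(q)$ and the trivial-connection series $\Phi^{(\s_0)}(\tau)$. The idea is to route the argument through an intermediate $3\times 3$ matrix $W(\tau)$ of generalized state-integrals, defined a priori only for $\tau\in\BC\setminus\BR$, which admits two independent descriptions: (i) a bilinear factorization as a product of $\bJ_m(\tq)^{-1}$, a diagonal automorphy factor, and $\bJ_m(q)$, in the spirit of the ``Analyticity'' paragraph of the introduction; and (ii) an entrywise identification with the Borel resummation $s_R(\Phi)(\tau)$. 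Equating (i) and (ii) and solving for the distinguished column of $\bJ_0(q)$ that equals $G(q)$ yields \eqref{eq:GMsPhi}, with $M_R(\tq)$ assembled from the entries of $\bJ_m(\tq)^{-1}$ — which are the explicit $\tq$-series $G^{(j)}_m(\tq)$ and $L^{(j)}_m(\tq)$ of Theorem~\ref{thm.41bJ2} — together with the Stokes matrices recording how $s_R(\Phi)$ changes as the cone $R$ varies.

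For step (i): by Theorem~\ref{thm.41bJ1} the matrix $\bJ_m(q)$ is a fundamental solution of the matrix $q$-difference equation \eqref{41Jqrec}. One identifies the correct top-row integrals (the ``new state-integrals implicit in work of Kashaev'' of Section~\ref{sub.4152}) so that the resulting matrix $W_m(\tau)$ of state-integrals obeys the same $q$-difference equation in $m$; this is a $q$-holonomicity check that can be carried out by creative telescoping \cite{Koutschan:holofunctions}, as was already done to show that $G^{(2)}_m(q)$ solves the inhomogeneous form of \eqref{41rec}. Two fundamental solutions of \eqref{41Jqrec} differ on the right by a matrix independent of $m$, hence periodic under $\tau\mapsto\tau+1$; comparing the $q,\tq\to 0$ (equivalently $m\to\infty$) behaviour of both sides pins this factor down and produces the factorization. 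The holomorphic extension of $W(\tau)$ to the cut plane $\BC'$ then follows from convergence of the defining integrals, as in \cite{GK:qseries,GGM:peacock}; this step also uses the analytic continuations \eqref{eq:Gmj-cn} to make sense of $\bJ_m(\tq)$ when $|\tq|>1$.

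For step (ii): one studies the asymptotics of $W(\tau)$ as $\tau\to 0$ inside a cone $R$ by deforming the integration cycle onto a union of Lefschetz thimbles attached to the critical points of the state-integral potential, which are the boundary-parabolic $\SL_2(\BC)$ flat connections $\s_0,\s_1,\s_2$. Each Morse thimble contributes the Borel--Laplace integral \eqref{eq:sPhi} of the corresponding $\Phi^{(\s_j)}$, and the jump of the thimble decomposition across a Stokes ray of the peacock pattern $\Lambda$ of \eqref{eq:Lmb} is governed by the integer intersection numbers of vanishing cycles; these are the Stokes constants and they form the cone-dependent part of $M_R(\tq)$. Integrality of the coefficients of $M_R(\tq)$ then follows from integrality of the $q$-series entries of $\bJ_m(\tq)^{-1}$ (Theorem~\ref{thm.41bJ2}) together with integrality of the Stokes constants, and the $\tq$- versus $\tq^{-1}$-series dichotomy for $\Im\tau\gtrless 0$ again reflects \eqref{eq:Gmj-cn}.

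The main obstacle is the $\s_0$ sector. The trivial flat connection is a \emph{degenerate} critical point of the state-integral potential — it sits ``at infinity'' on the integration contour and is not Morse — so the thimble analysis of step (ii) near $\s_0$ is not the standard one, and identifying the contribution of that locus with $s_R(\Phi^{(\s_0)})(\tau)$ is delicate, all the more since $\varphi^{(\s_0)}(\tau)$ is only defined intrinsically through the colored Jones polynomial or the Habiro-ring element \eqref{hab41}. Bridging this requires a separate argument relating the Habiro-ring / Kashaev-invariant description of $\Phi^{(\s_0)}$ to an integral (holomorphic-block) representation compatible with the factorization of step (i); one must also first pin down precisely the top-row state-integrals and their normalization, and control the $|\tq|>1$ analytic continuations uniformly. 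These difficulties — rather than a gap in any single computation — are why the statement is phrased as a conjecture, supported by the extensive numerical verification in the subsections that follow.
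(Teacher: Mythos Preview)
Your strategy matches the paper's approach closely. Note that the statement is labeled a \emph{conjecture} and is not proved in the paper: the paper establishes your step~(i) rigorously (Theorem~\ref{thm-newst} gives the factorization of the new state-integral $\mc{Z}_{\lambda,\mu}(\tau)$, and Theorem~\ref{thm.2} proves that $W_{S,\lambda,\mu}(\tau)$ extends holomorphically to $\BC'$ and equals a matrix of state-integrals), but leaves your step~(ii)---the identification of the state-integrals with the Borel resummation $s_R(\Phi)(\tau)$, in particular for the $\s_0$ sector---as a conjecture supported by numerical evidence (equations~\eqref{itau-borel}, \eqref{nsi-median}, and Conjecture~\ref{conj-exact-regions}). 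Your diagnosis of the obstacle, namely that the trivial connection corresponds to a degenerate critical point ``at infinity'' so that the standard Lefschetz-thimble analysis does not apply, is exactly the reason the paper does not claim a proof.
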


\begin{figure}
  \centering
  \includegraphics[height=8cm]{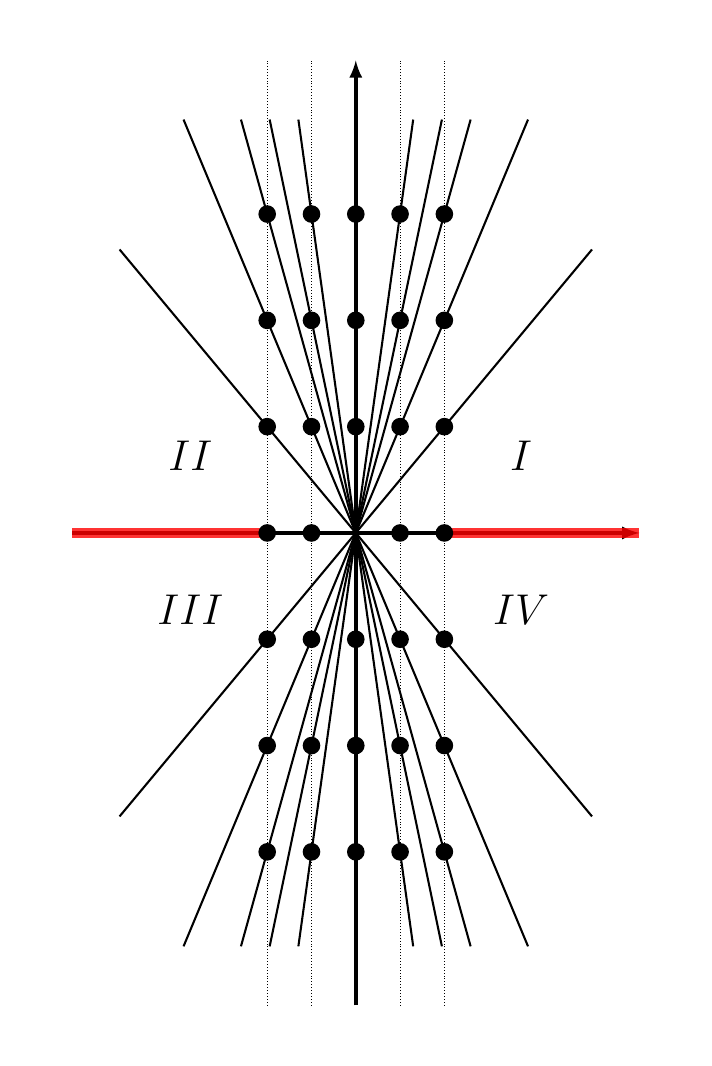}
  \caption{Stokes rays and cones in the $\tau$-plane for the 3-vector
    $\Phi(\tau)$ of asymptotic series of the knot $\knot{4}_1$.}
  \label{fg:41_rays}
\end{figure}

As in~\cite{GGM:resurgent,GGM:peacock}, we pick out in particular four
of these cones, located slightly above and below the positive or the
negative real axis, labeled in clockwise direction by $I,II,III,IV$ as
indicated in Fig.~\ref{fg:41_rays}.  We work out the exact matrices
$M_R(\tq)$ in these four cones.

\begin{conjecture}
\label{conj-exact-regions}
  Equation \eqref{eq:GMsPhi} holds in the cones $R=I,II,III,IV$ where
  the matrices $M_R(\tq)$ are given in terms of $\bJ_{-1}(\tq)$ as
  follows 
  \begin{subequations}
    \begin{align}
      M_I(\tq) &=
      \bJ_{-1}(\tq)
        \begin{pmatrix}
          1 & 0 & 0\\
          0 & 0 & -1\\
          0 & 1 & -1
        \end{pmatrix}, & |\tq|<1,
                  \label{eq:M1}\\
      M_{II}(\tq) &=
      \begin{pmatrix}
        1 & 0 & 0\\
        0 & -1& 0\\
        0 & 0 & 1
      \end{pmatrix}
                \bJ_{-1}(\tq)
                \begin{pmatrix}
                  1&0&0\\
                  0&1&0\\
                  0&1&-1
                \end{pmatrix}, & |\tq|<1,
                         \label{eq:M2}\\
      M_{III}(\tq) &=
                     \begin{pmatrix}
        1 & 0 & 0\\
        0 & -1& 0\\
        0 & 0 & 1
      \end{pmatrix}
      \bJ_{-1}(\tq)
        \begin{pmatrix}
          1&1&0\\
          0&-1&0\\
          0&2&1
        \end{pmatrix}, & |\tq|>1,
                         \label{eq:M3}\\      
      M_{IV}(\tq) &=
                    \bJ_{-1}(\tq)
    \begin{pmatrix}
      1&0&1\\
      0&0&-1\\
      0&1&2
    \end{pmatrix}, & |\tq| >1.
           \label{eq:M4}
    \end{align}
  \end{subequations}  
\end{conjecture}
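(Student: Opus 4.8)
The plan is to take Conjecture~\ref{conj-exact-ra} as given, so that the existence of an integer matrix $M_R(\tq)$ with $\Delta'(\tau)G(q)=M_R(\tq)\Delta(\tau)s_R(\Phi)(\tau)$ is already known, and to reduce the statement to pinning down $M_R$ in the four distinguished cones. After deleting the extra row and column, the bottom-right $2\times 2$ block of each $M_R$ must coincide with the matrix determined for the reduced pair $(\Phi^{(\s_1)},\Phi^{(\s_2)})$ and the reduced vector $(G^{(1)},G^{(0)})$ in~\cite{GGM:resurgent,GGM:peacock}; since that block of $\bJ_{-1}(\tq)$ is precisely $\bJ^\RED_{-1}(\tq)$, this fixes the part of $M_R$ that does not involve the trivial connection $\s_0$. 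So the genuinely new content is the first row and the first column of $M_R$, i.e.\ the coupling of $G^{(2)}(q)$ and of $\Phi^{(\s_0)}(\tau)$ to the rest.

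For the first column I would argue that $\Phi^{(\s_0)}$ carries no exponential prefactor ($V(\s_0)=0$), that it does not enter the asymptotic expansion of the reduced series $G^{(0)},G^{(1)}$ (by the $2\times 2$ result just quoted), and that it enters the expansion of $G^{(2)}(q)$ with leading coefficient exactly $1$ after the $\Delta',\Delta$ normalisation --- the latter being the statement that the trivial-connection sector of the relevant state integral reproduces $\varphi^{(\s_0)}$, which by definition is the image of the Habiro-ring element~\eqref{hab41} under $q\mapsto e^h$. This forces the first column of $M_R(\tq)$ to be $(1,0,0)^T$ for every $R$, in agreement with the first column of $\bJ_{-1}(\tq)$ and of all four stated matrices. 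For the first row I would start from the $\ve$-deformed generating series~\eqref{G41epsilon}, whose coefficient of $\ve^2$ is $G^{(2)}(q)$ and which has the state-integral representation of~\cite{GZ:qseries,GK:qseries}; a steepest-descent analysis of that integral as $q=\re^{2\pi\ri\tau}\to 1$ inside the cone $R$ produces an asymptotic expansion of $G^{(2)}(q)$ that is a combination of $\Phi^{(\s_0)},\Phi^{(\s_1)},\Phi^{(\s_2)}$, with the exponentially small ($\tq$-series) corrections organised into the top row of $\bJ_{-1}(\tq)$ times the relevant constant matrix. The inhomogeneous $q$-difference equation for $G^{(2)}_m$ with right-hand side $1$ (established by creative telescoping~\cite{Koutschan:holofunctions}, as recalled before Theorem~\ref{thm.41bJ1}) together with the explicit inverse of $\bJ_m$ in Theorem~\ref{thm.41bJ2} is the bookkeeping that ties the top row to the Appell--Lerch sums $L^{(j)}_m(q)$ and forces the block-triangular shape of $M_R(\tq)$.

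Once $M_R(\tq)$ is known in one reference cone, say $R=I$ with~\eqref{eq:M1}, the other three would follow by two kinds of moves. Crossing a Stokes ray within a fixed half-plane replaces $s_R(\Phi)$ by $\mfS\, s_R(\Phi)$ for the relevant Stokes automorphism, whose $2\times 2$ block is the one from~\cite{GGM:resurgent,GGM:peacock} and whose new row and column encode the Stokes constants attached to $\s_0$; since $G(q)$ is cone-independent, $M_R$ is replaced by $M_R\mfS^{-1}$, which absorbs the accumulated Stokes data into the constant matrix and yields~\eqref{eq:M2} from~\eqref{eq:M1} (and $M_{III}$ from $M_{IV}$). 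Crossing the real axis, where $|q|$ and $|\tq|$ pass through $1$, is instead handled by the analytic-continuation relation~\eqref{Jmqq} for $\bJ_m$ together with the $q$-difference equation~\eqref{41Jqrec}: rewriting $\bJ_{-1}(\tq)$ for $|\tq|>1$ via $\bJ_0(\tq)$ for $|\tq|<1$, hence via $\bJ_{-1}(\tq)$ and one $A(\tq^m,\tq)$ factor, turns~\eqref{eq:M1} into~\eqref{eq:M4} and~\eqref{eq:M2} into~\eqref{eq:M3}; the unipotent discrepancy matrices read off this way are exactly the $q$-Stokes matrices, and their integrality is a consistency check. A final normalisation from the leading behaviour of $G^{(2)}(q)$ and of the state integral fixes the remaining constants, and one verifies mutual consistency around the loop $I\to II\to III\to IV\to I$.

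The main obstacle is that, rigorously, the scheme rests on inputs not currently available: the resurgence of $\Phi^{(\s_0)}(\tau)$ with Borel singularities exactly on the set~\eqref{eq:Lmb0}, and the identification of its median Borel sum with the particular state integral entering the factorisation, together with the bilinear factorisation of that integral into $\bJ(q)$ and $\bJ(\tq)$; one expects the last of these to be provable by the same $q$-holonomic, creative-telescoping machinery used for Theorems~\ref{thm.41bJ1} and~\ref{thm.41bJ2}, but this has not been carried out. Even granting Conjecture~\ref{conj-exact-ra}, computing in closed form the new Stokes constants attached to $\s_0$ --- rather than verifying them numerically to high precision, as is done here --- is the crux. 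This is why the statement is presented as a conjecture, corroborated by the agreement of the predicted $\tq$-series coefficients with the numerics in each of the cones $I,II,III,IV$.
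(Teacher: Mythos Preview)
The statement is explicitly a \emph{conjecture} in the paper, and the paper offers no proof; the matrices $M_R(\tq)$ are determined by the known $2\times 2$ block from~\cite{GGM:resurgent,GGM:peacock} together with high-precision numerical Borel--Pad\'e resummation of $\Phi^{(\s_0)}(\tau)$, not by an analytic argument. You recognise this correctly in your final paragraph, and your overall conclusion --- that the result is conjectural, supported by numerics, and that a proof would require establishing the resurgence of $\Phi^{(\s_0)}$ and identifying its Borel sums with the new state-integral of Section~\ref{sub.AK41new} --- matches the paper's position.

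One point where your sketch is looser than you indicate: the propagation from one cone to the others is more circular than presented. You write that $M_{II}$ follows from $M_I$ by right-multiplying by the inverse Stokes matrix, but in the paper the Stokes matrices are \emph{derived from} the $M_R$'s via~\eqref{eq:SMM}, not known independently; so this step presupposes what is to be established in the target cone. Likewise, invoking the algebraic relation~\eqref{Jmqq} to cross the real axis handles only the $\tq$-dependence of $\bJ_{-1}(\tq)$; the analytic continuation of the Borel sums $s_R(\Phi)(\tau)$ from the upper to the lower half-plane is a separate, genuinely analytic statement, and it is exactly where the new Stokes data attached to $\s_0$ are hiding. So even granting Conjecture~\ref{conj-exact-ra} and the formula in one anchor cone, the other three do not follow by the bookkeeping you describe without independent input. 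The paper does not claim otherwise: each of the four formulae is asserted on numerical grounds, and the subsequent Stokes computations~\eqref{eq:S12}--\eqref{disc-prb} are consequences, not ingredients.
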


We now discuss the Stokes automorphism.  To any singularity in the
Borel plane located at $\iota_{i,j}^{(k)}:= \iota_{i,j}+2\pi\ri k$, we
can associate a local Stokes matrix 
\begin{equation}
  \mf{S}_{\iota_{i,j}^{(k)}}(\tq) = I + \mc{S}_{i,j}^{(k)}\tq^k
  E_{i,j},\quad  \mc{S}_{i,j}^{(k)}\in\IZ,
\end{equation}
where $E_{i,j}$ is the elementary matrix with $(i,j)$-entry 1
($i,j=0,1,2$) and all other entries zero, and $\mc{S}_{i,j}^{(k)}$ is
the Stokes constant.  Let us assume the \emph{locality} condition that
no two Borel singularities share the same argument, or if there are,
their Stokes matrices commute.  This is indeed the case in our
example.  Then for any ray of angle $\theta$, the Borel resummations
of $\Phi(\tau)$ with $\tau$ whose argument is raised slight above
($\theta_+$) or lowered sightly below ($\theta_-$) $\theta$ are related
by the following formula of Stokes automorphism 
\begin{equation}
  \Delta(\tau) s_{\theta_+}(\Phi)(\tau) = \mf{S}_{\theta}(\tq)
  \Delta(\tau) s_{\theta_-}(\Phi)(\tau),\quad
  \mf{S}_\theta(\tq) = \prod_{\arg\iota=\theta}\mf{S}_\iota(\tq).
\end{equation}
Because of the locality condition, we don't have to worry about the
order of the product of local Stokes matrices.

More generally, consider two rays $\rho_{\theta^+}$ and
$\rho_{\theta^-}$ whose arguments satsify
$0<\theta^+-\theta^-\leq \pi$, we define the global Stokes
automorphism
\begin{equation}
  \Delta(\tau)s_{\theta^+}(\Phi)(\tau) =
  \mf{S}_{\theta^-\rightarrow \theta^+}(\tq)
  \Delta(\tau)s_{\theta^-}(\Phi)(\tau),
  \label{eq:S-glob}
\end{equation}
where both sides are analytically continued smoothly to the same value
of $\tau$.  The global Stokes matrix
$\mf{S}_{\theta^-\rightarrow\theta^+}(\tq)$ satisfies the
factorisation property~\cite{GGM:resurgent,GGM:peacock}
\begin{equation}
  \mf{S}_{\theta^-\rightarrow\theta^+}(\tq) =
  \prod_{\theta^-<\theta<\theta^+}^{\leftarrow}
  \mf{S}_{\theta}(\tq),
  \label{eq:S-fac}
\end{equation}
where the ordered product is taken over all the local Stokes matrices
whose arguments are sandwiched between $\theta^-,\theta^+$ and they
are ordered with rising arguments from right to left.

Given \eqref{eq:GMsPhi} with explicit values of $M_R(\tq)$ for
$R=I,II,III,IV$, in general we can calculate the global Stokes matrix via
\begin{equation}
  \mf{S}_{R\rightarrow R'}(\tq) = M_{R'}(\tq)^{-1}\cdot M_{R}(\tq).
  \label{eq:SMM}
\end{equation}
Here in the subscript of the global Stokes matrix on the left hand
side, $R$ stands for any ray in the cone.
For instance, we find that the global Stokes matrix from cone $I$
anti-clockwise to cone $II$ is 
\begin{equation}
  \mf{S}_{I\rightarrow II}(\tq) =
  \begin{pmatrix}
    1&0&0\\
    0&1&0\\
    0&1&-1
  \end{pmatrix}
  \bJ_{-1}(\tq)^{-1}
  \begin{pmatrix}
    1&0&0\\
    0&-1&0\\
    0&0&1
  \end{pmatrix}
  \bJ_{-1}(\tq)
  \begin{pmatrix}
    1&0&0\\
    0&0&-1\\
    0&1&-1
  \end{pmatrix},\quad |\tq|<1.\label{eq:S12}
\end{equation}
This Stokes matrix has the block upper triangular form
\begin{equation}
    \begin{pmatrix}
    1&*&*\\
    0&*&*\\
    0&*&*
  \end{pmatrix}.\label{eq:1bd}
\end{equation}
Let us note that this form implies that
$\Phi^{(\s_j)}(\tau)$ ($j=1,2$) form a closed subset under Stokes
automorphisms (this was called in \cite{GM:top-sting} a ``minimal
resurgent structure'').  They are controled by the $2\times 2$
submatrix of $\mf{S}_{I\rightarrow II}(\tq)$ in the bottom right and
one can verity that it is indeed the Stokes matrix
in~\cite{GGM:resurgent}.  In addition we can also extract Stokes
constants $\mc{S}_{0,j}^{(k)}$ ($j=1,2$, $k=1,2,\ldots$) responsible
for Stokes automorphisms into $\Phi^{(\s_0)}(\tau)$ from Borel
singularities in the upper half plane, and collect them in the
generating series
\begin{equation}
  \ms{S}^+_{0,j}(\tq) = \sum_{k=1}^\infty
  \mc{S}_{0,j}^{(k)}\tq^k,\quad j=1,2.
\end{equation}
We find
\begin{align}
  \ms{S}_{0,1}^+(\tq) = \ms{S}_{0,2}^+(\tq) =
  &-G_0^{(2)}(\tq) - G_1^{(2)}(\tq)
    +  \left(G_0^{(0)}(\tq)+G_1^{(0)}(\tq)\right)G_0^{(2)}(\tq)
    /G_0^{(0)}(\tq)\nn=
  &-\tq-2\tq^2-3\tq^3-7\tq^4-14\tq^5-34\tq^6+\ldots.
\end{align}

Similarly, we find that the global Stokes matrix from cone $III$
anti-clockwise to cone $IV$ is 
\begin{equation}
  \mf{S}_{III\rightarrow IV}(\tq) =
  \begin{pmatrix}
    1&0&0\\
    0&-1&1\\
    0&1&0
  \end{pmatrix}\cdot \bJ_{-1}(\tq^{-1})^{-1}\cdot
  \begin{pmatrix}
    1&0&0\\
    0&-1&0\\
    0&0&1
  \end{pmatrix}
  \cdot \bJ_{-1}(\tq^{-1})\cdot
  \begin{pmatrix}
    1&0&0\\
    0&1&0\\
    0&1&1
  \end{pmatrix},\quad |\tq|>1.\label{eq:S34}
\end{equation}
It also has the form as \eqref{eq:1bd}, and the $2\times 2$ submatrix of
$\mf{S}_{III\rightarrow IV}(\tq)$ in the bottom right is the Stokes matrix
given in~\cite{GGM:resurgent}. We also extract
Stokes constants $\mc{S}_{0,j}^{(k)}$ ($j=1,2$, $k=-1,-2,\ldots$)
responsible for Stokes automorphisms into $\Phi^{(\s_0)}(\tau)$ from
Borel singularities in the lower half plane, and collect them in the
generating series
\begin{equation}
  \ms{S}^-_{0,j}(\tq) = \sum_{k=-1}^{-\infty}
  \mc{S}_{0,j}^{(k)}\tq^k,\quad j=1,2.
\end{equation}
We find
\begin{align}
  \ms{S}_{0,2}^-(\tq) = -\ms{S}_{0,1}^-(\tq) = \ms{S}_{0,1}^+(\tq^{-1}).
\end{align}
We can also use \eqref{eq:SMM} to compute the global Stokes matrix
$\mf{S}_{IV\rightarrow I}(\tq)$ and we find
\begin{equation}
  \mf{S}_{IV\rightarrow I} = 
  \begin{pmatrix}
    1&0&1\\
    0&1&3\\
    0&0&1
  \end{pmatrix}.
\end{equation}
Note that this can be identified as $\mf{S}_{0}$ associated to the ray
$\rho_0$ and it can be factorised as
\begin{equation}
  \mf{S}_{0} = \mf{S}_{\iota_{0,2}}\mf{S}_{\iota_{1,2}},\quad
  \mf{S}_{\iota_{0,2}} =
  \begin{pmatrix}
    1&0&1\\
    0&1&0\\
    0&0&1
  \end{pmatrix},\quad
  \mf{S}_{\iota_{1,2}} =
  \begin{pmatrix}
    1&0&0\\
    0&1&3\\
    0&0&1
  \end{pmatrix}.
\end{equation}
Since the local Stokes matrices $\mf{S}_{\iota_{0,2}}$ and
$\mf{S}_{\iota_{1,2}}$ commute, the locality condition is satisfied.
We read off the Stoke discontinuity formulas
\begin{equation}
\label{disc-pra}
\begin{aligned}
  &\text{disc}_0 \Phi^{(0)}(\tau) = \tau^{-3/2}s(\Phi^{(s_2)})(\tau),\\
  &\text{disc}_0 \Phi^{(1)}(\tau) = 3s(\Phi^{(s_2)})(\tau),
\end{aligned}
\end{equation}
with 
\begin{equation}
  \text{disc}_\theta\Phi(\tau) = s_{\theta_+}(\Phi)(\tau) - s_{\theta_-}(\Phi)(\tau),
\end{equation}
and the second identity has already appeared in
\cite{gh-res,GGM:resurgent}.

Finally, in order to compute the global Stokes matrix
$\mf{S}_{II\rightarrow III}(\tq)$, we need to take into account that
the odd powers of $\tau^{1/2}$ on both sides of \eqref{eq:GMsPhi} give
rise to additional $-1$ factors when one crosses the branch cut at the
negative real axis, and \eqref{eq:SMM} should be modified by
\begin{equation}
  \mf{S}_{II\rightarrow III}(\tq) = \diag(1,-1,-1)
  M_{III}(\tq)^{-1}\cdot M_{II}(\tq),
\end{equation}
and we find
\begin{equation}
  \mf{S}_{II\rightarrow III} = 
  \begin{pmatrix}
    1&1&0\\
    0&1&0\\
    0&-3&1
  \end{pmatrix}.
\end{equation}
Similarly this can be identified as $\mf{S}_{\pi}$ associated to the
ray $\rho_{\pi}$ and it can be factorised as
\begin{equation}
  \mf{S}_{\pi} = \mf{S}_{\iota_{0,1}}\mf{S}_{\iota_{2,1}},\quad
  \mf{S}_{\iota_{0,1}} =
  \begin{pmatrix}
    1&1&0\\
    0&1&0\\
    0&0&1
  \end{pmatrix},\quad
  \mf{S}_{\iota_{2,1}} =
  \begin{pmatrix}
    1&0&0\\
    0&1&0\\
    0&-3&1
  \end{pmatrix}.
\end{equation}
Note that the local Stokes matrices $\mf{S}_{\iota_{0,1}}$ and
$\mf{S}_{\iota_{2,1}}$ also commute.  We read off the Stokes
discontinuity formulas
\begin{equation}
  \label{disc-prb}
\begin{aligned}
  &\text{disc}_\pi \Phi^{(0)}(\tau) =
  \tau^{-3/2}s(\Phi^{(s_1)})(\tau),\\
  &\text{disc}_\pi \Phi^{(2)}(\tau) = -3s(\Phi^{(s_1)})(\tau),
\end{aligned}
\end{equation}
where the second identity has already appeared in
\cite{GGM:resurgent}.

\subsection{The Andersen--Kashaev state-integral}
\label{sub.AK41}

In this section we briefly recall the properties of the state-integral of
Andersen--Kashaev for the $\knot{4}_1$ knot~\cite[Sec.11.4]{AK}, defined by
\begin{equation}
  \label{Z410}
  Z_{\knot{4}_1}(\tau) = 
  \int_{\BR+\ri 0} \Phi_\bb(v)^2 \, \re^{-\pi \ri v^2} \rd v, \qquad (\tau=\sqrt{\bb}) \,.
\end{equation}
Here, $\fad (z)$ is Faddeev's quantum dilogarithm~\cite{Faddeev}, in the conventions of
e.g.~\cite[Appendix A]{AK}.
With this choice of contour, the integrand is exponentially decaying at $\pm \infty$
hence the integral is absolutely convergent. State-integrals have several key features:
\begin{itemize}
\item
  They are analytic functions in $\BC'$.
\item
  Their restriction to $\BC\setminus\BR$ factorises bilinearly as finite sum of a
  product of a $q$-series and a $\tilde q$-series, where $q=\e(\tau)$ and
  $\tilde q=\e(-1/\tau)$; see~\cite{Beem,GK:qseries}.
\item
  Their evaluation at positive rational numbers also factorises bilinearly as a finite
  sum of a product of a periodic function of $\tau$ and a periodic function of
  $-1/\tau$; see~\cite{GK:evaluation}.
\item
  State-integrals are equal to linear combinations of the median Borel summation
  of asymptotic series.
\item
  State-integrals come with a descendant version which satisfies a linear
  $q$-difference equation.
\end{itemize}
Let us explain these properties for the state-integral~\eqref{Z410}. The integrand
is a quasi-periodic meromorphic function with explicit poles and residues. Moving the
contour of integration above, summing up the residue contributions, and using the fact
that there are no contributions from infinity, one finds
that~\cite[Cor.1.7]{GK:qseries} 
\be
\label{zlmuv0}
 Z(\tau)    =
    -\ri 
    \left(\frac{q}{\tq}\right)^{\frac{1}{24}}
    \left(\tau^{1/2}G^{(1)}(q)G^{(0)}(\tq) -
     \tau^{-1/2} G^{(0)}(q)G^{(1)}(\tq)\right), \qquad (\tau \in \BC\setminus\BR). 
\ee
When $\tau$ is a positive rational number, the quasi-periodicity of the integrand,
together with a residue calculation leads to a formula for $Z(\tau)$ given
in~\cite{GK:evaluation}. More generally, in~\cite{GGM:resurgent} we considered the descendant
integral
\begin{equation}
\label{eq.statelm}
 Z_{\lambda,\mu}(\tau) =
    \int_{\calD} \Phi_{\bb}(v)^2 \re^{-\pi\ri v^2 +
      2\pi(\lambda \bb - \mu \bb^{-1})v}\rd v,
\end{equation}
where $\lambda, \mu \in \BZ$ and the contour $\calD$ is asymptotic at
infinity to the horizontal line $\Im(v)=v_0$ where
$v_0 > |\real(\lambda\bb-\mu\bb^{-1})|$ but is deformed near the
origin so that all the poles of the quantum dilogarithm located at
\begin{equation}
  c_\bb + \ri \bb m +\ri \bb^{-1}n,\quad m,n\in\IZ_{\geq 0},
\end{equation}
are above the contour. 
These integrals factorise as follows:
\be
\label{zlmu}
 Z_{\lambda,\mu}(\tau)    =
    (-1)^{\lambda-\mu+1}\ri q^{\frac{\lambda}{2}}\tq^{\frac{\mu}{2}}
    \left(\frac{q}{\tq}\right)^{\frac{1}{24}}
    \left(\tau^{1/2}G^{(1)}_\lambda(q)G^{(0)}_\mu(\tq) -
     \tau^{-1/2} G^{(0)}_\lambda(q)G^{(1)}_\mu(\tq)\right). 
\ee
The above factorisation can be expressed neatly in matrix form. Indeed, let
us define 
\be
\label{Wgtred}
W^\RED_{S,\lambda,\mu}(\tau) = \bJ^\RED_\lambda(\tq)^{-1}
\diag(\tau^{3/2}, \tau^{1/2}, \tau^{-1/2}) \, \bJ^\RED_\mu(q) \,.
\ee
Using the $q$-difference equation~\eqref{41Jqrec}, it is easy to see that
$W^\RED_{S,\lambda+1,\mu}(\tau)=A^{-1}(-1/\tau) W^\RED_{S,\lambda,\mu}(\tau)$
and $W^\RED_{S,\lambda,\mu+1}(\tau)=W^\RED_{S,\lambda,\mu}(\tau) A(\tau)$ hence
the domain of $W^\RED_{S,\lambda,\mu}$ is independent of the integers $\lambda$
and $\mu$. Equation~\eqref{zlmu} implies that 
$W^\RED_{S,\lambda,\mu}(\tau)$ are given by the matrix $(Z_{\lambda+i,\mu+j}(\tau))$
(for $i,j =0,1$), up to left-multiplication by a matrix of automorphy factors.

Finally we discuss the relation between the Borel summation of the two
asymptotic series $\Phi^{(\sigma_j)}(h)$ for $j=1,2$ and the
descendant state-integrals. Since the Borel transform of those series
may have singularities at the positive real axis, we denote by
$s_{\rm med}$ their \emph{median resummation} given by the average of
the two Laplace transforms to the left and to the right of the
positive real axis. Then, we have
\begin{equation}
  \label{smed12}
  \begin{aligned}
    s_{\rm med} ( \Phi^{(\sigma_1)} ) (\tau) = &\ri(\tq/q)^{1/24}
    \left(-\frac{1}{2}Z_{0,0}(\tau)-\tq^{1/2}
      Z_{0,-1}(\tau)\right), \\
    s_{\rm med} ( \Phi^{(\sigma_2)} ) (\tau) = &\ri(\tq/q)^{1/24}
    Z_{0,0}(\tau) \,.
  \end{aligned}
\end{equation}

\subsection{A new state-integral}
\label{sub.AK41new}

In the previous section, we saw how the matrix $W^\RED(\tau)$ of
products of $q$-series and $\tq$-series~\eqref{Gm01} coincides with a
matrix of state-integrals.  Having found the $q$-series~\eqref{G2}
which complement the series~\eqref{Gm01}, it is natural to search for
a new state-integral which factorises in terms of all three $q$-series
$G_{m}^{(j)}(q)$ for $j=0,1,2$ and their $\tq$-versions.  Upon looking
carefully, the series $G_{m}^{(j)}(q)$ for $j=0,1$ were produced from
the Andersen--Kashaev state-integral because its integrand had a
double pole, hence the contributions came from
expanding~\eqref{G41epsilon} up to $O(\ve^2)$.  If we expanded up to
$O(\ve^3)$, we would capture the new series $G^{(2)}(q)$.  Hence the
problem is to find a state-integral of the $\knot{4}_1$ whose integrand has
poles of order $3$. After doing so, one needs to understand how this
story, which seems a bit ad hoc and coincidental to the $\knot{4}_1$ knot,
can generalise to all knots.  It turns out that such a state-integral
existed in the literature for many years, and in fact was devised by
Kashaev~\cite{kas-volume} as a method to convert the state-sums of the
Kashaev invariants into state-integrals, using as a building block the
Faddeev quantum dilogarithm function at rational numbers, multiplied
by $1/\sinh x$.  Incidentally, similar integrals have appeared in
\cite{kas-mar} and more recently in the work of two of the authors on
the topological string on local $\BP^2$;
see~\cite[Eqn.3.141]{GM:top-sting}. The integrand of such
state-integrals are meromorphic functions with the usual pole
structure coming from the Faddeev quantum dilogarithm function,
together with the extra poles coming from $1/\sinh x$. The residues of
the former give rise to products of $q$-series times $\tq$-series, but
the presence of of $1/\sinh x$ has two effects.  On the one hand, it
produces, in an asymmetric fashion, poles of the integrand of one
order higher, contributing to sums of $q$-series or $\tq$-series. On
the other hand, the produced $q$ and $\tq$-series look like
multidimensional Appell-Lerch sums.  An original motivation for
converting state-sum formulas for the Kashaev invariants into
state-integral formulas was to use such an integral expression for a
proof of the Volume Conjecture.

\begin{figure}[htpb!]
\leavevmode
\begin{center}
\includegraphics[height=5cm]{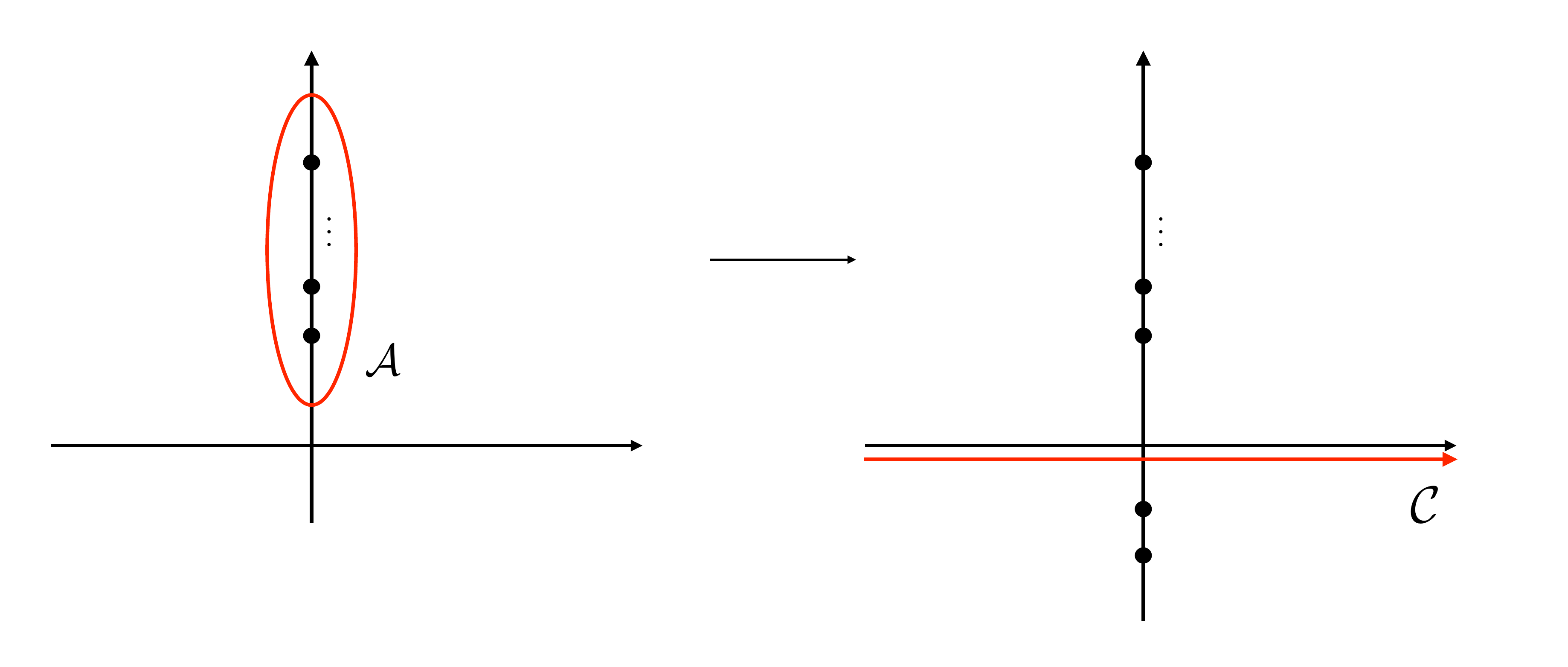}
\end{center}
\caption{The contour ${\mathcal A}_N$ appears in the integral formula \eqref{car}
  for the Kashaev invariant of the $\knot{4}_1$ knot, and it encircles the $N$
  poles~\eqref{jones-poles}. By doing the integral along the contour
  $\calC$ and picking the poles in the lower half plane, one obtains a new
  state-integral with information about the trivial connection.}
\label{contours-fig}
\end{figure} 
There are two examples that convert state-sums into state-integrals, one given
by Kashaev in~\cite{kas-volume} for the $\knot{4}_1$ knot and further studied by
Andersen--Hansen~\cite{AH}, and one in Kashaev--Yokota~\cite{KY:52} for the $\knot{5}_2$ knot. 
In the case of the $\knot{4}_1$ knot, the integral considered in~\cite{kas-volume, AH} is 
\begin{equation}
\label{car}
\langle \knot{4}_1 \rangle_N
=-{\ri \over 2 \mb^3} \int_{{\mathcal A}_N}
\tanh\left( {\pi y \over \mb} \right) {\fad \left( -y +{\ri \over 2 \mb} \right)
  \over \fad \left(y -{\ri \over 2 \mb} \right)} \rd y. 
\end{equation}
For generic $\bb^2\in\IC'$ so that $\real \bb >0$, the
integrand has the following poles and zeros, all in the imaginary axis: 
\begin{equation}
\begin{aligned}
  &\text{simple poles}: \left\{\ri\bb\left(\frac{1}{2}+m\right)
    \,\big|\,    m=0,1,2,\ldots\right\},\\
  &\text{double poles}: \left\{
    -\ri\bb\left(\frac{1}{2}+m\right)-\ri\bb^{-1}(1+n)
    \,\big|\, m,n=0,1,2,\ldots \right\},  \\
  &\text{triple poles}: \left\{-\ri\bb\left(\frac{1}{2}+m\right)
    \,\big|\, m=0,1,2,\ldots \right\},\\
  &\text{double zeros}: \left\{ \ri\bb\left(\frac{1}{2}+m\right)
    +\ri\bb^{-1}(1+n)\,\big|\,m,n=0,1,2,\ldots\right\} \,.
\end{aligned}
\end{equation}
In the special case where $\mb^2=N^{-1}$ where $N \in \BZ_{>0}$, which
is the case where \eqref{car} is well-defined, the poles and zeros
in the upper half plane conspire so that there are only finite many
simple poles located at
\begin{equation}
\label{jones-poles}
y_m=\ri \mb \left(m+{1\over 2} \right), \qquad m=0, \cdots, N-1, 
\end{equation}
and we can define the contour ${\mathcal A}_N$ encircling these points as
in \figref{contours-fig} (left). An application of the residue theorem
gives that this integral calculates the Kashaev invariant of the $\knot{4}_1$
knot,
\be
\label{kas-ex}
\langle \knot{4}_1 \rangle_N=\sum_{m=0}^{N-1} (-1)^m \xi^{-m(m+1)/2}
\prod_{\ell=1}^m (1- \xi^\ell)^2, \qquad \xi = \re^{2\pi \ri \over N}. 
\ee
Now, we can define a new analytic function by changing the contour of
integration from $\calA_N$ to the horizontal contour $\mathcal C$
slightly below the horizontal line $\imag(y) = \real(\bb^{-1})/2$,
\be
\label{itau-def}
{ \mc{Z}}(\tau)= -\frac{\ri}{2\bb^3}\int_{\mathcal C} 
\tanh\left( {\pi y \over \mb} \right) {\fad \left( -y +{\ri \over 2 \mb} \right)
  \over \fad \left(y -{\ri \over 2 \mb} \right)} \rd y.
\ee
This is now defined for $\tau= \mb^2 \in \BC'$. Although
both \eqref{car} and \eqref{itau-def} share the same integrand, it has
significant contributions from infinity in the upper half plane, so
that we cannot deform the contour $\mc{A}_N$ smoothly to the contour
$\mc{C}$, and \eqref{car} and \eqref{itau-def} are really different.
On the other hand, the integrand does have vanishing contributions
from infinity in the lower half plane. Consequently we can smoothly
deform the new controur $\mc{C}$ downwards, and collect the residues
of the integrand on the lower half-plane, as shown in
\figref{contours-fig} (right). This integral, in contrast to the
Andersen--Kashaev state-integral, contains information about the
trivial connection. In particular, we conjecture that, in the region
of the complex plane slightly above the positive real axis, the
all-orders asymptotic of $\mc{Z}(\tau)$ at $\tau=0$ is given by
\be
{ \mc{Z}}(\tau)\sim \Phi^{(\sigma_0)}(\tau). 
\ee
Moreover, this can be upgraded to an exact asymptotic formula by using Borel
resummation in the same region, and one has 
\be
\label{itau-borel}
{ \mc{Z}}(\tau)=  s( \Phi^{(\sigma_0)} )(\tau)
-{\ri \over 2}\tau^{-3/2}  s(\Phi^{(\sigma_2)} )(\tau) . 
\ee
It turns out that the change of contour in \figref{contours-fig} implements the
inversion of the Habiro series recently studied in~\cite{park-inverted}: the
integral over the contour ${\mathcal A}_N$ leads to the Habiro series, while the
integral over $\mathcal{C}$ gives the ``inverted'' Habiro series, see
also Section~\ref{sub.41statex}. This inversion between $q$-series and elements of
the Habiro ring was observed 10 years ago by the first author
in his joint work with  Zagier~\cite{GZ:qseries}, under the informal name
``upside-down cake''. 

\subsection{A $3 \times 3$ matrix of state-integrals}
\label{sub.3x3state}

Having found a new state-integral whose asymptotics sees the asymptotic series
$\Phi^{(\s_0)}(\tau)$, we now consider its descendants, and their factorisations
to complete the story. 
The new state-integrals $\mc{Z}_{\lambda,\mu}(\tau)$ are defined
as follows: 
\be
\label{inst-def}
{\mc{Z}}_{\lambda, \mu}(\tau) = -\frac{\ri}{2\bb^3}\int_{\mathcal C}
\tanh\left( {\pi y \over \mb} \right)
{\fad \left( -y +{\ri \over 2 \mb} \right) \over \fad
  \left(y -{\ri \over 2 \mb} \right)} \re^{-2 \pi (\lambda \mb- \mu \mb^{-1})} \rd y,  
\ee
where $\mb$ is related to $\tau$ by $\tau= \mb^2$ and $\lambda,\mu\in \BZ$. The
integration contour $ {\mathcal C}$ is chosen so that, at infinity, it is asymptotic
to the line ${\rm Im}(y)=y_2$, where $y_2$ satisfies 
\begin{equation}
  y_2 < \tfrac{1}{2}\real \bb^{-1} - |\real(\lambda\bb-\mu\bb^{-1})|.
\end{equation}
%
This guarantees convergence of the integral. We choose $\calC$ so that
all poles of the integrand in the lower half plane are below $\calC$. Note that
$\mc{Z}_{0,0}(\tau)=\mc{Z}(\tau)$ is the integral introduced in
\eqref{itau-def}, so that the state-integrals with general $\lambda,\mu$ are
descendants of $\mc{Z}(\tau)$.  

\begin{theorem}
  \label{thm-newst}
  The descendant state-integral \eqref{inst-def} can be expressed in terms of
  the series (\ref{Gm01}), (\ref{Gm2}) as follows: 
  \begin{equation}
    \label{Zlmfac}
  \begin{aligned}
    {\mc{Z}}_{\lambda, \mu}(\tau) &= q^{\lambda/2} (-1)^{\mu}
    \left(  G_\lambda^{(2)}(q)+\tau^{-1}
      G_\lambda^{(1)}(q) L_\mu^{(0)}(\tq)
      -\tau^{-2}  G_\lambda^{(0)}(q) L_\mu^{(1)}(\tq) \right) \\
    & +\frac{1}{2}   q^{\lambda/2} (-1)^{\mu} \left( \tau^{-1}
      G_\lambda^{(1)}(q) G_\mu^{(0)}(\tq)
      - \tau^{-2}  G_\lambda^{(0)}(q) G_\mu^{(1)}(\tq) \right) 
\end{aligned}
\end{equation}
\end{theorem}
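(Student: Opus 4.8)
The plan is to compute the integral $\mc{Z}_{\lambda,\mu}(\tau)$ by the residue method already used for the Andersen--Kashaev integral in Section~\ref{sub.AK41}, but now keeping track of the higher-order poles introduced by the factor $\tanh(\pi y/\mb)$. First I would observe that, as with \eqref{zlmuv0}, for generic $\bb^2 \in \BC'$ the contour $\mc{C}$ can be pushed downward to infinity in the lower half-plane, where the integrand has vanishing contribution; this expresses $\mc{Z}_{\lambda,\mu}(\tau)$ as a convergent sum over the residues of the poles lying below $\mc{C}$. Those poles come in two families: the poles of $\fad(-y+\ri/(2\mb))$ together with the zeros of $\fad(y-\ri/(2\mb))$ in the lower half-plane (the ``usual'' pole structure inherited from the Faddeev dilogarithm, as in~\cite{GK:qseries}), and the poles of $\tanh(\pi y/\mb)$ located at $y = -\ri\mb(\tfrac12+m)$. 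A key point, visible already in the pole/zero list given just before \eqref{jones-poles}, is that these two families overlap precisely along the set $\{-\ri\mb(\tfrac12+m)\}$, where a double pole from the Faddeev factors collides with a simple pole from $\tanh$ to produce a \emph{triple} pole; away from this set one gets ordinary double poles. So the residue sum naturally splits into a ``triple-pole part'' and a ``double-pole part''.

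The double-pole part is, up to the extra $\tanh$ factor evaluated at a point where it is regular and equal to a sign, exactly the computation that produced the factorisation \eqref{zlmu} of $Z_{\lambda,\mu}(\tau)$ into the bilinear combination $\tau^{1/2}G^{(1)}_\lambda(q)G^{(0)}_\mu(\tq)-\tau^{-1/2}G^{(0)}_\lambda(q)G^{(1)}_\mu(\tq)$; I would simply quote that result, absorbing the $\tanh$ sign and the prefactors into the $q^{\lambda/2}(-1)^\mu$ and the $\tfrac12$-coefficient that appear on the second line of \eqref{Zlmfac}. The triple-pole part is the genuinely new contribution. Expanding the integrand near $y=-\ri\mb(\tfrac12+m)$ to third order requires the Laurent expansion of $\fad$ at its poles (standard, cf.~\cite[Appendix A]{AK}), together with the expansion of $\tanh(\pi y/\mb)$ near its simple pole and the expansion of the exponential factor $\re^{-2\pi(\lambda\mb-\mu\mb^{-1})y}$. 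The $O(\ve^3)$ data of the $\ve$-deformed series $G(q,\ve)$ of \eqref{G41epsilon} is engineered to organise exactly this third-order expansion: the coefficient of $\ve^0,\ve^1,\ve^2$ reproduce $G^{(0)},G^{(1)},G^{(2)}$ on the $q$-side, while on the $\tq$-side the ``one order higher'' pole forces the appearance of the Appell--Lerch sums $L^{(0)}_\mu(\tq)$ and $L^{(1)}_\mu(\tq)$ of Theorem~\ref{thm.41bJ2} rather than $G^{(0)}_\mu,G^{(1)}_\mu$ — this is the asymmetry between the two halves alluded to in the text. Matching powers of $\tau$ (equivalently, of $\mb$) in the residue expansion then yields precisely the three terms $G^{(2)}_\lambda(q)$, $\tau^{-1}G^{(1)}_\lambda(q)L^{(0)}_\mu(\tq)$, and $-\tau^{-2}G^{(0)}_\lambda(q)L^{(1)}_\mu(\tq)$ on the first line of \eqref{Zlmfac}.

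The main obstacle I anticipate is the bookkeeping in the triple-pole residue: one must correctly combine the third-order Taylor data of four distinct factors (the two Faddeev dilogarithms, the $\tanh$, and the exponential) at the colliding poles, and then recognise that the resulting double sum over $(m,n)$ reorganises into the closed forms $L^{(j)}_\mu(\tq)$. In practice the cleanest route is probably \emph{not} to expand everything by brute force but to differentiate the already-known double-pole identity: writing the double-pole generating identity with a free parameter $\ve$ (shifting $\mb \to \mb$, $\tau \to \tau$ but deforming the Faddeev arguments as in \eqref{G41epsilon}) and then extracting the $\ve^2$-coefficient, so that the $L^{(j)}_\mu$'s emerge from $\pt_\ve$ acting on products $G^{(i)}_\mu(\tq)$ exactly as in the definitions \eqref{L12}. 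A secondary technical point to verify carefully is that the shifts $\lambda \mapsto \lambda+1$, $\mu\mapsto\mu+1$ in the exponential are compatible with the $q^{mn}$, $q^{mn+n}$ factors appearing in $G^{(j)}_m$ and $L^{(j)}_m$ — i.e.\ that the descendant parameter in the integral maps to the descendant parameter $m$ in the $q$-series, which can be checked either directly from the exponential or, more slickly, by noting that both sides of \eqref{Zlmfac} satisfy the same first-order $q$-difference equations in $\lambda$ and $\mu$ (coming from \eqref{41Jqrec} and its inverse) and agree at $\lambda=\mu=0$.
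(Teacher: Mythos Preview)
Your overall strategy --- close the contour $\mc C$ downward and sum residues, distinguishing the triple poles at $y_{m,0}=-\ri\mb(\tfrac12+m)$ from the double poles at $y_{m,n}=-\ri\mb(\tfrac12+m)-\ri n\mb^{-1}$ with $n\geq 1$ --- is exactly the paper's approach. But your bookkeeping of which poles produce which pieces of \eqref{Zlmfac} contains a real error.

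You claim that at the double poles ``the extra $\tanh$ factor [is] evaluated at a point where it is regular and equal to a sign,'' and hence that the double-pole contribution reproduces \eqref{zlmu} and gives the second line of \eqref{Zlmfac}. This is false: at $y_{m,n}$ with $n\ge 1$ one has, using the $\ri\pi$-periodicity of $\tanh$ and $\tq=e^{-2\pi\ri/\tau}$,
\[
\tanh\!\left(\frac{\pi y_{m,n}}{\mb}\right)=\tanh\!\left(-\tfrac{\ri\pi}{2}-\tfrac{\ri\pi n}{\tau}\right)=\frac{\tq^{\,n}+1}{\tq^{\,n}-1},
\]
which is not a sign at all. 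It is precisely this factor that produces the Appell--Lerch denominators $\tfrac{1}{1-\tq^{\,n}}$ visible in the $n\ge 1$ sums defining $L_\mu^{(0)}(\tq)$ and $L_\mu^{(1)}(\tq)$ in \eqref{Lm}. Concretely, writing $\tfrac12\cdot\tfrac{1+\tq^{\,n}}{1-\tq^{\,n}}=\tfrac12+\tfrac{\tq^{\,n}}{1-\tq^{\,n}}$ one sees that the $n\ge 1$ double-pole residues supply simultaneously the $n\ge 1$ parts of the $L$-series \emph{and} of the $\tfrac12\,G_\mu^{(j)}(\tq)$-terms. The triple poles, by contrast, contribute the $G_\lambda^{(2)}(q)$ term and the $n=0$ constants in the $L$-series (the pieces $2E_1^{(0)}(\tq)-1-\mu$, etc.). So your proposed split ``triple poles $\to$ first line, double poles $\to$ second line'' does not hold, and the mechanism you invoke for the appearance of the $L_\mu^{(j)}$ (that the ``one order higher'' pole forces them) is not the correct one.

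A second, smaller omission: the paper singles out one genuinely nontrivial step you do not mention, namely the quasi-modular transformation
\[
E_2(\tq)=\tau^2\Big(E_2(q)+\frac{12}{2\pi\ri\,\tau}\Big),
\]
which is needed to separate the $E_2(q)$ appearing in $G_\lambda^{(2)}(q)$ from the $E_2(\tq)$ appearing in $L_\mu^{(1)}(\tq)$ after the third-order residue expansion. Without it the triple-pole contribution does not reorganise into the stated form. Your proposed ``differentiate the known identity in an $\ve$-parameter'' shortcut does not obviously circumvent this, since the extra $\tanh$ factor has no natural place in the $\ve$-deformation \eqref{G41epsilon}.
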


\begin{proof}
  This follows by applying the residue theorem to the
  state-integral~\eqref{inst-def}, along the lines of the proof of
  Theorem 1.1 in~\cite{GK:qseries}. One closes the contour to encircle the poles in
  the lower half-plane, located at 
  \be
  y_{m,n}= -{\ri \mb \over 2} -\ri m \mb -\ri n \mb^{-1}, \qquad m,n\ge 0. 
  \ee
  The poles of the integrand come the poles and the zeros of the quantum dilogarithm
  as well as from the $\tanh$ function. When $n=0$ they are triple (a double pole
  comes from the quantum dilogarithm and a simple pole from $\tanh$), while those
  with $n>0$ are double, coming only from the quantum dilogarithm. The triple poles
  lead to the series $G_\lambda^{(2)}(q)$. In order to obtain the final result, one
  also has to use the properties of $E_2(q)$ under modular transformations, i.e. 
  \be
  E_2(\tq)= \tau^2 \left( E_2 (q) + {12 \over 2 \pi \ri \tau} \right). 
  \ee
\end{proof}
 
\begin{remark}
  The state-integral \eqref{inst-def} can be evaluated for arbitrary rational values
  of $\tau$ by using the techniques of~\cite{GK:evaluation}. One finds for example,
  for $\mb^2=1$, 
 \be
 \mc{Z} (1)= -2 \sinh^2 \left( {V \over 4 \pi} \right), 
 \ee
 where $V$ is the hyperbolic volume of $\knot{4}_1$. 
\end{remark} 

\begin{remark} 
Equation~\eqref{itau-borel} can be written as 
\be
\label{nsi-median}
\mc{Z}(\tau)= \, s_{\rm med} ( \Phi^{(\sigma_0)} ) (\tau),
\qquad \tau >0. 
\ee
\end{remark}
 
We now discuss an important analytic extension of the matrix $\bJ_\mu(q)$
defined for $|q| \neq 1$. We define 
\be
\label{Wgt}
W_{S,\lambda,\mu}(\tau) = \bJ_\lambda(\tq)^{-1}
\diag(\tau^{3/2}, \tau^{1/2}, \tau^{-1/2}) \, \bJ_\mu(q)
\qquad (\tau \in \BC\setminus\BR)\,.
\ee
As in Section~\ref{sub.AK41}, we find that the domain of $W_{S,\lambda,\mu}$ is
independent of the integers $\lambda$ and $\mu$. 

\begin{theorem}
  \label{thm.2}
  $W_{S,\lambda,\mu}(\tau)$ extends to a holomorphic function on
  $\BC'$ and equals to the matrix $(Z_{\lambda+i,\mu+j}(\tau))$
(for $i,j =0,1,2$), up to left-multiplication by a matrix of automorphy factors.
\end{theorem}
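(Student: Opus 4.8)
The plan is to exploit the block upper--triangular shape of $\bJ_m(q)$ in \eqref{Jq} and of its inverse in \eqref{Jqinv}: this reduces $W_{S,\lambda,\mu}(\tau)$ to a trivial $(\s_0,\s_0)$--corner, the reduced matrix $W^\RED_{S,\lambda,\mu}(\tau)$ of \eqref{Wgtred} whose analytic properties were already settled in Section~\ref{sub.AK41}, and a single new top row; one then matches that top row with the new descendant state--integrals of Theorem~\ref{thm-newst}, all of which --- together with the relevant automorphy factors --- are holomorphic on $\BC'$.

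First I would carry out the block multiplication $W_{S,\lambda,\mu}(\tau)=\bJ_\lambda(\tq)^{-1}\diag(\tau^{3/2},\tau^{1/2},\tau^{-1/2})\,\bJ_\mu(q)$. Since $\bJ_\mu(q)$ has a $1$ in the $(\s_0,\s_0)$ entry, the row $(G^{(2)}_\mu(q),G^{(2)}_{\mu+1}(q))$ above, and $\bJ^\RED_\mu(q)$ in the lower--right $2\times 2$ corner, and --- by \eqref{Jqinv} --- so does $\bJ_\lambda(\tq)^{-1}$, with top row $(1,L^{(0)}_\lambda(\tq),-L^{(1)}_\lambda(\tq))$, the product $W_{S,\lambda,\mu}(\tau)$ is again block upper triangular: its $(\s_0,\s_0)$ entry is $\tau^{3/2}$, its lower--right $2\times 2$ block is exactly $W^\RED_{S,\lambda,\mu}(\tau)$, and its two remaining top--row entries equal
\[
\tau^{3/2}G^{(2)}_{\mu+j}(q)+\tau^{1/2}L^{(0)}_\lambda(\tq)\,G^{(1)}_{\mu+j}(q)-\tau^{-1/2}L^{(1)}_\lambda(\tq)\,G^{(0)}_{\mu+j}(q), \qquad j=0,1 .
\]
By Section~\ref{sub.AK41}, $W^\RED_{S,\lambda,\mu}(\tau)$ is, up to left multiplication by a matrix of automorphy factors, the matrix of descendant Andersen--Kashaev state--integrals, and those factors --- products of $q^{\lambda/2}=\re^{\pi\ri\lambda\tau}$, $\tq^{\mu/2}=\re^{-\pi\ri\mu/\tau}$, $(q/\tq)^{1/24}$, signs and half--integer powers of $\tau$ --- are holomorphic and nowhere zero on the simply connected domain $\BC'$; since state--integrals are holomorphic on $\BC'$, this block extends holomorphically to $\BC'$, and the $(\s_0,\s_0)$ entry $\tau^{3/2}$ manifestly does.

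Next I would identify the top--row entries with the new state--integrals. Comparing the displayed bilinear expression (with the roles of $\lambda$ and $\mu$ exchanged) with the factorisation \eqref{Zlmfac} of $\mc{Z}_{\mu+j,\lambda}(\tau)$, and using \eqref{zlmu} to recognise the extra ``half--term'' $\tfrac12 q^{\lambda/2}(-1)^{\mu}\bigl(\tau^{-1}G^{(1)}_{\mu}(q)G^{(0)}_{\lambda}(\tq)-\tau^{-2}G^{(0)}_{\mu}(q)G^{(1)}_{\lambda}(\tq)\bigr)$ in \eqref{Zlmfac} as an automorphy--factor multiple of the Andersen--Kashaev state--integral $Z_{\mu+j,\lambda}(\tau)$, one obtains an identity of the shape
\[
\bigl(W_{S,\lambda,\mu}(\tau)\bigr)_{\s_0,\,j}= a_j(\tau)\,\mc{Z}_{\mu+j,\lambda}(\tau) + b_j(\tau)\,Z_{\mu+j,\lambda}(\tau), \qquad j=0,1,
\]
with $a_j(\tau),b_j(\tau)$ built from the automorphy factors above, hence holomorphic and nowhere vanishing on $\BC'$. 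Since $\mc{Z}_{\mu+j,\lambda}$ is holomorphic on $\BC'$ by construction of its contour (Section~\ref{sub.AK41new}) and $Z_{\mu+j,\lambda}$ is likewise holomorphic on $\BC'$, these entries also extend holomorphically to $\BC'$.

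Finally, assembling the three pieces shows that $W_{S,\lambda,\mu}(\tau)$ --- a priori defined and holomorphic only on $\BC\setminus\BR$ --- agrees on $\BC\setminus\BR$ with a matrix $\Psi(\tau)$ obtained from the matrix of state--integrals $(Z_{\lambda+i,\mu+j}(\tau))$ (of both kinds) by left multiplication with a matrix of automorphy factors holomorphic and invertible on $\BC'$; as $\Psi$ is holomorphic on the connected domain $\BC'\supset\BC\setminus\BR$, it is the sought holomorphic extension of $W_{S,\lambda,\mu}$, which establishes both claims. I expect the real obstacle to lie in the second and third steps: tracking the index relabelling and the precise automorphy prefactors in \eqref{zlmu} and \eqref{Zlmfac}, and in particular checking that the spurious ``half--term'' produced by the block decomposition is exactly accounted for by the Andersen--Kashaev block; once this bookkeeping is in place, the extension statement is formal.
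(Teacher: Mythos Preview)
Your approach is correct and essentially the same as the paper's: both exploit the block upper--triangular form of $\bJ_\mu(q)$ and $\bJ_\lambda(\tq)^{-1}$ to reduce $W_{S,\lambda,\mu}$ to the trivial corner $\tau^{3/2}$, the already--treated $W^\RED_{S,\lambda,\mu}$, and the two new top--row entries, and then identify the latter via the factorisation \eqref{Zlmfac} (together with \eqref{zlmu} for the ``half--term'') as linear combinations of state--integrals with holomorphic automorphy coefficients. Your index bookkeeping $\mc{Z}_{\mu+j,\lambda}$ is in fact the correct reading of \eqref{Zlmfac}; the paper's proof is terser on this point but proceeds identically.
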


\begin{proof}
  For the bottom block of four entries, this result is already known
  from~\cite{GGM:resurgent,GGM:peacock}, and it follows from \eqref{zlmu} as
  was discussed in Section~\ref{sub.AK41}. The top
  two non-trivial entries $(\s_0,\s_j)$ of $W_{S,\lambda,\mu}(\tau)$ for $j=1,2$
  are given by  
\begin{equation}
  \tau^{3/2}\left(G_{\mu-1+j}^{(2)}(q)
    +\tau^{-1} G_{\mu-1+j}^{(1)}(q)L_{\lambda}^{(0)}(\tq) - 
    \tau^{-2} G_{\mu-1+j}^{(0)}(q)L_{\lambda}^{(1)}(\tq) \right)\,.
\end{equation}
In view of Theorem \ref{thm-newst} and~\eqref{zlmu} they can be written as a sum of
state-integrals ${\mc{Z}}_{\lambda, \mu}(\tau)$ and ${\mc{Z}}_{\lambda, \mu+1}(\tau)$,
multiplied by holomorphic factors. This proves the theorem.
\end{proof} 


\section{The $x$-variable}
\label{sub.xvariable}

In this section we discuss an extension of the results of Section~\ref{sec.41}
adding an $x$-variable. In the context of the $n$th colored Jones polynomial,
$x=q^n$ corresponds to an eigenvalue of the meridian in the asymptotic expansion of
the Chern--Simons path integral around an abelian representation of a knot complement.
In the context of the state-integral of Andersen-Kashaev~\cite{AK}, the $x$-variable
is the monodromy of a peripheral curve. The corresponding state-integral factorises
bilinearly into holomorphic blocks, which are functions of $(x,q)$ and $(\tx,\tq)$ \cite{Beem}.
In the context of quantum modular forms, $x$ plays the role of a Jacobi variable.

The corresponding perturbative series are now $x$-deformed
(see~\cite[Sec.5.1]{GGM:peacock}), but there are some tricky aspects of this
deformation that we now discuss. The critical points of the
action, after exponentiation, lie in a plane curve $S$ in $(\BC^*)^2$ (the so-called
spectral curve) defined over the rational numbers, where $(\BC^*)^2$ is equipped with
coordinate functions $x$ and $y$. The field $\BQ(S)$ of rational functions of $S$
(assuming $S$ is irreducible, or working with one component of $S$ at a time) can be
identified with $\BQ(x)[y]/(p(x,y))$ where $p(x,y)=0$ is the defining polynomial of $S$. 
The coefficients of the perturbative series are elements of $(\BQ(S)^*)^{-1/2} \BQ(S)$
and the perturbative series are labeled by the branches of the projection $S \to \BC^*$
corresponding to $(x,y) \mapsto y$ (with discriminant $\delta$, a rational function
on $S$). Each such branch $\s$ defines locally an algebraic function
$y=y^\s=y^\s(x) \in \overline{\BQ}(x)$ satisfying the equation $p(x,y^\s(x))=0$,
which gives rise to  an embedding of the field of $\BQ(S)$ to the field
$\overline{\BQ}(x)$ of algebraic functions obtained by replacing $y$ by $y^\s(x)$. 
For each such branch $\s$, the perturbative series has the form 
\be
\label{Phix}
\Phi^{(\s)}(x,\tau) = e^{\tfrac{V^{\s}(x)}{2\pi\ri\tau}} \varphi^{(\s)}(x,\tau)
\ee
where $\varphi^{(\s)}(x,\tau) \in \frac{1}{\sqrt{i\delta_\s(x)}}
\overline{\BQ}(x)[[2\pi\ri\tau]]$. The volume $V^{\s}(x)$ is also a function of $x$
given explicitly as a sum of dilogarithms and products of logarithms.

In the above discussion it is important to keep in mind that the asymptotic
series~\eqref{Phix} are labeled by branches of the finite ramified covering
$S \to \BC^*$. Going around a loop in $x$-space that avoids the finitely many
ramified points \emph{will} change the labeling of the $y=y(x)$ branches, and
correspondingly of the asymptotic series. In the present paper (as well as
in~\cite{GGM:peacock}), we define the asymptotic series in a neighborhood
of $x \sim 1$ of the geometric representation, and we do not discuss the
$x$-monodromy question. 

In the case of the $\knot{4}_1$ knot, the asymptotic series associated to the
geometric, and the conjugate flat connections are given by
\begin{equation}
\label{2phi41}
\begin{aligned}
  \varphi^{(\s_1)}(x;\tfrac{\tau}{2\pi\ri}) &=
  \frac{1}{\sqrt{\delta(x)}} \left(1-\frac{\ri
      (x^{-3}-x^{-2}-2x^{-1}+15-2x-x^2+x^3)} {24\delta(x)^3}\tau
    +\ldots \right)\\
  \varphi^{(\s_2)}(x;\tfrac{\tau}{2\pi\ri}) &= \frac{\ri}
  {\sqrt{\delta(x)}}
  \left(1+\frac{\ri(x^{-3}-x^{-2}-2x^{-1}+15-2x-x^2+x^3)}
    {24\delta(x)^3}\tau+\ldots \right)
\end{aligned}
\end{equation}
where
\begin{equation}
  \delta(x) = \sqrt{-x^{-2}+2x^{-1}+1+2x-x^2}.
\end{equation}
The corresponding perturbative series are defined by
\begin{equation}
\label{eq:Phi1}
  \begin{aligned}
    \Phi^{(\s_1)}(x;\tau) =
    & \re^{\tfrac{A(x)}{2\pi\ri\tau}}
      \varphi^{(\s_1)}(x;\tau),\\
    \Phi^{(\s_2)}(x;\tau) =
    & \re^{-\tfrac{A(x)}{2\pi\ri\tau}}
      \varphi^{(\s_2)}(x;\tau),
\end{aligned}
\end{equation}
where
\begin{equation}
  \label{eq:Axr}
  A(x) = \frac{1}{2} \log(t)^2 + 2 \log(t) \log(x) + \log(x)^2
    + \Li_2(-t x) + 
 \Li_2(-t)+ \frac{\pi^2}{6} + \pi\ri\log(x),
\end{equation}
with $t(x) = \frac{-1-x^{-1}+x -\ri \delta(x)}{2}$ being a solution to
the equation $(t+x^{-1})+(t+x^{-1})^{-1}=x+x^{-1}-1$. Note that when
$x=1$, $\delta(1)=\sqrt{3}$, $t(1)=-\frac{1+\ri \sqrt{3}}{2}$ and
$\Phi^{(\s_j)}(1;\tau)=\Phi^{(\s_j)}(\tau)$, the latter defined in
Section~\ref{sub.413Phi}.

\subsection{The $\Phi^{(\s_0)}(x,\tau)$ series}
\label{sub.41Phi0xq}

We begin by discussing the perturbative series
$\varphi^{(\s_0)}(x,\tau)$ which is a formal power series in
$2\pi\ri\tau$ whose coefficients are rational functions of $x$ with
rational coefficients. The series is defined by the right hand side of
Equation~\eqref{Jnqloop} after setting $h=2\pi\ri\tau$.
One way to compute the $\ell$-th coefficient of that series
is by computing the colored Jones polynomial, expanding in $n$ and $h$
as in~\eqref{Jnq} and then resumming as in~\eqref{Jnqresum}, taking
into account the fact that the latter sum is a rational function. An
alternative way is by using Habiro's expansion of the colored Jones
polynomials~\cite{Habiro:sl2} (see also~\cite{Habiro:WRT})
\begin{equation}
\label{JHx}
J^K(x,q) = \sum_{k=0}^\infty c_{k}(x,q) H^K_k(q), \qquad
c_{k}(x,q) = x^{-k}(q x;q)_k (q^{-1}x;q^{-1})_k 
\end{equation}
where $H^K_k(q) \in \BZ[q^\pm]$ are the Habiro polynomials of the knot $K$
and $J^K(q^n,q)$ is the $n$th colored Jones polynomial. The latter
can be efficiently computed using a recursion (which always exists~\cite{GL}) together
with initial conditions. This is analogous to applying the WKB method to
a corresponding linear $q$-difference equation~\cite{DGLZ,Ga:diff}.
We comment that the colored Jones polynomials of a knot $K$ have a descendant
version defined by~\cite{GK:desc}
\be
\label{DJdef}
DJ^{K,(m)}(x,q) = \sum_{k=0}^\infty c_{k}(x,q) H^K_k(q) \, q^{km}, \qquad (m \in \BZ)\,.
\ee
Correspondingly, the Kashaev invariant has a descendant version $DJ^{K,(m)}(1,q)$
(an element of the Habiro ring) and the asymptotic series 
$\Phi^{(\s_0)}(x,h)$ have a descendant version
$\Phi^{(\s_0)}_m(x,h)$ defined for all integers $m$ in~\cite{GK:desc}, which we will
not use in the present paper. 

Going back to the case of the $\knot{4}_1$ knot, we have
\begin{align}
\label{Phi410x}
\varphi^{(\s_0)}(x;\tfrac{\tau}{2\pi\ri}) &=
    -\frac{1}{x^{-1}-3+x}
    -\frac{x^{-1}-1+x}{(x^{-1}-3+x)^4}\tau^2 \\ &
    -\frac{x^{-4}+14x^{-3}+64x^{-2}-156x^{-1}
    +201-156x+64x^2+14x^3+x^4}
    {12(x^{-1}-3+x)^7}\tau^4
    +\ldots
\end{align}
and the corresponding perturbative series is given by $\Phi^{(\s_0)}(x;\tau) =
\varphi^{(\s_0)}(x;\tau)$.

\subsection{A $3\times 3$ matrix of $(x,q)$-series}
\label{sub.41qx}


We now extend the results of Section~\ref{sub.41q3} by including the Jacobi
variable $x$ which, on the representation side, determines the monodromy of the
meridian of an $\SL_2(\BC)$ representation $\s$. 

Our first task is to define the $3 \times 3$ matrix $\bJ_{m}(x,q)$.
For $|q| \neq 1$, we define
\be
\label{ABC}
\begin{aligned}
  C_{m}(x,q) &=
  \sum_{k=0}^{\infty}(-1)^{k}\frac{q^{k(k+1)/2+km}}{(x^{-1};q)_{k+1}(x;q)_{k+1}}
\\
A_{m}(x,q) &=
\sum_{k=0}^{\infty}(-1)^{k}\frac{q^{k(k+1)/2+km}x^{k+m}}{(q;q)_{k}(x^{2}q;q)_{k}}
\\
B_{m}(x,q) &=A_{m}(x^{-1},q).
\end{aligned}
\ee
Our series $C_m(x,q)$ contain as a special case the series
$F_{\knot{4}_1}(x,q)$ in \cite{Gukov-Manolescu,Park:2020edg,park-inverted}
\begin{equation}
  F_{\knot{4}_1}(x,q) = (x^{1/2}-x^{-1/2}) C_0(x,q).
  \label{eq:FK}
\end{equation}
We assemble these $(x,q)$-series into a matrix
\begin{equation}
\bJ_{m}(x,q)
=
\begin{pmatrix}
1 & C_{m}(x,q) & C_{m+1}(x,q)\\
0 & A_{m}(x,q) & A_{m+1}(x,q)\\
0 & B_{m}(x,q) & B_{m+1}(x,q)\\
\end{pmatrix}
\end{equation}
whose bottom-right $2 \times 2$ matrix is $\bJ^\RED_{m}(x,q)$. The properties of
$\bJ_{m}(x,q)$ are summarised in the next theorem.
\begin{theorem}
\label{thm.41bJx1}
  The matrix $\bJ_{m}(x,q)$ is a fundamental solution to the linear $q$-difference
  equation
\be
\label{41Jxqrec}
\bJ_{m+1}(x,q) = \bJ_{m}(x,q) A(x,q^m,q), \qquad A(x,q^m,q)=
\begin{pmatrix}
1 & 0 & 1\\
0 & 0 & -1\\
0 & 1 & x^{-1}+x-q^{m+1}\\
\end{pmatrix} \,,
\ee
has $\det(\bJ_m(x,q))=x^{-1}-x$ and satisfies the analytic extension 
\begin{equation}
\label{Jmxqq}  
  \bJ_m(x,q^{-1}) =
  \begin{pmatrix}
    1&0&0\\
    0&0&1\\
    0&1&0
  \end{pmatrix} \bJ_{-m-1}(x,q)
  \begin{pmatrix}
    1&0&0\\
    0&0&1\\
    0&1&0
  \end{pmatrix}  \,.
\end{equation}
\end{theorem}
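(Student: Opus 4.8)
The plan is to establish the three assertions of Theorem~\ref{thm.41bJx1} in turn, following the pattern of Theorems~\ref{thm.41bJ1}--\ref{thm.41bJ2} with the $q$-series of~\eqref{Gm01} and~\eqref{Gm2} replaced by the $(x,q)$-series $A_m,B_m,C_m$ of~\eqref{ABC}. \emph{For the $q$-difference equation~\eqref{41Jxqrec}:} writing $\bJ_{m+1}(x,q)=\bJ_m(x,q)A(x,q^m,q)$ column by column, the first column is the trivial identity $(1,0,0)^T=(1,0,0)^T$, the second column is tautologically the third column of $\bJ_m(x,q)$, and, since the last two columns of $A(x,q^m,q)$ form a companion block with bottom entry $x^{-1}+x-q^{m+1}$, the third column is equivalent to the scalar relations
\begin{equation}
  C_m-(x^{-1}+x-q^{m+1})C_{m+1}+C_{m+2}=1,\qquad
  A_m-(x^{-1}+x-q^{m+1})A_{m+1}+A_{m+2}=0,
\end{equation}
together with the same homogeneous relation for $B_m$. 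The homogeneous equation is the $x$-deformation of~\eqref{41rec}, to whose coefficients it specialises at $x=1$, and I would verify that $A_m(x,q)$ solves it by $q$-holonomic creative telescoping of the summand $(-1)^kq^{k(k+1)/2+km}x^{k+m}/((q;q)_k(x^2q;q)_k)$, exhibiting a $q$-hypergeometric certificate for the telescoped sum — by hand or with Koutschan's package, exactly as is done for $G^{(j)}_m$ and $G^{(2)}_m$ in the undeformed case. The relation for $B_m=A_m(x^{-1},q)$ is then automatic because the coefficient $x^{-1}+x-q^{m+1}$ is invariant under $x\mapsto x^{-1}$, and the inhomogeneous relation for $C_m$ is the analogous telescoping computation, with the right-hand side $1$ arising as the surviving boundary contribution.

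\emph{For the determinant:} the block form of $\bJ_m(x,q)$ gives $\det\bJ_m(x,q)=A_mB_{m+1}-A_{m+1}B_m$. Granting~\eqref{41Jxqrec}, expanding $\det A(x,q^m,q)$ along its first column yields $\det A(x,q^m,q)=1$, so $\det\bJ_m(x,q)$ is independent of $m$ (equivalently, this Casoratian of the two solutions $A_m,B_m$ of a common second-order recursion is $m$-invariant). To evaluate the constant I would let $m\to+\infty$ with $0<|q|<1$: the $k=0$ terms dominate, so $A_m(x,q)=x^m(1+O(q^m))$ and $B_m(x,q)=x^{-m}(1+O(q^m))$, whence $\det\bJ_m(x,q)=A_mB_{m+1}-A_{m+1}B_m\to x^{-1}-x$. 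Being constant in $m$, it equals $x^{-1}-x$ for $|q|<1$; this is nonzero for $x\ne\pm1$, which gives the ``fundamental solution'' claim, and the value carries over to $|q|>1$ via the extension in the next step.

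\emph{For the analytic extension~\eqref{Jmxqq}:} as in Section~\ref{sub.41q2}, where the series $G^{(j)}_m$ are continued from $|q|<1$ to $|q|>1$ by~\eqref{eq:Gmj-cn}, I would extend $A_m,B_m,C_m$ to $|q|>1$ by the reflection formulas $C_m(x,q^{-1})=C_{-m}(x,q)$, $A_m(x,q^{-1})=B_{-m}(x,q)$ and $B_m(x,q^{-1})=A_{-m}(x,q)$; here the sign $(-1)^j$ of~\eqref{eq:Gmj-cn} is replaced by the swap $A\leftrightarrow B$, which is natural since $B_m(x,q)=A_m(x^{-1},q)$, and one checks these formulas are consistent under iteration. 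Substituting them into $\bJ_m(x,q^{-1})$ then exhibits it entrywise as $P\,\bJ_{-m-1}(x,q)\,P$, where $P$ is the permutation matrix transposing coordinates $2$ and $3$, which is precisely the right-hand side of~\eqref{Jmxqq}.

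\emph{The main obstacle} is the creative-telescoping verification that $A_m(x,q)$ and $C_m(x,q)$ satisfy the asserted scalar $q$-recursions — i.e.\ producing the explicit $q$-WZ certificates — while entry matching, the determinant, and the reflection identities are essentially bookkeeping. A secondary but inessential subtlety (needed only for consistency with Section~\ref{sub.41q3}) is pinning down the renormalisation relating $\bJ_m(x,q)$ to $\bJ_m(q)$ as $x\to1$, which reconciles $\det\bJ_m(x,q)=x^{-1}-x$ with $\det\bJ_m(q)=-1$.
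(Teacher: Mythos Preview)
Your proposal is correct and follows essentially the same route as the paper's proof: verify the scalar recursions for $A_m,B_m,C_m$ directly from their $q$-hypergeometric definitions (with the boundary term in $C_m$ producing the inhomogeneity), reduce the determinant via the block form to the $2\times 2$ Wronskian, and derive~\eqref{Jmxqq} from the reflection identities $C_m(x,q^{-1})=C_{-m}(x,q)$, $A_m(x,q^{-1})=B_{-m}(x,q)$, $B_m(x,q^{-1})=A_{-m}(x,q)$. The one difference is in the determinant step: the paper simply cites the $2\times 2$ computation $\det\bJ^\RED_m(x,q)=x^{-1}-x$ from~\cite{GGM:peacock}, whereas you supply a self-contained argument by observing $\det A(x,q^m,q)=1$ and evaluating the $m\to+\infty$ limit --- a small but pleasant improvement in self-containment.
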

\begin{proof}
The proof is analogous to the proof of Theorem~\ref{thm.41bJ1}.
Equation~\eqref{41Jxqrec} follows quickly using the $q$-hypergeometric expressions and
noting that $C_{m}(x,q)$ has a boundary term so satisfies an inhomogenous version.
The block form again reduces the calculation of the determinant of $\bJ_{m}(x,q)$ to a
calculation of the determinant of $\bJ^\RED_{m}(x,q)$ given in \cite{GGM:peacock}.
Equation~\eqref{Jmxqq} follows from the symmetry of the $q$-hypergeometric functions
\be
\label{ABCqinv}
\begin{aligned}
C_{m}(x,q^{-1}) &=
  C_{-m}(x,q)
\\
A_{m}(x,q^{-1}) &=
B_{-m}(x,q)
\\
B_{m}(x,q^{-1}) &=
A_{-m}(x,q).
\end{aligned}
\ee
\end{proof}
The Appell-Lerch like sums again appear in the inverse of $\bJ_{m}(x,q)$. The proof
is again completely analogous to the proof of Theorem~\ref{thm.41bJ2}.

\begin{theorem}
\label{thm.41bJx2}  
We have
\be
\label{Jxqinv}
\bJ_{m}(x,q)^{-1}
=
\frac{1}{x^{-1}-x}\begin{pmatrix}
x^{-1}-x  & -LB_{m}(x,q) & LA_{m}(x,q)\\
0 & B_{m+1}(x,q) & -A_{m+1}(x,q)\\
0 & -B_{m}(x,q) & A_{m}(x,q)
\end{pmatrix} 
\ee
for the $q$-series $LA_m(x,q),LB_m(x,q)$ defined by
\be
\label{Lx12}
\begin{aligned}
LA_{m}(x,q) &=
A_{m+1}(x,q)C_{m}(x,q)-A_{m}(x,q)C_{m+1}(x,q)
\\
LB_{m}(x,q) &=
B_{m+1}(x,q)C_{m}(x,q)-B_{m}(x,q)C_{m+1}(x,q)
\end{aligned}
\ee
The $q$-series $LA_m(x,q),LB_m(x,q)$ are expressed in terms of Appell-Lerch type sums:
\be
\label{Lxm}
\begin{aligned} 
LA_{m}(x,q) &=
\sum_{k=0}^{\infty}(-1)^{k}
\frac{q^{k(k+1)/2+km+k}x^{k+m+1}}{(q;q)_{k}(x^{2}q;q)_{k}(1-xq^{k})} \\      
LB_{m}(x,q) &=
LA_{m}(x^{-1},q) \,.
\end{aligned}
\ee
\end{theorem}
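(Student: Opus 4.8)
The plan is to follow the proof of Theorem~\ref{thm.41bJ2} step by step, splitting the statement into a purely linear-algebraic part (the shape~\eqref{Jxqinv} of the inverse, with $LA_m$, $LB_m$ given by the determinantal combinations~\eqref{Lx12}) and an analytic part (the Appell--Lerch representations~\eqref{Lxm} of the two entries in the first row of $\bJ_m(x,q)^{-1}$).

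For the first part I would argue as follows. By Theorem~\ref{thm.41bJx1} the matrix $\bJ_m(x,q)$ is block upper triangular with a single $1$ in the top-left corner and bottom-right $2\times 2$ block $\bJ^\RED_m(x,q)$, and $\det\bJ_m(x,q)=\det\bJ^\RED_m(x,q)=x^{-1}-x$. Inverting the $2\times 2$ block by Cramer's rule gives $\frac{1}{x^{-1}-x}$ times its adjugate, which is the lower-right block of~\eqref{Jxqinv}; then in the block upper triangular inverse the $(1,2)$ and $(1,3)$ entries are forced to equal $-(C_m(x,q),C_{m+1}(x,q))\,\bJ^\RED_m(x,q)^{-1}$, and expanding this product reproduces exactly $\frac{-1}{x^{-1}-x}LB_m(x,q)$ and $\frac{1}{x^{-1}-x}LA_m(x,q)$ with $LA_m$, $LB_m$ as in~\eqref{Lx12}. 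This step is formal and uses only the determinant computed in Theorem~\ref{thm.41bJx1}.

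For the second part I would first turn the matrix $q$-difference equation~\eqref{41Jxqrec} into a recursion for $LA_m$, $LB_m$. Inverting~\eqref{41Jxqrec} gives $\bJ_{m+1}(x,q)^{-1}=A(x,q^m,q)^{-1}\bJ_m(x,q)^{-1}$, and since $A(x,q^m,q)$ has first column $(1,0,0)^{T}$, first row $(1,0,1)$, and a companion-type lower-right block, its inverse has first column $(1,0,0)^{T}$ and first row $(1,1,0)$. Reading off the $(1,2)$ and $(1,3)$ entries of this matrix identity yields the first-order inhomogeneous $q$-difference equations
\[
LA_{m-1}(x,q)-LA_m(x,q)=A_m(x,q),\qquad LB_{m-1}(x,q)-LB_m(x,q)=B_m(x,q).
\]
Let $\mathcal{LA}_m(x,q)$ be the right-hand side of the first line of~\eqref{Lxm}. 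A one-line telescoping shows it satisfies the same recursion: in $\mathcal{LA}_{m-1}-\mathcal{LA}_m$ the $k$-th summand acquires the numerator factor $q^{k(m-1)}x^{k+m}(1-xq^k)$, the factor $1-xq^k$ cancels the one in the denominator of~\eqref{Lxm}, and what remains is precisely the $k$-th term of $A_m(x,q)$ in~\eqref{ABC}. Hence $\mathcal{LA}_m-LA_m$ is independent of $m$. To see it vanishes, restrict to $|q|<1$ and $0<|x|<1$ with $x$ avoiding the poles of the summands, and let $m\to+\infty$: term by term $A_m(x,q)\to 0$ (bounded by $|x|^m$ times a convergent series), $C_m(x,q)\to\frac{1}{(1-x^{-1})(1-x)}$, so $LA_m=A_{m+1}C_m-A_mC_{m+1}\to 0$, while $\mathcal{LA}_m\to 0$ (bounded by $|x|^{m+1}$ times a convergent series). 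Thus $LA_m(x,q)=\mathcal{LA}_m(x,q)$ on this domain, hence everywhere by analytic continuation, the case $|q|>1$ being covered by the analytic extensions in Theorem~\ref{thm.41bJx1}. Finally, since $(x^{-1};q)_{k+1}(x;q)_{k+1}$ is invariant under $x\mapsto x^{-1}$ we have $C_m(x^{-1},q)=C_m(x,q)$, so $LB_m(x,q)=LA_m(x^{-1},q)$ already at the level of~\eqref{Lx12}; substituting $x\mapsto x^{-1}$ into the identity just proved gives the second line of~\eqref{Lxm} for free.

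The routine parts are the linear algebra and the telescoping identity $\mathcal{LA}_{m-1}-\mathcal{LA}_m=A_m(x,q)$, whose only real content is that the denominator factor $1-xq^k$ is exactly what makes the telescoped sum collapse onto $A_m(x,q)$. The main thing to be careful about is the pinning-down step: justifying the $m\to+\infty$ limits (dominated convergence for the tails, using $|q|<1$) and the passage between the regions $|x|<1$ and $|x|>1$ needed for the $LB$-identity. This is, however, the same bookkeeping as in the proof of Theorem~\ref{thm.41bJ2}, with the simplification that here no summand of $C_m(x,q)$ grows with $m$, so the difference $\mathcal{LA}_m-LA_m$ tends to $0$ rather than requiring a cancellation of linearly growing terms.
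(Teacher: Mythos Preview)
Your proof is correct and follows essentially the same approach as the paper's: the block-triangular linear algebra, the inversion of~\eqref{41Jxqrec} to obtain the first-order inhomogeneous recursions $LA_{m-1}-LA_m=A_m$ and $LB_{m-1}-LB_m=B_m$, the telescoping verification that the Appell--Lerch candidate satisfies the same recursion, and the $m\to\infty$ limit to pin down the constant are all exactly as in the paper. Your use of the symmetry $C_m(x^{-1},q)=C_m(x,q)$ and $B_m=A_m(x^{-1},q)$ to deduce the $LB_m$ identity from the $LA_m$ one is a small shortcut relative to the paper, which instead remarks that the parallel argument and analytic continuation cover $LB_m$.
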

\begin{proof}
  Given the block form of $\bJ_{m}(x,q)$ and the determinant calculated previously in
  Theorem~\ref{thm.41bJx1}, Equation~\eqref{Lx12} follows from taking the matrix inverse.
Observe
that again $A(x;q^m,q)$ has first column $(1,0,0)^t$ and
first row $(1,0,1)$. It follows that its inverse matrix has first column $(1,0,0)^t$
and first row $(1,1,0)$. This, together with \eqref{41Jxqrec}, implies that
\begin{equation}
\begin{split}
\bJ_{m+1}(x,q)^{-1}
&= A(x,q^m,q)^{-1}
\bJ_{m}(x,q)^{-1}\\
&=
\begin{pmatrix}
1 & 1 & 0\\
0 & x+x^{-1}-q^{m+1} & 1\\
0 & -1 & 0\\
\end{pmatrix}
\bJ_{m}(x,q)^{-1}
\end{split}
\end{equation}
which implies that $LA_m(x,q),LB_{m}(x,q)$ satisfy the first order inhomogeneous linear
$q$-difference equation
\begin{equation}
\begin{split}
LA_{m-1}(x,q)-LA_{m}(x,q) =
A_{m}(x,q),\\
LB_{m-1}(x,q)-LB_{m}(x,q) =
B_{m}(x,q)\,.
\end{split}
\end{equation}
Let $\calL A_m(x,q)$ denote the right-hand side of the first Equation~\eqref{Lxm}.  
Then we have
\[
\calL A_{m-1}(x,q)-\calL A_{m}(x,q)
=
\sum_{k=0}^{\infty}(-1)^{k}
\frac{q^{k(k+1)/2+km}x^{k+m}(1-xq^{k})}{(q;q)_{k}(x^{2}q;q)_{k}(1-xq^{k})}
=
A_{m}(x,q).
\]
Therefore $\calL A_{m}(x,q)-LA_{m}(x,q)$ is independent of $m$. Moreover,
$\lim_{m \to \infty} \calL A_{m}(x,q)-LA_{m}^{(0)}(x,q)=0$ for $|q|,|x|<1$ or
$\lim_{m \to -\infty} \calL A_{m}(x,q)-LA_{m}^{(0)}(x,q)=0$ for $|q|,|x|>1$.
Equations~\eqref{Lxm} follows from analytic continuation.
\end{proof}
Now if we take the inverse of $\bJ_{m}(x,q)^{-1}$ we can get similar identities for
$C_{m}(x,q)$.
\begin{corollary}
\begin{align}
\label{Cm}  
C_{m}(x,q)&=
\frac{1}{x^{-1}-x}\left(A_{m}(x,q)LB_{m}(x,q)-B_{m}(x,q)LA_{m}(x,q)\right)\\
\label{Cmp1}  
C_{m+1}(x,q)&=
\frac{1}{x^{-1}-x}\left(A_{m+1}(x,q)LB_{m}(x,q)-B_{m+1}(x,q)LA_{m}(x,q)\right)\,.
\end{align}
\end{corollary}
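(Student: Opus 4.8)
The statement should follow immediately by matrix inversion. By Theorem~\ref{thm.41bJx2} the matrix displayed in~\eqref{Jxqinv} is, by construction, the inverse of $\bJ_m(x,q)$, so inverting it again must return $\bJ_m(x,q)$, whose $(1,2)$ and $(1,3)$ entries are precisely $C_m(x,q)$ and $C_{m+1}(x,q)$. My plan is to carry out this inversion exploiting the block-triangular shape. Both $\bJ_m(x,q)$ and the matrix in~\eqref{Jxqinv} have the form $\bigl(\begin{smallmatrix}1 & w\\ 0 & N\end{smallmatrix}\bigr)$ with $w$ a row $2$-vector and $N$ a $2\times 2$ block; for $\bJ_m(x,q)$ one has $N=\bJ^\RED_m(x,q)=\bigl(\begin{smallmatrix}A_m&A_{m+1}\\ B_m&B_{m+1}\end{smallmatrix}\bigr)$ and $w=(C_m,C_{m+1})$ (suppressing the arguments $(x,q)$ throughout), while from~\eqref{Jxqinv} the corresponding data for $\bJ_m(x,q)^{-1}$ are $N'=\tfrac{1}{x^{-1}-x}\bigl(\begin{smallmatrix}B_{m+1}&-A_{m+1}\\ -B_m&A_m\end{smallmatrix}\bigr)$ and $w'=\tfrac{1}{x^{-1}-x}(-LB_m,\,LA_m)$.

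I would then apply the block-inversion identity $\bigl(\begin{smallmatrix}1&w'\\0&N'\end{smallmatrix}\bigr)^{-1}=\bigl(\begin{smallmatrix}1&-w'(N')^{-1}\\0&(N')^{-1}\end{smallmatrix}\bigr)$. First, $(N')^{-1}=\bJ^\RED_m(x,q)$: the $2\times2$ adjugate of $\bigl(\begin{smallmatrix}B_{m+1}&-A_{m+1}\\ -B_m&A_m\end{smallmatrix}\bigr)$ is $\bigl(\begin{smallmatrix}A_m&A_{m+1}\\ B_m&B_{m+1}\end{smallmatrix}\bigr)$ and its determinant is $A_mB_{m+1}-A_{m+1}B_m=x^{-1}-x$ by the determinant identity $\det\bJ^\RED_m(x,q)=x^{-1}-x$ used in the proof of Theorem~\ref{thm.41bJx1}, so the two factors of $x^{-1}-x$ cancel. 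Consequently the top row of $\bJ_m(x,q)=\bigl(\bJ_m(x,q)^{-1}\bigr)^{-1}$ is
\[
  (C_m,\ C_{m+1})=-w'(N')^{-1}=\frac{1}{x^{-1}-x}\bigl(A_m\,LB_m-B_m\,LA_m,\ \ A_{m+1}\,LB_m-B_{m+1}\,LA_m\bigr),
\]
which is exactly~\eqref{Cm} and~\eqref{Cmp1}.

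A slicker variant avoids inverting anything: since $\bJ_m(x,q)^{-1}\bJ_m(x,q)=I_3$, I would pair the first row of~\eqref{Jxqinv}, namely $\tfrac{1}{x^{-1}-x}(x^{-1}-x,\,-LB_m,\,LA_m)$, with the second and third columns $(C_m,A_m,B_m)^t$ and $(C_{m+1},A_{m+1},B_{m+1})^t$ of $\bJ_m(x,q)$, set each product to $0$, and solve for $C_m$ and $C_{m+1}$. Either way there is no genuine obstacle: the corollary is an elementary consequence of Theorems~\ref{thm.41bJx1} and~\ref{thm.41bJx2}. The only points requiring care are the signs in the $2\times2$ adjugate and the bookkeeping of the global factor $x^{-1}-x$, and in particular the use of $A_mB_{m+1}-A_{m+1}B_m=x^{-1}-x$, the Wronskian-type identity for $A_m,B_m$ that underlies the evaluation $\det\bJ^\RED_m(x,q)=x^{-1}-x$ recorded in~\cite{GGM:peacock}.
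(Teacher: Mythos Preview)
Your proposal is correct and matches the paper's approach exactly: the corollary is stated as what one gets by taking the inverse of $\bJ_m(x,q)^{-1}$, and your block-triangular inversion (or equivalently your row-times-column variant) is precisely this computation.
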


\subsection{Borel resummation and Stokes constants}
\label{sub.41borelx}

In this section we extend the discussion in Section~\ref{sub.41borel}
to include $x$-deformation. We analyse the asymptotic expansion as
$q = \re^{2\pi\ri\tau}$ and $\tau\rightarrow 0$ of the $(x,q)$-series
presented in Section~\ref{sub.41qx} and relate them to the
$(x,\tau)$-asymptotic series given in Section~\ref{sub.41Phi0xq}.  For
this purpose, it is more convenient to introduce the decorated
$(x,q)$-series
\begin{equation}
  \begin{aligned}
    \mc{C}_m(x,q) =
    &C_m(x,q),\\
    \mc{A}_m(x,q) =
    &\frac{(qx^2;q)_\infty}{\th(-q^{1/2}x,q)}A_m(x,q),\\
    \mc{B}_m(x,q) =
    &x\frac{(qx^{-2};q)_\infty}{\th(-q^{1/2}x^{-1},q)}B_m(x,q),
  \end{aligned}
\end{equation}
where
\begin{equation}
  \theta(x,q) = (-q^{1/2}x;q)_\infty(-q^{1/2}x^{-1};q)_\infty.
\end{equation}
The associated decorated matrix $\mc{J}(x,q)$ is given by
\begin{align}
  \calJ_{m}(x,q)
  =
  &\begin{pmatrix}
    1 & \mathcal{C}_{m}(x,q) & \mathcal{C}_{m+1}(x,q)\\
    0 & \mathcal{A}_{m}(x,q) & \mathcal{A}_{m+1}(x,q)\\
    0 & \mathcal{B}_{m}(x,q) & \mathcal{B}_{m+1}(x,q)
  \end{pmatrix}\nn
=
&\begin{pmatrix}
1 & 0 & 0\\
0 & \frac{(qx^{2};q)_{\infty}}{\theta(-q^{1/2}x;q)^{2}} & 0\\
0 & 0 & x\frac{(qx^{-2};q)_{\infty}}{\theta(-q^{1/2}x^{-1};q)^{2}}
\end{pmatrix}
\bJ_{m}(x,q)
\end{align}
and it has
\begin{equation}
  \det\mc{J}(x,q):= \det\mc{J}_m(x,q) = \theta(-q^{-1/2}x^2,q)
  \theta(-q^{1/2}x;q)^{-2}\theta(-q^{1/2}x^{-1},q)^{-2}.
\end{equation}
We will focus on the vector $\mc{B}(x,q)$
of $(x,q)$-series 
\begin{equation}
  \mc{B}(x,q) =
  \begin{pmatrix}
    \mc{C}_0(x,q)\\
    \mc{A}_0(x,q)\\
    \mc{B}_0(x,q)
  \end{pmatrix},
\end{equation}
which is defined for $|q| \neq 1$ and satisfies
by
\begin{equation}
  \mc{B}(x,q^{-1}) =
  \begin{pmatrix}
    1&0&0\\
    0&0&x \det\mc{J}(x,q)^{-1}\\
    0&-x\det\mc{J}(x,q)^{-1}&0
  \end{pmatrix} \mc{B}(x,q).
\end{equation}
We will write
\begin{equation}
\label{xq}
q=e^{2\pi\ri \tau}, \qquad x=e^u
\end{equation}
and we will show that the
asymptotic expansion of $\mc{B}(x,q)$ in the limit $\tau \to 0$ is
related to the vector $\Phi(x,\tau)$ of $(x,\tau)$ asymptotic series
\begin{equation}
    \Phi(x,\tau) =
  \begin{pmatrix}
    \Phi^{(\s_0)}(x,\tau)\\
    \Phi^{(\s_1)}(x,\tau)\\
    \Phi^{(\s_2)}(x,\tau)
  \end{pmatrix}
\end{equation}
with corrections given by $\mc{B}(\tx,\tq)$ where
\be
\label{xqt}
\tq=e^{-2\pi\ri/\tau}, \qquad \tx=e^{u/\tau} \,.
\ee

\begin{figure}[htpb!]
  \centering%
  \subfloat[$\Lambda^{\s_0}(x)$]
  {\includegraphics[height=6cm]{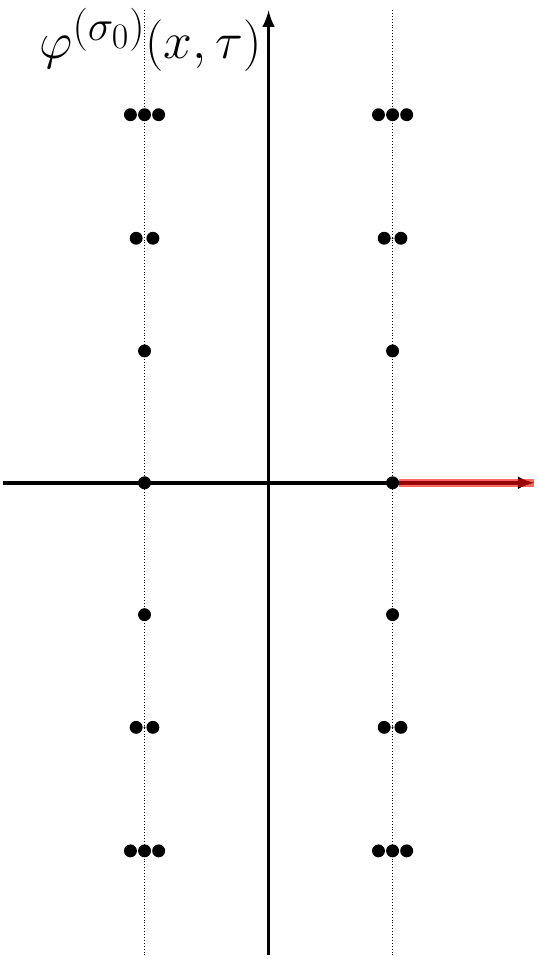}}%
  \hspace{4ex}
  \subfloat[$\Lambda^{\s_0}(x)$]
  {\includegraphics[height=6cm]{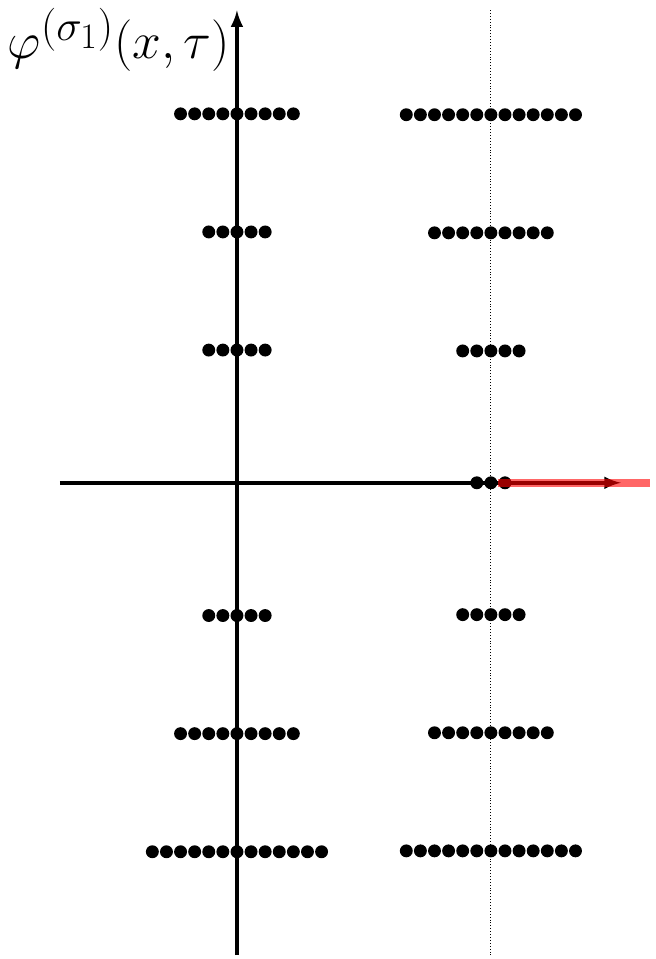}}%
  \hspace{4ex}
  \subfloat[$\Lambda^{\s_0}(x)$]
  {\includegraphics[height=6cm]{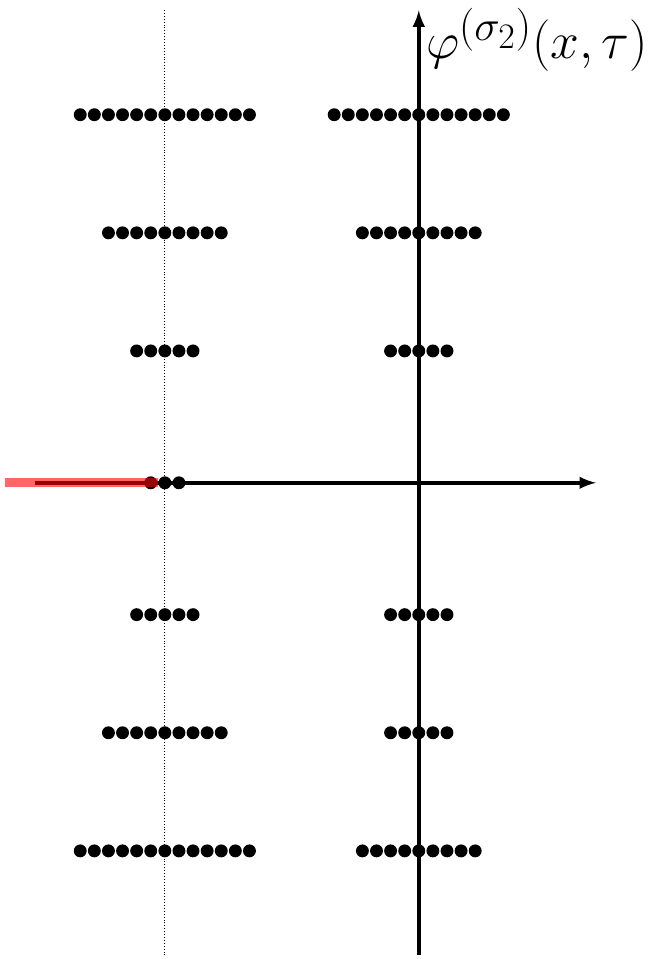}}%
  \caption{Singularities of the Borel transforms of
    $\varphi^{(\s_j)}(x,\tau)$ for $j=0,1,2$ of the knot
    $\knot{4}_1$.}
  \label{fg:41x_brplane}
\end{figure}

The asymptotic series $\Phi(x,\tau)$ can be resummed by Borel
resummation.  As we have explained in Section~\ref{sub.41borel} the
value of the Borel resummation depends on the singularities of the
Borel transform of $\Phi(x,\tau)$.  The positions of these singular
points, denoted collectively as $\Lambda(x)$, are smooth functions of
$x$, and in the limit $x=1$ they are equal to $\Lambda$ defined in
\eqref{eq:Lmb}.  When $x$ is near~1, which is the regime we will be
interested in, each singular point $\iota_{i,j}^{(k)}$ in $\Lambda$
splits to a finite set of points located at
$\iota_{i,j}^{(k,\ell)}:=\iota_{i,j}^{(k)}+\ell\log(x)$, $\ell\in\IZ$,
i.e.~they are aligned on
a line and are apart from each other by a distance $\log(x)$.  We
illustrate this schematically in Fig.~\ref{fg:41x_brplane}. The
complex plane of $\tau$ is divided to infinitely many cones by rays
passing through these singular points, and the Borel resummation of
$\Phi(x,\tau)$, denoted by $s_R(\Phi)(x,\tau)$, is only well-defined
within a cone $R$.

We conjecture that the asymptotic expansion in the limit $q\rightarrow
1$ of the vector of $(x,q)$-series $\mc{B}(x,q)$ can be expressed in
terms of $s_R(\Phi)(x,\tau)$. Furthermore, in each cone, the asymptotic
expansion can be upgraded to exact identities between $\mc{B}(x,q)$
and linear transformation of Borel resummation of $\Phi(x,\tau)$ up to
exponentially small corrections characterised by $\tq$ and $\tx =
\exp(\frac{\log x}{\tau})$.

\begin{conjecture}
  \label{conj-exact-ra-x}
  For every $x \sim 1$, every cone $R \subset \BC\setminus\Lambda(x)$
  and every $\tau \in R$ we have
  \begin{equation}
    \label{eq:GMsPhix}
    \Delta'(x,\tau) \mc{B}(x,q) = M_R(\tx,\tq)
    \Delta(x,\tau)s_R(\Phi)(x,\tau),
  \end{equation}
  where 
  \begin{equation}
    \begin{aligned}
      &\Delta'(x,\tau)
      =\diag(\tau^{1/2}\tfrac{x^{1/2}-x^{-1/2}}{\tx^{1/2}-\tx^{-1/2}},
      (\tx/x)^{1/2}\re^{\frac{3\pi\ri}{4}-\frac{\pi\ri}{4}(\tau+\tau^{-1})},
      (\tx/x)^{1/2}\re^{\frac{3\pi\ri}{4}-\frac{\pi\ri}{4}(\tau+\tau^{-1})}),\\
      &\Delta(x,\tau) = \diag(\tau^{1/2}
      \tfrac{x^{1/2}-x^{-1/2}}{\tx^{1/2}-\tx^{-1/2}},1,1),
      \end{aligned}
  \end{equation}
  and $M_R(\tx,\tq)$ is a $3\times 3$ matrix of $\tq$ (resp., $\tq^{-1}$)-series 
  if $\Im\tau>0$ (resp., $\Im\tau<0$) with coefficients in $\IZ[\tx^{\pm 1}]$
  that depend on $R$.
\end{conjecture}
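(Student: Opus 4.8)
Toward Conjecture~\ref{conj-exact-ra-x}, the plan is to extend to the $x$-variable, in three steps, the chain of arguments that yielded the $x=1$ statements (Conjectures~\ref{conj-exact-ra} and~\ref{conj-exact-regions}), using as the new ingredient an $x$-deformation of the state-integral $\mc{Z}(\tau)$ of~\eqref{itau-def} and of its descendants. \textbf{Step 1 (bilinear factorisation).} First I would $x$-deform the matrix of state-integrals of Theorem~\ref{thm.2}: set $W_{S,\lambda,\mu}(x,\tau)=\bJ_\lambda(x,\tq)^{-1}\Delta''(x,\tau)\bJ_\mu(x,q)$ for a diagonal matrix $\Delta''(x,\tau)$ of automorphy factors (powers of $\tau$ together with $\theta$-quotients coming from the decoration $\calJ_m(x,q)$), and check via the $q$-difference equation~\eqref{41Jxqrec}, exactly as in Section~\ref{sub.AK41}, that its domain is independent of $\lambda$ and $\mu$. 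One then proves that $W_{S,\lambda,\mu}(x,\tau)$ continues holomorphically to $\tau\in\BC'$ by identifying its entries with the $x$-deformed Andersen--Kashaev state-integrals $Z_{\lambda,\mu}(x,\tau)$ and with the $x$-deformation of the new state-integrals $\mc{Z}_{\lambda,\mu}(x,\tau)$ of~\eqref{inst-def}; the required bilinear factorisation into the series $C_m(x,q),A_m(x,q),B_m(x,q)$ and their Appell--Lerch companions $LA_m(x,q),LB_m(x,q)$ is obtained, as in Theorem~\ref{thm-newst}, by closing the contour downwards and summing residues — the triple poles producing the $C_m$-column, the double poles the $A_m,B_m$-columns, and the modular transformation of $E_2$ being used to assemble the answer.

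\textbf{Step 2 (resurgence and Borel sums).} The essential input is that the Borel transforms of $\varphi^{(\s_j)}(x,\tau)$, $j=0,1,2$, are endlessly continuable with singularities exactly on the peacock set $\Lambda(x)$, and that for $x\sim1$ the median Borel resummations $s_R(\Phi^{(\s_j)})(x,\tau)$ are related to the state-integrals of Step~1 by explicit constant linear combinations — the $x$-deformations of~\eqref{smed12} and~\eqref{itau-borel}. I expect this to be carried out by a global saddle-point / Picard--Lefschetz analysis of the convergent integrals $Z_{\lambda,\mu}(x,\tau)$ and $\mc{Z}_{\lambda,\mu}(x,\tau)$: deforming the contour through the critical points of the integrand expresses the integral as an integer-coefficient sum of Lefschetz thimbles whose contributions are precisely the Borel sums $s(\Phi^{(\s_j)})$, while the poles swept during the deformation supply the $\tx,\tq$-corrections. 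For the trivial connection the relevant extra critical point is the one created by the $\tanh$ (equivalently $1/\sinh$) factor in the integrand of $\mc{Z}$, which is absent in the Andersen--Kashaev integral; this is exactly why $\mc{Z}$, and not $Z$, sees $\Phi^{(\s_0)}$.

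\textbf{Step 3 (assembly).} Substituting the factorisations of Step~1 into the identities of Step~2 and using $\det\bJ_m(x,q)=x^{-1}-x$ together with the explicit inverse~\eqref{Jxqinv} to solve for the column $\mc{B}(x,q)$ of the decorated matrix, one obtains an identity of the shape $\Delta'(x,\tau)\mc{B}(x,q)=D(\tx,\tq)\,\bJ_{-1}(\tx,\tq)\,N_R\,\Delta(x,\tau)\,s_R(\Phi)(x,\tau)$ for a constant matrix $N_R$ and a diagonal $\theta$-decoration $D(\tx,\tq)$; matching against the four distinguished cones $R=I,II,III,IV$ then fixes $M_R(\tx,\tq)=D(\tx,\tq)\bJ_{-1}(\tx,\tq)N_R$, exactly as in the $x=1$ case. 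An arbitrary cone is reached by composing with the Stokes automorphisms~\eqref{eq:S-glob}--\eqref{eq:S-fac}, so that $M_R$ differs from $M_{R'}$ by factors $M_{R'}(\tx,\tq)^{-1}M_R(\tx,\tq)$; and integrality of the entries of $M_R$ in $\IZ[\tx^{\pm1}]$ follows, once the $\theta$-denominators are cleared, from the integrality of the connection matrices of~\eqref{41Jxqrec} combined with the $q\to q^{-1}$ symmetry~\eqref{Jmxqq}.

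\textbf{Main obstacle.} The bottleneck is Step~2: no proof of resurgence of the $\Phi^{(\s_j)}(x,\tau)$ is known, even at $x=1$ and even for $\knot{4}_1$, so one must either develop an exact-WKB / Borel-summability theory for the third-order linear $q$-difference equation~\eqref{41Jxqrec} and its $\tau\to0$ confluent limit — establishing endless continuability and the Stokes data directly — or, as was effectively done for $\s_1,\s_2$ in~\cite{GGM:resurgent}, establish the state-integral $=$ Borel-sum identity unconditionally by global saddle-point analysis and use it to \emph{define} the resurgent structure. A further difficulty peculiar to the $x$-deformation is that $\Phi^{(\s)}(x,\tau)$ is defined only near $x\sim1$: as $x$ moves, the peacock singularities $\iota_{i,j}^{(k,\ell)}=\iota_{i,j}^{(k)}+\ell\log x$ collide and the labelling of the branches of the ramified covering $S\to\BC^*$ changes under $x$-monodromy, so the identity can be proved only in a neighbourhood of $x=1$, the $x$-monodromy question being left open.
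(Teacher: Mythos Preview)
The statement you are attempting to prove is a \emph{conjecture} in the paper, and the paper offers no proof of it. The authors state explicitly in Section~\ref{sub.challenge} that ``a proof of resurgence of the asymptotic series $\Phi^{(\s)}(h)$ is \emph{still} missing'', and the corresponding $x=1$ statement (Conjecture~\ref{conj-exact-ra}) is likewise unproved. What the paper does is: (i) prove the bilinear factorisation of the $x$-deformed state-integral $\mc{Z}(u,\tau)$ into the series $\mc{A}_m,\mc{B}_m,\mc{C}_m,L\mc{A}_m,L\mc{B}_m$ by residue calculus (equation~\eqref{eq:Ixq}), (ii) state the explicit matrices $M_R(\tx,\tq)$ for $R=I,II,III,IV$ as a further conjecture, and (iii) verify the identity~\eqref{eq:GMsPhix} numerically. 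The identities~\eqref{eq:Isphi-1}--\eqref{eq:Isphi-3} relating $\mc{Z}(u,\tau)$ to Borel sums are \emph{derived from} the conjecture by inverting $M_R$, not proved independently.

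Your roadmap is a reasonable sketch of what a proof would have to contain, and you correctly locate the obstruction in Step~2. One correction, however: you write that the state-integral $=$ Borel-sum identity ``was effectively done for $\s_1,\s_2$ in~\cite{GGM:resurgent}'' unconditionally by global saddle-point analysis. This overstates what is known: in~\cite{GGM:resurgent,GGM:peacock} the analogous identities for $\s_1,\s_2$ are also conjectural, supported by high-precision numerics rather than by a rigorous Lefschetz-thimble argument. So Step~2 is not merely the bottleneck for the trivial connection --- it is open across the board, and your proposed alternative (exact WKB for the third-order $q$-difference equation) would itself be a substantial new result. In short: your Step~1 is essentially what the paper proves (and your description of it is accurate), your Step~3 is straightforward algebra, but Step~2 is genuinely open and neither you nor the paper closes it.
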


\begin{figure}[htpb!]
  \centering
  \includegraphics[height=7cm]{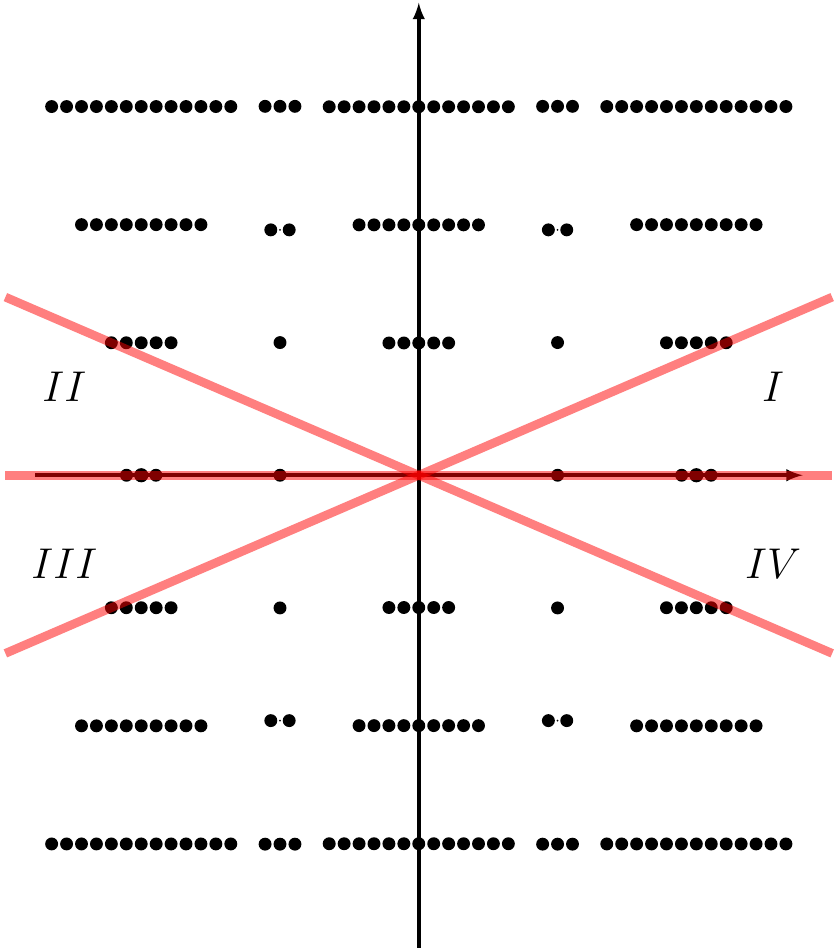}
  \caption{Stokes rays and cones in the $\tau$-plane for the 3-vector
    $\Phi(x,\tau)$ of asymptotic series of the knot $\knot{4}_1$.}
  \label{fg:41x_rays_m}
\end{figure}

To illustrate examples of $M_R(\tx,\tq)$, we pick four of these cones,
located slightly above and below the positive or negative real axis,
labeled in counterclockwise direction by $I,II,III,IV$, cf.~Fig.~\ref{fg:41x_rays_m}.
\begin{conjecture}
  Equation \eqref{eq:GMsPhix} holds in the cones $R=I,II,III,IV$ where
  the matrices $M_R(\tx,\tq)$ are given in terms of
  $\mc{J}_{-1}(\tx,\tq)$ as follows 
  \begin{subequations}
    \begin{align}
      M_I(\tx,\tq) &= \mc{J}_{-1}(\tx,\tq)
      \begin{pmatrix}
        1&0&0\\
        0&0&-1\\
        0&1&-1
      \end{pmatrix}, & |\tq|<1,\\
      M_{II}(\tx,\tq) &=
        \begin{pmatrix}
          1&0&0\\
          0&0&1\\
          0&1&0
        \end{pmatrix}
        \mc{J}_{-1}(\tx,\tq)
      \begin{pmatrix}
        1&0&0\\
        0&1&0\\
        0&1&-1
      \end{pmatrix}, & |\tq|<1,\\
      M_{III}(\tx,\tq) &=
        \begin{pmatrix}
          1&0&0\\
          0&0&-1\\
          0&-1&0
        \end{pmatrix}
        \mc{J}_{-1}(\tx,\tq)
      \begin{pmatrix}
        1&\tx^{-1}-1&0\\
        0&-1&0\\
        0&\tx+\tx^{-1}&1
      \end{pmatrix}, & |\tq|>1,\\
      M_{IV}(\tx,\tq) &= \mc{J}_{-1}(\tx,\tq)
      \begin{pmatrix}
        1&0&\tx^{-1}-1\\
        0&0&-1\\
        0&1&\tx+\tx^{-1}
      \end{pmatrix}, & |\tq|>1.
    \end{align}
  \end{subequations}
\end{conjecture}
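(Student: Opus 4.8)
This is a conjecture rather than a theorem; what follows is the route by which we expect it to be established, conditional on the resurgence of $\Phi^{(\s_0)}$ and on the precise determination of its Stokes constants.

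\textbf{Step 1: the $x$-deformed factorisation of state-integrals.} The first task is to prove the $x$-deformed analogues of Theorems~\ref{thm-newst} and~\ref{thm.2}. Concretely, one introduces the $(x,q)$-deformation of the new Kashaev-type integral $\mc{Z}_{\lambda,\mu}(\tau)$ of~\eqref{inst-def}, carrying the monodromy variable $x$ through the $\tanh$-kernel and the quantum dilogarithm, and applies the residue theorem exactly as in the proof of Theorem~\ref{thm-newst}: closing the contour downwards, the double poles of the Faddeev dilogarithm produce the bilinear blocks built from $A_m,B_m$ and their $\tq$-versions, while the triple poles (double from the dilogarithm, simple from the $\tanh$) produce the new series $C_m$; the modular anomaly $E_2(\tq)=\tau^2\bigl(E_2(q)+12/(2\pi\ri\tau)\bigr)$ again enters and is absorbed into the decorations. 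The outcome is that the matrix $W_{S,\lambda,\mu}(x,\tau):=\mc{J}_\lambda(\tx,\tq)^{-1}\,D(x,\tau)\,\mc{J}_\mu(x,q)$, for a suitable diagonal matrix $D(x,\tau)$ of automorphy factors built from the decorations of $\mc{C}_m,\mc{A}_m,\mc{B}_m$, equals a matrix of these $(x,q)$-state-integrals up to left multiplication by automorphy factors, and in particular extends holomorphically to $\tau\in\BC'$. The decorations are chosen precisely so that this holds; as in Theorem~\ref{thm-newst}, the delicate bookkeeping is the triple-pole contribution and the modular behaviour of $E_2$.

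\textbf{Step 2: state-integrals as median resummations.} Next one identifies $W_{S,\lambda,\mu}(x,\tau)$ with a matrix of median Borel resummations of $\Phi(x,\tau)$. For the bottom $2\times 2$ block this is the $x$-deformation of the result of~\cite{GGM:peacock} (the descendant Andersen--Kashaev state-integrals are the median resummations of $\Phi^{(\s_1)},\Phi^{(\s_2)}$). For the two first-row entries involving $\s_0$, one uses the $x$-deformed form of $\mc{Z}(\tau)=s_{\mathrm{med}}(\Phi^{(\s_0)})(\tau)$ of~\eqref{nsi-median}, together with the factorisation of Step~1. Combining these gives an exact identity $D(x,\tau)\,\mc{J}_\mu(x,q)=\mc{J}_\lambda(\tx,\tq)\,P(x,\tau)\,s_{\mathrm{med}}(\Phi)(x,\tau)$ for an explicit diagonal $P(x,\tau)$ of automorphy factors.

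\textbf{Step 3: extracting the vector identity and converting to lateral resummations.} Since $\mc{B}(x,q)$ is the second column of $\mc{J}_0(x,q)$, equivalently the last column of $\mc{J}_{-1}(x,q)$, one specialises Step~2 to $\mu=0$, $\lambda=-1$ and reads off the relevant column, using the $q$-difference equation~\eqref{41Jxqrec} to make the index shifts explicit. This produces $\Delta'(x,\tau)\,\mc{B}(x,q)=\mc{J}_{-1}(\tx,\tq)\,N(x,\tau)\,\Delta(x,\tau)\,s_{\mathrm{med}}(\Phi)(x,\tau)$ for an explicit $N$. Finally one converts $s_{\mathrm{med}}$ into the lateral resummation $s_R$ in each cone $R=I,II,III,IV$ by composing with the appropriate Stokes automorphisms, exactly as in the undeformed computation of Section~\ref{sub.41borel} (equations~\eqref{eq:S12}, \eqref{eq:S34} and the surrounding discussion); the integers there become Laurent polynomials in $\tx$, and the constant matrices sandwiching $\mc{J}_{-1}(\tx,\tq)$ in the four displayed formulas are precisely these Stokes data, so that \eqref{eq:GMsPhix} follows with the stated $M_R(\tx,\tq)$. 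The passage from the cones $I,II$ (where $|\tq|<1$) to $III,IV$ (where $|\tq|>1$) crosses $|q|=1$ and uses the analytic continuation~\eqref{Jmxqq} of $\mc{J}_m$ together with the corresponding continuation formula for $\mc{B}$.

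\textbf{Main obstacle.} The serious gap is that the statement is \emph{exact}, not merely asymptotic, and it rests on two inputs that are presently conjectural: the resurgence of $\Phi^{(\s_0)}(x,\tau)$ --- equivalently the identity $\mc{Z}(x,\tau)=s_{\mathrm{med}}(\Phi^{(\s_0)})(x,\tau)$, the $x$-deformation of~\eqref{nsi-median} and~\eqref{itau-borel} --- and the precise value of every Stokes constant entering the first row, i.e.\ that the correction in each cone is \emph{exactly} the displayed $\tq$-series with coefficients in $\BZ[\tx^{\pm 1}]$. Proving these would require a full resurgent analysis of the new integral $\mc{Z}(x,\tau)$: a description of the singularities of its Borel transform along the rays through $\Lambda(x)$, together with the associated Stokes discontinuities. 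This is precisely the content that the factorisation techniques of Step~1, which only reach the bottom block, do not supply; it is the heart of Conjecture~\ref{conj-exact-ra-x} and of the resurgence conjecture for $\Phi^{(\s_0)}$.
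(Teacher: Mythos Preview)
The paper does not prove this statement; it is explicitly a Conjecture, supported only by high-precision numerical verification of both sides of~\eqref{eq:GMsPhix} for specific values of $x$ and $\tau$ in each of the four cones. Your proposal correctly recognises this and offers a conditional roadmap rather than a proof, with the main obstacle clearly identified.

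The route you sketch is essentially the reverse of the paper's logical flow. The paper \emph{assumes} the conjecture and then, in Section~\ref{sub.41statex}, uses it to derive the identification of the new state-integral $\mc{Z}(u,\tau)$ with Borel resummations (equations~\eqref{eq:Isphi-1}--\eqref{eq:Isphi-3}); you propose instead to establish those identifications independently and deduce the conjecture from them. Both directions hinge on the same unresolved resurgence input for $\Phi^{(\s_0)}(x,\tau)$, so neither is currently a proof, but your framing makes the dependence structure more transparent.

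One technical correction to your Step~1: in the $x$-deformed case with $u\neq 0$ the pole structure of the integrand changes qualitatively (cf.~\eqref{eq:PZ41x}). The $\tanh$ poles at $-\ri\bb(\tfrac12+m)$ are now \emph{disjoint} from the quantum-dilogarithm poles at $v^\pm_{m,n}=\pm u-\ri\bb(\tfrac12+m)-\ri\bb^{-1}n$, so there are no triple poles; the series $\mc{C}_m(x,q)=C_m(x,q)$ arises from simple poles and is accordingly a much simpler object than $G^{(2)}_m(q)$ --- compare~\eqref{ABC} with~\eqref{Gm2}: no $E_2$ or $E_k^{(n)}$ terms appear. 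The factorisation~\eqref{eq:Ixq} does not require the modular anomaly of $E_2$. Your invocation of triple poles and $E_2$ is inherited from the undeformed case of Theorem~\ref{thm-newst} and does not carry over; the $x$-deformed residue calculation is in fact easier, and the subtlety is pushed into the $x\to 1$ limit where the $(1-x^{\pm 1})$ prefactors in $C_m$ degenerate.
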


\begin{remark}
  It is sometimes stated in the literature that the Gukov--Manolescu
  series is obtained by ``resumming'' the perturbative series
  $\Phi^{(\sigma_0)}(x, \tau) $ associated to the trivial connection, although it
  is not always clear what ``resumming'' means in that context. The above conjecture
  shows that, generically, $\mc{C}_0(x,q)$ involves the Borel resummation of all
  perturbative series $\Phi^{(\sigma_j)}(x, \tau)$, $j=0,1,2$, as well as
  non-perturbative corrections in $\tq, \tx$. 
\end{remark}

We now discuss the Stokes automorphism of the Borel resummation
$s_R(\Phi)(x,\tau)$.  The discussion is similar to the one in 
Section~\ref{sub.41borel}.  To any singular point of the Borel
transform of $\Phi(x,\tau)$ locatd at $\iota_{i,j}^{(k,\ell)}$, we can
associate a local Stokes matrix
\begin{equation}
  \mf{S}_{\iota_{i,j}^{(k,\ell)}} = I +
  \mc{S}_{i,j}^{(k,\ell)}\tq^k\tx^\ell E_{i,j}, \quad \mc{S}_{i,j}^{(k,\ell)}\in\IZ,
\end{equation}
where $E_{i,j}$ is the elementary matrix with $(i,j)$-entry 1
($i,j=0,1,2$) and all other entries zero, and
$\mc{S}_{i,j}^{(k,\ell)}$ is the Stokes constant.  Let us again assume
the locality condition.  Then for any ray of angle $\theta$, the Borel
resummations of $\Phi(x,\tau)$ with $\tau$ whose argument is raised
slightly above $\theta$ ($\theta_+$) or sightly below ($\theta_-$) 
are related by the following formula of Stokes automorphism
\begin{equation}
  \Delta(x,\tau)s_{\theta_+}(\Phi)(x,\tau) = \mf{S}_{\theta}(\tx,\tq)
  \Delta(x,\tau)s_{\theta_-}(\Phi)(x,\tau),\quad
  \mf{S}_\theta(\tx,\tq) = \prod_{\arg\iota =\theta}\mf{S}_\iota(\tx,\tq).
\end{equation}
Because of the locality condition, we don't have to worry about the
order of product of local Stokes matrices.

In addition, given two rays $\rho_{\theta^+}$ and $\rho_{\theta^-}$ whose
arguments satisfy $0<\theta^+-\theta^-\leq \pi$, we define the global
Stokes matrix $\mf{S}_{\theta^-\rightarrow \theta^+}(\tx,\tq)$ by
\begin{equation}
  \Delta(x,\tau)s_{\theta^+}(\Phi)(x,\tau) =
  \mf{S}_{\theta^-\rightarrow \theta^+}(\tx,\tq)
  \Delta(x,\tau)s_{\theta^-}(\Phi)(x,\tau),
\end{equation}
where both sides are analytically continued smoothly to the same value
of $\tau$.  The global Stokes matrix
$\mf{S}_{\theta^-\rightarrow\theta^+}(\tx,\tq)$ satisfies the
factorisation property~\cite{GGM:resurgent,GGM:peacock}
\begin{equation}
  \mf{S}_{\theta^-\rightarrow\theta^+}(\tx,\tq) =
  \prod_{\theta^-<\theta<\theta^+}^{\leftarrow}
  \mf{S}_{\theta}(\tx,\tq),
\end{equation}
where the ordered product is taken over all the local Stokes matrices
whose arguments are sandwiched between $\theta^-,\theta^+$ and they
are ordered with rising arguments from right to left.

Given \eqref{eq:GMsPhix} with explicit values of $M_R(\tx,\tq)$ for
$R=I,II,III,IV$, in general we can calculate the global Stokes matrix
via
\begin{equation}
  \mf{S}_{R\rightarrow R'}(\tx,\tq) = M_{R'}(\tx,\tq)^{-1} \cdot M_{R}(\tx,\tq).
  \label{eq:SMMx}
\end{equation}
For instance, we find the global Stokes matrix from cone $I$
anti-clockwise to cone $II$ is 
\begin{equation}
  \mf{S}_{I\rightarrow II}(\tx,\tq) =
  \begin{pmatrix}
    1&0&0\\
    0&1&0\\
    0&1&-1
  \end{pmatrix}
  \mc{J}_{-1}(\tx,\tq)^{-1}
  \begin{pmatrix}
    1&0&0\\
    0&0&1\\
    0&1&0
  \end{pmatrix}
  \mc{J}_{-1}(\tx,\tq)
  \begin{pmatrix}
    1&0&0\\
    0&0&-1\\
    0&1&-1
  \end{pmatrix},\quad |\tq|<1.\label{eq:S12x}
\end{equation}
This Stokes matrix has the block upper triangular form
\begin{equation}
    \begin{pmatrix}
    1&*&*\\
    0&*&*\\
    0&*&*
  \end{pmatrix}.\label{eq:1bdx}
\end{equation}
One can verify that the $2\times 2$
submatrix of $\mf{S}_{I\rightarrow II}(\tx,\tq)$ in the bottom right
is the Stokes matrix in~\cite{GGM:resurgent}. In addition we can also
extract Stokes constants $\mc{S}_{0,j}^{(k,\ell)}$
($j=1,2, k=1,2,\ldots$) responsible for Stokes automorphisms into
$\Phi^{(\s_0)}(x,\tau)$ from Borel singularities in the upper half
plane, and collect them in the generating series
\begin{equation}
  \ms{S}^+_{0,j}(\tx,\tq) = \sum_{k=1}^\infty\sum_{\ell}
  \mc{S}_{0,j}^{(k,\ell)}\tx^\ell\tq^k,\quad j=1,2.
\end{equation}
We find 
\begin{align}
  \ms{S}_{0,1}^+(\tx,\tq) = \ms{S}_{0,2}^+(\tx,\tq) =
  &\tx^{-1}\left(-\mc{C}_{-1}(\tx,\tq)+\mc{C}_0(\tx,\tq)
    \frac{\mc{A}_{-1}(\tx,\tq)+\mc{B}_{-1}(\tx,\tq)}
    {\mc{A}_0(\tx,\tq)+\mc{B}_0(\tx,\tq)}
    \right)
    \nn=
  &-\tq-(\tx+\tx^{-1})\tq^2
    -(\tx^2+1+\tx^{-2})\tq^3+\ldots.
\end{align}

Similarly, we find the global Stokes matrix from cone $III$
anti-clockwise to cone $IV$ is 
\begin{equation}
  \mf{S}_{III\rightarrow IV}(\tx,\tq) =
  \begin{pmatrix}
    1&0&0\\
    0&-1&1\\
    0&1&0
  \end{pmatrix}\cdot \mc{J}_{-1}(\tx,\tq^{-1})^{-1}\cdot
  \begin{pmatrix}
    1&0&0\\
    0&0&1\\
    0&1&0
  \end{pmatrix}
  \cdot \mc{J}_{-1}(\tx,\tq^{-1})\cdot
  \begin{pmatrix}
    1&0&0\\
    0&1&0\\
    0&1&1
  \end{pmatrix},\quad |\tq|>1.\label{eq:S34x}
\end{equation}
It also has the form as \eqref{eq:1bdx}. This, together with the same phenomenon in
the upper half plane, implies that $\Phi^{(s_j)}(x,\tau)$ ($j=1,2$) form a minimal
resurgent structure. The $2\times 2$ submatrix of
$\mf{S}_{III\rightarrow IV}(\tx,\tq)$ in the bottom right is identical
to the Stokes matrix given in~\cite{GGM:resurgent}.  We also extract
Stokes constants $\mc{S}_{0,j}^{(k,\ell)}$ ($j=1,2$, $k=-1,-2,\ldots$)
responsible for Stokes automorphisms into $\Phi^{(\s_0)}(x,\tau)$ from
Borel singularities in the lower half plane, and collect them in the
generating series
\begin{equation}
  \ms{S}^-_{0,j}(\tx,\tq) = \sum_{k=-1}^{-\infty}\sum_{\ell}
  \mc{S}_{0,j}^{(k,\ell)}\tx^\ell\tq^k,\quad j=1,2.
\end{equation}
And we find
\begin{align}
  \ms{S}_{0,2}^-(\tx,\tq) = -\ms{S}_{0,1}^-(\tx,\tq) = \ms{S}_{0,1}^+(\tx,\tq^{-1}).
\end{align}

We can also use \eqref{eq:SMMx} to compute the global Stokes matrix
$\mf{S}_{IV\rightarrow I}(\tq)$ and we find 
\begin{equation}
  \mf{S}_{IV\rightarrow I} = 
  \begin{pmatrix}
    1&0&1\\
    0&1&\tx+1+\tx^{-1}\\
    0&0&1
  \end{pmatrix}.
\end{equation}
Note that this can be identified as $\mf{S}_{0}$, associated to the ray
$\rho_0$, and it can be factorised as 
\begin{equation}
  \mf{S}_{0} = \mf{S}_{\iota_{0,2}}\mf{S}_{\iota_{1,2}},\quad
  \mf{S}_{\iota_{0,2}} =
  \begin{pmatrix}
    1&0&1\\
    0&1&0\\
    0&0&1
  \end{pmatrix},\quad
  \mf{S}_{\iota_{1,2}} =
  \begin{pmatrix}
    1&0&0\\
    0&1&\tx+1+\tx^{-1}\\
    0&0&1
  \end{pmatrix}.
\end{equation}
Since the local Stokes matrices $\mf{S}_{\iota_{0,2}}$ and
$\mf{S}_{\iota_{1,2}}$ commute, the locality condition is satisfied.
We read off the Stoke discontinuity formulas 
\begin{equation}
  \label{eq:dsPhi01x}
\begin{aligned}
  &\text{disc}_0 \Phi^{(0)}(x,\tau) =
  \frac{\tx^{1/2}-\tx^{-1/2}}{x^{1/2}-x^{-1/2}}
  \tau^{-1/2}s(\Phi^{(s_2)})(x,\tau), \\
  &\text{disc}_0 \Phi^{(1)}(x,\tau) =
  (\tx+1+\tx^{-1})s(\Phi^{(s_2)})(x,\tau) \,.
\end{aligned}
\end{equation}
They reduce properly to \eqref{disc-pra} in the $x\rightarrow 1$
limit, and the second identity has already appeared in
\cite{GGM:resurgent}.

Finally, in order to compute the global Stokes matrix
$\mf{S}_{II\rightarrow III}(\tq)$, we need to take into account that
the odd powers of $\tau^{1/2}$ on both sides of \eqref{eq:GMsPhix}
give rise to additional $-1$ factors when one crosses the branch cut at
the negative real axis, and \eqref{eq:SMMx} should be modified by
\begin{equation}
  \mf{S}_{II\rightarrow III}(\tq) = \diag(-1,1,1)
  M_{III}(\tq)^{-1}\diag(-1,1,1) M_{II}(\tq),
\end{equation}
and we find 
\begin{equation}
  \mf{S}_{II\rightarrow III} = 
  \begin{pmatrix}
    1&1&0\\
    0&1&0\\
    0&-\tx-1-\tx^{-1}&1
  \end{pmatrix}.
\end{equation}
Similarly this can be identified as $\mf{S}_{\pi}$ associated to the
ray $\rho_{\pi}$ and it can be factorised as 
\begin{equation}
  \mf{S}_{\pi} = \mf{S}_{\iota_{0,1}}\mf{S}_{\iota_{2,1}},\quad
  \mf{S}_{\iota_{0,1}} =
  \begin{pmatrix}
    1&1&0\\
    0&1&0\\
    0&0&1
  \end{pmatrix},\quad
  \mf{S}_{\iota_{2,1}} =
  \begin{pmatrix}
    1&0&0\\
    0&1&0\\
    0&-\tx-1-\tx^{-1}&1
  \end{pmatrix}.
\end{equation}
Note that the local Stokes matrices $\mf{S}_{\iota_{0,1}}$ and
$\mf{S}_{\iota_{2,1}}$ also commute.  We read off the Stokes
discontinuity formulas 
\begin{align}
  &\text{disc}_\pi \Phi^{(0)}(x,\tau) =
    \frac{1-\tx}{1-x}\tau^{-1/2}s(\Phi^{(s_1)})(x,\tau), \\
  &\text{disc}_\pi \Phi^{(2)}(x,\tau) = -(\tx+1+\tx^{-1})s(\Phi^{(s_1)})(x,\tau).
\end{align}
They reduce properly to \eqref{disc-prb} in the $x\rightarrow 1$
limit, and the second identity has already appeared in
\cite{GGM:resurgent}.

\subsection{$(u,\tau)$ state-integrals}
\label{sub.41statex}

In parallel to the discussion in Sections~\ref{sub.AK41new} and \ref{sub.3x3state}, we
now introduce a new state-integral which depends on $\tau$, but also on a variable $u$. 
Let us consider the state-integral 
\begin{equation}
  \mc{Z}_\calB(u,\tau) = -\frac{\ri}{2\bb}\frac{\sinh(\pi \bb^{-1} u)}
  {\sinh(\pi \bb u)}
  \int_{\calB} \tanh(\pi\bb^{-1}v)
  \frac{\Phi_\bb(-v+\frac{\ri}{2}\bb^{-1}+u)}
  {\Phi_\bb(v-\frac{\ri}{2}\bb^{-1}+u)}
  \re^{2\pi\ri u(v-\frac{\ri}{2}\bb^{-1})} \rd v,
\end{equation}
where the contour of integral $\calB$ is not specified yet. The integrand
reduces to that of \eqref{car} in the limit $u\rightarrow 0$.  For
generic $\bb^2\in\IC'$ so that $\real\bb>0$, the integrand has the following poles
and zeros
\begin{equation}
  \begin{aligned}
    &\text{Poles}:\; \left\{\pm \ri\bb\left(\frac{1}{2}+m\right),\;
      \pm u -\ri\bb\left(\frac{1}{2}+m\right)-\ri\bb^{-1}n\;\big|\;
      m,n=0,1,2,\ldots
    \right\}\\
    &\text{Zeros}:\; \left\{
      \pm u +\ri\bb\left(\frac{1}{2}+m \right)+\ri\bb^{-1}(1+n)\;\big|\;
      m,n=0,1,2,\ldots
    \right\} \,.
  \end{aligned}
  \label{eq:PZ41x}
\end{equation}
We can choose for the integral the contour $\mc{A}_N$ in the upper half
plane that wraps the following poles, as in the left panel of
Fig.~\ref{contours-fig}, 
\begin{equation}
\label{vm-poles}
  v_m = \ri\bb\left(\frac{1}{2}+m\right),\quad m=0,1,2,\ldots,N-1.
\end{equation}
By summing over the residues of these poles, the integral evaluates
as follows
\begin{equation}
  \mc{Z}_{\mc{A}_N}(\ub,\tau) = \sum_{n=0}^{N-1}(-1)^n
  q^{-n(n+1)/2}(qx;q)_n(qx^{-1};q)_n,\qquad
  x= \re^{u},\, q=\re^{2\pi\ri\tau} \,,
\end{equation}
where we defined $\ub=u/(2\pi \bb)$, as in~\cite[Eqn.(2)]{GGM:peacock}.
When $x = q^N$ this is none other
than the colored Jones polynomial of the knot $\knot{4}_1$
\begin{equation}
\label{aint-habiro}
  \mc{Z}_{\mc{A}_N}(\ri N\bb,\bb^2) = J_N^{\knot{4}_1}(q) =
  \sum_{n=0}^{N-1}(-1)^n
  q^{-n(n+1)/2}(q^{1+N};q)_n(q^{1-N};q)_n.
\end{equation}
Alternatively, we can choose for the integral the contour $\mc{C}$ as
in the right panel of Fig.~\ref{contours-fig}, which is asymptotic to
a horizontal line slightly below $\imag(v) = \real(\bb^{-1})$, but
deformed near the origin in such a way that all the poles
\begin{equation}
  v^{\pm}_{m,n} = \pm u -\ri\bb\left(\frac{1}{2}+m\right)-
  \ri\bb^{-1}n,\quad m,n=0,1,2,\ldots
  \label{eq:vmn}
\end{equation}
are below the contour $\mc{C}$.  Let
$ \mc{Z}(u,\tau):= \mc{Z}_{\mc{C}}(u,\tau)$ denote the
corresponding state-integral. Similar to the discussion in
Section~\ref{sub.AK41new}, as the integrand has non-trivial
contributions from infinity in the upper half plane, the two integrals
$\mc{Z}_{\mc{A}_N}(u,\tau)$ and $\mc{Z}(u,\tau)$ are different.  On
the other hand, since the integrand does have vanishing contributions
from infinity in the lower half plane, we can smoothly deform the
contour $\mc{C}$ downwards so that $\mc{Z}(u,\tau)$ can be evaluated
by summing over residues at the poles $v^{\pm}_{m,n}$, and we find
\begin{align}
  \mc{Z}(u,\tau)=
  &\mc{C}_0(x,q)
    +\frac{\tx^{-1}\re^{\frac{3\pi\ri}{4}-\frac{\pi\ri}{4}(\tau+\tau^{-1})}}
    {\tau^{1/2}}
    \mc{A}_0(x,q)\left(L\mc{A}_{0}(\tx,\tq^{-1})
    +\frac{1-\tx}{2}\mc{A}_{0}(\tx,\tq^{-1})\right)\nn
  &\phantom{==}
    +\frac{\tx^{-1}\re^{\frac{3\pi\ri}{4}-\frac{\pi\ri}{4}(\tau+\tau^{-1})}}
    {\tau^{1/2}}
    \mc{B}_0(x,q)\left(L\mc{B}_{0}(\tx,\tq^{-1})
    +\frac{1-\tx}{2}\mc{B}_{0}(\tx,\tq^{-1})
    \right),
    \label{eq:Ixq}
\end{align}
where $L\mc{A}_{\mu}(x,q),L\mc{B}_{\mu}(x,q)$ are defined as in
\eqref{Lx12} with Roman letters $A,B,C$ replaced by caligraphic
letters $\mc{A},\mc{B},\mc{C}$. As mentioned above, the change of integration
contour implements the Habiro inversion of \cite{park-inverted}: the integration
over $\mc{A}_N$ gives the Habiro series (\ref{aint-habiro}), while the integration
over $\mc{C}$ involves $\mc{C}_0(x,q)$, which was interpreted
in~\cite{park-inverted} as an inverted Habiro series. This contribution comes
from the poles $-v_m$ in the lower half-plane. 

The integral $\mc{Z}(u,\tau)$ can also be identified with the Borel
resummation of the perturbative series $\Phi^{(\s_j)}(x;\tau)$ for
$j=0,1,2$.  By inverting the matrix $M_R(\tx,\tq)$ in
\eqref{eq:GMsPhix}, we can also express the Borel resummation
$s_R(\Phi)(x,\tau)$ in any cone $R$ in terms of combinations of
$(x,q)$- and $(\tx,\tq)$-series, and they can be then compared with
the right hand side of \eqref{eq:Ixq}.  For instance, in the cones $I$
and $IV$ respectively, we find 
\begin{subequations}
\begin{align}
  \mc{Z}(u,\tau) =
  &s_I(\Phi^{(\s_0)})(x;\tau) -
    \frac{\tx^{1/2}-\tx^{-1/2}}
    {2(x^{1/2}-x^{-1/2})}\tau^{-1/2}s_I(\Phi^{(\s_2)})(x;\tau), \label{eq:Isphi-1}
  \\ =
  &s_{IV}(\Phi^{(\s_0)})(x;\tau) +
  \frac{\tx^{1/2}-\tx^{-1/2}}{2(x^{1/2}-x^{-1/2})}\tau^{-1/2}s_{IV}(\Phi^{(\s_2)})(x;\tau).
  \label{eq:Isphi-2}
\end{align}
\end{subequations}
This also implies that for positive real $\tau$,
\begin{equation}
  \label{eq:Isphi-3}
  \mc{Z}(u,\tau) =s_{\text{med}}(\Phi^{(\s_0)})(x;\tau) \,.
\end{equation}

Finally, we can introduce the descendants of the integral $\mc{Z}(u,\tau)$
as follows
\begin{equation}
  \mc{Z}_{\lambda,\mu}(u,\tau) =
  -\frac{\ri}{2\bb}\frac{\sinh(\pi \bb^{-1} u)}
  {\sinh(\pi \bb u)}
  \int_{\mc{C}} \tanh(\pi\bb^{-1}v)
  \frac{\Phi_\bb(-v+\frac{\ri}{2}\bb^{-1}+u)}
  {\Phi_\bb(v-\frac{\ri}{2}\bb^{-1}+u)}
  \re^{2\pi\ri u(v-\frac{\ri}{2}\bb^{-1})-2\pi(\lambda\bb-\mu\bb^{-1})v}
  \rd v.
\end{equation}
The integrand has the same poles and zeros as in \eqref{eq:PZ41x}.  To
ensure convergence, the contour $\mc{C}$ needs slight modification: it
is asymptotic to a horizontal line slightly below
$\imag(v) = \frac{1}{2}\real(\bb^{-1})
-|\real(\lambda\bb-\mu\bb^{-1})|$, and it is deformed near the origin
in such a way that all the poles \eqref{eq:vmn} are below the contour
$\mc{C}$.
Similarly, by smoothly deforming the contour downwards we can evaluate
this integral by summing up residues of all the poles in the lower
half plane, and we find 
\begin{align}
  \mc{Z}_{\lambda,\mu}(u,\tau) =
  &(-1)^\mu q^{\lambda/2}\left(
    \mc{C}_\lambda(x,q)
    +\frac{\tx^{-1}\re^{\frac{3\pi\ri}{4}-\frac{\pi\ri}{4}(\tau+\tau^{-1})}}
    {\tau^{1/2}}
    \mc{A}_\lambda(x,q)\left(L\mc{A}_{-\mu}(\tx,\tq^{-1})
    +\frac{1-\tx}{2}\mc{A}_{-\mu}(\tx,\tq^{-1})\right)\right.\nn
  &\left.\phantom{==}
    +\frac{\tx^{-1}\re^{\frac{3\pi\ri}{4}-\frac{\pi\ri}{4}(\tau+\tau^{-1})}}
    {\tau^{1/2}}
    \mc{B}_\lambda(x,q)\left(L\mc{B}_{-\mu}(\tx,\tq^{-1})
    +\frac{1-\tx}{2}\mc{B}_{-\mu}(\tx,\tq^{-1})\right)
    \right).
\end{align}

\subsection{An analytic extension of the colored Jones polynomial} 
\label{sub.CJ41analytic}

In this section we discuss a Borel resummation formula for the colored Jones
polynomial of the $\knot{4}_1$ knot. The latter is defined by
\begin{equation}
  J^{\knot{4}_1}_N(q) = \sum_{k=0}^{N-1}(-1)^k
  q^{-k(k+1)/2}(q^{1+N};q)_k(q^{1-N};q)_k \,.
\end{equation}
Let $u \sim 0$ be in a small neighborhood of the origin in the complex plane.
It is related to $x = q^N$ and $\tau$ by
\begin{equation}
  x = \re^{u},\quad \tau = \frac{u}{2\pi\ri N}+\frac{1}{N}.
  \label{eq:x-n-tau}
\end{equation}
Then $u$ is near $0$, then $x$ is close to $1$, which is the regime
that we studied in Section~\ref{sub.41borelx}, and $\tau$ is close to
$1/N$.  Note that $N\tau = 1+\frac{u}{2\pi\ri}$ is the analogue of
$n/k$ in~\cite{gukov}, and here we are considering a deformation from
the case of $n/k=1$.

Experimentally, we found that in cones $I$ and $IV$ respectively, we
have 
\begin{subequations}
  \begin{align}
    J^{\knot{4}_1}_N(q) =
    &s_{I}(\Phi^{(\s_0)})(x;\tau)
      +\frac{\tx^{1/2}-\tx^{-1/2}}{x^{1/2}-x^{-1/2}}
      \tau^{-1/2}s_{I}(\Phi^{(\s_1)})(x;\tau)
      \nn
    &-(1+\tx)\frac{\tx^{1/2}-\tx^{-1/2}}{x^{1/2}-x^{-1/2}}\tau^{-1/2}
      s_{I}(\Phi^{(\s_2)})(x;\tau)
      \label{eq:Jn-s1}
    \\=
    &s_{IV}(\Phi^{(\s_0)})(x;\tau)
      +\frac{\tx^{1/2}-\tx^{-1/2}}{x^{1/2}-x^{-1/2}}
      \tau^{-1/2}s_{IV}(\Phi^{(\s_1)})(x;\tau)\nn
    &+(1+\tx^{-1})\frac{\tx^{1/2}-\tx^{-1/2}}{x^{1/2}-x^{-1/2}}\tau^{-1/2}
      s_{IV}(\Phi^{(\s_2)})(x;\tau)
      \label{eq:Jn-s2}
  \end{align}
\end{subequations}
where $\tx=e^{u/\tau}=e^{2\pi\ri N u/(u+2\pi\ri)}$. This, together
with Conjecture~\ref{conj-exact-regions} implies 
\begin{align}
  J^{\knot{4}_1}_N(q) =
  &s_{\text{med}}(\Phi^{(\s_0)})(x;\tau)
    +\frac{\tx^{1/2}-\tx^{-1/2}}{x^{1/2}-x^{-1/2}}
    \tau^{-1/2}s_{\text{med}}(\Phi^{(\s_1)})(x;\tau)\nn
  &-\frac{\tx-\tx^{-1}}{2}\frac{\tx^{1/2}-\tx^{-1/2}}{x^{1/2}-x^{-1/2}}
    \tau^{-1/2}s_{\text{med}}(\Phi^{(\s_2)})(x;\tau) ,
  \label{eq:Jn-smed}
\end{align}
which is Conjecture~\ref{conj.ejones} for the $\knot{4}_1$ knot. 

We now make several consistency checks of the above conjecture. The first is
that equation~\eqref{eq:Jn-smed} is invariant
under complex conjugation which moves $\tau$ from cone $I$ to cone $IV$.
The second is that the conjecture implies the Generalised Volume Conjecture.
Indeed, in the limit
  \begin{equation}
    N\rightarrow \infty, \quad \tau \rightarrow 0, \quad \log(x) =
    2\pi\ri N\tau \text{ finite}
  \end{equation}
  the right hand side of~\eqref{eq:Jn-s1},\eqref{eq:Jn-s2} are dominated by the
  first term.  If we keep only the exponential, this is the
  generalised Volume Conjecture \cite{Murakami2011:ivc,gukov}.
  Recall from \cite{Murakami2011:ivc}, the generalised Volume
  Conjecture reads, for $u$ in a small neighborhood of origin such
  that $u\not\in \pi\ri\IQ$,
  \begin{equation}
    \lim_{N\rightarrow \infty}\frac{\log
      J_N^{K}(\exp((u+2\pi\ri)/N))}{N}
    = \frac{H(y,x)}{u+2\pi\ri},
    \label{eq:gVC}
  \end{equation}
  where $x = \exp(u+2\pi\ri)$ and $
    H(y,x) = \Li_2(1/(xy)) - \Li_2(y/x) + \log(x)\log(y)$, 
  with $y$ a solution to $y+y^{-1} = x+x^{-1}-1$.
  By the identification $u + 2\pi\ri = 2\pi\ri (N\tau) \sim 2\pi\ri$, 
  and since $A(x)$ is identical with $H(y,x)$ (up to $\pm 1$), one can
  check that \eqref{eq:Jn-s1},\eqref{eq:Jn-s2} imply \eqref{eq:gVC}.
  

\section{The $\knot{5}_2$-knot}
\label{sec.52}

\subsection{A $3 \times 3$ matrix of $q$-series}
\label{sub.52q3}

The trace field of the $\knot{5}_2$ knot is the cubic field of discriminant $-23$,
with a distinguished complex embedding $\s_1$ (corresponding to the geometric
representation of $\knot{5}_2$), its complex conjugate $\s_2$ and a real embedding $\s_3$.
The $\knot{5}_2$ knot has three boundary parabolic representations whose associated
asymptotic series $\varphi^{(\s_j)}(h)$ for $j=1,2,3$ correspond to the three
embeddings of the trace field. In~\cite{GGM:resurgent} these asymptotic series
were discussed, and a $3 \times 3$ matrix $\bJ^\RED_m(q)$ of $q$-series was constructed
to describe the resurgence properties of the asymptotic series. The matrix 
$\bJ^\RED_m(q)$ is a fundamental solution to the linear $q$-difference
equation~\cite[Eqn.(23)]{GGM:resurgent}
\begin{equation}
  \label{52qdiff}
  f_m(q)-3f_{m+1}(q)+(3-q^{2+m})f_{m+2}(q) - f_{m+3}(q) = 0
\end{equation}
and it is defined by\footnote{The matrices $\bJ^\RED_m(q)$ are related to the
  Wronskians $W_m(q)$ in \cite{GGM:resurgent,GGM:peacock} by
  \begin{equation}
    \bJ^\RED_m(q) =
    \begin{pmatrix}
      0&0&1\\0&1&0\\1&0&0
    \end{pmatrix} W_m(q)^T.
  \end{equation}}
\be
\label{Jqred52}
\bJ^\RED_{m}(q) =
\begin{pmatrix}
H_{m}^{(2)}(q) & H_{m+1}^{(2)}(q) & H_{m+2}^{(2)}(q) \\
H_{m}^{(1)}(q) & H_{m+1}^{(0)}(q) & H_{m+2}^{(1)}(q) \\
H_{m}^{(0)}(q) & H_{m+1}^{(0)}(q) & H_{m+2}^{(0)}(q) 
\end{pmatrix},
\qquad (|q| \neq 1)
\ee
where for $|q|<1$

\be
\label{Hpm}
\begin{aligned}
H^{(0)}_{m}(q)
  &=\sum_{n=0}^\infty \frac{q^{n(n+1)+nm}}{(q;q)_n^3}\,,
\\
  H^{(1)}_{m}(q)
  &=\sum_{n=0}^\infty \frac{q^{n(n+1)+nm}}{(q;q)_n^3}
    \left(1+2n+m-3E_1^{(n)}(q)\right) \,,
  \\
  H^{(2)}_{m}(q)
  &=\sum_{n=0}^\infty \frac{q^{n(n+1)+nm}}{(q;q)_n^3}
    \left((1+2n+m-3E_1^{(n)}(q))^2-3E_2^{(n)}(q)-\frac{1}{6}E_2(q)\right) \,,
\end{aligned}
\ee
and
\be
\label{Hnm}
\begin{aligned}
  H^{(0)}_{-m}(q^{-1})
  &=\sum_{n=0}^\infty (-1)^n\frac{q^{\frac{1}{2}n(n+1)+nm}}{(q;q)_n^3}\,,
\\
  H^{(1)}_{-m}(q^{-1})
  &=-\sum_{n=0}^\infty (-1)^n\frac{q^{\frac{1}{2}n(n+1)+nm}}{(q;q)_n^3}
    \left(\frac{1}{2}+n+m-3E_1^{(n)}(q)\right) \,,
  \\
  H^{(2)}_{-m}(q^{-1})
  &=\sum_{n=0}^\infty (-1)^n\frac{q^{\frac{1}{2}n(n+1)+nm}}{(q;q)_n^3}
  \left(\big(\frac{1}{2}+n+m-3E_1^{(n)}(q)\big)^2-3E_2^{(n)}(q)
    -\frac{1}{12}E_2(q)\right) \,.
\end{aligned}
\ee

\subsection{The Habiro polynomials and the descendant Kashaev invariants}
\label{sub.52hab}

The addition of the asymptotic series $\varphi^{(\s_0)}(h)$ corresponding to the
trivial flat connection
requires a $4 \times 4$ extension of the matrix $\bJ^\RED(q)$. This is consistent
with the fact that the colored Jones polynomial of $\knot{5}_2$ satisfies a third order
inhomogenous linear $q$-difference equation, and hence a $4$th order homogeneous
linear $q$-difference equation. However, the descendant colored Jones polynomials
of $\knot{5}_2$ satisfy a $5$th order inhomogeneous recursion~\cite[Eqn.(14)]{GK:desc},
hence a $6$th order homogeneous recursion. In view of this, we will give a
$6 \times 6$ matrix $\bJ(q)$ of $q$-series and we will use its $4 \times 4$ block
to describe the resurgent structure of the asymptotic series $\varphi^{(\s_0)}(h)$.

Let us recall the Habiro polynomials, the descendant colored Jones
polynomials, the descendant Kashaev invariants and their recursions.   
The Habiro polynomials $H_n^{\knot{5}_2}(q) \in \BZ[q^{\pm 1}]$ are given by terminating
$q$-hypergeometric sums 
\be
\label{Hk52}
H_n^{\knot{5}_2}(q) = (-1)^n q^{\frac{1}{2}n(n+3)}
\sum_{k=0}^n q^{k(k+1)} \binom{n}{k}_q
\ee
(see Habiro~\cite{Ha} and also Masbaum~\cite{Mas:HabPol}) 
where $\binom{a}{b}_q = (q;q)_a/((q;q)_b (q;q)_{b-a})$ is the $q$-binomial function.
In~\cite{GS:Cpoly}, it was shown that $H_n=H_n^{\knot{5}_2}(q)$ satisfies the linear
$q$-difference equation
\be
\label{recHk52}
H_{n+2}^{\knot{5}_2}(q) + q^{3 + n} (1 + q - q^{2 + n}
+ q^{4 + 2 n}) H_{n+1}^{\knot{5}_2}(q)
-q^{6 + 2 n} (-1 + q^{1 + n}) H_n^{\knot{5}_2}(q) =0, \qquad (n \geq 0)
\ee with initial conditions $H_n^{\knot{5}_2}(q)=0$ for $n<0$ and
$H_0^{\knot{5}_2}(q)=1$.  Actually, the above recursion is valid for all
integers if we replace the right hand side of it by $\delta_{n+2,0}$.
The recursion for the Habiro polynomials of $\knot{5}_2$, together with
Equation~\eqref{DJdef} and~\cite{Koutschan:holofunctions}, gives that
$\DJ^{(m)}=\DJ^{\knot{5}_2,(m)}(x,q)$, which is the descendant
colored Jones polynomial defined by~\eqref{DJdef}, satisfies the
linear $q$-difference equation
{\tiny
\begin{multline}
\label{recJm52x}
  (-1 + q^{1 + m}) (-1 + q^{2 + m}) x^2 \DJ^{(m)} - 
  q^{2 + m} (-1 + q^{2 + m}) x (1 + q + x + (1+ q) x^2) \DJ^{(1 + m)} \\
  + q^{3 + m} (q^{3 + m} +(- 1 + q^{2 + m} + q^{3 + m}) x +(- 2  - 
    q + q^{2 + m} + 2 q^{3 + m} + q^{4 + m}) x^2 +(-1 + 
    q^{2 + m} + q^{3 + m}) x^3 + q^{3 + m} x^4) \DJ^{(2 + m)} \\
    - 
 q^{4 + m} (q^{3 + m} +(-1 + q^{3 + m} + q^{4 + m}) x +(-1 + 
    q^{2 + m}  + 2 q^{3 + m}  + q^{4 + m} ) x^2 +(-1 + 
    q^{3 + m} + q^{4 + m}) x^3 + q^{3 + m} x^4) \DJ^{(3 + m)} \\
    + q^{5 + m} x (q^{3 + m} + q^{4 + m} +(-1 + q^{4 + m}) x + 
    (q^{3 + m} + q^{4 + m}) x^2) \DJ^{(4 + m)} - 
    q^{10 + 2 m} x^2 \DJ^{(5 + m)}
\\ = x (q^{2 + m} + q^{4 + m} + (1 - q^{1 + m} - 2 q^{3 + m} - 
    q^{5 + m}) x + (q^{2 + m} + q^{4 + m}) x^2) H_0(q) + 
 q^m x (1 - x q^{-1}) (1 - q x) H_1(q) \,.
\end{multline}
}

\noindent
Using the values $H^{\knot{5}_2}_0(q)=1$, $H^{\knot{5}_2}_1(q)=-q^2-q^4$, it follows that the
right hand side of the above recursion is $x^2$ for all $m$. Setting $x=1$, and
renaming $\DJ^{(m)}$ by $f_m(q)$, we arrive at the inhomogenous $5$-th order
$q$-difference equation satisfied by the descendant Kashaev
invariant~\cite[Eqn.(14)]{GK:desc}

{\tiny
\begin{multline}
\label{52rec}
  -q^{2m+10}f_{m+5}(q)
  + (3q^{2m+9} + 2q^{2m+8} - q^{m+5})f_{m+4}(q)
  + (-3q^{2m+8} - 6q^{2m+7} - q^{2m+6} + 3q^{m+4})f_{m+3}(q)\\
  + (q^{2m+7} + 6q^{2m+6} + 3q^{2m+5} - q^{m+4} - 4q^{m+3})f_{m+2}(q) 
  + (2q^{m+3} + 3q^{m+2})(1-q^{m+2})f_{m+1}(q)
  + (1-q^{m+1})(1-q^{m+2})f_{m}(q)
  =1
\end{multline}
}

\noindent
valid for all integers $m$. Our aim is to define an explicit fundamental matrix
solution to the corresponding sixth order homogenous linear $q$-difference
equation~\eqref{52rec}. To do so, we define a 2-parameter family of deformations
of the Habiro polynomials which satisfy a one-parameter deformation of the recursion
of the Habiro polynomials. Motivated by the $q$-hypergeometric expression~\eqref{Hk52}
for the Habiro polynomials, we define deformations of the Habiro polynomials,
for $|q|\neq 1$, with appropriate normalisations
\be\label{defHab51}
\begin{aligned}
  H_{n}(\ve,\delta;q)
  &=
  \frac{(qe^{\ve-\delta};q)_{\infty}(qe^{\delta};q)_{\infty}}{
    (qe^{\ve};q)_{\infty}(q;q)_{\infty}}
  \frac{(-1)^nq^{n(n+3)/2}e^{(n+1)\ve}}{
    e^{\frac{1}{12}\ve^2-\frac{1}{12}(\ve\delta-\delta^2)E_{2}(q)}}
  \sum_{k\in\BZ}\frac{q^{k(k+1)}e^{(2k+1)\delta}(qe^{\ve};q)_{n}}{
    (qe^{\delta};q)_{k}(qe^{\ve-\delta};q)_{n-k}}\\
  H_{n}(\ve,\delta;q^{-1})
  &=
  \frac{(qe^{\ve+\delta};q)_{\infty}}{(qe^{\delta};q)_{\infty}^{2}}
  \frac{q^{-n(n+3)/2}e^{(n+3/2)\ve}}{(-1)^{n}(e^{-\delta};q)_{\infty}(q;q)_{\infty}}
  \sum_{k\in\BZ}(-1)^{k}q^{k(k+1)/2}e^{\delta k}\frac{(qe^{\delta};q)_{k-1}}{
    (qe^{\ve+\delta};q)_{k-n-1}} 
\end{aligned}
\ee
where $n\in\BZ$ and $|q|<1$. These deformations satisfy the recursion
\begin{small}
\be
  H_{n+2}(\ve,\delta;q)
  +e^{\ve}q^{n+3}(1+q-e^{\ve}q^{n+2}+e^{2\ve}q^{2n+4})
  H_{n+1}(\ve,\delta;q)
  +e^{2\ve}q^{2n+6}(1-e^{\ve}q^{n+1})H_{n}(\ve,\delta;q)
  =0
\ee
\end{small}

\noindent
obtained from~\eqref{recHk52} by replacing $q^n$ to
$e^\ve q^n$.  Note that when $\ve=0$, we cannot solve for $H_{-1}$ in
terms of $H_{n}$ for $n\geq0$ as discussed
in~\cite{park-inverted}\footnote{Our $H_{-1}(q)$ agrees with the one
  defined in~\cite{park-inverted} when $|q|<1$, however differs when
  $|q|>1$.}. It follows that the function \be
\label{DS52}
\begin{aligned}
  \DS_{m}(\ve,\delta;q)
  &=
  -e^{-\ve}(1-e^{\ve})^{2}\sum_{n=-\infty}^{-1}q^{mn}e^{m\ve}
  H_{n}(\ve,\delta;q)(qe^{\ve};q)_{n}(q^{-1}e^{-\ve};q^{-1})_{n}\\
  &=
  \sum_{n=0}^{\infty}\frac{q^{-mn-m}e^{m\ve}
    H_{-1-n}(\ve,\delta;q)}{(q^{-1}e^{\ve};q^{-1})_{n}(qe^{-\ve};q)_{n}}
\end{aligned}
\ee
is an inhomogenous solution of Equation~\eqref{52rec}. 
In particular, for $|q|<1$ we have
\be
\begin{aligned}
  \DS_{m}(\ve,\delta;q)
  =&\:
  \frac{(qe^{\ve-\delta};q)_{\infty}(qe^{\delta};q)_{\infty}
    (1-e^{\ve-\delta})}{(qe^{\ve};q)_{\infty}(q;q)_{\infty}e^{\frac{1}{12}
      \ve^2-\frac{1}{12}(\ve\delta-\delta^2)E_{2}(q)}(1-e^{\ve})}\\
  &\times
  \sum_{n=0}^{\infty}\sum_{k\in\BZ}\frac{(-1)^nq^{(n+1)(n-2)/2-mn-m+k(k+1)}e^{(m-n)
      \ve+(2k+1)\delta}(q^{-1}e^{\ve-\delta};q^{-1})_{n+k}}{
    (q^{-1}e^{\ve};q^{-1})_{n}^{2}(qe^{\ve};q)_{n}(qe^{\delta};q)_{k}}\\
  \DS_{m}(\ve,\delta;q^{-1})
  =&\:
  \frac{(qe^{\ve+\delta};q)_{\infty}}{(qe^{\delta};q)_{\infty}^{2}(
    e^{-\delta};q)_{\infty}(q;q)_{\infty}}\\
  &\times\sum_{n=0}^{\infty}\sum_{k\in\BZ}(-1)^{n+k}
  \frac{q^{-(n+1)(n-2)/2+mn+m+k(k+1)/2}e^{(m-n+1/2)\ve+\delta k}
    (qe^{\delta};q)_{k-1}}{(qe^{\ve+\delta};q)_{k+n}
    (qe^{\ve};q)_{n}(q^{-1}e^{-\ve};q^{-1})_{n}}.
\end{aligned}
\ee
We see that $\DS_{m}(\ve,\delta;q)$ is convergent for $|q|<1$ and all $m\in\BZ$
and for
$|q|>1$ and all $m\in\BZ_{\geq0}$. Moreover,
$\ve\DS_{m}(\ve,\delta;q)\in\BZ((q))[[\ve,\delta]]$ for $m\in\BZ$ and
$\delta^{2}\DS_{m}(\ve,\delta;q^{-1})\in\BZ((q))[[\ve,\delta]]$ for
$m\in\BZ_{\geq0}$. Substituting $\DS$ for $f$ in the LHS of Equation~\eqref{52rec}
gives a RHS of
\be
\label{DSrec}
\begin{aligned}
  e^{(m-1)\ve}(1-e^{\ve})^{2}H_{0}(\ve,\delta;q)
  -q^{m+4}e^{(m+1)\ve}(1-q^{-1}e^{-\ve})(1-e^{\ve})^{3}
  (1-q^{-1}e^{\ve})H_{-1}(\ve,\delta;q).
\end{aligned}
\ee
In particular, for $|q|<1$ Equation~\eqref{DSrec} is
\be
\label{DSrec1}
\begin{aligned}
  \frac{(qe^{\ve-\delta};q)_{\infty}(qe^{\delta};q)_{\infty}}{
    (qe^{\ve};q)_{\infty}(q;q)_{\infty}e^{\frac{1}{12}\ve^2
      -\frac{1}{12}(\ve\delta-\delta^2)E_{2}(q)}}
  \bigg(e^{m\ve}(1-e^{\ve})^{2}
  \sum_{k\in\BZ}\frac{q^{k(k+1)}e^{(2k+1)\delta}}{(qe^{\delta};q)_{k}
    (qe^{\ve-\delta};q)_{-k}}\qquad
  \\
  +q^{m+3}e^{(m+1)\ve}(1-q^{-1}e^{-\ve})
  (1-e^{\ve})^{2}(1-q^{-1}e^{\ve})
  \sum_{k\in\BZ}\frac{q^{k(k+1)}e^{(2k+1)\delta}}{(qe^{\delta};q)_{k}
    (qe^{\ve-\delta};q)_{-1-k}}\bigg)\\
  =
  \ve^{2}(1+O(\delta))+O(\ve^{3})
\end{aligned}
\ee
and for $|q|>1$ Equation~\eqref{DSrec} is
\begin{small}
\be\label{DSrec2}
\begin{aligned}
  \frac{(q^{-1}e^{\ve+\delta};q^{-1})_{\infty}}{
    (q^{-1}e^{\delta};q^{-1})_{\infty}^{2}
    (e^{-\delta};q^{-1})_{\infty}(q^{-1};q^{-1})_{\infty}}
  \bigg(e^{(m+1/2)\ve}(1-e^{\ve})^{2}
  \sum_{k\in\BZ}(-1)^{k}q^{-k(k+1)/2}e^{\delta k}
  \frac{(q^{-1}e^{\delta};q^{-1})_{k-1}}{
    (q^{-1}e^{\ve+\delta};q^{-1})_{k-1}}\;\\
  +q^{m+3}e^{(m+3/2)\ve}(1-q^{-1}e^{-\ve})
  (1-e^{\ve})^{3}(1-q^{-1}e^{\ve})
  \sum_{k\in\BZ}(-1)^{k}q^{-k(k+1)/2}e^{\delta k}
  \frac{(q^{-1}e^{\delta};q^{-1})_{k-1}}{
    (q^{-1}e^{\ve+\delta};q^{-1})_{k}}\bigg)\\
  =
  \ve^{2}+O(\ve^{3}).
\end{aligned}
\ee
\end{small}

\subsection{A $6 \times 6$ matrix of $q$-series}
\label{sub.52q6}

We now have all the ingredients to define the promised $6 \times 6$ matrix
$\bJ_{m}(q)$ of $q$-series for $|q| \neq 1$. Let us denote by $\DS_{m}^{(a,b)}(q)$
the coefficient of $\ve^{a}\delta^{b}$ in the expansion of $\DS_{m}(q)$.
We now define
{\small
\be
\label{Jq6x6}
\begin{aligned}
  \bJ_{m}(q)=
  \begin{pmatrix}
    1 & \DS^{(2,0)}_{m}(q) & \DS^{(2,0)}_{m+1}(q) &
    \DS^{(2,0)}_{m+2}(q) & \DS^{(2,0)}_{m+3}(q) & \DS^{(2,0)}_{m+4}(q)\\
    0 & \DS^{(0,0)}_{m}(q) & \DS^{(0,0)}_{m+1}(q) & \DS^{(0,0)}_{m+2}(q)
    & \DS^{(0,0)}_{m+3}(q) & \DS^{(0,0)}_{m+4}(q)\\
    0 & \DS^{(-1,2)}_{m}(q) & \DS^{(-1,2)}_{m+1}(q) & \DS^{(-1,2)}_{m+2}(q)
    & \DS^{(-1,2)}_{m+3}(q) & \DS^{(-1,2)}_{m+4}(q)\\
    0 & \DS^{(0,2)}_{m}(q) & \DS^{(0,2)}_{m+1}(q) & \DS^{(0,2)}_{m+2}(q)
    & \DS^{(0,2)}_{m+3}(q) & \DS^{(0,2)}_{m+4}(q)\\
    0 & \DS^{(1,0)}_{m}(q) & \DS^{(1,0)}_{m+1}(q) & \DS^{(1,0)}_{m+2}(q)
    & \DS^{(1,0)}_{m+3}(q) & \DS^{(1,0)}_{m+4}(q)\\
    0 & \DS^{(1,2)}_{m}(q) & \DS^{(1,2)}_{m+1}(q) & \DS^{(1,2)}_{m+2}(q)
    & \DS^{(1,2)}_{m+3}(q) & \DS^{(1,2)}_{m+4}(q)
  \end{pmatrix}
  \qquad (|q|<1),\\
  \bJ_{m}(q)=
  \begin{pmatrix}
    1 & \DS^{(2,0)}_{m}(q) & \DS^{(2,0)}_{m+1}(q)
    & \DS^{(2,0)}_{m+2}(q) & \DS^{(2,0)}_{m+3}(q) & \DS^{(2,0)}_{m+4}(q)\\
    0 & \DS^{(1,-2)}_{m}(q) & \DS^{(1,-2)}_{m+1}(q) & \DS^{(1,-2)}_{m+2}(q)
    & \DS^{(1,-2)}_{m+3}(q) & \DS^{(1,-2)}_{m+4}(q)\\
    0 & \DS^{(2,-2)}_{m}(q) & \DS^{(2,-2)}_{m+1}(q) & \DS^{(2,-2)}_{m+2}(q)
    & \DS^{(2,-2)}_{m+3}(q) & \DS^{(2,-2)}_{m+4}(q)\\
    0 & \DS^{(1,0)}_{m}(q) & \DS^{(1,0)}_{m+1}(q) & \DS^{(1,0)}_{m+2}(q)
    & \DS^{(1,0)}_{m+3}(q) & \DS^{(1,0)}_{m+4}(q)\\
    0 & \DS^{(0,-2)}_{m}(q) & \DS^{(0,-2)}_{m+1}(q) & \DS^{(0,-2)}_{m+2}(q)
    & \DS^{(0,-2)}_{m+3}(q) & \DS^{(0,-2)}_{m+4}(q)\\
    0 & \DS^{(0,0)}_{m}(q) & \DS^{(0,0)}_{m+1}(q) & \DS^{(0,0)}_{m+2}(q)
    & \DS^{(0,0)}_{m+3}(q) & \DS^{(0,0)}_{m+4}(q)
  \end{pmatrix}
  \qquad (|q|>1)\,.
\end{aligned}
\ee
}
The next theorem relates the above matrix to the linear $q$-difference
equation~\eqref{52rec}.

\begin{theorem}
\label{thm.51bJ1}
  The matrix $\bJ_{m}(q)$ is a fundamental solution to the linear $q$-difference
  equation
\begin{small}
\be
\label{52Jqrec}
\bJ_{m+1}(q) = \bJ_{m}(q) A(q^m,q),\; A(q^m,q)=
\begin{pmatrix}
    1 & 0 & 0 & 0 & 0 & -q^{-2m-10}\\
    0 & 0 & 0 & 0 & 0 & (1-q^{m+1})(1-q^{m+2})q^{-2m-10}\\
    0 & 1 & 0 & 0 & 0 & (3 + 2q)(1-q^{m+1})q^{-m-8}\\
    0 & 0 & 1 & 0 & 0 & (q^{m+4} + 6q^{m+3} + 3q^{m+2} - q - 4)q^{-m-7}\\
    0 & 0 & 0 & 1 & 0 & (-3q^{m+4} - 6q^{m+3} - q^{m+2} + 3)q^{-m-6}\\
    0 & 0 & 0 & 0 & 1 & (3q^{m+4} + 2q^{m+3} - 1)q^{-m-5}
  \end{pmatrix} \,.
\ee
\end{small}
and has
\be
\begin{aligned}
  \det(\bJ_{m}(q))
  &=q^{-20-7m}(q;q)_{\infty}^{9}(q^{-m-1};q)_{\infty}(q^{-m};q)_{\infty}
  &\qquad (|q|<1),\\
  \det(\bJ_{m}(q))
  &=
  q^{-20-7m}(q^{-1};q^{-1})_{\infty}^{-9}(q^{-m-1};q^{-1})_{\infty}^{-1}
  (q^{-m-2};q^{-1})_{\infty}^{-1}
  &\qquad (|q|>1).
\end{aligned}
\ee
\end{theorem}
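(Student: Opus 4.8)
The plan is to treat the two assertions of the theorem separately: that $\bJ_m(q)$ solves the $q$-difference equation \eqref{52Jqrec}, and that it is a \emph{fundamental} solution, which is exactly the content of the determinant formulas.

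\textbf{The $q$-difference equation.} By construction, rows $2$ through $6$ of $\bJ_m(q)$ are the shifted sequences $\bigl(\DS_m^{(a,b)},\DS_{m+1}^{(a,b)},\dots,\DS_{m+4}^{(a,b)}\bigr)$ for the pairs $(a,b)$ read off from \eqref{Jq6x6}, and row $1$ is $\bigl(1,\DS_m^{(2,0)},\dots,\DS_{m+4}^{(2,0)}\bigr)$. Substituting $\DS_m(\ve,\delta;q)$ for $f$ in the left side of \eqref{52rec} produces, by \eqref{DSrec} together with \eqref{DSrec1} and \eqref{DSrec2}, the series $\ve^{2}(1+O(\delta))+O(\ve^{3})$ when $|q|<1$ and $\ve^{2}+O(\ve^{3})$ when $|q|>1$. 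Extracting the coefficient of $\ve^{a}\delta^{b}$ shows that, for each of the listed pairs with $(a,b)\ne(2,0)$, the series $\DS_m^{(a,b)}(q)$ solves the homogeneous version of \eqref{52rec}, while $\DS_m^{(2,0)}(q)$ solves \eqref{52rec} itself (right side $1$). Equation \eqref{52Jqrec} is then the first-order companion reformulation of \eqref{52rec}: the first column $(1,0,\dots,0)^{T}$ of $A(q^m,q)$ reproduces the first column of $\bJ_{m+1}(q)$; columns $2$–$5$, being the standard basis vectors $e_{3},\dots,e_{6}$, realise the shift $m\mapsto m+1$; and the last column of $A(q^m,q)$ records \eqref{52rec} solved for $f_{m+5}$, its entries in rows $2$ through $6$ being $-a_j/a_5$ ($j=0,\dots,4$), with $a_j$ the coefficients of \eqref{52rec}, and its $(1,6)$ entry being $1/a_5=-q^{-2m-10}$, which carries the constant inhomogeneity through the top-left $1$ of $\bJ_m(q)$. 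Matching these against the displayed $A(q^m,q)$ is a routine coefficient check, and then $\bJ_{m+1}(q)=\bJ_m(q)A(q^m,q)$ holds row by row.

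\textbf{The determinant, up to an $m$-independent factor.} Since the first column of $\bJ_m(q)$ is $(1,0,\dots,0)^{T}$, cofactor expansion along it identifies $\det\bJ_m(q)$ with the $5\times5$ Casoratian of the five homogeneous solutions occupying rows $2$–$6$. Expanding $\det A(q^m,q)$ along its first column and then its $5\times5$ companion block along the first row gives $\det A(q^m,q)=q^{-2m-10}(1-q^{m+1})(1-q^{m+2})$, equivalently the standard Casoratian-transition factor $(-1)^{5}a_0/a_5$ of the order-five recursion \eqref{52rec}. Therefore
\[
\det\bJ_{m+1}(q)=q^{-2m-10}\,(1-q^{m+1})(1-q^{m+2})\,\det\bJ_m(q),
\]
which determines $\det\bJ_m(q)$ up to a factor independent of $m$. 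A short Pochhammer manipulation — $(q^{-m-2};q)_\infty=(1-q^{-m-2})(1-q^{-m-1})(q^{-m};q)_\infty$, and its $q\mapsto q^{-1}$ counterpart — shows that both claimed closed forms satisfy this very recursion in $m$; in particular their zero loci ($q^{m+1}=1$ or $q^{m+2}=1$, where \eqref{52rec} degenerates) and the restriction to $m\ge0$ forced by convergence when $|q|>1$ come out correctly.

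\textbf{Fixing the normalization, and the main difficulty.} What remains is to pin down the $m$-independent constant, i.e.\ to produce the factor $(q;q)_\infty^{9}$ (respectively $(q^{-1};q^{-1})_\infty^{-9}$). One way is a leading-order analysis as $q\to0$: taking $m\to-\infty$ when $|q|<1$ and $m\to+\infty$ when $|q|>1$, the explicit double $q$-hypergeometric expressions for $\DS_m(\ve,\delta;q^{\pm1})$ displayed just after \eqref{DS52} have a single dominant summand whose lowest $q$-power and leading coefficient can be read off, and the corresponding leading term of the $5\times5$ Casoratian should then match that of the proposed formula. A more structural alternative, which I expect is the intended one, exploits the filtration of the homogenised \eqref{52rec}: it contains the third-order equation \eqref{52qdiff} as a sub/quotient — whose fundamental matrix $\bJ^\RED_m(q)$ has determinant already computed in \cite{GGM:resurgent} — together with an order-two Rogers--Ramanujan factor, so that in a basis adapted to this filtration $\det\bJ_m(q)$ factors as $\det\bJ^\RED_m(q)$ times a Rogers--Ramanujan Casoratian times a transition term, reproducing the stated normalization. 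This last step is where the real work lies: verifying \eqref{52Jqrec} is a creative-telescoping and coefficient computation and the Casoratian transition law is standard, but producing the $(q;q)_\infty^{9}$ prefactor requires either the careful $q\to0$ asymptotics with no unexpected cancellation in the $5\times5$ determinant, or a fully explicit identification of the Rogers--Ramanujan sub-block and its pairing with $\bJ^\RED_m(q)$. The $|q|>1$ case is then dispatched by the same argument applied to the second family of rows in \eqref{Jq6x6}, the only additional input being the convergence range $m\ge0$.
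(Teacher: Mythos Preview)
Your approach matches the paper's: the $q$-difference equation \eqref{52Jqrec} is deduced from \eqref{DSrec1}--\eqref{DSrec2} exactly as you describe, and the determinant is fixed by computing $\det A(q^m,q)$ and then pinning down the $m$-independent constant via the limiting behaviour in $m$ --- your first option. One correction worth making: your ``structural alternative'' through the block decomposition and the Rogers--Ramanujan factor is \emph{not} available here, because in the paper that block structure appears only as Conjectures~\ref{prop.52block} and~\ref{DS2hid}, proved (partially) in the appendix \emph{after} the present theorem; so it cannot be the intended route, and the normalization really is obtained from the $m$-limit asymptotics you sketched first.
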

\begin{proof}
  Equation~\eqref{52Jqrec} follows from Equations~\eqref{DSrec1},~\eqref{DSrec2}. The
  determinant is calculated using the determinant of $A(q^{m},q)$ and by considering
  the limiting behavior in $m$.
\end{proof}
The construction of this matrix has used special $q$-hypergeometric formulae for the
Habiro polynomials. However, this construction can be carried out more generally and
will be developed in a later publication. 

There is a similar, however more complicated, relation between
$\bJ_{-m}(q^{-1})$ with the first row replaced by Appell-Lerch type
sums and $\bJ_{m}(q)^{-1}$ as in Theorem~\ref{thm.41bJ2}. This
indicates these matrices could come from the factorisation of a
state-integral. We will not give this relation, since we do not need
it for the purpose of resurgence. We will however, discuss an
important block property of the matrix $\bJ_{-2}(q)$, after a gauge transformation.
Namely, we define:
\be
\label{J52norm}
\bJ^{\mathrm{norm}}(q)
=
\bJ_{-2}(q)
\begin{pmatrix}
1 & 0 & 0 & 0 & 0 & 0\\
0 & 0 & 0 & 0 & 0 & q^{-1}-1\\
0 & 0 & 0 & 0 & 1 & -3\\
0 & -q & q & 3q^2 & 0 & 2q\\
0 & 0 & q^2 & q^2-3q^3 & 0 & -q^2\\
0 & 0 & 0 & q^4 & 0 & 0
\end{pmatrix} \,.
\ee

The first few terms of the matrix $\bJ^{\mathrm{norm}}(q)+Q(q^3)$ are given by
{\tiny
\be
\begin{pmatrix}
  1 & -\frac{1}{12} + \frac{25}{12}q + 4q^2
  & -\frac{5}{6} - \frac{19}{6}q - \frac{95}{12}q^2
  & \frac{1}{12} - 2q - \frac{83}{12}q^2 & -\frac{5}{12} + \frac{11}{12}q - 3q^2
  & \frac{5}{12} - \frac{1}{2}q + 2q^2\\
0 & 1 - q & -2 + 2q - q^2 & -1 - q^2 & -1 + q & 1\\
0 & -1 + 4q + q^2 & 1 - 7q + 2q^2 & -q + q^2 & 1 - 3q - q^2 & q^2\\
0 & \frac{5}{12} - \frac{35}{12}q + \frac{13}{2}q^2
& \frac{2}{3} + \frac{4}{3}q - \frac{263}{12}q^2
& \frac{1}{12} - \frac{5}{2}q - \frac{137}{12}q^2
& -\frac{17}{12} + \frac{53}{12}q - \frac{13}{2}q^2
& -\frac{1}{12} + 4q + \frac{11}{2}q^2\\
0 & 0 & 0 & 0 & 1 - 2q & -1 + q + 2q^2\\
0 & 0 & 0 & 0 & \frac{11}{12} - \frac{11}{6}q + 10q^2
& \frac{1}{12} - \frac{61}{12}q - \frac{1}{6}q^2
\end{pmatrix} \,.
\ee
}

We next discuss a block structure for the gauged-transform matrix~\eqref{J52norm}. 
\begin{conjecture}
\label{prop.52block}
When $|q|<1$, the matrix $\bJ^{\mathrm{norm}}(q)$ has a block form
\be
\begin{pmatrix}
1\times1 & 1\times3 & 1\times2\\
0 & 3\times 3 & 3\times 2\\
0 & 0 & 2\times2
\end{pmatrix} \,.
\ee
\end{conjecture}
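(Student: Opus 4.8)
The plan is to reduce the asserted block structure to a handful of $q$-series identities and then to deduce those from the recursion~\eqref{52rec}. Since the first columns of both $\bJ_{-2}(q)$ and of the gauge matrix in~\eqref{J52norm} are $(1,0,0,0,0,0)^{T}$, so is the first column of $\bJ^{\mathrm{norm}}(q)$; this already gives the vanishing of the $(2,1)$- and $(3,1)$-blocks, so only the $(3,2)$-block is at stake. Expanding the product in~\eqref{J52norm} with the $|q|<1$ form of $\bJ_{m}(q)$ from~\eqref{Jq6x6}, the three entries of row~$5$ in columns $2,3,4$ are
\begin{gather*}
-q\,\DS^{(1,0)}_{0}(q),\qquad q\,\DS^{(1,0)}_{0}(q)+q^{2}\,\DS^{(1,0)}_{1}(q),\\
3q^{2}\,\DS^{(1,0)}_{0}(q)+(q^{2}-3q^{3})\,\DS^{(1,0)}_{1}(q)+q^{4}\,\DS^{(1,0)}_{2}(q),
\end{gather*}
and those of row~$6$ are the same with $\DS^{(1,2)}$ in place of $\DS^{(1,0)}$. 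This is a triangular linear system in $\DS^{(1,0)}_{0},\DS^{(1,0)}_{1},\DS^{(1,0)}_{2}$ with invertible diagonal $(-q,q^{2},q^{4})$, so Conjecture~\ref{prop.52block} is equivalent to the identities $\DS^{(1,0)}_{m}(q)=\DS^{(1,2)}_{m}(q)=0$ for $m=0,1,2$.

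The next step is to observe that $(\DS^{(1,0)}_{m}(q))_{m}$ and $(\DS^{(1,2)}_{m}(q))_{m}$ are homogeneous solutions of the recursion obtained from~\eqref{52rec} by replacing the right-hand side by $0$: indeed, by~\eqref{DSrec1} the result of substituting $\DS_{m}(\ve,\delta;q)$ into the left-hand side of~\eqref{52rec} is $\ve^{2}(1+O(\delta))+O(\ve^{3})$, so the coefficient of every $\ve^{1}\delta^{b}$ vanishes. Now recall, as in the discussion of the non-invertibility of the Habiro recursion after~\eqref{defHab51}, that the coefficients of $f_{m}$ and $f_{m+1}$ in~\eqref{52rec} are $(1-q^{m+1})(1-q^{m+2})$ and $(2q^{m+3}+3q^{m+2})(1-q^{m+2})$, which both vanish at $m=-2$ (and the first also at $m=-1$), while the coefficient $-q^{2m+10}$ of $f_{m+5}$ never vanishes. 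Running the degenerate recursions at $m=-2$ and $m=-1$ and then the non-degenerate ones at $m\ge 0$, a homogeneous solution $w$ with $w_{0}=w_{1}=w_{2}=0$ automatically has $w_{m}=0$ for all $m\ge 0$; hence the conjecture is equivalent to the statement that $\DS^{(1,0)}_{\bullet}$ and $\DS^{(1,2)}_{\bullet}$ both lie in the two-dimensional space $W_{0}=\{\,w: w_{0}=w_{1}=w_{2}=0\,\}$ of homogeneous solutions vanishing for $m\ge 0$. Since $\det\bJ_{m}(q)\ne 0$ by Theorem~\ref{thm.51bJ1}, the five homogeneous solutions occupying the last five rows of $\bJ_{m}(q)$ are linearly independent, so this says precisely that $\DS^{(1,0)}_{\bullet}$ and $\DS^{(1,2)}_{\bullet}$ form a basis of $W_{0}$.

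The remaining and decisive step is to prove $\DS^{(1,0)}_{m}(q)=\DS^{(1,2)}_{m}(q)=0$ for $m=0,1,2$, and this is where I expect the main difficulty: knowing only that these sequences satisfy the homogeneous fifth-order recursion does not by itself force vanishing at three consecutive indices, so one genuinely has to identify which solutions they are. Concretely, I would extract the coefficient of $\ve^{1}\delta^{0}$ (resp.\ $\ve^{1}\delta^{2}$) in the explicit $q$-hypergeometric sums for $\DS_{m}(\ve,\delta;q)$ coming from~\eqref{defHab51}--\eqref{DS52} and verify that the resulting sum over $n$ vanishes at each of $m=0,1,2$ by creative telescoping of $q$-holonomic sums~\cite{Koutschan:holofunctions}, as is done elsewhere in this paper. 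More conceptually, one expects $W_{0}$ to be exactly the ``excess'' two-dimensional piece that distinguishes the sixth-order descendant equation~\eqref{52rec} from the fourth-order homogeneous equation of the Kashaev invariant of $\knot{5}_2$ --- the Rogers--Ramanujan block announced in the introduction --- and that this piece is captured by the $\ve$-linear coefficients $\DS^{(1,0)}_{\bullet}$, $\DS^{(1,2)}_{\bullet}$; making that identification precise, in tandem with writing down the $2\times 2$ Rogers--Ramanujan block of $\bJ^{\mathrm{norm}}(q)$ explicitly, is the natural route to finishing the proof.
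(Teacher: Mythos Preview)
Your reduction is correct and goes further than the paper does. The first column of $\bJ^{\mathrm{norm}}(q)$ is indeed $(1,0,\dots,0)^T$, and your computation of the entries in rows~5 and~6, columns~2--4, is right: the triangular system in $\DS^{(1,0)}_0,\DS^{(1,0)}_1,\DS^{(1,0)}_2$ (and likewise $\DS^{(1,2)}$) shows that the $(3,2)$-block vanishes if and only if these six quantities do. Your observation that the coefficients of $f_m$ and $f_{m+1}$ in the homogeneous form of~\eqref{52rec} vanish at $m=-2$ (and that of $f_m$ at $m=-1$) is also correct, and the resulting identification of a two-dimensional space $W_0$ of homogeneous solutions vanishing on $m\ge 0$ is a clean structural picture the paper does not make explicit.

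However, you should be aware that the paper does \emph{not} prove this statement: it is stated as a conjecture (Conjecture~\ref{prop.52block}), supported only by the $O(q^3)$ expansion displayed immediately after~\eqref{J52norm}. The identities sketched in Appendix~\ref{Appqserids} address Conjecture~\ref{DS2hid} (identifying the $3\times 3$ block with $\bJ^{\RED}$), not the vanishing of the $(3,2)$-block. So there is no proof in the paper to compare against, and the gap you correctly isolate --- establishing $\DS^{(1,0)}_m(q)=\DS^{(1,2)}_m(q)=0$ for $m=0,1,2$ from the hypergeometric definitions~\eqref{defHab51}--\eqref{DS52} --- is genuinely open. Your suggested route via creative telescoping is in the spirit of the appendix and is the natural thing to try, but carrying it out would go beyond what the paper claims.
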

Our next task is to identify the $3\times3$ and the $2\times2$ blocks of the
matrix $\bJ^{\mathrm{norm}}(q)$. The first observation is that the $3\times 3$ block
is related to the $3\times3$ matrix given in~\cite{GGM:resurgent}. The second is that
the $2\times 2$ block is related to modular forms. This is the content of the next
conjecture.

\begin{conjecture}
\label{DS2hid}
The $3\times3$ block for $|q|<1$ of $\bJ^{\mathrm{norm}}(q)$ of~\eqref{J52norm} has
the form
\be
(q;q)_{\infty}
\bJ_{-1}^\RED(q)
\begin{pmatrix}
  0&0&1\\
  -1&3&0\\
  0&-1&0
\end{pmatrix}
\ee
(where $\bJ_{m}^\RED(q)$ is the $3\times3$ matrix of~\cite{GGM:resurgent} reviewed
in Section~\ref{sub.52q3}) and the $2\times2$ block has the form
\be
(q;q)_{\infty}^{2}
\begin{pmatrix}
  H(q) & G(q)\\
  * & *
\end{pmatrix}
\ee
where
\be
  H(q)=\sum_{k=0}^{\infty}\frac{q^{k^{2}+k}}{(q;q)_{k}}
  \quad\text{and}\quad
  G(q)=\sum_{k=0}^{\infty}\frac{q^{k^{2}}}{(q;q)_{k}}
\ee
are the famous Rogers-Ramanujan functions. 
\end{conjecture}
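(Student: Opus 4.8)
\begin{proofx}
The plan is to identify the two claimed blocks as fundamental solution matrices of the diagonal sub-equations produced by the gauge transformation \eqref{J52norm}, and then to fix the constants of comparison from a determinant computation together with the first few $q$-Taylor coefficients. First I would record explicit $q$-hypergeometric formulae for all entries of $\bJ^{\mathrm{norm}}(q)$ by extracting the coefficients of $\ve^a\delta^b$ in \eqref{defHab51}--\eqref{DS52}; together with \eqref{DSrec1}--\eqref{DSrec2} this shows that the floating family $\bJ^{\mathrm{norm}}_m(q):=\bJ_m(q)P(q)$ (with $P(q)$ the fixed matrix of \eqref{J52norm}, so $\bJ^{\mathrm{norm}}(q)=\bJ^{\mathrm{norm}}_{-2}(q)$) satisfies $\bJ^{\mathrm{norm}}_{m+1}(q)=\bJ^{\mathrm{norm}}_m(q)\,\widetilde A(q^m,q)$ with $\widetilde A=P^{-1}A\,P$ and $A$ the companion matrix of Theorem~\ref{thm.51bJ1}. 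The role of $P(q)$ is that $\widetilde A(q^m,q)$ is block upper-triangular of type $(1,3,2)$; granting Conjecture~\ref{prop.52block} (or proving it by verifying that the three lower-left blocks of $\widetilde A$, hence of $\bJ^{\mathrm{norm}}_m$, vanish to all orders in $q$ and not merely to the orders tabulated in the excerpt), the middle $3\times3$ block $B_3(q)$ and the bottom-right $2\times2$ block $B_2(q)$ satisfy $B_3(m{+}1)=B_3(m)\widetilde A_{22}(q^m,q)$ and $B_2(m{+}1)=B_2(m)\widetilde A_{33}(q^m,q)$, i.e.\ they are fundamental solution matrices of the diagonal sub-equations of orders $3$ and $2$.

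For the $3\times3$ block one checks that $\widetilde A_{22}$ is a constant conjugate of the companion matrix of the reduced $\knot{5}_2$ recursion \eqref{52qdiff}; that the relevant inhomogeneous terms die is already contained in \eqref{DSrec1}--\eqref{DSrec2}, whose right-hand side is $\ve^2(1+O(\delta))+O(\ve^3)$. Hence $B_3(q)$ and the matrix $\bJ^\RED_m(q)$ of Section~\ref{sub.52q3} (which solves \eqref{52qdiff} by construction) differ, after the constant conjugation and the appropriate index shift, by right multiplication by a matrix $C_3(q)$ independent of $m$; its determinant is forced by the determinant formula of Theorem~\ref{thm.51bJ1} and the determinant of $\bJ^\RED_m(q)$ from \cite{GGM:resurgent}, while the powers of $(q;q)_\infty$ come from the normalisations in \eqref{defHab51}. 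Matching $C_3(q)$ against the displayed low-order expansion of $\bJ^{\mathrm{norm}}(q)$ and the known initial terms of the $H^{(j)}_m(q)$ then forces $C_3(q)=(q;q)_\infty\bigl(\begin{smallmatrix}0&0&1\\-1&3&0\\0&-1&0\end{smallmatrix}\bigr)$; a single column suffices since $\bJ^\RED_{-1}(q)$ is invertible, and the resulting $q$-series identities can be certified to all orders by creative telescoping in the spirit of \cite{Koutschan:holofunctions} once both sides are known to solve \eqref{52qdiff}.

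For the $2\times2$ block the sub-equation $\widetilde A_{33}$ should turn out to be the order-two recursion satisfied by $m\mapsto F(q^m)$ where $F(z)=\sum_{k\ge0}z^kq^{k^2}/(q;q)_k$ (which obeys $F(z)=F(zq)+zq\,F(zq^2)$, giving $q^{m+1}F(q^{m+2})+F(q^{m+1})-F(q^m)=0$), whose solution space over $|q|<1$ is spanned by $G(q)=F(1)$ and $H(q)=F(q)$. Thus $B_2(q)=C_2(q)\bigl(\begin{smallmatrix}H(q)&G(q)\\ *&*\end{smallmatrix}\bigr)$ for an $m$-independent $C_2(q)$ whose determinant is again fixed by Theorem~\ref{thm.51bJ1}, and the displayed coefficients pin down the normalisation $(q;q)_\infty^2$ and the top row. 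The substantive input is the identity expressing the relevant $\ve^1\delta^0$- and $\ve^1\delta^2$-coefficients of $\DS_m(\ve,\delta;q)$ --- which are built from the negative-index Habiro deformations $H_{-1-n}(\ve,\delta;q)$ of \eqref{defHab51} and are a priori \emph{double} sums over $n\ge0$ and $k\in\BZ$ --- as $(q;q)_\infty^2$ times a single Rogers--Ramanujan series; this requires collapsing the bilateral $k$-sum, which is a specialised or limiting instance of a Bailey-type transformation adapted to the $\knot{5}_2$ Habiro polynomial \eqref{Hk52}.

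I expect two main obstacles. First, the rigorous verification of the block-triangular structure (Conjecture~\ref{prop.52block}): the gauge matrix $P(q)$ is not visibly adapted to the $q^m$-dependent part of $A(q^m,q)$, so showing that the lower-left blocks of $\widetilde A$ vanish identically needs a genuine argument rather than a finite $q$-expansion check, and with it one also needs nonvanishing of the Wronskians $\det B_2(q),\det B_3(q)$ (visible from the displayed leading coefficients and the determinant formula of Theorem~\ref{thm.51bJ1}) to run the ``fundamental solution, hence unique up to constant factor'' argument. Second, and more seriously, the emergence of the Rogers--Ramanujan functions is a true $q$-hypergeometric identity, not a formal consequence of matching finitely many coefficients unless one has already established the order-two recursion for the $2\times2$ block; extracting it from the double sum in \eqref{DS52} is where the real work lies, and it is plausible that the cleanest route is to exhibit the relevant Bailey pair directly rather than to manipulate the sums by hand.
\end{proofx}
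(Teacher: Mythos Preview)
Your plan rests on the claim that the gauge $P(q)$ of \eqref{J52norm} makes the transfer matrix $\widetilde A(q^m,q)=P(q)^{-1}A(q^m,q)P(q)$ block upper-triangular of type $(1,3,2)$, so that the diagonal $3\times3$ and $2\times2$ blocks of $\bJ_m(q)P$ evolve by autonomous sub-recursions. This is false. A direct computation (row~6 of $P^{-1}$ is $(0,\tfrac{q}{1-q},0,0,0,0)$, and column~4 of $AP$ picks up $q^4A_{26}$) gives
\[
\widetilde A_{6,4}(q^m,q)\;=\;\frac{(1-q^{m+1})(1-q^{m+2})}{(1-q)\,q^{2m+5}}\;\neq\;0,
\]
so the lower-left block of $\widetilde A$ does not vanish. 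The matrix $P$ was tailored to make the \emph{solution} $\bJ_{-2}(q)P$ block upper-triangular at the single index $m=-2$; it does not block-triangularise the $q^m$-dependent equation, and no $m$-independent gauge can, since the last column of $A$ carries genuine $q^m$-dependence into rows $2$ and $3$. Consequently there is no $3\times3$ sub-recursion for the family $\bJ_m(q)P$, and the ``two fundamental solutions of the same equation differ by a right constant matrix'' step never gets started. Your ``hence $\bJ^{\mathrm{norm}}_m$ is block upper-triangular'' is thus not merely unjustified but incorrect, and with it the entire comparison argument for both blocks collapses.

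The paper's approach is completely different and deliberately partial: it proves only one row of the $3\times3$ statement (the $\DS^{(0,0)}$ row) and says nothing about the Rogers--Ramanujan block. The method is a direct $q$-hypergeometric identity proof with no appeal to block structure or to Conjecture~\ref{prop.52block}. One writes $q\DS^{(0,0)}_m(q)$ and the target $(q;q)_\infty\cdot[\text{combination of }H^{(0)}_j(q)]$ as double sums $f_{-m-1,0}(q)$ and $h_{0,0}(q)$ (expanding $(q;q)_\infty$ via the triple product), introduces an auxiliary parameter $p$ and a grading index $k$ to get families $f_{m,p,k},h_{m,p,k}$, and uses creative telescoping (Koutschan's \texttt{HolonomicFunctions}) to show these share the same annihilator ideal in $(p,k)$. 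The remaining two initial conditions reduce to Euler's identity $\sum_{n\ge0}q^{n^2}/(q;q)_n^2=1/(q;q)_\infty$ and an immediate shift of it. This establishes the three identities \eqref{3Q}; the other rows and the $2\times2$ block remain open.
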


The remaining two entries of the $2 \times 2$ block are higher weight vector-valued
modular forms associated to the same $\SL_{2}(\BZ)$-representation as the
Rogers-Ramanujan functions, discussed for example in~\cite{SW:NCJac}. Part of this
conjecture is proved in Appendix~\ref{Appqserids}.

This block decomposition fits nicely with the ``dream''
in~\cite{DZ:lec}. Here we do see the interesting property that the
$1\times 2$ and $3\times 2$ blocks contain some non-trivial gluing
information. This implies that the diagrammatic ``short exact
sequence'' will not always ``split''.  The block decomposition also
implies that the resurgent structure of the asymptotic series
associated to the $q$-series in the $4\times4$ block in the top left
does not depend on the other blocks. This block and in-particular the
second column of $\bJ^{\mathrm{norm}}$ will be the focus of
Section~\ref{sub.52borel}.

We now consider the analytic properties of the function
\be
\label{W52}
W(\tau)=\bJ^{\mathrm{norm}}(e(\tau))^{-1}
\begin{pmatrix}
\tau^2 & 0 & 0 & 0 & 0 & 0\\
0 & 1 & 0 & 0 & 0 & 0\\
0 & 0 & \tau & 0 & 0 & 0\\
0 & 0 & 0 & \tau^2 & 0 & 0\\
0 & 0 & 0 & 0 & \tau & 0\\
0 & 0 & 0 & 0 & 0 & \tau^3
\end{pmatrix}
\bJ^{\mathrm{norm}}(\e(-1/\tau)), \qquad (\tau \in \BC\setminus\BR) \,.
\ee
If the work~\cite{GZ:qseries} extended to the $6 \times 6$ matrix, it would imply
that the function $W$ extends to an analytic function on $\BC'$. This would follow
from an identification of $W$ with a matrix of state-integrals, as was done in
Section~\ref{sub.3x3state} for the $\knot{4}_1$ knot. Although we do not know of such
a matrix of state-integrals, we can numerically evaluate $W$ when $\tau$ is near
the positive real axis and test the extension hypothesis. Doing so for 
$\tau=1+\frac{\ri}{100}$ we have
\begin{tiny}
\be
\begin{split}
&\bJ^{\mathrm{norm}}(\e(-1/\tau))\\&=
\begin{pmatrix}
  1 & 1.9E^{9} + 3.8E^{8}\ri & -5.1E^{9} - 9.9E^{8}\ri & -4.5E^{9} - 8.8E^{8}\ri
  & -1.2E^{9} - 2.5E^{8}\ri & 2.9E^{9} + 5.7E^{8}\ri\\
  0 & 2.4E^{6} + 4.1E^{5}\ri & -6.1E^{6} - 1.0E^{6}\ri & -5.4E^{6} - 9.5E^{5}\ri
  & -1.5E^{6} - 2.7E^{5}\ri & 3.5E^{6} + 6.1E^{5}\ri\\
  0 & -1.3E^{-20} + 1.0E^{-20}\ri & 1.7E^{-20} - 2.6E^{-20}\ri & -6.2E^{-22} - 5.1E^{-21}\ri
  & 9.1E^{-21} - 2.5E^{-21}\ri & -4.0E^{-21} + 3.8E^{-21}\ri\\
  0 & 1.9E^{9} + 3.8E^{8}\ri & -5.1E^{9} - 9.9E^{8}\ri & -4.5E^{9} - 8.8E^{8}\ri
  & -1.2E^{9} - 2.5E^{8}\ri & 2.9E^{9} + 5.7E^{8}\ri\\
0 & 0 & 0 & 0 & 3.1E^{-17} - 1.3E^{-17}\ri & -5.0E^{-17} + 2.1E^{-17}\ri\\
0 & 0 & 0 & 0 & 2.6E^{-14} - 1.0E^{-14}\ri & -4.2E^{-14} + 1.7E^{-14}\ri
\end{pmatrix}
\end{split}
\ee
\end{tiny}
where $\e(x)=e^{2 \pi \ri x}$ whereas
\begin{tiny}
\be
W(\tau)=
\begin{pmatrix}
  0.99 - 0.019\ri & -0.10 - 0.028\ri & 0.24 - 0.25\ri & 0.060 - 0.43\ri & -0.064 + 0.059\ri
  & -0.18 - 0.094\ri\\
0 & 0.59 - 1.0\ri & 1.0 + 1.3\ri & 0.19 - 0.13\ri & -0.60 - 0.20\ri & -0.48 - 0.22\ri\\
0 & -0.17 - 0.17\ri & 1.2 - 0.30\ri & 0.024 - 0.31\ri & -0.14 - 0.0076\ri & -0.17 + 0.030\ri\\
0 & 0.028 - 0.31\ri & 0.097 + 1.1\ri & 1.0 + 0.46\ri & -0.17 + 0.030\ri & -0.12 - 0.53\ri\\
0 & 0 & 0 & 0 & 0.17 - 0.83\ri & -0.44 - 0.25\ri\\
0 & 0 & 0 & 0 & -0.46 - 0.26\ri & 0.63 - 0.56\ri
\end{pmatrix} \,.
\ee
\end{tiny}

\subsection{Borel resummation and Stokes constants}
\label{sub.52borel}

The $\knot{5}_2$ knot has four asymptotic series $\Phi^{(\s_j)}(\tau)$
for $j=0,1,2,3$ corresponding to the trivial, the geometric, the
conjugate, and the real flat connections respectively, denoted
respectively by $\s_j$ for $j=0,1,2,3$.  Similar to the $\knot{4}_1$
knot, the asymptotic series $\Phi^{(\s_j)}(\tau)$ for $j=1,2,3$ can be
defined in terms of a perturbation theory of a state-integral
\cite{KLV,AK} using the standard formal Gaussian integration as
explained in \cite{DGLZ,GGM:resurgent}, and they have been computed in
\cite{GGM:resurgent} with more than 200 terms.  Let $\xi_{j}$
($j=1,2,3$) be the roots to the algebraic equation
\begin{equation}
  (1-\xi)^3 = \xi^2
\end{equation}
with numerical values
\begin{equation}
  \xi_1 = 0.78492\ldots +1.30714\ldots\ri,\quad
  \xi_2 = 0.78492\ldots -1.30714\ldots\ri,\quad
  \xi_3 = 0.43016\ldots.
  \label{eq:xi}
\end{equation}
The asymptotic series $\Phi^{(\s_j)}(\tau)$  for $j=1,2,3$ have the
universal form\footnote{The series $\Phi^{(\s_j)}(\tau)$ ($j=1,2,3$) are related
  to the series in \cite{GGM:resurgent,GGM:peacock}, which we will
  denote by $\Phi^{(\s_j)}_{\text{GGM}}(\tau)$, by a common prefactor
  \begin{equation}
    \Phi^{(\s_j)}(\tau) = \ri
    \re^{-\frac{\pi\ri}{12}(\tau+\tau^{-1})-2\pi\ri\tau}
    \Phi^{(\s_j)}_{\text{GGM}}(\tau),\quad j=1,2,3.
  \end{equation}
  The Stokes constants associated to the Borel resummation of
  $\Phi^{(\s_j)}_{\text{GGM}}(\tau)$ are not changed. The additional
  prefactor is introduced so that the Stokes automorphism between
  $\Phi^{(\s_0)}(\tau)$ and $\Phi^{(\s_j)}(\tau)$ ($j=1,2,3$) can be
  presented in an elegant form, and is also dictated by positions of
  singularities of Borel transform of $\Phi^{(\s_0)}(\tau)$.}
\begin{equation}
  \Phi^{(\s_j)}(\tau) = \frac{\re^{\frac{3\pi\ri}{4}}}{\sqrt{\delta_j}}
  \re^{\frac{V_j}{2\pi\ri\tau}}  \varphi^{(\s_j)}(\tau),\quad j=1,2,3,
\end{equation}
where $\delta_j = 5-3\xi_j+3\xi_j^2$  and
\begin{equation}
\label{eq:V52}
  \begin{aligned}
    V_1 =
    &3\Li_2(\xi_1)+3/2\log(\xi_1)\log(1-\xi_1)
      -\pi\ri\log(\xi_1)-\frac{\pi^2}{3} 
    \\
    V_2 =
    &3\Li_2(\xi_2)+3/2\log(\xi_2)\log(1-\xi_2)
      +\pi\ri\log(\xi_2)-\frac{\pi^2}{3},
    \\
    V_3 =
    &3\Li_2(\xi_3)+3/2\log(\xi_3)\log(1-\xi_3)
      -\frac{\pi^2}{3}.
  \end{aligned}  
\end{equation}
Their numerical values are given by 
\begin{equation}
  V_1= 3.0241\ldots + 2.8281\ldots\ri,\quad
  V_2= 3.0241\ldots - 2.8281\ldots\ri,\quad
  V_3= -1.1134\ldots.
  \label{eq:V-52}
\end{equation}
where the common absolute value of the imaginary parts of
$V_1,V_2$ is the $\text{Vol}(S^3\backslash \knot{5}_2)$.
Finally the power series $\varphi^{(\s_j)}(\tau/(2\pi\ri)$
have coefficients in the number field $\BQ(\xi_j)$ and their first few
coefficients are given by
\begin{equation}
  \varphi^{(\s_j)}\left(\frac{\tau}{2\pi\ri}\right) =
  1+\frac{1452 \xi_j^2-1254 \xi_j +15949}{2^3\cdot 3\cdot 23^2}\tau +
  \frac{2124948 \xi_j ^2-2258148 \xi_j +11651375}
  {2^7\cdot 3^2\cdot 23^3}\tau^2 +\ldots
\end{equation}

The additional new series $\Phi^{(\s_0)}(\tau) \in \IQ[[\tau]]$
corresponds to the zero volume ($V(\s_0)=0$) trivial flat connection.
As exlained in Section~\ref{sub.413Phi}, it can be computed using the
colored Jones polynomial or the Kashaev invariant.  The first few
terms are
\begin{equation}
  \Phi^{(\s_0)}(\tfrac{\tau}{2\pi\ri})
  =\varphi^{(\s_0)}(\tfrac{\tau}{2\pi\ri}) 
  =  1 + 2\tau^2 + 6\tau^3+\frac{157}{6}\tau^4 + \ldots
\end{equation}

\begin{figure}[htpb!]
  \centering
  \subfloat[$\Lambda^{(\s_0)}$]{\includegraphics[height=6cm]{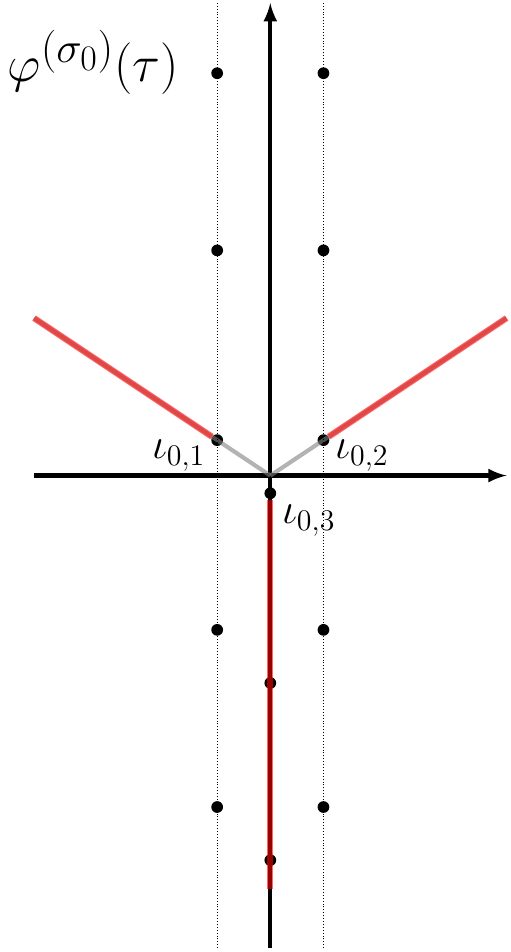}}%
  \hspace{3ex}
  \subfloat[$\Lambda^{(\s_1)}$]{\includegraphics[height=6cm]{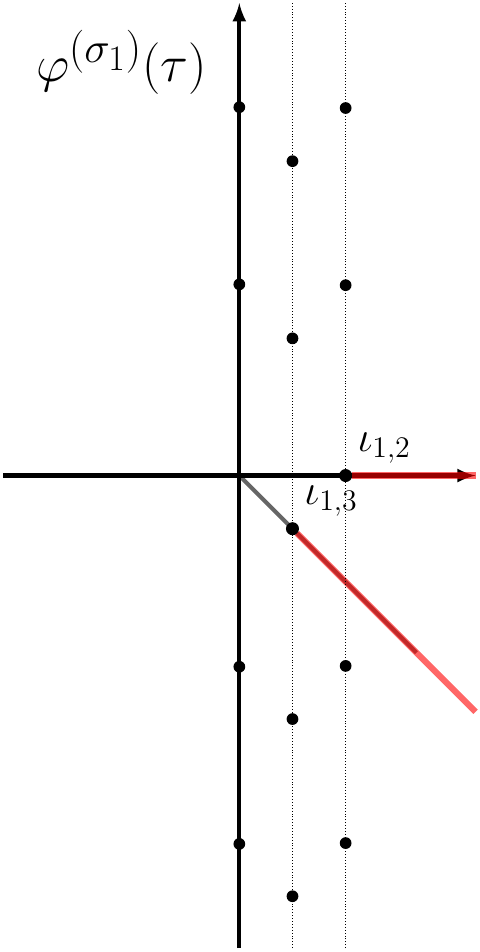}}%
  \hspace{3ex}
  \subfloat[$\Lambda^{(\s_2)}$]{\includegraphics[height=6cm]{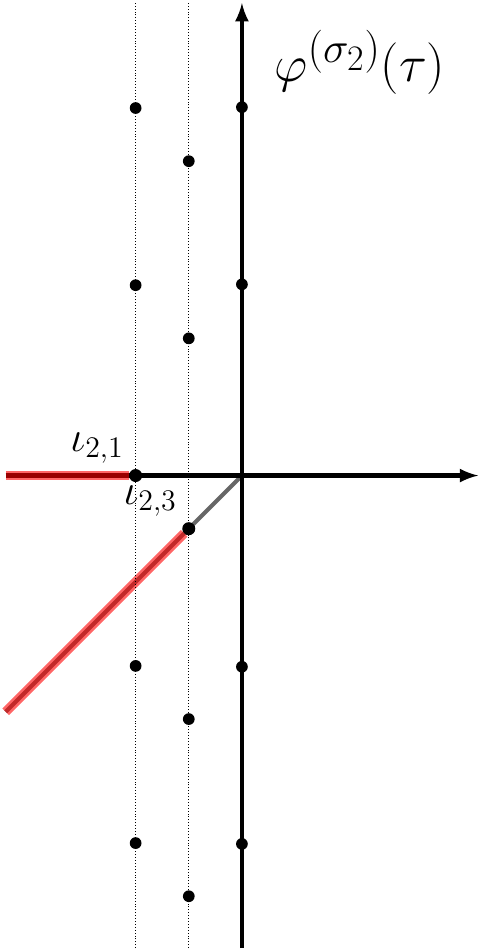}}%
  \hspace{3ex}
  \subfloat[$\Lambda^{(\s_3)}$]{\includegraphics[height=6cm]{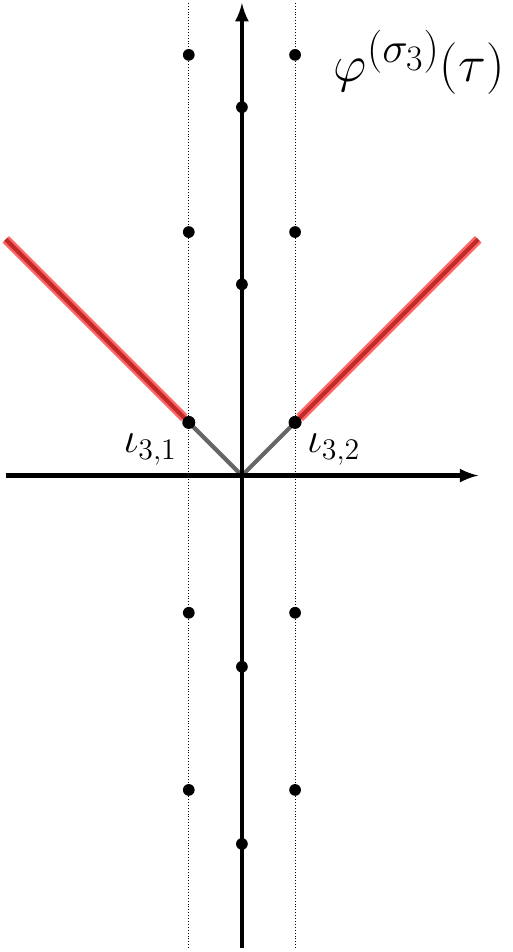}}%
  \caption{Singularities of Borel transforms of
    $\varphi^{(\s_j)}(\tau)$ for $j=0,1,2,3$ of the knot
    $\knot{5}_2$.}
  \label{fg:52_brplane}
\end{figure}

We are interested in the Stokes automorphism of the Borel resummation
of the 4-vector $\Phi(\tau)$ of asymptotic series
\begin{equation}
  \Phi(\tau) =
  \begin{pmatrix}
    \Phi^{(\s_0)}(\tau)\\
    \Phi^{(\s_1)}(\tau)\\
    \Phi^{(\s_2)}(\tau)\\
    \Phi^{(\s_3)}(\tau)
  \end{pmatrix}.
\end{equation}
First of all, the Borel transform of each asymptotic series
$\Phi^{(\s_j)}(\tau)$ ($j=0,1,2,3$) has rich patterns of
singularities.  Similar to the case of $\knot{4}_1$ knot discussed in
Section~\ref{sub.41borel}, the Borel transforms of
$\Phi^{(\s_j)}(\tau)$, $j=1,2,3$ have singularities located at
\begin{equation}
  \Lambda^{(\s_j)} = \{\iota_{j,i} + 2\pi\ri k
  \,|\, i=1,2,3,i\neq j,\,k\in\IZ \} \cup
  \{ 2\pi\ri k \,|\, k\in\IZ_{\neq 0}
  \},\quad j=1,2,3
\end{equation}
as shown in the right three panels of Fig.~\ref{fg:52_brplane},
while the Borel transform of $\Phi^{(\s_0)}(\tau)$ have singularities
located at (some of these singular points are actually missing as we
will comment in the end of the section.)
\begin{equation}
  \Lambda^{(\s_0)} = \{\iota_{0,i}+2\pi\ri k
  \,|\, i=1,2,3,\,k\in\IZ\},
\end{equation}
as shown in the left most panel of Fig.~\ref{fg:52_brplane}, where
\begin{equation}
  \iota_{j,i} = \frac{V_j-V_i}{2\pi\ri},\quad i,j=0,1,2,3.
\end{equation}

To any singularity located at $\iota_{i,j}^{(k)}:= \iota_{i,j}+2\pi\ri
k$ in the union 
\begin{equation}
  \Lambda = \cup_{j=0,1,2,3} \Lambda^{(\s_j)},
  \label{eq:Lmb52}
\end{equation}
we can associate a local Stokes matrix
\begin{equation}
  \mf{S}_{\iota_{i,j}^{(k)}}(\tq) = I + \mc{S}_{i,j}^{(k)}\tq^k
  E_{i,j},\quad \mc{S}_{i,j}^{(k)}\in\IZ,
\end{equation}
where $E_{i,j}$ is the $4\times 4$ elementary matrix with
$(i,j)$-entry 1 ($i,j=0,1,2,3$) and all other entries zero, and
$\mf{S}_{i,j}^{(k)}$ is the Stokes constant.
Then the Borel resummation along the rays $\rho_{\theta_\pm}$ raised
slight above and below the angle $\theta$ are related by the Stokes
automorphism
\begin{equation}
  \Delta(\tau)s_{\theta_+}(\Phi)(\tau) =
  \mf{S}_{\theta}(\tq)\Delta(\tau) s_{\theta_-}(\tau),
\end{equation}
where
\begin{equation}
  \mf{S}_\theta(\tq) = \prod_{\arg\iota =
    \theta}\mf{S}_{\iota}(\tq),\quad
  \Delta(\tau) = \diag(\tau^{3/2},1,1,1),
\end{equation}
and the locality condition is assumed.

\begin{figure}[htpb!]
  \centering
  \includegraphics[height=8cm]{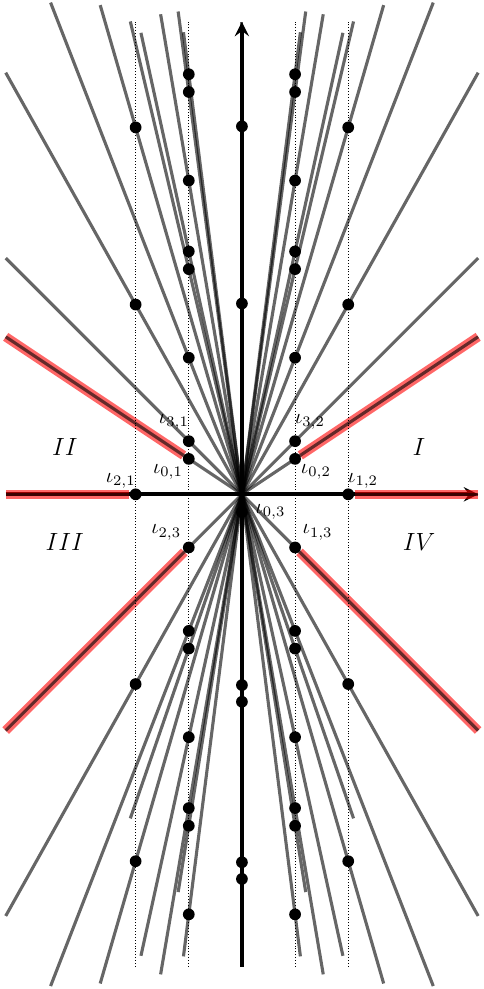}
  \caption{Stokes rays and cones in the $\tau$-plane for the 4-vector
    $\Phi(\tau)$ of asymptotic series of the knot $\knot{5}_2$.}
  \label{fg:52_rays}
\end{figure}

More generally, for two rays $\rho_{\theta^+}$ and $\rho_{\theta^-}$
whose arguments satisfy $0<\theta^+-\theta^-\leq\pi$, we can define
the global Stokes matrix $\mf{S}_{\theta^-\rightarrow \theta^+}$ as in
\eqref{eq:S-glob}, and it also satisfies the factorisation property
\eqref{eq:S-fac}.  Since the factorisation is unique
\cite{GGM:resurgent,GGM:peacock}, we only need to compute finitely
many global Stokes matrices in order to extract all the local Stokes
matrices associated to the infinitely many singularities in $\Lambda$
and thus the corresponding Stokes constants.  In particular, we can
choose four cones $I,II,III,IV$ slightly above the positive and the
negative real axes as shown in Fig.~\ref{fg:52_rays}, and compute the
four global Stokes matrices
\begin{equation}
  \mf{S}_{I\rightarrow II},\;\mf{S}_{II\rightarrow III},\;
  \mf{S}_{III\rightarrow IV},\;\mf{S}_{IV\rightarrow I},
  \label{eq:SRR}
\end{equation}
where a cone $R$ in the subscript means any ray inside the cone.

On the other hand, each of the global Stokes matrices in
\eqref{eq:SRR} has the block upper triangular form
\begin{equation}
  \begin{pmatrix}
    1&*&*&*\\
    0&*&*&*\\
    0&*&*&*\\
    0&*&*&*
  \end{pmatrix}.
\end{equation}
The $3\times 3$ sub-matrices $\mf{S}^{\text{red}}_{R\rightarrow R'}$
in the right bottom have been worked out in \cite{GGM:resurgent}.  For
later convenience, we write down two of the four reduced global Stokes
matrices,

\begin{subequations}
\begin{align}
\mf{S}^{\text{red}}_{I\rightarrow II}(\tq)
&=\frac{1}{2}
\begin{pmatrix}
0&1&0\\0&1&1\\-1&0&0
\end{pmatrix}\bJ^\RED_{-1}(\tq^{-1})^T
\begin{pmatrix}
0&0&1\\0&-2&0\\1&0&0
\end{pmatrix}\bJ^\RED_{-1}(\tq)
\begin{pmatrix}
0&0&-1\\1&-3&0\\0&1&0
\end{pmatrix},
\quad
|\tq|<1,\label{eq:SRed12}
\\
\mf{S}^{\text{red}}_{III\rightarrow IV}(\tq)
&=\frac{1}{2}
\begin{pmatrix}
      1&-3&0\\0&1&0\\0&0&-1
\end{pmatrix}\bJ^\RED_{-1}(\tq^{-1})^T
\begin{pmatrix}
  0&0&1\\0&-2&0\\1&0&0
\end{pmatrix}\bJ^\RED_{-1}(\tq)
\begin{pmatrix}
  1&0&0\\1&1&0\\0&0&-1
\end{pmatrix},
\quad |\tq| > 1.
\label{eq:SRed34}
\end{align}
\end{subequations}

In addition, as seen from Fig.~\ref{fg:52_brplane}, there are no
singularities along the positive and negative real axes in
$\Lambda^{(\s_0)}$ relevant for $\Phi^{(\s_0)}(\tau)$; all the
singular points in $\Lambda^{(\s_0)}$ are either in the upper half
plane beyond the cones $I,II$ or in the lower half plane beneath the
cones $III,IV$. Consequently we only need to compute the first row of
two Stokes matrices $\mf{S}_{I\rightarrow II}$ and
$\mf{S}_{III\rightarrow IV}$.  For this purpose, we find the following.

\begin{conjecture}
  For every cone $R \subset \BC\setminus\Lambda$ and every $\tau \in R$,
  we have
  \begin{equation}
    Q_0^{(2,0)}(q) = s_R(\Phi^{(\s_0)})(\tau) +
    \tau^{-3/2}\sum_{j=1}^3 M_{R,j}(\tq)s_R(\Phi^{(\s_j)})(\tau),
    \label{eq:QMPhi}
  \end{equation}
  where $M_{R,j}(\tq)$ ($j=1,2,3$) are $\tq$ (resp., $\tq^{-1}$)-series 
  if $\Im\tau>0$ (resp., $\Im\tau<0$) with integer coefficients that depend on
  $R$.
\end{conjecture}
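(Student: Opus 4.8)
The plan is to mirror the proof of Conjectures~\ref{conj-exact-ra} and~\ref{conj-exact-regions} for the $\knot{4}_1$ knot, now organised around the fundamental matrix solution $\bJ_m(q)$ of the sixth order $q$-difference equation~\eqref{52rec} furnished by Theorem~\ref{thm.51bJ1}. Here $Q_0^{(2,0)}(q)=\DS^{(2,0)}_0(q)$ is the $(1,2)$-entry of $\bJ_0(q)$ in~\eqref{Jq6x6}, hence (up to a power of $q$) an entry of the second column of the gauged matrix $\bJ^{\mathrm{norm}}(q)$ of~\eqref{J52norm} that is the focus of Section~\ref{sub.52borel}, so the $q$-Stokes machinery for fundamental solutions of linear $q$-difference equations applies. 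Concretely I would: (i) identify the formal perturbative solutions of~\eqref{52rec} at the irregular point $\tau\to 0$ with the trans-series $\Phi^{(\s_j)}(\tau)$, $j=0,1,2,3$; (ii) show that $Q_0^{(2,0)}(q)$ is asymptotic to $\Phi^{(\s_0)}(\tau)$ as $q=\re^{2\pi\ri\tau}\to 1$ radially in any cone; and (iii) upgrade this to the exact bilinear identity~\eqref{eq:QMPhi} by controlling the exponentially small ($\tq$) corrections through the connection matrix of the $q$-difference equation, which is encoded in $\bJ_{-1}(\tq)$ and its analytic continuation to $|\tq|>1$.

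For step (i), the homogeneous sixth order equation~\eqref{52rec} has a Birkhoff/WKB normal form at $q\to 1$ whose formal solutions are $\re^{V_j/(2\pi\ri\tau)}$-trans-series with Gevrey-$1$ coefficients; by the state-integral perturbation theory of~\cite{DGLZ,GGM:resurgent} these are the $\Phi^{(\s_j)}(\tau)$, $j=1,2,3$, attached to the three embeddings of the cubic trace field, together with one more solution, the power series $\Phi^{(\s_0)}(\tau)$, coming from the inhomogeneous term. For step (ii) I would extract the coefficient of $\ve^2\delta^0$ in the explicit $q$-hypergeometric double sum~\eqref{DS52} for $\DS_m(\ve,\delta;q)$ and run an Euler--Maclaurin / saddle-point analysis as $q\to 1$: this shows the resulting $q$-series is asymptotic, to all orders, to the series produced from the associated Habiro-ring element under $q\mapsto \re^h$, which by Section~\ref{sub.413Phi} is exactly $\Phi^{(\s_0)}(\tau)$. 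Equivalently, $Q_0^{(2,0)}(q)$ is the inhomogeneous solution of~\eqref{52rec} analytic on $|q|<1$ with the normalisation built into~\eqref{DS52}, so its radial asymptotics is forced to be $\Phi^{(\s_0)}(\tau)$ up to a linear combination of the $\Phi^{(\s_j)}(\tau)$, $j=1,2,3$; combined with the known resurgence of the $3\times 3$ block $\bJ^\RED(q)$ from~\cite{GGM:resurgent} and the block decomposition of Conjectures~\ref{prop.52block} and~\ref{DS2hid}, the $4\times 4$ top-left block decouples and its resurgent structure is what we have to pin down.

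For step (iii) the mechanism is that of Section~\ref{sub.AK41new}: one wants a state-integral $\mathcal{Z}(\tau)$ whose integrand has a pole of order one higher than the Kashaev--Yokota integrand for $\knot{5}_2$~\cite{KY:52}, so that closing the contour in the lower half plane yields $Q_0^{(2,0)}(q)$ together with Appell--Lerch type $q$-series (the other second-column entries of $\bJ^{\mathrm{norm}}$), while closing it in the upper half plane yields $\tq$-series; equating the two residue expansions gives the bilinear factorisation, and a saddle-point computation identifies $\mathcal{Z}(\tau)$ with $s_{\mathrm{med}}(\Phi^{(\s_0)})(\tau)$ for $\tau>0$. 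Passing from the median ray to a general cone $R$ then produces the corrections $\tau^{-3/2}\sum_{j}M_{R,j}(\tq)\,s_R(\Phi^{(\s_j)})(\tau)$, with the $M_{R,j}(\tq)$ being the first-row entries of the relevant $q$-Stokes matrices; these are obtained by the same connection-matrix bookkeeping that yields~\eqref{eq:M1}--\eqref{eq:M4}, now from $\bJ_{-1}(\tq)$ when $|\tq|<1$ (cones $I,II$) and from $\bJ_{-1}(\tq^{-1})$ when $|\tq|>1$ (cones $III,IV$), using Theorem~\ref{thm.51bJ1}. Integrality of the coefficients of $M_{R,j}(\tq)$ is inherited from $\bJ^{\mathrm{norm}}(q)$ having entries in $\BZ((q))$ up to the $(q;q)_\infty$-factors of Conjectures~\ref{prop.52block} and~\ref{DS2hid}, together with $\det\bJ_m(q)$ being a unit in the relevant ring.

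The hard part will be step (iii). Unlike the $\knot{4}_1$ case there is as yet no state-integral realising the full $6\times 6$ matrix $\bJ^{\mathrm{norm}}(q)$, so the analyticity on $\BC'$ of the matrix $W(\tau)$ in~\eqref{W52} — the property that converts ``asymptotic to'' into ``equal to the Borel sum'' — is not yet established; the numerical test at $\tau=1+\ri/100$ reported above is evidence but not a proof. Rigorously this would require either the still-open resurgence of the $\Phi^{(\s_j)}(\tau)$, or constructing and analysing the higher-order-pole state-integral for $\knot{5}_2$ by converting the Kashaev--Yokota state-sum~\cite{KY:52} along the lines of Kashaev's construction for $\knot{4}_1$~\cite{kas-volume}. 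A secondary, purely technical step is to confirm by creative telescoping~\cite{Koutschan:holofunctions} that $\DS^{(2,0)}_m(q)$ solves the inhomogeneous form of~\eqref{52rec} with the stated normalisation, and to prove the block decomposition of Conjectures~\ref{prop.52block} and~\ref{DS2hid}; granting all of this, the extraction of the $M_{R,j}(\tq)$ and the integrality claim are routine.
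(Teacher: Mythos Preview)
The statement you are trying to prove is stated in the paper as a \emph{Conjecture}, not a theorem; the paper offers no proof, only numerical verification of~\eqref{eq:QMPhi} in the four cones $I,II,III,IV$ and of the resulting Stokes constants. Your proposal is therefore not to be compared against an existing proof but assessed as a strategy, and you are candid that step~(iii) hinges on ingredients the paper explicitly flags as missing: a $\knot{5}_2$ state-integral realising the $6\times 6$ matrix (Section~\ref{sec:newstateint} only produces the integrand and the inverted Habiro series, not a convergent contour integral with a proven factorisation), the analytic extension of $W(\tau)$ in~\eqref{W52} (tested only numerically), and ultimately the resurgence of the $\Phi^{(\s_j)}$. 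With those granted your outline is the natural one and matches what the paper does for $\knot{4}_1$ and sketches for $\knot{5}_2$.

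There is, however, one concrete error in your bookkeeping. You write that integrality of the $M_{R,j}(\tq)$ ``is inherited from $\bJ^{\mathrm{norm}}(q)$ having entries in $\BZ((q))$''. It does not: the displayed expansion of $\bJ^{\mathrm{norm}}(q)$ just after~\eqref{J52norm} shows the first row (the $\DS^{(2,0)}$ row) has coefficients such as $-\tfrac{1}{12},\,-\tfrac{5}{6},\,\tfrac{5}{12}$, coming from the $e^{\frac{1}{12}\ve^2-\cdots}$ normalisation in~\eqref{defHab51}. Since $M_R(\tq)$ is built from $\tq^j Q_{j-1}^{(2,0)}(\tq)$ via~\eqref{eq:MMs052}, the integrality of $M_{R,j}(\tq)$ asserted in the conjecture is a genuine cancellation phenomenon, not something routine; it is part of what is being conjectured, and your argument does not supply it.
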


A more elegant way to present $M_{R,j}(\tq)$ is by the row vector
$M_R(\tq) := (M_{R,1},M_{R,2},M_{R,3})(\tq)$, and it can be expressed
in terms of a $3\times 3$ matrix $M^{(\s_0)}_R(\tq)$
\begin{equation}
  M_R(\tq)
  = \left(\tq Q^{(2,0)}_0(\tq),\tq^2Q_1^{(2,0)}(\tq),\tq^3Q_2^{(2,0)}(\tq)\right)
  M^{(\s_0)}_R(\tq).
  \label{eq:MMs052}
\end{equation}
\begin{conjecture}
  Equation \eqref{eq:QMPhi} holds in the cones $R=I,II,III,IV$ where
  the $\tq$,$\tq^{-1}$-series $M_{R,j}(\tq)$ are given in terms of
  $M_R^{(0)}(\tq)$ through \eqref{eq:MMs0} which are as follows
  \begin{subequations}
  \begin{align}
    M^{(\s_0)}_{I}(\tq) =
    &\begin{pmatrix}
       1&-1&-3\tq\\0&-1&-1+3\tq\\0&0&-\tq
     \end{pmatrix},\\
    M^{(\s_0)}_{II}(\tq) =
    &\begin{pmatrix}
       -1&1&-3\tq\\-1&0&-1+3\tq\\0&0&-\tq
     \end{pmatrix},\\
    M^{(\s_0)}_{III}(\tq) =
    &\begin{pmatrix}
        3&1&-3\tq\\-1&0&-1+3\tq\\0&0&-\tq
     \end{pmatrix},\\
    M^{(\s_0)}_{IV}(\tq) =
    &\begin{pmatrix}
     1&3&-3\tq\\0&-1&-1+3\tq\\0&0&-\tq   
     \end{pmatrix}.
  \end{align}
\end{subequations}
\end{conjecture}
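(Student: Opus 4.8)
The plan is to reduce this conjecture to three ingredients: (i) an exact Borel--resummation identity expressing the top-row $q$-series $Q_0^{(2,0)}(q)$ in terms of the four perturbative series, (ii) the propagation of that identity to the descendants via the $q$-difference equation~\eqref{52Jqrec}, which forces the factorisation~\eqref{eq:MMs052}, and (iii) the explicit determination of the $3\times 3$ matrices $M_R^{(\s_0)}(\tq)$ from the known reduced Stokes data together with integrality and the local factorisation of Stokes matrices. Throughout I would lean on the block structure of Conjecture~\ref{prop.52block}, which decouples the top-left $4\times 4$ block of $\bJ_m(q)$ and so makes the resurgent structure of the series attached to that block self-contained.

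\textbf{Step 1.} First I would establish~\eqref{eq:QMPhi} in a single cone, i.e. that for $\tau\in R$ one has $Q_0^{(2,0)}(q) = s_R(\Phi^{(\s_0)})(\tau) + \tau^{-3/2}\sum_{j=1}^3 M_{R,j}(\tq)\,s_R(\Phi^{(\s_j)})(\tau)$. The natural route is the one used for $\knot{4}_1$ in Section~\ref{sub.AK41new}: identify $Q_0^{(2,0)}(q)$ (equivalently the ``inverted Habiro series'' of $\knot{5}_2$, cf.\ Kashaev--Yokota~\cite{KY:52}) with a new state-integral whose integrand carries triple poles, show that this integral is holomorphic on $\BC'$, factorise it bilinearly as in Theorem~\ref{thm-newst} using the series $\DS_m^{(a,b)}(q)$, and compute its radial asymptotics at $\tau\to 0$ by steepest descent through the critical points $\xi_1,\xi_2,\xi_3$. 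Since the median Borel sums of $\Phi^{(\s_j)}$ ($j=1,2,3$) are already matched to descendant Andersen--Kashaev state-integrals in~\cite{GGM:resurgent}, comparing the asymptotic data on both sides then upgrades the asymptotic statement to an exact identity. This step is the main obstacle, precisely as the text warns: a proof of resurgence of the $\Phi^{(\s)}(h)$ is still missing, so in practice Step~1 is verified to high numerical precision, and everything below is conditional on it.

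\textbf{Step 2.} Given~\eqref{eq:QMPhi} in one cone, I would propagate it to the descendants using that $\bJ_m(q)$ is a fundamental solution of~\eqref{52Jqrec} (Theorem~\ref{thm.51bJ1}): the row $(1,Q_m^{(2,0)}(q),Q_{m+1}^{(2,0)}(q),\dots)$ satisfies the sixth-order recursion, and the $\Phi^{(\s)}_m$ obey the $q\to 1$ limit of the same equation. Comparing the two sides of the recursion forces $M_R(\tq):=(M_{R,1},M_{R,2},M_{R,3})(\tq)$ to factor as in~\eqref{eq:MMs052}, with $M_R^{(\s_0)}(\tq)$ a matrix of $\tq$-series independent of the descendant index; this is a purely algebraic manipulation with the companion matrix $A(q^m,q)$ and its $\tq$-counterpart, exactly parallel to the treatment of $W^\RED_{S,\lambda,\mu}$ in Section~\ref{sub.AK41}.

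\textbf{Step 3.} Finally I would pin down the entries of $M_R^{(\s_0)}(\tq)$. The global Stokes matrices are $\mf{S}_{R\to R'}(\tq) = M_{R'}(\tq)^{-1}M_R(\tq)$, with the usual $\diag(1,-1,-1,-1)$ correction when crossing the negative axis, and they must simultaneously: restrict on the bottom $3\times 3$ block to the reduced Stokes matrices~\eqref{eq:SRed12}--\eqref{eq:SRed34} of~\cite{GGM:resurgent}; have $\tq$- (resp.\ $\tq^{-1}$-) series entries with \emph{integer} coefficients; factor as an ordered product of commuting local matrices $I+\mc{S}_{0,i}^{(k)}\tq^k E_{0,i}$ over the singularities $\iota_{0,i}^{(k)}$ in the relevant half-plane, as read off from Fig.~\ref{fg:52_brplane} (note that several such singular points are absent, which is itself a constraint); and be compatible with the $q\mapsto q^{-1}$ symmetry of $\bJ_m(q)$ and with complex conjugation exchanging $R=I$ and $R=IV$. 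These conditions, combined with the first few coefficients of $Q_0^{(2,0)},Q_1^{(2,0)},Q_2^{(2,0)}$, over-determine $M_I^{(\s_0)},\dots,M_{IV}^{(\s_0)}$, and one verifies that the displayed matrices satisfy all of them; a high-order numerical check of~\eqref{eq:QMPhi} confirms the answer. In short, I expect Steps~2 and~3 to be rigorous consequences of the $q$-holonomic structure already in Theorem~\ref{thm.51bJ1} and~\cite{GGM:resurgent}, and only Step~1 (analyticity of the new state-integral and its asymptotics) to be genuinely hard.
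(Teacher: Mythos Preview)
This statement is a \emph{Conjecture}, and the paper offers no proof. Its support is purely numerical: as the authors say in Section~\ref{sub.challenge}, a proof of resurgence of the $\Phi^{(\sigma)}(h)$ is still missing, and the matrices $M_R^{(\sigma_0)}$ are obtained by matching high-precision Borel--Pad\'e resummations of the four asymptotic series against the values of $Q_m^{(2,0)}(q)$. There is no paper proof to compare your proposal against.

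You correctly flag Step~1 as the genuine obstruction. But your claim that Steps~2 and~3 are ``rigorous consequences of the $q$-holonomic structure'' is too optimistic. In Step~2 the factorisation~\eqref{eq:MMs052} writes $M_R(\tq)$ as a row of the $\tq$-series $Q_m^{(2,0)}(\tq)$ times a $3\times 3$ matrix; this is a bilinear ($q$ versus $\tq$) statement of the same holomorphic-quantum-modular flavour as~\eqref{Wgt} and~\eqref{W52}, and it does \emph{not} follow from the recursion~\eqref{52Jqrec} for $\bJ_m(q)$ alone --- one also needs the analytic extension of $W(\tau)$ across the real line, which the paper only tests numerically. Your argument also leans on the block decomposition of Conjecture~\ref{prop.52block}, itself unproven. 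In Step~3 the assertion that integrality, the reduced Stokes data~\eqref{eq:SRed12}--\eqref{eq:SRed34}, and local factorisation ``over-determine'' $M_R^{(\sigma_0)}$ is not justified: no uniqueness theorem is available, and the paper does not derive the matrices from such constraints but simply reads them off the numerics.

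In short: you have outlined a plausible research programme, not a proof, and the paper is in the same position.
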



Eqs.~\eqref{eq:QMPhi}, together with the reduced Stokes matrices
$\mf{S}^{\text{red}}_{R\rightarrow R'}(\tq)$ for $\Phi^{(\s_j)}(\tau)$
($j=1,2,3$), allow us to calculate entries in the first row of
$\mf{S}_{I\rightarrow II}(\tq)$ and $\mf{S}_{III\rightarrow IV}(\tq)$
by
\begin{equation}
  \mf{S}_{R\rightarrow R'}(\tq)_{0,j} =
  M_{R,j}(\tq) - \sum_{k=1}^3M_{R',k}(\tq)
  \mf{S}^{\text{red}}_{R\rightarrow R'}(\tq)_{k,j},\quad j=1,2,3.
\end{equation}
In the following we list the first few terms of these
$\tq$ and $\tq^{-1}$-series.  In the upper half plane
\begin{subequations}
  \begin{align}
    \mf{S}_{I\rightarrow II}(\tq)_{0,1}=
    &-1+13\tq-12\tq^2-82\tq^3-29\tq^4+85\tq^5+\cO(\tq^6) ,\\
    \mf{S}_{I\rightarrow II}(\tq)_{0,2}=
    &1-16\tq+42\tq^2+135\tq^3-54\tq^4-346\tq^5+\cO(\tq^6) ,\\
    \mf{S}_{I\rightarrow II}(\tq)_{0,3}=
    &-\tq+10\tq^2+18\tq^3-31\tq^4-92\tq^5+\cO(\tq^6).
  \end{align}
\end{subequations}
In the lower half plane
\begin{subequations}
  \begin{align}
    \mf{S}_{III\rightarrow IV}(\tq)_{0,1}=
    &4\tq^{-1}-4\tq^{-2}-51\tq^{-3}-62\tq^{-4}-27\tq^{-5}+\cO(\tq^{-6}) ,\\
    \mf{S}_{III\rightarrow IV}(\tq)_{0,2}=
    &3\tq^{-1}+2\tq^{-2}-26\tq^{-3}-47\tq^{-4}-64\tq^{-5}+\cO(\tq^{-6}) ,\\
    \mf{S}_{III\rightarrow IV}(\tq)_{0,3}=
    &-1+\tq^{-2}+18\tq^{-3}+39\tq^{-4}+73\tq^{-5}+\cO(\tq^{-6}).
  \end{align}
\end{subequations}

Finally, we can factorise the global Stokes matrices
$\mf{S}_{I\rightarrow II}(\tq), \mf{S}_{III\rightarrow IV}(\tq)$ to
obtain local Stokes matrices associated to individual singular points
in $\Lambda$ and extract the associated Stokes constants.
The Stokes constants for $\Phi^{(\s_j)}(\tau)$ ($j=1,2,3$) are already given
in \cite{GGM:resurgent,GGM:peacock}.
We collect the Stokes contants for $\Phi^{(\s_0)}(\tau)$ in the
generating series
\begin{equation}
  \ms{S}^+_{0,j}(\tq) = \sum_{k\geq 0} \mc{S}^{(k)}_{0,j}\tq^k,\quad
  \ms{S}^-_{0,j}(\tq) = \sum_{k\leq 0} \mc{S}^{(k)}_{0,j}\tq^k,\quad j=1,2,3.
\end{equation}
And we find that in the upper half plane
\begin{subequations}
  \begin{align}
    \ms{S}^+_{0,1}(\tq) =
    &-1+\tq+3\tq^2+25\tq^3+278\tq^4+3067\tq^5+\cO(\tq^6),\\
    \ms{S}^+_{0,2}(\tq) =
    &1-\tq-3\tq^2-25\tq^3-278\tq^4-3067\tq^5+\cO(\tq^6),\\
    \ms{S}^+_{0,3}(\tq) =
    &0,
  \end{align}
\end{subequations}
while in the lower half plane
\begin{subequations}
  \begin{align}
    \ms{S}^-_{0,1}(\tq) =
    &3\tq^{-1}-34\tq^{-2}+391\tq^{-3}-4622\tq^{-4}
      +54388\tq^{-5}+\cO(\tq^{-6}),\\
    \ms{S}^-_{0,2}(\tq) =
    &3\tq^{-1}-34\tq^{-2}+391\tq^{-3}-4622\tq^{-4}
      +54388\tq^{-5}+\cO(\tq^{-6}),\\
    \ms{S}^-_{0,3}(\tq) =
    &-1+9\tq^{-1}-56\tq^{-2}+705\tq^{-3}-8378\tq^{-4}
      +98379\tq^{-5}+\cO(\tq^{-6}).
  \end{align}
\end{subequations}
We comment that the results of $\ms{S}^+_{0,3}(\tq)$ and
$\ms{S}^-_{0,3}(\tq)$ indicate that there are actually no singular
points of the type $\iota_{0,3}^{(k)}$ in the upper half plane, but
they exist in the lower half plane.  Also note that the constant terms
in $\ms{S}^+_{0,1}(\tq),\ms{S}^+_{0,2}(\tq)$ and $\ms{S}^-_{0,3}(\tq)$
are Stokes constants associated to the singular points $\iota_{0,j}$
($j=1,2,3$).  The Stokes constants associated to $\iota_{i,j}$
$(i,j=1,2,3, i\neq j)$ have already been computed in
\cite{GGM:resurgent,GGM:peacock}. We can assemble all these Stokes
constants in a matrix
\begin{equation}
  \begin{pmatrix}
    0&-1&1&-1\\
    0& 0&4& 3\\
    0&-4&0&-3\\
    0&-3&3& 0
  \end{pmatrix}
\end{equation}
which matches (after some changes of signs) the one appearing
in~\cite[Eq.(40)]{GZ:kashaev}.

\subsection{$(x,q)$-series}
\label{sub.52qx}

In this section we extend the results of Section~\ref{sub.52q3} by including the
Jacobi variable $x$. Recall that the matrix $\bJ^{\RED}_{m}(x,q)$\footnote{The matrices
  $\bJ^\RED_m(x,q)$ are related to the Wronskians $W_m(x,q)$
  in~\cite{GGM:resurgent,GGM:peacock} by
  \begin{equation}
    \bJ^\RED_m(x,q) = W_m(x,q)^T.
  \end{equation}}
is a fundamental solution to the linear $q$-difference equation
\be
  f_m(x,q)-(1+x+x^{-1})f_{m+1}(x,q)+(1+x+x^{-1}-q^{2+m})f_{m+2}(x,q) - f_{m+3}(x,q) = 0
\ee
and it is defined by
\begin{equation}
  \label{Jred52x}
  \bJ^\RED_m(x,q) =
  \begin{pmatrix}
    A_m(x,q)&A_{m+1}(x,q)&A_{m+2}(x,q)\\
    B_m(x,q)&B_{m+1}(x,q)&B_{m+2}(x,q)\\
    C_m(x,q)&C_{m+1}(x,q)&C_{m+2}(x,q)\\
  \end{pmatrix},\quad |q|\neq 1,
\end{equation}
where the holomorphic blocks are given by
\begin{subequations}
  \begin{align}
    &A_m(x,q) = H(x,x^{-1},q^m;q),\\
    &B_m(x,q) = \theta(-q^{1/2}x;q)^{-2}x^m H(x,x^2,q^mx^2;q),\\
    &C_m(x,q) = \theta(-q^{-1/2}x;q)^{-2}x^{-m} H(x^{-1},x^{-2},q^mx^{-2};q),
  \end{align}
\end{subequations}
where $H(x,y,z;q^\varepsilon):=H^\varepsilon(x,y,z;q)$ for $|q|<1$ and
$\varepsilon=\pm$ and
\begin{subequations}
  \begin{align}
    H^+(x,y,z;q) &=
      (qx;q)_\infty(qy;q)_\infty\sum_{n=0}^\infty\frac{q^{n(n+1)}z^n}
      {(q;q)_n(qx;q)_n(qy;q)_n},\\
    H^-(x,y,z;q) &= \frac{1}{(x;q)_\infty(y;q)_\infty}\sum_{n=0}^\infty
      (-1)^n\frac{q^{\frac{1}{2}n(n+1)}x^{-n}y^{-n}z^n}
      {(q;q)_n(qx^{-1};q)_n(qy^{-1};q)_n},\\
    \theta(x;q)&=(-q^{\frac{1}{2}}x;q)_{\infty}(-q^{\frac{1}{2}}x^{-1};q)_{\infty}.
  \end{align}
\end{subequations}
To these series we wish to add an additional series which satisfies the inhomogenous
$q$-difference equations of the descendant coloured Jones polynomial~\eqref{recJm52x}.
This can be easily constructed using the deformations of the Habiro
polynomials~\eqref{defHab51}. We find a solution
\begin{equation}
  D_m(x,q)
  =
  -\sum_{n=-\infty}^{-1}q^{mn}
  H_{n}(q)x^{-n}(qx;q)_{n}(q^{-1}x;q^{-1})_{n}.
\end{equation}
(compare with Equation~\eqref{DJdef}) 
where $|q|<1$ and $m\in\BZ$ or $|q|>1$ and $m\in\BZ_{\geq0}$, and $H_{n}(q)$ is the
coefficient of $\ve^0\delta^0$ in the expansion of $H_{n}(\epsilon,\delta;q)$. In
particular, for $|q|<1$ we have
\be
  D_m(x,q)
  =
  -\sum_{n,k=0}^\infty(-1)^kq^{n(n+1)+k(k+1)/2-nk-(m+1)(n+1)}
  \frac{(q;q)_{n+k}}{(q;q)_k(q;q)_n(x^{-1};q)_{n+1}(x;q)_{n+1}}
\ee
and we see the $(x,q)$-series $D_0(x,q)$ coincides with
$f_{\knot{5}_2}(x,q)$ in \cite{Park:2020edg,park-inverted}.

This series can be included as the first row of a $6\times6$ matrix of $(x,q)$-series.
The latter might be related to the factorisation of the state integral proposed
in Section~\ref{sec:newstateint}.

However, we find that the matrices above and below the reals have different
quantum modular co-cycles related by inversion. This implies that to do a full
discussion on resurgence one needs to understand the monodromy of this $q$-holonomic
system. Both these issue will be explored in later publications. For now, we give a
description of the Stokes matrices restricted to $\tau$ in the upper half plane.

\subsection{$x$-version of Borel resummation and Stokes constants}
\label{sub.52borelx}

In this section we discuss the $x$-deformation version of Section~\ref{sub.52borel}. 
The asymptotic series $\Phi^{(\s_j)}(\tau)$ for $j=0,1,2,3$ are extended to
series $\Phi^{(\s_j)}(x;\tau)$ with coefficients in $\IZ(x^{\pm 1})$. The
series $\Phi^{(\s_j)}(x;\tau)$ for $j=1,2,3$ are defined in terms of
perturbation theory of a deformed state-integral \cite{AK} and they
have been computed with about 200 terms for many values of $x$ in
\cite{GGM:peacock}.  Let $\xi_j$ ($j=1,2,3$) be three roots to the
equation
\begin{equation}
  (1-\xi)(1-x\xi)(1-x^{-1}\xi) = \xi^2,
\end{equation}
ordered such that they reduce to \eqref{eq:xi} in the limit
$x\rightarrow 1$. The series $\Phi^{(\s_j)}(\tau)$ ($j=1,2,3$) can be
uniformly written as\footnote{The series $\Phi^{(\s_j)}(x;\tau)$
  ($j=1,2,3$) are related to the series in
  \cite{GGM:resurgent,GGM:peacock}, which we will denote by
  $\Phi^{(\s_j)}_{\text{GGM}}(x;\tau)$, by a common prefactor
  \begin{equation}
    \Phi^{(\s_j)}(x;\tau) = \ri
    \re^{-\frac{\pi\ri}{12}(\tau+\tau^{-1})-2\pi\ri\tau}
    \Phi^{(\s_j)}_{\text{GGM}}(x;\tau),\quad j=1,2,3.
  \end{equation}
  The Stokes constants associated to the Borel resummation of
  $\Phi^{(\s_j)}_{\text{GGM}}(\tau)$ are not changed.}  
\begin{equation}
  \Phi^{(\s_j)}(x;\tau) = \frac{\re^{\frac{3\pi\ri}{4}}}{\sqrt{\delta_j(x)}}
    \re^{\frac{V_j(x)}{2\pi\ri\tau}} \varphi^{(\s_j)}(x;\tau)
\end{equation}
where $\delta_j(x) = \xi_j-s \xi_j^{-1}+2 \xi_j^{-2}$
and
\begin{equation}
\label{Vx52}
  \begin{aligned}
    V_1(x) &=
      -\Li_2(\xi_1^{-1})-\Li_2(x\xi_1^{-1})-\Li_2(x^{-1}\xi_1^{-1})
      +\frac{1}{2}\log^2 x -\frac{1}{2}\log^2 \xi_1
      +\pi\ri\log\xi_1+\frac{2\pi^2}{3},\\
    V_2(x) &=
      -\Li_2(\xi_2^{-1})-\Li_2(x\xi_2^{-1})-\Li_2(x^{-1}\xi_2^{-1})
      +\frac{1}{2}\log^2 x -\frac{1}{2}\log^2 \xi_2
      -\pi\ri\log\xi_2+\frac{2\pi^2}{3},\\
    V_3(x) &=
      -\Li_2(\xi_3^{-1})-\Li_2(x\xi_3^{-1})-\Li_2(x^{-1}\xi_3^{-1})
      +\frac{1}{2}\log^2 x -\frac{1}{2}\log^2 \xi_3
      +3\pi\ri\log\xi_3+\frac{2\pi^2}{3}.
  \end{aligned}
\end{equation}
The power series $\varphi^{(\s_j)}(x;\tau)$ are 
\begin{align}
  \varphi^{(\s_j)}(x;\tfrac{\tau}{2\pi\ri}) =
  &1 + \frac{\tau}{12\delta_j(x)}\big((-397 - 94 s - 114 s^2 + 390 s^3 - 278 s^4 + 81 s^5 - 10 s^6)\nn
  &+(-381 + 623 s - 124 s^2 - 328 s^3 + 268 s^4 - 81 s^5 + 10 s^6)
    \xi_j\nn
  &+(-270 + 137 s + 182 s^2 - 207 s^3 + 71 s^4 - 10 s^5) \xi_j^2
    \big)+\ldots
\end{align}
with
\begin{equation}
  s = s(x) = x^{-1}+1+x.
\end{equation}

The additional series $\Phi^{(\s_0)}(x;\tau)$, as in
Section~\ref{sub.41Phi0xq}, can be computed either from the colored
Jones polynomial or by using Habiro's expansion of the colored Jones
polynomials. We find
\begin{equation}
  \Phi^{(\s_0)}(x;\tau) = \varphi^{(\s_0)}(x;\tau),
\end{equation}
where the power series $\varphi^{(\s_0)}(x;\tau)$ reads
\begin{equation}
  \phi^{(\s_0)}(x;\tfrac{\tau}{2\pi\ri}) = \frac{1}{2x+2x^{-1}-3}
  -\frac{(x^{1/2}-x^{-1/2})^2(5x+5x^{-1}-4)}{(2x+2x^{-1}-3)^3}\tau+\ldots,
\end{equation}

\begin{figure}[htpb!]
  \centering
  \subfloat[$\Lambda^{(\s_0)}$]%
  {\includegraphics[height=5cm]{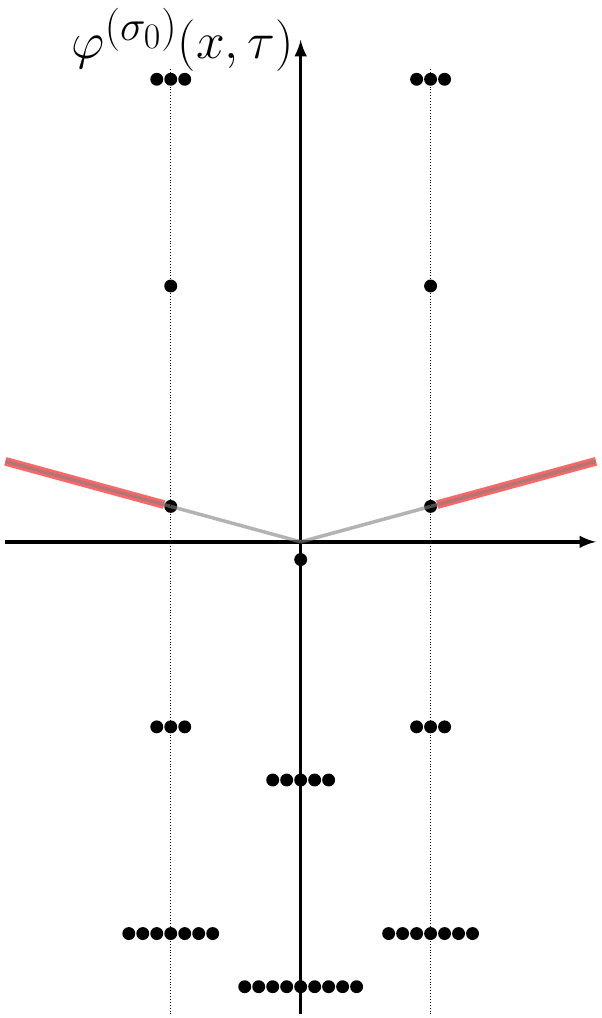}}\hspace{2ex}%
  \subfloat[$\Lambda^{(\s_1)}$]%
  {\includegraphics[height=5cm]{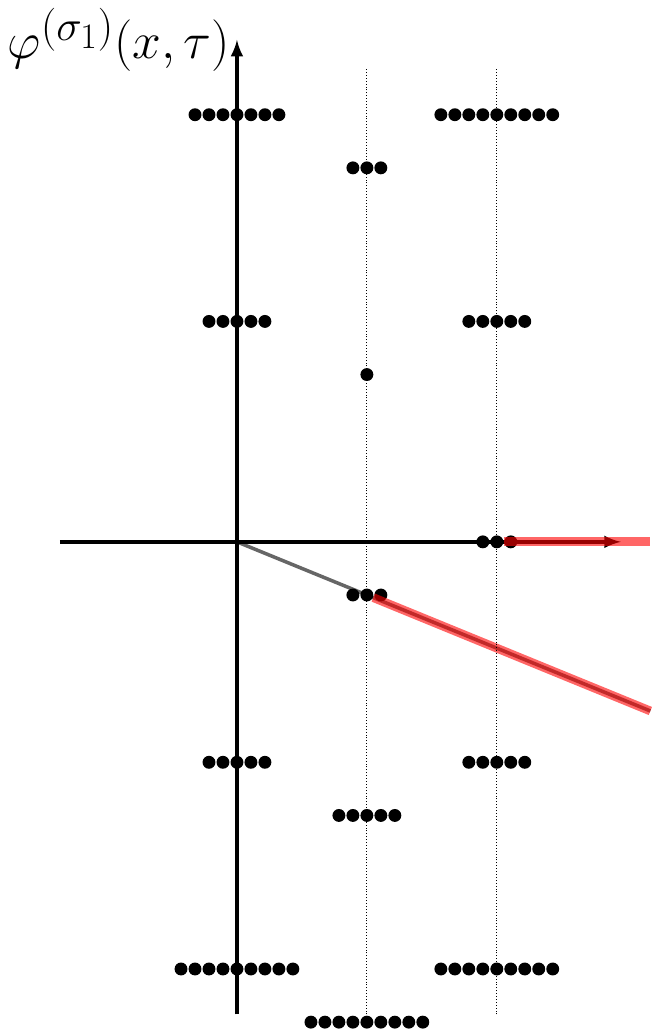}}\hspace{2ex}%
  \subfloat[$\Lambda^{(\s_2)}$]%
  {\includegraphics[height=5cm]{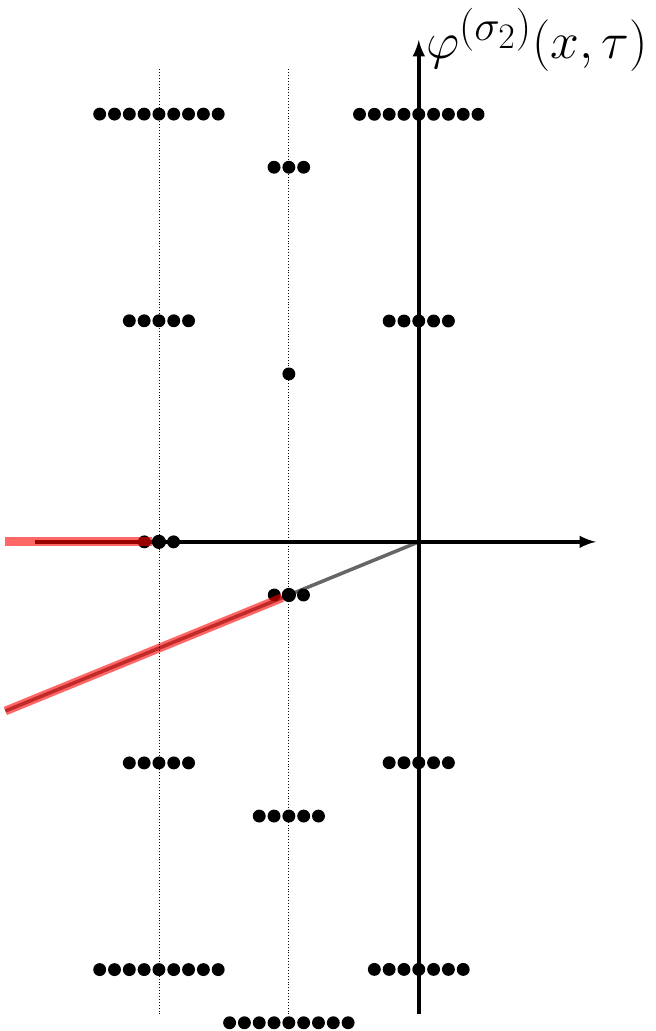}}\hspace{2ex}%
  \subfloat[$\Lambda^{(\s_3)}$]%
  {\includegraphics[height=5cm]{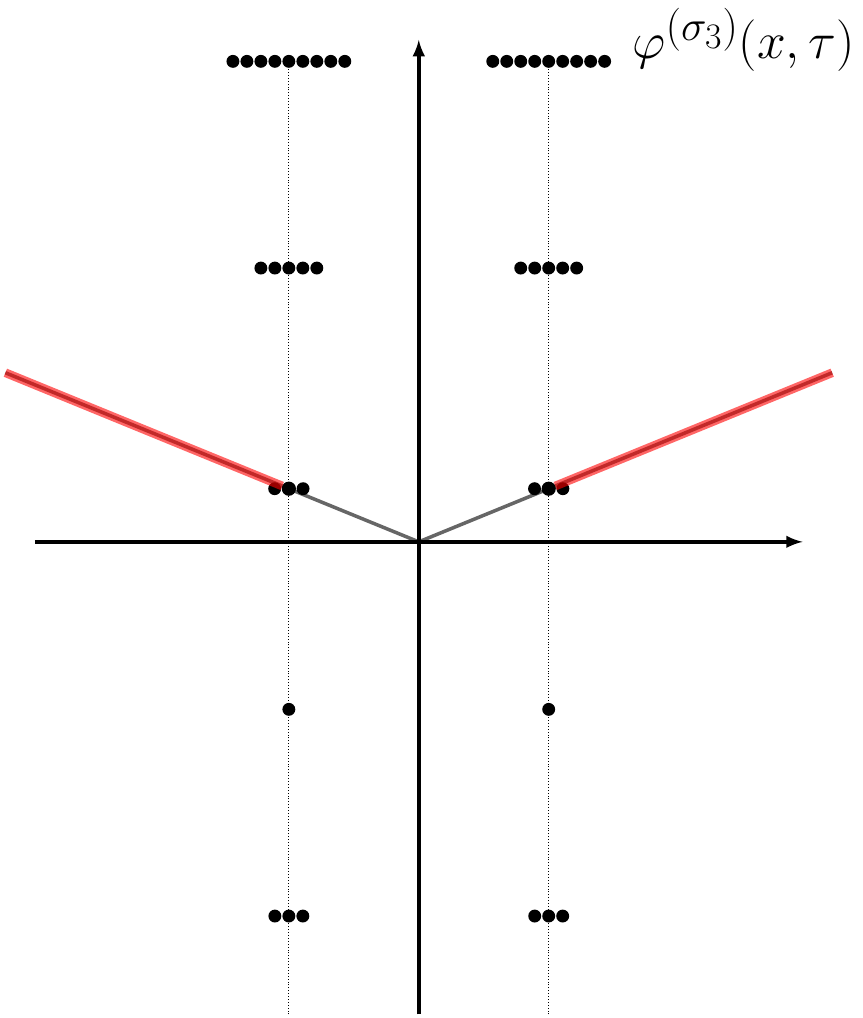}}\hspace{2ex}%
  \caption{Singularities of Borel transforms of
    $\varphi^{(\s_j)}(x,\tau)$ for $j=0,1,2,3$ of the knot
    $\knot{5}_2$.}
  \label{fg:52x_brplane}
\end{figure}

We are interested in the Stokes automorphisms in the upper half plane
of the Borel resummation of the 4-vector $\Phi(x;\tau)$ of asymptotic
series
\begin{equation}
  \Phi(x;\tau) =
  \begin{pmatrix}
    \Phi^{(\s_0)}(x;\tau)\\
    \Phi^{(\s_1)}(x;\tau)\\
    \Phi^{(\s_2)}(x;\tau)\\
    \Phi^{(\s_3)}(x;\tau)
  \end{pmatrix},
\end{equation}
when $x$ is close to 1.  The singular points of the Borel transform of
$\Phi(x;\tau)$, collectively denoted as $\Lambda(x)$, are smooth
functions of $x$ and they are equal to $\Lambda$ in \eqref{eq:Lmb52}
in the limit $x\rightarrow 1$.  When $x$ is slightly away from 1, each
singular point $\iota_{i,j}^{(k)}$ in $\Lambda$ splits to a finite set
of points located at
$\iota_{i,j}^{(k,\ell)}:=\iota_{i,j}^{(k)}+\ell \log(x)$,
$\ell\in\IZ$.  We illustrate this schematically in
Fig.~\ref{fg:52x_brplane}.  The complex plane of $\tau$ is divided by
rays passing through these singular points into infinitely many cones.
We will then pick the cones $I$ and $II$ located slightly above the
positive and negative real axes, and compute the global Stokes matrix
from cone $I$ to cone $II$ defined by
\begin{equation}
  \Delta(x,\tau) s_{II}(x,\tau) = \mf{S}_{I\rightarrow II}(\tx,\tq)
  \Delta(x,\tau) s_{I}(x,\tau),
  \label{eq:SRRx}
\end{equation}
where
\begin{equation}
  \Delta(x,\tau) =
  \diag\left(\tau^{1/2}\frac{x^{1/2}-x^{-1/2}}{\tx^{1/2}-\tx^{-1/2}},1,1,1\right).
\end{equation}
The global Stokes matrix $\mf{S}_{I\rightarrow II}(\tx,\tq)$
factorises uniquely into a product of local Stokes automorphisms
associated to each of the singular points in the upper half plane, from
which the individual Stokes constants can be read off.

The global Stokes matrix $\mf{S}_{I\rightarrow II}(\tx,\tq)$ in
\eqref{eq:SRRx} also has the block upper triangular form
\begin{equation}
  \begin{pmatrix}
    1&*&*&*\\
    0&*&*&*\\
    0&*&*&*\\
    0&*&*&*
  \end{pmatrix}.
\end{equation}
The $3\times 3$ sub-matrices $\mf{S}^{\text{red}}_{I\rightarrow II}$
in the right bottom have been worked out in \cite{GGM:peacock}, and
they are given by
\begin{subequations}
\begin{align}
\mf{S}^{\text{red}}_{I\rightarrow II}(\tx,\tq)
&=\frac{1}{2}
\begin{pmatrix}
0&1&0\\0&1&1\\-1&0&0
\end{pmatrix} \bJ^\RED_{-1}(\tx;\tq^{-1})^T
\begin{pmatrix}
1&0&0\\0&0&1\\0&1&0
\end{pmatrix} \bJ^\RED_{-1}(\tx;\tq)
\begin{pmatrix}
 0&0&-1\\1&-\ts&0\\0&1&0
\end{pmatrix},
\quad
|\tq|<1,\label{eq:SRed12x}
\end{align}
\end{subequations}
where
\begin{equation}
  \ts = s(\tx),
\end{equation}
and $\bJ^{\RED}(x,q)$ is given by~\eqref{Jred52x}. To calculate the first row of
$\mf{S}_{I\rightarrow II}(\tx,\tq)$, we use the additional holomorphic block $D_m(x,q)$.
\begin{conjecture}
  For every cone $R\subset \Lambda(x)$ and every $\tau \in R$, we have
  \begin{equation}
    D_0(x,q) = s_R(\Phi^{(\s_0)})(x;\tau) +
    \tau^{-1/2}\frac{\tx^{1/2}-\tx^{-1/2}}{x^{1/2}-x^{-1/2}}
    \sum_{j=1}^3 M_{R,j}(\tx,\tq)s_R(\Phi^{(\s_j)})(x;\tau),
    \label{eq:QMPhix}
  \end{equation}
  where $M_{R,j}(\tx,\tq)$ ($j=1,2,3$) are $\tq$-series 
  with coefficients in $\IZ(\tx^{\pm 1})$ depending on the
  cone $R$.
\end{conjecture}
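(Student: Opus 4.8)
\emph{Strategy for a proof.}
This conjecture is the $x$--deformed, $\knot{5}_2$ analogue of Conjecture~\ref{conj-exact-ra-x} for the $\knot{4}_1$ knot and of the undeformed $\knot{5}_2$ identity~\eqref{eq:QMPhi}, and the plan is to follow the same three--step template by which those relations were reached. First I would exhibit the $(x,q)$--series $D_0(x,q)$ inside the bilinear factorisation of a suitable state--integral. Second, I would identify that state--integral with a linear combination of median Borel resummations $s_{\mathrm{med}}(\Phi^{(\s_j)})(x;\tau)$ for $j=0,1,2,3$. Third, I would invert the factorisation, using the matrix description of the Stokes data together with the $q$--Stokes phenomenon, to isolate $D_0(x,q)$ and read off the $\tq$--series $M_{R,j}(\tx,\tq)$.

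For the first step, I would build the descendant state--integral $\mc{Z}_{\lambda,\mu}(u,\tau)$ for $\knot{5}_2$ out of the Kashaev--Yokota integral of~\cite{KY:52}, decorated with a $\tanh$ (equivalently $1/\sinh$) factor that produces poles of one order higher, exactly as the $\knot{4}_1$ integral~\eqref{inst-def} was built in Section~\ref{sub.41statex}; this is the integral referred to in Section~\ref{sec:newstateint}. Closing the contour downward and applying the residue theorem along the lines of Theorem~\ref{thm-newst}, the residues at the higher--order poles --- where a double pole of the quantum dilogarithm collides with the simple pole of $\tanh$ --- would assemble into the series $D_m(x,q)$, while the residues at the remaining poles assemble into $\tq$--series built from $A_m,B_m,C_m$ of~\eqref{Jred52x} and their Appell--Lerch $L$--versions; as in Theorem~\ref{thm-newst}, the modular transformation of $E_2$ would be needed to put the answer in closed form. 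For the third step, I would combine this bilinear expression with the $\knot{5}_2$ analogue of~\eqref{eq:GMsPhix} and of~\eqref{eq:SRed12x}, in which the decorated $6\times6$ matrix of $(x,q)$--series whose first row carries $D_m(x,q)$ is related through $\Delta'(x,\tau)$ and $\Delta(x,\tau)$ to $M_R(\tx,\tq)\,s_R(\Phi)(x;\tau)$ with $M_R$ expressed through $\bJ^{\RED}_{-1}(\tx,\tq)$ and the extra first row, and then invert the resulting matrix of $\tq$--series; its determinant is the one computed in Theorem~\ref{thm.51bJ1}, hence nonvanishing for $|\tq|\neq1$. The $q$--Stokes phenomenon --- crossing from $R$ to an adjacent cone changes the $\tq$--series but not $D_0(x,q)$ itself --- would then force the cone dependence to reside entirely in $M_{R,j}(\tx,\tq)$, with coefficients in $\IZ(\tx^{\pm1})$.

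\emph{The hard part} is the second step: proving that the $\knot{5}_2$ state--integral is \emph{exactly} the median Borel resummation of the $\Phi^{(\s_j)}(x;\tau)$, i.e.\ the analogue of~\eqref{eq:Isphi-3}. This requires the full saddle--point expansion of the integral at the critical points of the action (one per flat connection, including the trivial one), the matching of these expansions with $\Phi^{(\s_j)}(x;\tau)$, and a Borel--plane / contour--deformation argument upgrading the asymptotic match to an equality; this is precisely the still--open resurgence of the $\Phi^{(\s_j)}$, since it demands control of the whole peacock singularity pattern. Two $\knot{5}_2$--specific difficulties compound it. The descendant $q$--holonomic system has rank $6$ while there are only $4$ boundary--parabolic flat connections; the excess is the $2\times2$ Rogers--Ramanujan / modular block of Conjecture~\ref{DS2hid}, whose representation--theoretic label --- and whose contribution, if any, to the critical--point analysis of the state--integral --- is not understood. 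Moreover the pole structure of the $\knot{5}_2$ integral is much richer than that of $\knot{4}_1$, so the residue bookkeeping of the first step is substantially heavier. Finally, one should keep in mind that the corresponding $\knot{4}_1$ statements (Conjecture~\ref{conj-exact-ra-x}) are themselves still conjectural, so any such argument would at present be aspirational and would, as a byproduct, establish resurgence for these two knots.
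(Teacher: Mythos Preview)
The paper does not prove this statement: it is explicitly labeled a \emph{Conjecture}, and the only support offered is numerical verification of~\eqref{eq:QMPhix} (and of the companion Conjecture specifying $M^{(\s_0)}_R$) via high--precision Borel resummation of the $\Phi^{(\s_j)}(x;\tau)$ compared against the $(x,q)$--series $D_0(x,q)$. So there is no proof in the paper for your proposal to be compared with.

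Your three--step roadmap is a sensible outline, and you are right that it is aspirational; but you should be aware that even the \emph{first} step is not available in the paper. Section~\ref{sec:newstateint} stops short of constructing the $\knot{5}_2$ state--integral: it writes down an integrand $\mc{I}_{K_p}(z,w)$ and recovers the inverted Habiro series from the $\tanh$ poles, but explicitly says ``it might be possible to find appropriate integration contours'' and ``we expect to come back to this problem in the near future''. So neither the convergent two--dimensional integral, nor its residue expansion analogous to Theorem~\ref{thm-newst}, nor its identification with $s_{\mathrm{med}}(\Phi^{(\s_0)})(x;\tau)$ exists yet. Your remark that the $\knot{4}_1$ analogue (Conjecture~\ref{conj-exact-ra-x}) is itself unproved is on point, and the additional $\knot{5}_2$ obstacles you list --- the rank--$6$ versus $4$--flat--connection mismatch and the Rogers--Ramanujan block --- are exactly the ones the paper flags as unresolved.
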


We present $M_{R,j}(\tx,\tq)$ in terms of the row vector
$M_R(\tx,\tq) := (M_{R,1},M_{R,2},M_{R,3})(\tx,\tq)$, and it can be expressed
in terms of a $3\times 3$ matrix $M^{(\s_0)}_R(\tx,\tq)$
\begin{equation}
  M_R(\tx,\tq)
  = \left(\tq D_0(\tx,\tq),\tq^2D_1(\tx,\tq),\tq^3D_2(\tx,\tq)\right)
  M^{(\s_0)}_R(\tx,\tq).
  \label{eq:MMs0}
\end{equation}
\begin{conjecture}
  Equation \eqref{eq:QMPhi} holds in the cones $R=I,II$ where the
  $\tq$-
  series $M_{R,j}(\tq)$ are given in terms of $M_R^{(0)}(\tx,\tq)$ through
  \eqref{eq:MMs0} which are as follows
  \begin{subequations}
  \begin{align}
    M^{(\s_0)}_{I}(\tx,\tq) =
    &\begin{pmatrix}
       1&-1&-\ts\,\tq\\0&-1&-1+\ts\,\tq\\0&0&-\tq
     \end{pmatrix},\\
    M^{(\s_0)}_{II}(\tx,\tq) =
    &\begin{pmatrix}
       -1&1&-\ts\,\tq\\-1&0&-1+\ts\,\tq\\0&0&-\tq
     \end{pmatrix}.
  \end{align}
\end{subequations}
\end{conjecture}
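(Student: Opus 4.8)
The plan is to follow the strategy already used for the $\knot{4}_1$ knot in Section~\ref{sub.41borelx} and for the undeformed $\knot{5}_2$ knot in Section~\ref{sub.52borel}. The starting point is that $D_0(x,q)$, together with its descendants $D_m(x,q)$, is an inhomogeneous solution of the sixth order $q$-difference equation underlying~\eqref{recJm52x} (with right-hand side $x^2$), so the associated homogeneous solution space is six-dimensional; assuming the $x$-analogue of the block-upper-triangular structure of Conjectures~\ref{prop.52block} and~\ref{DS2hid}, the component relevant for $D_0$ is the four-dimensional subspace that, in the regime $q=\re^{2\pi\ri\tau}$, $\tau\to 0$ in a cone $R$ and assuming resurgence of the $\Phi^{(\s_j)}$, is spanned by the Borel resummations $s_R(\Phi^{(\s_j)})(x;\tau)$ for $j=1,2,3$ together with one particular solution; matching that particular solution with the leading $\tau\to0$ behaviour of $D_0$ produces an identity of the shape~\eqref{eq:QMPhix}. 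The coefficient of $s_R(\Phi^{(\s_0)})(x;\tau)$ is exactly $1$, rather than a $\tq$-series, because $D_0(x,q)$ is itself a particular solution of the inhomogeneous equation reproducing $\varphi^{(\s_0)}(x;\tau)$ to leading order, whereas the homogeneous solutions $\Phi^{(\s_j)}$ for $j=1,2,3$ carry the exponentially small prefactors $\re^{V_j(x)/(2\pi\ri\tau)}$ and hence enter only through corrections in powers of $\tq$ and $\tx$.

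Next I would establish the factorisation~\eqref{eq:MMs0}. Analogues of~\eqref{eq:QMPhix} hold for every descendant $D_m(x,q)$, and the corresponding row vectors $M_R$ are related across shifts by the recursion~\eqref{recJm52x}; on the other hand, the exponentially small corrections to the $\tau\to0$ asymptotics of $D_0(x,q)$ are governed by the same $q$-holonomic system evaluated at the dual variables $(\tx,\tq)$, which on the top row produces precisely the descendant series $D_0,D_1,D_2$ at $(\tx,\tq)$. Comparing these two descriptions forces $M_R(\tx,\tq)$ to factor as in~\eqref{eq:MMs0} with a matrix $M^{(\s_0)}_R(\tx,\tq)$ that is ``constant'' with respect to the holonomic structure, i.e.\ the residual data not fixed by the $q$-difference equation. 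This is the $x$-deformed analogue of~\eqref{eq:MMs052} and of the $\knot{4}_1$ identities of Section~\ref{sub.41borelx}, and the $q$-holonomicity it relies on is supplied by~\cite{GK:desc,Koutschan:holofunctions}.

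It then remains to pin down the entries of $M^{(\s_0)}_I$ and $M^{(\s_0)}_{II}$. Three constraints suffice: (i) the $x\to1$ limit must recover the matrices $M^{(\s_0)}_{I,II}(\tq)$ of the preceding conjecture for $\knot{5}_2$, which forces the only nontrivial $\tx$-dependence to enter through $\ts=s(\tx)$ specialising to $3$; (ii) the first-row entries of the global Stokes matrix must be reproduced by $\mf{S}_{R\to R'}(\tx,\tq)_{0,j} = M_{R,j}(\tx,\tq) - \sum_{k=1}^{3} M_{R',k}(\tx,\tq)\,\mf{S}^{\text{red}}_{R\to R'}(\tx,\tq)_{k,j}$ with $\mf{S}^{\text{red}}_{I\to II}$ given by~\eqref{eq:SRed12x}, and the resulting global Stokes matrix must factor into commuting local automorphisms with Stokes constants in $\IZ[\tx^{\pm1}]$; (iii) compatibility with the complex-conjugation symmetry exchanging $\s_1$ and $\s_2$ and with the sector structure of the Borel plane (for instance the absence in the upper half-plane of singularities of type $\iota_{0,3}$ observed in Section~\ref{sub.52borel}). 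Combined with a direct residue evaluation of $D_m(x,q)$ in terms of the holomorphic blocks $A_m,B_m,C_m$, along the lines of Theorems~\ref{thm.41bJx1} and~\ref{thm.41bJx2}, these constraints single out the stated matrices.

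Finally, the identity would be checked numerically: compute the Borel transforms of $\varphi^{(\s_j)}(x;\tau)$ to high order (about $200$ coefficients are available) for several $x$ near $1$, perform lateral Laplace transforms in the cones $I$ and $II$, verify~\eqref{eq:QMPhix} with the claimed $M^{(\s_0)}_R$, and confirm that the extracted Stokes constants are Laurent polynomials in $\tx$ with integer coefficients. The main obstacle is that the argument remains \emph{conditional}: resurgence of the series $\Phi^{(\s_j)}(x;\tau)$ is still unproved, and even granting it, upgrading the constraints of the previous paragraph to a rigorous determination of the entries of $M^{(\s_0)}_R$ requires control of the $x$-monodromy of the sixth order $q$-holonomic module attached to the descendant colored Jones polynomial of $\knot{5}_2$ — precisely the point the authors defer to a later publication. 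In its absence the statement is established only as a numerically verified conjecture, supported by the conditional derivation sketched above.
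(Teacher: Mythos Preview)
Your proposal correctly identifies that this statement is a \emph{conjecture}, not a theorem, and that the paper offers no proof: it is presented as a numerically verified claim motivated by analogy with the $\knot{4}_1$ case and the undeformed $\knot{5}_2$ case. Your heuristic derivation and the explicit acknowledgment that the argument is conditional on resurgence and on control of the $x$-monodromy of the sixth order module match the paper's own stance, which simply states the matrices and proceeds to extract Stokes constants from them.
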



Eqs.~\eqref{eq:QMPhix}, together with the reduced Stokes matrices
$\mf{S}^{\text{red}}_{I\rightarrow II}(\tx,\tq)$ for
$\Phi^{(\s_j)}(x;\tau)$ ($j=1,2,3$), allow us to calculate entries in
the first row of $\mf{S}_{I\rightarrow
  II}(\tx,\tq)$ 
by
\begin{equation}
  \mf{S}_{I\rightarrow II}(\tx,\tq)_{0,j} =
  M_{I,j}(\tx,\tq) - \sum_{k=1}^3M_{II,k}(\tx,\tq)
  \mf{S}^{\text{red}}_{I\rightarrow II}(\tx,\tq)_{k,j},\quad j=1,2,3.
\end{equation}
In the following we list the first few terms of these
$\tq$
-series.  
\begin{equation}
  \begin{aligned}
    \mf{S}_{I\rightarrow II}(\tx,\tq)_{0,1}=
    &-1+(1+\ts+\ts^2)\tq -(-2\ts-\ts^2+\ts^3)\tq^2-(1+\ts^4)\tq^3
      +\cO(\tq^4) ,\\
    \mf{S}_{I\rightarrow II}(\tx,\tq)_{0,2}=
    &1-(1+2\ts+\ts^2)\tq+(-\ts-\ts^2+2\ts^3)\tq^2+(3\ts^2+\ts^3+\ts^4)\ts^3
      +\cO(\tq^4) ,\\
    \mf{S}_{I\rightarrow II}(\tx,\tq)_{0,3}=
    &-\tq+(1+\ts^2)\tq^2+(3\ts+\ts^2)\tq^3+\cO(\tq^4).
  \end{aligned}
\end{equation}

Finally, we can factorise the global Stokes matrices
$\mf{S}_{I\rightarrow II}(\tx,\tq)$
to obtain local Stokes matrices associated to individual singular
points in $\Lambda$ and extract the associated Stokes constants.  The
Stokes constants for $\Phi^{(\s_j)}(x;\tau)$ ($j=1,2,3$) are already
given in \cite{GGM:resurgent,GGM:peacock}.  We collect the Stokes
contants for $\Phi^{(\s_0)}(x;\tau)$ in the generating series
\begin{equation}
  \ms{S}^+_{0,j}(\tx,\tq) = \sum_{k\geq 0}\sum_{\ell}
  \mc{S}^{(k,\ell)}_{0,j}\tx^\ell\tq^k,
  \quad j=1,2,3.
\end{equation}
And we find that 
\begin{equation}
  \begin{aligned}
    \ms{S}^+_{0,1}(\tx,\tq) =
    &-1+\tq+\ts\tq^2+(-2+3\ts^2)\tq^3+(2-\ts-2\ts^2+5\ts^3+2\ts^4)\tq^4
      +\cO(\tq^5),\\
    \ms{S}^+_{0,2}(\tx,\tq) =
    &1-\tq-\ts\tq^2-(-2+3\ts^2)\tq^3+(2-\ts-2\ts^2+5\ts^3+2\ts^4)\tq^4
      +\cO(\tq^5),\\
    \ms{S}^+_{0,3}(\tx,\tq) =
    &0.
  \end{aligned}
\end{equation}

\subsection{An analytic extension of the Kashaev invariant and the
  colored Jones polynomial}

In this section we discuss an analytic extension of the Kashaev invariant
and of the colored Jones polynomial of the $\knot{5}_2$ knot, illustrating
Conjectures~\ref{conj.ekashaev} and~\ref{conj.ejones}. 

Recall that the colored Jones polynomial of the $\knot{5}_2$ is given by
\begin{equation}
  J^{\knot{5}_2}_N(q) = \sum_{k=0}^{N-1}(-1)^k
  q^{-k(k+1)/2}(q^{1+N};q)_k(q^{1-N};q)_k H_k(q),\quad
  q = \re^{2\pi\ri \tau},
\end{equation}
where
\begin{equation}
  H_k(q) = (-1)^kq^{k(k+3)/2}\sum_{k=0}^\ell q^{\ell(\ell+1)}
  \frac{(q;q)_k}{(q;q)_\ell(q;q)_{k-\ell}}.
\end{equation}
Let $u$ be in a small neighborhood of the origin. It is related to $x = q^N$
and $\tau$ by
\begin{equation}
  x = \re^{u+2\pi\ri} = \re^{u},\quad \tau = \frac{u+2\pi\ri}{2\pi\ri N}.
  \label{eq:x-n-tau52}
\end{equation}
Then $x$ is close to $1$ and $\tau$ is close to $1/N$.
Note that
\begin{equation}
  N\tau = 1+\frac{u}{2\pi\ri}
\end{equation}
is the analogue of $n/k$ in \cite{gukov}, and here we are
considering a deformation from the case of $n/k=1$.  We also have
\begin{equation}
  \tx = \re^{\log(x)/\tau} =
  \exp\left(\frac{2\pi\ri N u}{u+2\pi\ri}\right).
\end{equation}
When $x$ is positive real, $\Phi^{(\s_1)}(x;\tau)$ are not Borel
summable along the positive real axis.  Depending on whether $\tau$ is
in the first or the fourth quadrant, we have 
\begin{subequations}
  \begin{align}
    J^{\knot{5}_2}_N(q) =
    s_{I}(\Phi^{(\s_0)})(x;\tau)
    +\tau^{-1/2}\frac{\tx^{1/2}-\tx^{-1/2}}{x^{1/2}-x^{-1/2}} 
    \big(
    &s_{I}(\Phi^{(\s_1)})(x;\tau)
      -(1+\tx)s_{I}(\Phi^{(\s_2)})(x;\tau)\nn
    &-(1+\tx)s_{I}(\Phi^{(\s_3)})(x;\tau)
      \big)
      \label{eq:Jn-s1-52}
    \\  =
    s_{IV}(\Phi^{(\s_0)})(x;\tau)  +\tau^{-1/2}
    \frac{\tx^{1/2}-\tx^{-1/2}}{x^{1/2}-x^{-1/2}} 
    \big(
    &s_{IV}(\Phi^{(\s_1)})(x;\tau)
      +(1+\tx^{-1})s_{IV}(\Phi^{(\s_2)})(x;\tau)\nn
    &-(1+\tx)s_{IV}(\Phi^{(\s_3)})(x;\tau)
      \big).
      \label{eq:Jn-s2-52}
    \end{align}
\end{subequations}    
  The two equations~\eqref{eq:Jn-s1-52}, \eqref{eq:Jn-s2-52} are related by
  the Stokes discontinuity formula
  \begin{equation}
    \text{disc}_0 \Phi^{(\s_1)}(x;\tau) =
    s_I(\Phi^{(\s_1)})(x;\tau) - s_{IV}(\Phi^{(\s_1)})(x;\tau)= 
    (2+\tx+\tx^{-1}) s(\Phi^{(\s_2)})(x;\tau).
  \end{equation}
Combined, they imply 
\begin{align}
\label{52ejones}
  J_N^{\knot{5}_2}(q) =
  s_{\text{med}}(\Phi^{(\s_0)})(x;\tau)+\tau^{-1/2}
  \frac{\tx^{1/2}-\tx^{-1/2}}{x^{1/2}-x^{-1/2}} 
  \big(
  &s_{\text{med}}(\Phi^{(\s_1)})(x;\tau)
    -(1+\tx)s_{\text{med}}(\Phi^{(\s_3)})(x;\tau)\nn
  &-\frac{\tx-\tx^{-1}}{2}s_{\text{med}}(\Phi^{(\s_2)})(x;\tau)
    \big) 
\end{align}
which is the assertion of Conjecture~\ref{conj.ejones}.

\subsection{A new state-integral for the ${\knot{5}_2}$ knot?}
\label{sec:newstateint}

In the case of the figure eight knot, the new state-integral was obtained by first 
writing down an integral formula for its colored Jones polynomial, in Habiro form,
and then changing the integration contour to pick the contribution from the poles
in the lower half plane. This led in particular to the ``inverted'' Habiro series
$\mc{C}_0(x,q)$ in (\ref{eq:Ixq}). Although we do not have a similar complete
theory for the ${\knot{5}_2}$ knot, we can however write down an integral formula
for its colored Jones polynomial which lead, after a change of contour, to the
corresponding inverted Habiro series. In fact, it is possible to write such an
integral for all twist knots $K_p$ (the ${\knot{5}_2}$ knot corresponds to $p=2$). 

Let us then consider the colored Jones polynomial of the twist knot $K_p$ in
Habiro's form \cite{Mas:HabPol}:
\be
\label{gen-habiro}
J^{K_p}_N (q;x)= \sum_{n=0}^{N-1} C^{K_p}_n (q) (qx;q)_n (q x^{-1};q)_n, 
\ee
where
\be
C^{K_p}_n(q)= -q^n \sum_{k=0}^n (-1)^k
q^{(p+1/2)k(k+1) +k } (q^{2k+1}-1) {(q;q)_n \over (q;q)_{n+k+1} (q;q)_{n-k}}. 
\ee
It is easy to see that (\ref{gen-habiro}) can be written as a double contour integral 
\be
\int_{\mc{A}_z} \int_{\mc{A}_w} \mc{I}_{K_p}(z,w) {\rm d} z {\rm d} w, 
\ee
where 
\be
\begin{aligned}
  &\mc{I}_{K_p} (z, w)=-\fad^{-1} \left(z-{\ri \over 2 \mb} + u \right)
  \fad^{-1} \left(z-{\ri \over 2 \mb} - u\right) \fad ^{-1}
  \left(z-{\ri \over 2 \mb} \right) \fad
  \left(z-w+ {\ri \mb \over 2} -{\ri \over  2 \mb} \right)  \\
& \times 
\fad \left(z+w+ {\ri \mb \over 2} -{\ri \over  2\mb}  \right)
\re^{-2 \pi \ri (p+1/2)(w+ {\ri \over 2 \mb})^2} \left( \re^{ 2 \pi \mb (z+w)}
  - \re^{ 2 \pi \mb (z-w)} \right) \tanh\left( {\pi z \over \mb} \right)
\tanh\left( {\pi w \over \mb} \right),
\end{aligned}
\ee
and the contours $\mc{A}_{z,w}$ encircle the poles of the form (\ref{jones-poles}) in
the upper complex planes of the $z$ and the $w$ variables, respectively. We can now
deform the contour to pick the poles in the lower half planes of $z$, $w$. The
contribution from the simple poles of the $\tanh$ functions in those half planes can
be easily computed, and one finds in this way the inverted Habiro series, 
\be
\label{CKp}
\begin{aligned}
  \mc{C}_{K_p} (q, x)&={1 \over (x^{1\over 2}- x^{-{1\over 2}})^2}
  \sum_{n \ge 0} { q^{n (n+1)/2} \over (q x;q)_n (q x^{-1};q)_n} \\
  & \times \sum_{k \ge n}  q^{n (n+1)/2 + (p+1/2) k(k+1)- (n+k)(n+k+1)/2-n}
  (q^k - q^{-k-1}) {(q;q)_{n+k} \over (q;q)_n (q;q)_{k-n}}. 
\end{aligned}
\ee
This gives a general formula for all twist knots which agrees with the results
of \cite{park-inverted} for $p=2$ (the ${\knot{5}_2}$ knot) and $p=3$ 
(the ${\knot{7}_2}$ knot). 

It might be possible to find appropriate integration contours so that the integral
of $\mc{I}_{K_p} (z, w)$ converges and provides the sought-for new state-integral
which sees the series $\Phi^{(\sigma_0)} (x, \tau)$, as it happened in the case of the
${\knot{4}_1}$ knot. In the case of the ${\knot{5}_2}$ knot, these contours do exist
and lead to a well-defined integral. We expect that an evaluation of such an integral
by summing over the appropriate set of residues will give the inverted Habiro
series \eqref{CKp}, together with additional contributions, as in \eqref{eq:Ixq}.
However, the fact that the integrals are two-dimensional makes them more difficult
to analyze. We expect to come back to this problem in the near future. 


\appendix

\section{$q$-series identities}
\label{Appqserids}

In this appendix we will sketch the proofs of some $q$-series identities
that appear in Conjecture~\ref{DS2hid}. Since this not one of the main
themes of the paper, our presentation will be rather brief. Our proofs of the
$q$-hypergeometric identities will use the algorithmic approach of the Wilf-Zeilberger
theory (see \cite{WZ,PWZ}) and the computer implementation by
Koutschan~\cite{Koutschan:holofunctions}). 


We outline part of the proof of Conjecture~\ref{DS2hid} for $|q|<1$, namely
\be
\label{3Q}
\begin{aligned}
  q\DS^{(0,0)}_{0}(q)
  &=
  (q;q)_{\infty}\sum_{n=0}^{\infty}\frac{q^{n(n+1)}}{(q;q)_{n}^{3}}\\
  q\DS_{1}^{(0,0)}(q)
  &=
  (q;q)_{\infty}\sum_{n=0}^{\infty}(2-q^{n})\frac{q^{n(n+1)}}{(q;q)_{n}^{3}}\\
  q\DS_{2}^{(0,0)}(q)
  &=
  (q;q)_{\infty}\sum_{n=0}^{\infty}
  \left((3+q^{-1})-(2+2q^{-1})q^{n}+q^{2n-1}\right)\frac{q^{n(n+1)}}{(q;q)_{n}^{3}}
\end{aligned}
\ee
\begin{proof}
The definition of $\DS_{m}^{(0,0)}(q)$ gives that
\be
q\DS_{m}^{(0,0)}(q)= f_{-m-1,0}(q)
\ee
where
\be
\label{fmp}
\begin{aligned}
  f_{m,p}(q)
  & =
  \sum_{n,k=0}^{\infty}(-1)^{k}q^{n(n+1)+k(k+1)/2-nk+mn+pk}
  \frac{(q;q)_{n+k}}{(q;q)_{n}^{3}(q;q)_{k}}
  \\
  & =\sum_{k=0}^{\infty}f_{m,p,k}(q)
\end{aligned}
\ee
with
\be
\label{fmpk}
  f_{m,p,k}(q)
  =
  \sum_{n=0}^{\infty}(-1)^{k}q^{n(n+1)+k(k+1)/2-nk+mn+pk}
  \frac{(q;q)_{n+k}}{(q;q)_{n}^{3}(q;q)_{k}} \,.
\ee
Likewise, we define
\be
\label{hmp}
\begin{aligned}
  h_{m,p}(q)
  &=
  (q;q)_{\infty}\sum_{n=0}^{\infty}\frac{q^{n(n+1)+pn+mn+mp}}{(q;q)_{n+p}^{2}(q;q)_{n}}
  \\
  &=
  \frac{1}{(q;q)_{\infty}}\sum_{n,k,\ell=0}^{\infty}(-1)^{k+\ell}\frac{q^{n(n+1)+k(k+1)/2
      +\ell(\ell+1)/2+pn+pm+pk+p\ell+nk+mn}}{(q;q)_{n}(q;q)_{k}(q;q)_{\ell}}
  \\
  &=\sum_{k=0}^{\infty}h_{m,p,k}(q)
\end{aligned}
\ee
where
\be
  h_{m,p,k}(q)
  =
  \frac{1}{(q;q)_{\infty}}\sum_{n+j+\ell+m=k}^{\infty}(-1)^{j+\ell}
  \frac{q^{n(n+1)+j(j+1)/2
  +\ell(\ell+1)/2+pn+pm+pj+p\ell+nj+mn}}{(q;q)_{n}(q;q)_{j}(q;q)_{\ell}} \,.
\ee
Therefore, we have
\be
  f_{-1,0}(q)
  =
  q\DS^{(0,0)}_{0}(q)
  \quad
  \text{and}
  \quad
  h_{0,0}(q)
  =
  (q;q)_{\infty}\sum_{n=0}^{\infty}\frac{q^{n(n+1)}}{(q;q)_{n}^{3}} \,.
\ee
This implies that the first equality in~\eqref{3Q} follows from the $p=0$
specialization of 
\be
  f_{-1,p}(q)=h_{0,p}(q), \qquad (p \in \BZ) \,.
\ee
This in turn follows (using Equations~\eqref{fmp} and~\eqref{hmp}) from
the following
\be
\label{f=h}
  f_{-1,p,k}(q)=h_{0,p,k}(q), \qquad (p \in \BZ, k \in \BN) \,.
\ee
Equation~\eqref{fmpk} expresses the two-variable $q$-holonomic function
$f_{-1,p,k}(q)$ as a one dimensional sum of a three variable proper $q$-hypergeometric
function. It follows from~\cite{Koutschan:holofunctions} that the annihilator ideal
of $F_{k,p}(q):=f_{-1,p,k}(q)$ is generated by the recursion relations
{\small
  \be
-q^k F_{p, k}(q) + F_{1 + p, k}(q)=0,
\ee
\be
q^{2 + k + 2 p} (-1 + q^{1 + k})^2 F_{p, k}(q) + 
q^{2 + k + p} (-3 + q^{1 + k} + q^{2 + k}) F_{p, 1 + k}(q)
+ (-1 + q^{2 + k}) F_{p, 2 + k}(q)=0
\ee
}
This coincides with the annihilator ideal of $h_{0,p,k}(q)$. Thus, the
equality~\eqref{f=h} for $p,k \in \BZ$ with $k \geq 0$ follows from the
two special cases $(p,k)=(0,0)$ and $(p,k)=(0,1)$, that is from the identities
\be
\label{euler}
\begin{aligned}
  \sum_{n=0}^{\infty}\frac{q^{n^{2}}}{(q;q)_{n}^{2}}
  & =
  \frac{1}{(q;q)_{\infty}}
  \\
  \frac{1}{1-q}\sum_{n=0}^{\infty}\frac{q^{n^{2}-n+1}(q^{n+1}-1)}{(q;q)_{n}^{2}}
  & =
  \frac{q^{2}-2q}{(q;q)_{\infty}(1-q)}
\end{aligned}  
\ee
The first one of the above identities is due to Euler and can be derived using
generating functions of partitions. The second one follows from the $q$-holonomic
system
\be
  g_{m}(q)=\sum_{n=0}^{\infty}\frac{q^{n^{2}+mn}}{(q;q)_{n}^{2}}
  \qquad\text{with}\qquad
  g_{m}(q)-2g_{m+1}(q)+(1-q^{m+1})g_{m+2}(q)=0.
\ee
This concludes the proof of the first identity
in~\eqref{3Q}. The remaining two identities follow (using the above steps)
from the following ones
\be
\label{Q23}
\begin{aligned}
  f_{-2,p,k}(q)
  &=
  2h_{0,p,k}(q)-h_{1,p,k}(q),\\
  f_{-3,p,k}(q)
  &=
  (3+q^{-1})h_{0,p,k}(q)-(2+2q^{-1})h_{1,p,k}(q)+q^{-1}h_{2,p,k}(q).
\end{aligned}
\ee
This concludes the sketch of the proof of~\eqref{3Q}. 
\end{proof}
In the course of the proof, we came up with the following conjecture which
expresses $f_{m,p}(q)$ as $\BZ[q^{\pm 1}]$-linear combinations of $h_{m,p}(q)$.

\begin{conjecture}
  For $m \geq 0$ we have:
  \be
\begin{aligned}
  f_{m,p}(q)
  &=
  \sum_{k,i=0}^{\infty}(-1)^iq^{i(i+1)/2+k}
  \frac{(q;q)_{m+k+i}}{(q;q)_{m}(q;q)_{i}(q;q)_{k}}h_{k,p}(q),\\
  f_{-1-m,p}(q)
  &=
  \sum_{k=0}^{m}\sum_{i=0}^{m-k}(-1)^iq^{i(i+1)/2+k}
  \frac{(q^{-1};q^{-1})_{m}}{(q^{-1};q^{-1})_{m-i-k}(q;q)_{i}(q;q)_{k}}h_{k,p}(q)
  \,.
\end{aligned}
\ee
\end{conjecture}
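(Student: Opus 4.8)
The plan is to show that, for each fixed $p$, both sides of each identity are sequences in the integer index satisfying one and the same linear $q$-difference equation, and then to match finitely many initial terms, the latter being identities of exactly the type already established in the proof of~\eqref{3Q}. To set this up, record two structural facts. Writing $h_{k,p}(q)=(q;q)_{\infty}q^{kp}\sum_{n\ge 0}\frac{q^{n(n+1)+pn}}{(q;q)_{n+p}^{2}(q;q)_{n}}q^{kn}$ and computing the ratio of consecutive summands, the denominator $(q;q)_{n+p}^{2}(q;q)_{n}$ gives a telescoping certificate showing that $h_{k,p}(q)$ satisfies a third-order linear $q$-difference equation in $k$ (the analogue of the equation for $g_{m}$ used in~\eqref{euler}); in particular, for every $N$ the term $h_{N,p}(q)$ lies in the $\overline{\BQ(q)}$-span of $h_{0,p}(q),h_{1,p}(q),h_{2,p}(q)$. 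Dually, $f_{m,p}(q)$ satisfies a linear $q$-difference equation $\mathcal{L}$, of some finite order $D$, in the index $m$; one obtains $\mathcal{L}$ by creative telescoping on the double sum~\eqref{fmp} (Koutschan's implementation), or inherits it from $f_{m,0}(q)=q\DS^{(0,0)}_{-m-1}(q)$ together with the homogeneous form of the descendant recursion~\eqref{52rec}, and one checks that $\mathcal{L}$ may be chosen independent of $p$.

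As in the appendix it is cleanest to prove the $k$-sliced versions, replacing $f_{m,p}(q)$ by $f_{m,p,k}(q)$ and each $h_{j,p}(q)$ by $h_{j,p,k}(q)$, and to sum over $k$ afterwards. For each fixed value of the first index these are identities between one-dimensional sums of proper $q$-hypergeometric terms, so both sides possess annihilator ideals in the $(p,k)$-operator algebra computable by Koutschan's package; once the ideals coincide, the identity follows from the two numerical base points $(p,k)\in\{(0,0),(0,1)\}$, exactly as in the proof of $f_{-1,p,k}=h_{0,p,k}$ and of~\eqref{Q23}. The latter two are the cases $m=-1,-2,-3$ of the second formula, and the finitely many further base cases needed to seed $\mathcal{L}$ are of the same nature.

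The induction step then runs $\mathcal{L}$ in the first index. After converting the $(q^{-1};q^{-1})$-Pochhammers via $(q^{-1};q^{-1})_{\ell}=(-1)^{\ell}q^{-\ell(\ell+1)/2}(q;q)_{\ell}$, the closed-form coefficients in the conjecture become $q$-multinomial coefficients times monomials, hence proper $q$-hypergeometric in all of their indices; a finite computation, using the third-order $k$-recursion above to keep the list of $h_{j,p}$'s of bounded length, then shows that the right-hand side of each displayed formula is annihilated by $\mathcal{L}$. Since that right-hand side also agrees with $f_{m,p}(q)$ at the $D$ seed values, the two sequences coincide for all $m\in\BZ$, which is the assertion. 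Alternatively, the two branches are interchanged by $q\mapsto q^{-1}$ together with the functional equation for $\DS_{m}$ implicit in the two representations of~\eqref{DS52}, so it suffices to treat one of them.

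The delicate point is this last step in the branch $m\ge 0$, where the sum over $k$ (and over $i$) is genuinely infinite: there the right-hand side is not literally a finite proper $q$-hypergeometric multisum, and one must work in $\BZ((q))$ (or the Habiro completion $\widehat{\BZ[q]}$), justify the termwise manipulations and the holonomic closure properties in that setting, and check that the prefactor $(q;q)_{\infty}$ carried by each $h_{k,p}(q)$ is handled consistently when passing to annihilator ideals. Pinning down the exact order $D$ of $\mathcal{L}$, and hence how many base cases beyond~\eqref{Q23} must be checked by machine, is the remaining bookkeeping cost.
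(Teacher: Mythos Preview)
The paper does not prove this statement; it is explicitly labeled a \emph{Conjecture}, introduced with the sentence ``In the course of the proof, we came up with the following conjecture\ldots''. There is therefore no paper proof to compare your proposal against.

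Your strategy is the natural one and is in the spirit of the appendix: reduce to a common linear $q$-difference equation in the integer index and match finitely many seeds, with the seeds being precisely the identities~\eqref{f=h} and~\eqref{Q23} already established. For the second formula (negative indices), the right-hand side is a finite sum, so this is a genuine proper $q$-hypergeometric multisum in all variables and the WZ/Koutschan machinery applies; modulo verifying the annihilator ideals and the finitely many base cases, this branch should go through exactly as you outline.

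For the first formula, however, you correctly flag the real obstruction: the $k$- and $i$-sums are infinite, so the right-hand side is not a proper $q$-hypergeometric multisum and the standard holonomic theory does not directly apply. Your suggestion to work $k$-sliced and to use the third-order recursion for $h_{k,p}$ to reduce to a bounded list of $h_{j,p}$'s does not by itself resolve this, because after reduction the coefficients of $h_{0,p},h_{1,p},h_{2,p}$ on the right are themselves infinite $q$-series in $k$ and $i$ that must be shown to satisfy the recursion $\mathcal{L}$. Establishing that $\mathcal{L}$ annihilates an infinite sum requires either a separate convergence/vanishing-boundary argument or passage to a formal power series ring where termwise operations are justified; this is a genuine gap, not just bookkeeping. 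The alternative you mention, deducing the $m\ge 0$ branch from the $m<0$ branch via $q\mapsto q^{-1}$, is more promising but also incomplete as stated: the two displayed formulas are not obviously images of one another under $q\mapsto q^{-1}$ (note the asymmetric appearance of $(q;q)_{m+k+i}$ versus $(q^{-1};q^{-1})_{m}$ and the infinite versus finite sum), and the functional equation for $\DS_{m}$ you invoke relates $|q|<1$ to $|q|>1$ representations, not the two branches of the conjecture on the same side. So as written, the first identity remains open.
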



\bibliographystyle{hamsalpha}
\bibliography{biblio}

\providecommand{\bysame}{\leavevmode\hbox to3em{\hrulefill}\thinspace}
\providecommand{\href}[2]{#2}
\providecommand{\eprint}{\begingroup \urlstyle{rm}\Url}
\begin{thebibliography}{GGMn21}

\bibitem[AH06]{AH}
J{\o}rgen~Ellegaard Andersen and S{\o}ren~Kold Hansen, \emph{Asymptotics of the
  quantum invariants for surgeries on the figure 8 knot}, J. Knot Theory
  Ramifications \textbf{15} (2006), no.~4, 479--548.

\bibitem[AK14]{AK}
J{\o}rgen~Ellegaard Andersen and Rinat Kashaev, \emph{A {TQFT} from {Q}uantum
  {T}eichm\"uller theory}, Comm. Math. Phys. \textbf{330} (2014), no.~3,
  887--934.

\bibitem[BDP14]{Beem}
Christopher Beem, Tudor Dimofte, and Sara Pasquetti, \emph{{Holomorphic Blocks
  in Three Dimensions}}, JHEP \textbf{12} (2014), 177, \eprint{1211.1986}.

\bibitem[BN95]{Bar-Natan}
Dror Bar-Natan, \emph{On the {V}assiliev knot invariants}, Topology \textbf{34}
  (1995), no.~2, 423--472.

\bibitem[CG11]{CG}
Ovidiu Costin and Stavros Garoufalidis, \emph{Resurgence of the
  {K}ontsevich-{Z}agier series}, Ann. Inst. Fourier (Grenoble) \textbf{61}
  (2011), no.~3, 1225--1258.

\bibitem[DG13]{DG}
Tudor Dimofte and Stavros Garoufalidis, \emph{The quantum content of the gluing
  equations}, Geom. Topol. \textbf{17} (2013), no.~3, 1253--1315.

\bibitem[DGG13]{DGG2}
Tudor Dimofte, Davide Gaiotto, and Sergei Gukov, \emph{3-manifolds and 3d
  indices}, Adv. Theor. Math. Phys. \textbf{17} (2013), no.~5, 975--1076.

\bibitem[DGG14]{DGG1}
\bysame, \emph{Gauge theories labelled by three-manifolds}, Comm. Math. Phys.
  \textbf{325} (2014), no.~2, 367--419.

\bibitem[DGLZ09]{DGLZ}
Tudor Dimofte, Sergei Gukov, Jonatan Lenells, and Don Zagier, \emph{Exact
  results for perturbative {C}hern-{S}imons theory with complex gauge group},
  Commun. Number Theory Phys. \textbf{3} (2009), no.~2, 363--443.

\bibitem[DMZ]{DMZ:blackholes}
Atish Dabholkar, Sameer Murthy, and Don Zagier, \emph{Quantum black holes, wall
  crossing, and mock modular forms}, \eprint{arXiv:1208.4074}, Preprint 2012,
  arXiv:1208.4074.

\bibitem[Fad95]{Faddeev}
Ludwig Faddeev, \emph{Discrete {H}eisenberg-{W}eyl group and modular group},
  Lett. Math. Phys. \textbf{34} (1995), no.~3, 249--254.

\bibitem[Gar08a]{Ga:resurgence}
Stavros Garoufalidis, \emph{Chern-{S}imons theory, analytic continuation and
  arithmetic}, Acta Math. Vietnam. \textbf{33} (2008), no.~3, 335--362.

\bibitem[Gar08b]{Ga:diff}
Stavros Garoufalidis, \emph{Difference and differential equations for the
  colored {J}ones function}, J. Knot Theory Ramifications \textbf{17} (2008),
  no.~4, 495--510.

\bibitem[GGMn]{GGM:peacock}
Stavros Garoufalidis, Jie Gu, and Marcos Mari\~no, \emph{Peacock patterns and
  resurgence in complex {C}hern-{S}imons theory}, \eprint{arXiv:2012.00062}.

\bibitem[GGMn21]{GGM:resurgent}
\bysame, \emph{The resurgent structure of quantum knot invariants},
  Communications in Mathematical Physics \textbf{386} (2021), no.~1, 469--493.

\bibitem[GH18]{gh-res}
Dongmin Gang and Yasuyuki Hatsuda, \emph{S-duality resurgence in {$\rm SL(2)$}
  {C}hern-{S}imons theory}, J. High Energy Phys. (2018), no.~7, 053, front
  matter+23.

\bibitem[GK]{GK:desc}
Stavros Garoufalidis and Rinat Kashaev, \emph{The descendant colored {J}ones
  polynomials}, \eprint{arXiv:2108.07553}, Preprint 2021, arXiv:2108.07553.

\bibitem[GK04]{GK:noncommutative}
Stavros Garoufalidis and Andrew Kricker, \emph{A rational noncommutative
  invariant of boundary links}, Geom. Topol. \textbf{8} (2004), 115--204.

\bibitem[GK15]{GK:evaluation}
Stavros Garoufalidis and Rinat Kashaev, \emph{Evaluation of state integrals at
  rational points}, Commun. Number Theory Phys. \textbf{9} (2015), no.~3,
  549--582.

\bibitem[GK17]{GK:qseries}
\bysame, \emph{From state integrals to {$q$}-series}, Math. Res. Lett.
  \textbf{24} (2017), no.~3, 781--801.

\bibitem[GL05]{GL}
Stavros Garoufalidis and Thang T.~Q. L{\^e}, \emph{The colored {J}ones function
  is {$q$}-holonomic}, Geom. Topol. \textbf{9} (2005), 1253--1293 (electronic).

\bibitem[GL11]{GL:asy}
Stavros Garoufalidis and Thang T.~Q. L\^{e}, \emph{Asymptotics of the colored
  {J}ones function of a knot}, Geom. Topol. \textbf{15} (2011), no.~4,
  2135--2180.

\bibitem[GM21]{Gukov-Manolescu}
Sergei Gukov and Ciprian Manolescu, \emph{A two-variable series for knot
  complements}, Quantum Topol. \textbf{12} (2021), no.~1, 1--109.

\bibitem[GMn]{GM:top-sting}
Jie Gu and Marcos Mari\~no, \emph{Peacock patterns and new integer invariants
  in topological string theory}, \eprint{arXiv:2104.07437}, Preprint 2021.

\bibitem[GMnP]{gmp}
Sergei Gukov, Marcos Mari\~no, and Pavel Putrov, \emph{Resurgence in complex
  {C}hern-{S}imons theory}, \eprint{arXiv:1605.07615}, Preprint 2016.

\bibitem[GPPV20]{Gukov:BPS}
Sergei Gukov, Du~Pei, Pavel Putrov, and Cumrun Vafa, \emph{B{PS} spectra and
  3-manifold invariants}, J. Knot Theory Ramifications \textbf{29} (2020),
  no.~2, 2040003, 85.

\bibitem[GS06]{GS:Cpoly}
Stavros Garoufalidis and Xinyu Sun, \emph{The {$C$}-polynomial of a knot},
  Algebr. Geom. Topol. \textbf{6} (2006), 1623--1653.

\bibitem[Guk05]{gukov}
Sergei Gukov, \emph{Three-dimensional quantum gravity, {C}hern-{S}imons theory,
  and the {A}-polynomial}, Comm. Math. Phys. \textbf{255} (2005), no.~3,
  577--627.

\bibitem[GZa]{GZ:qseries}
Stavros Garoufalidis and Don Zagier, \emph{Knots and their related $q$-series},
  Preprint 2021.

\bibitem[GZb]{GZ:kashaev}
\bysame, \emph{Knots, perturbative series and quantum modularity}, Preprint
  2021.

\bibitem[Hab02a]{Ha}
Kazuo Habiro, \emph{On the quantum {$\rm sl_2$} invariants of knots and
  integral homology spheres}, Invariants of knots and 3-manifolds ({K}yoto,
  2001), Geom. Topol. Monogr., vol.~4, Geom. Topol. Publ., Coventry, 2002,
  pp.~55--68 (electronic).

\bibitem[Hab02b]{Habiro:sl2}
\bysame, \emph{On the quantum {$\rm sl_2$} invariants of knots and integral
  homology spheres}, Invariants of knots and 3-manifolds ({K}yoto, 2001), Geom.
  Topol. Monogr., vol.~4, Geom. Topol. Publ., Coventry, 2002, pp.~55--68
  (electronic).

\bibitem[Hab08]{Habiro:WRT}
\bysame, \emph{A unified {W}itten-{R}eshetikhin-{T}uraev invariant for integral
  homology spheres}, Invent. Math. \textbf{171} (2008), no.~1, 1--81.

\bibitem[HO15]{Hatsuda-Okuyama}
Yasuyuki Hatsuda and Kazumi Okuyama, \emph{Resummations and non-perturbative
  corrections}, J. High Energy Phys. (2015), no.~9, 051, front matter+28.

\bibitem[Jon87]{Jones}
Vaughan Jones, \emph{Hecke algebra representations of braid groups and link
  polynomials}, Ann. of Math. (2) \textbf{126} (1987), no.~2, 335--388.

\bibitem[Jon09]{Jones:origin}
\bysame, \emph{On the origin and development of subfactors and quantum
  topology}, Bull. Amer. Math. Soc. (N.S.) \textbf{46} (2009), no.~2, 309--326.

\bibitem[Kas95]{K95}
Rinat Kashaev, \emph{A link invariant from quantum dilogarithm}, Modern Phys.
  Lett. A \textbf{10} (1995), no.~19, 1409--1418.

\bibitem[Kas97]{kas-volume}
\bysame, \emph{The hyperbolic volume of knots from the quantum dilogarithm},
  Lett. Math. Phys. \textbf{39} (1997), no.~3, 269--275.

\bibitem[KLV16]{KLV}
Rinat Kashaev, Feng Luo, and Grigory Vartanov, \emph{A {TQFT} of
  {T}uraev-{V}iro type on shaped triangulations}, Ann. Henri Poincar\'e
  \textbf{17} (2016), no.~5, 1109--1143.

\bibitem[KM16]{kas-mar}
Rinat Kashaev and Marcos Marino, \emph{{Operators from mirror curves and the
  quantum dilogarithm}}, Commun. Math. Phys. \textbf{346} (2016), no.~3,
  967--994, \eprint{1501.01014}.

\bibitem[Kou10]{Koutschan:holofunctions}
Christoph Koutschan, \emph{{HolonomicFunctions} (user's guide)}, Tech. Report
  10-01, RISC Report Series, Johannes Kepler University Linz, 2010.

\bibitem[Kri]{Kricker}
Andrew Kricker, \emph{The lines of the {K}ontsevich integral and {R}ozansky's
  rationality conjecture}, \eprint{arXiv:math/0005284}, Preprint 2000.

\bibitem[KY]{KY:52}
Rinat Kashaev and Yoshiyuki Yokota, \emph{On the {V}olume {C}onjecture for the
  knot {$5_2$}}, Preprint 2012.

\bibitem[Mas03]{Mas:HabPol}
Gregor Masbaum, \emph{Skein-theoretical derivation of some formulas of
  {H}abiro}, Algebraic \& Geometric Topology \textbf{3} (2003), no.~1, 537 --
  556.

\bibitem[Mn14]{mmreview}
Marcos Mari\~no, \emph{{Lectures on non-perturbative effects in large $N$ gauge
  theories, matrix models and strings}}, Fortsch. Phys. \textbf{62} (2014),
  455--540, \eprint{1206.6272}.

\bibitem[Mur11]{Murakami2011:ivc}
Hitoshi Murakami, \emph{An introduction to the volume conjecture}, Interactions
  between hyperbolic geometry, quantum topology and number theory, Contemp.
  Math., vol. 541, Amer. Math. Soc., Providence, RI, January 2011,
  \eprint{1002.0126}, pp.~1--40.

\bibitem[Par]{park-inverted}
Sunghyuk Park, \emph{{Inverted state sums, inverted Habiro series, and
  indefinite theta functions}}, \eprint{arXiv:2106.03942}.

\bibitem[Par20]{Park:2020edg}
\bysame, \emph{{Large color $R$-matrix for knot complements and strange
  identities}}, \eprint{2004.02087}.

\bibitem[PWZ96]{PWZ}
Marko Petkovsek, Herbert~S. Wilf, and Doron Zeilberger, \emph{{$A=B$}}, A K
  Peters Ltd., Wellesley, MA, 1996, With a foreword by Donald E. Knuth, With a
  separately available computer disk.

\bibitem[Roz98]{Rozansky:bureau}
Lev Rozansky, \emph{The universal {$R$}-matrix, {B}urau representation, and the
  {M}elvin-{M}orton expansion of the colored {J}ones polynomial}, Adv. Math.
  \textbf{134} (1998), no.~1, 1--31.

\bibitem[Sau15]{Sauzin:resurgent}
David Sauzin, \emph{Nonlinear analysis with resurgent functions}, Ann. Sci.
  \'{E}c. Norm. Sup\'{e}r. (4) \textbf{48} (2015), no.~3, 667--702.

\bibitem[Sib90]{Sibuya}
Yasutaka Sibuya, \emph{Linear differential equations in the complex domain:
  problems of analytic continuation}, Translations of Mathematical Monographs,
  vol.~82, American Mathematical Society, Providence, RI, 1990, Translated from
  the Japanese by the author.

\bibitem[SW]{SW:NCJac}
Matthias Storzer and Campbell Wheeler, \emph{{J}acobi forms and {N}ahm sums},
  In preparation.

\bibitem[Wit89]{witten-jones}
Edward Witten, \emph{Quantum field theory and the {J}ones polynomial}, Comm.
  Math. Phys. \textbf{121} (1989), no.~3, 351--399.

\bibitem[Wit11]{Witten:2010cx}
\bysame, \emph{{Analytic Continuation Of Chern-Simons Theory}}, AMS/IP Stud.
  Adv. Math. \textbf{50} (2011), 347--446, \eprint{1001.2933}.

\bibitem[WZ92]{WZ}
Herbert~S. Wilf and Doron Zeilberger, \emph{An algorithmic proof theory for
  hypergeometric (ordinary and ``{$q$}'') multisum/integral identities},
  Invent. Math. \textbf{108} (1992), no.~3, 575--633.

\bibitem[Zaga]{DZ:lec}
Don Zagier, \emph{From 3-manifold invariants to number theory}, Lecture course
  2021.

\bibitem[Zagb]{Z:HQMF}
\bysame, \emph{Holomorphic quantum modular forms}, In preparation.

\bibitem[Zwe01]{Zwegers:thesis}
Sander Zwegers, \emph{Mock {$\theta$}-functions and real analytic modular
  forms}, {$q$}-series with applications to combinatorics, number theory, and
  physics ({U}rbana, {IL}, 2000), Contemp. Math., vol. 291, Amer. Math. Soc.,
  Providence, RI, 2001, pp.~269--277.

\end{thebibliography}
\end{document}